\numberwithin{equation}{section}
\newtheorem{proposition}{Proposition}[section]
\newtheorem{lemma}[proposition]{Lemma}
\newtheorem{theorem}[proposition]{Theorem}
\newtheorem{corollary}[proposition]{Corollary}
\newtheorem{conjecture}{Conjecture}[section]
\theoremstyle{definition}
\newtheorem{remark}[proposition]{Remark}
\newtheorem{definition}[proposition]{Definition}
\newtheorem{example}[proposition]{Example}
\DeclareMathOperator{\Aut}{Aut}
\DeclareMathOperator{\Ric}{Ric}
\DeclareMathOperator{\DF}{DF}
\DeclareMathOperator{\Ima}{Im}
\DeclareMathOperator{\Rea}{Re}
\DeclareMathOperator{\Lie}{Lie}
\DeclareMathOperator{\BC}{BC}
\newcommand{\N}{\mathbb{N}}
\newcommand{\R}{\mathbb{R}}
\newcommand{\C}{\mathbb{C}}
\newcommand{\Z}{\mathbb{Z}}
\newcommand{\Q}{\mathbb{Q}}
\newcommand{\G}{\mathcal{G}}
\newcommand{\pr}{\mathbb{P}}
\renewcommand{\epsilon}{\varepsilon}
\newcommand{\D}{\mathcal{D}}
\newcommand{\M}{\mathcal{M}}
\newcommand{\scN}{\mathcal{N}}
\newcommand{\scP}{\mathcal{P}}
\newcommand{\scO}{\mathcal{O}}
\newcommand{\ddb}{i\partial \bar\partial}
\newcommand{\scL}{\mathcal{L}}
\newcommand{\J}{\mathcal{J}}
\newcommand{\mfh}{\mathfrak{h}}
\newcommand{\mfk}{\mathfrak{k}}
\newcommand{\F}{\mathcal{F}}
\renewcommand{\L}{\mathcal{L}}
\newcommand{\X}{\mathcal{X}}
\newcommand{\Y}{\mathcal{Y}}
\renewcommand{\H}{\mathcal{H}}
\newcommand{\U}{\mathcal{U}}
\renewcommand{\G}{\mathcal{G}}
\newcommand{\scS}{\mathcal{S}}
\newcommand{\ddbar}{\partial\overline{\partial}}
\renewcommand{\phi}{\varphi}
\title[Stability conditions for polarised varieties]{Stability conditions for polarised varieties}
\author[Ruadha\'i Dervan]{Ruadha\'i Dervan}
\address{Current address: Ruadha\'i Dervan, School of Mathematics and Statistics, University of Glasgow, University Place, Glasgow G12 8QQ, United Kingdom}\email{ruadhai.dervan@glasgow.ac.uk}
\address{Previous address: DPMMS, Centre for Mathematical Sciences, Wilberforce Road, Cambridge CB3 0WB, United Kingdom}\email{R.Dervan@dpmms.cam.ac.uk}
\begin{document}

\begin{abstract} We introduce an analogue of Bridgeland's stability conditions for polarised varieties. Much as Bridgeland stability is modelled on slope stability of coherent sheaves, our notion of $Z$-stability is modelled on the notion of K-stability of polarised varieties. We then introduce an analytic counterpart to stability, through the notion of a $Z$-critical K\"ahler metric, modelled on the constant scalar curvature K\"ahler condition. Our main result shows that a polarised variety which is analytically K-semistable and asymptotically $Z$-stable admits $Z$-critical K\"ahler metrics in the large volume regime. We also prove a local converse, and explain how these results can be viewed in terms of local wall crossing. A special case of our framework gives a manifold analogue of the deformed Hermitian Yang-Mills equation.
\end{abstract}

\maketitle

\section{Introduction}

Two notions of stability have dominated much of algebraic geometry over the last twenty years: these are the notions of \emph{K-stability} of a polarised variety \cite{tian-inventiones, donaldson-toric} and \emph{Bridgeland stability} of an object in a triangulated category \cite{bridgeland}. Bridgeland stability is modelled on the more classical notion of \emph{slope stability} of a coherent sheaf over a polarised variety, and slope stability can be viewed as the ``large volume limit'' of Bridgeland stability. One then expects to obtain moduli spaces of Bridgeland stable objects (and one frequently does \cite{toda-i,toda-ii,arcara-bertram}), with the usefulness of Bridgeland stability arising from the fact that one can vary the stability condition, which often leads to a good geometric understanding of the birational geometry of these moduli spaces. This, in turn, frequently leads to interesting geometric consequences \cite{bayer-survey}.

In the simplest case that the object of the triangulated category in question is a holomorphic vector bundle, there is a differential-geometric counterpart to Bridgeland stability, though the dictionary is not exact and theory is in its infancy. This counterpart is the notion of a $Z$-\emph{critical connection} \cite{DMS}, recently introduced by the author, McCarthy and Sektnan, which concretely is a solution to a partial differential equation on the space of Hermitian metrics on the holomorphic vector bundle. $Z$-critical connections should play an analogous role to Hermite-Einstein metrics in the study of slope stability of vector bundles, and indeed the ``large volume limit'' of the $Z$-critical condition is the Hermite-Einstein condition.

K-stability of a polarised variety originated directly through from K\"ahler geometry, through the search for \emph{constant scalar curvature K\"ahler (cscK)  metrics} on smooth polarised varieties, whose existence is conjectured by Yau, Tian and Donaldson is to be equivalent to K-stability \cite{yau,tian-inventiones, donaldson-toric}. Already through the early work of Fujiki and Schumacher it was apparent that the cscK condition (hence,  \emph{a posteriori}, the K-stability condition) should be the appropriate condition to form moduli of polarised varieties, and there is now much compelling evidence for this \cite{fujiki, fujiki-schumacher, moduli, inoue}, especially in the Fano setting \cite{odaka-moduli, LWX, xu}. With these moduli spaces being increasingly well understood, it is natural to ask what the geometry of these spaces is, and whether their birational geometry can be understood through other notions of stability; this is a heavily studied problem for moduli spaces of curves \cite{flip}. Thus one is led to the question: is there an analogue of Bridgeland stability for polarised varieties?

Here we begin a programme to answer this question. The definitions and techniques in the present work are most relevant in the ``large volume'' regime, where categorical input is less necessary, and the links with differential geometry are currently strongest. 

The main input into a Bridgeland stability condition is a \emph{central charge}; our analogue for varieties is essentially a complex polynomial in cohomology classes of the polarised variety $(X,L)$, including Chern classes of $X$. Fixing such a central charge $Z$, one obtains a complex number $Z(X,L)$ with \emph{phase} $\phi(X,L) = \arg Z(X,L)$, which we always assume to be non-zero. On the differential-geometric side, we introduce the notion of a $Z$\emph{-critical K\"ahler metric}, which  is a solution to a partial differential equation of the form $$\Ima(e^{-i\phi(X,L)} \tilde Z(\omega)) = 0,$$ where $\tilde Z(\omega)$ is a complex-valued  function defined using representatives of the cohomology classes associated to the central charge $Z(X,L)$, with appropriate Chern-Weil representatives chosen to represent the Chern classes. We also require the positivity condition $\Rea(e^{-i\phi(X,L)} \tilde Z(\omega))>0$. The $Z$-critical condition is then equivalent to asking that the function $$\tilde Z(\omega): X \to \C$$ has constant argument. The equation has formal similarities to the notion of a $Z$-critical connection on a holomorphic vector bundle, leading us to mirror the terminology.

On the algebro-geometric side, the notion of stability involves \emph{test configurations}, which are the $\C^*$-degenerations $(\X,\L)$ of $(X,L)$ crucial to the definition of K-stability. We associate a numerical invariant $Z(\X,\L)$ to each test configuration, which is again a complex number whose phase we denote $\phi(\X,\L)$. The notion of \emph{Z}-stability we introduce, which is roughly analogous to Bridgeland stability, means that for each test configuration the phase inequality $$\Ima\left(\frac{Z(\X,\L)}{Z(X,L)}\right)>0$$ holds. These definitions allow us to state the following analogue of the Yau-Tian-Donaldson conjecture:

\begin{conjecture}\label{intromainconjecture} Let $(X,L)$ be a smooth polarised variety with discrete automorphism group. Then the existence of a $Z$-critical K\"ahler metric in $c_1(L)$ is equivalent to $Z$-stability of $(X,L)$.\end{conjecture}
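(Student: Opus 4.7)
The approach I would take is perturbative, working in the large volume regime where one replaces $L$ by $kL$ for $k \gg 0$ and expands everything in powers of $k^{-1}$. The point is that, with a suitable normalisation of $Z$, the $Z_k$-critical equation becomes a perturbation of the constant scalar curvature K\"ahler (cscK) equation, so one can try to deform a cscK metric into a $Z_k$-critical metric by a version of the implicit function theorem.

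For the direction \emph{$Z$-stability implies existence}, assume first that $(X,L)$ admits a cscK metric $\omega_0$. Writing $\omega_u = \omega_0 + \ddb u$ and expanding $\Ima(e^{-i\phi(X,L)}\tilde Z(\omega_u))$ in powers of $k^{-1}$, the linearisation of the $Z_k$-critical operator at $\omega_0$ should agree, to leading order, with the Lichnerowicz operator $\D^*\D$. Since $\Aut(X,L)$ is discrete, $\D^*\D$ is an isomorphism on normalised functions, so one produces a formal solution $u_k \sim \sum_{j\geq 1} u_j k^{-j}$ order by order in $k^{-1}$. Standard Schauder estimates applied to the residual error then upgrade the formal expansion to a genuine smooth $Z_k$-critical metric for all $k$ sufficiently large. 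When $(X,L)$ is only analytically K-semistable, the strategy is to degenerate through its optimal test configuration $(\X,\L)$ to a K-polystable (and hence cscK) central fibre, construct $Z_k$-critical metrics there, and then transfer them back to $(X,L)$, using asymptotic $Z$-stability of $(X,L)$ to rule out jumping along the degeneration. This is the content of the author's main result, and it is the natural asymptotic form of the conjecture.

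For the converse, \emph{existence implies $Z$-stability}, the plan is to introduce a $Z$-energy functional $\mathcal E_Z$ on the space of K\"ahler potentials whose Euler-Lagrange equation is the $Z$-critical equation, and to compute its asymptotic slope along the geodesic ray associated to a test configuration $(\X,\L)$. That slope should reproduce the numerical invariant $\Ima(Z(\X,\L)/Z(X,L))$, in direct analogy with the Donaldson-Tian slope formula for the Mabuchi functional. Geodesic convexity of $\mathcal E_Z$ in the large volume regime, which to leading order reduces to convexity of the Mabuchi functional established by Berman-Berndtsson and Chen-Cheng, then forces the slope to be non-negative on every test configuration, with strict positivity (i.e.\ $Z$-stability) following from a uniqueness argument for $Z$-critical metrics modulo automorphisms.

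The principal obstacle is moving beyond the large volume regime. There the $Z$-critical equation is no longer a small perturbation of the cscK equation, it can fail to be elliptic, and its linearisation may develop kernel across walls in the space of central charges. A complete proof of the conjecture in full generality would require both a categorical reformulation of $Z$-stability, capable of detecting objects that only appear after a wall crossing, and a priori estimates valid in the non-elliptic regime; the analogous difficulties are well known for the deformed Hermitian-Yang-Mills equation, which the author flags as the vector bundle model for this theory.
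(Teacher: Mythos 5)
The statement is a conjecture, and neither you nor the paper claims a full proof; you correctly note that only the large-volume asymptotic version is accessible. What the paper actually proves is Theorem~\ref{intromainthm} (asymptotic $Z$-stability implies existence of $Z_k$-critical metrics for $k\gg 0$, under an analytic K-semistability hypothesis) together with a \emph{local} converse in Section~\ref{converse}. Your plan captures the perturbative flavour but differs from the paper's mechanism in ways that matter, and one step in your converse direction has a genuine gap.

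For ``stability implies existence'', your sketch (cscK case via a formal expansion in $k^{-1}$ and Schauder estimates, then ``degenerate to the cscK central fibre, construct there, transfer back'') is not how the paper proceeds, and the ``transfer back'' step hides the real difficulty. The paper works directly on the Kuranishi space $B$ of the cscK degeneration $(\X_0,\L_0)$, interprets the $Z$-critical operator as a finite-dimensional moment map on $B$ (via the $Z$-energy and analytic Deligne pairings), perturbs the relatively K\"ahler metric so that the only obstructions to solving lie in $\Lie(T)$, and then --- crucially --- uses the equivariant Darboux theorem to reduce to a linear model in which one constructs a zero of the moment map by a direct order-by-order argument. Asymptotic $Z$-stability enters precisely as the sign condition $h_{l,p_l}<0$ needed in Proposition~\ref{approximate-solutions}; there is no ``transferring metrics along the degeneration''. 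Your sketch also omits the analytic subtlety the paper has to handle: the linearised operator jumps from fourth to sixth order as $\epsilon$ passes from $0$ to $>0$, forcing the model-operator analysis of Section~\ref{finite-dim-reduction} and a Schauder constant scaling like $\epsilon^{-1}$.

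For ``existence implies stability'', your proposal relies on geodesic convexity of $E_Z$ on the space of K\"ahler potentials, reducing ``to leading order'' to convexity of the Mabuchi functional. This is a gap: convexity of the leading-order term does not give convexity of $E_{Z_k}$ at any fixed finite $k$ unless one controls the lower-order terms on geodesics, which is not established in the paper and appears to be a substantial open problem (it would be a $Z$-critical analogue of Berman--Berndtsson/Chen--Cheng). The paper deliberately avoids this and proves only a \emph{local} converse (Section~\ref{converse}), restricted to test configurations arising from the Kuranishi space, by applying convexity of the log-norm functional for a genuine finite-dimensional moment map with respect to the K\"ahler forms $\Omega_{\epsilon}$ on $B$. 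That is a strictly weaker conclusion than what your argument would produce if it worked, which is itself a sign that your route requires an input that is not available.
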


We should say immediately that this conjecture is only plausible in sufficiently ``large volume'' regions of the space of central charges; this is a condition which we expect to be explicit in concrete situations. Away from this region, categorical phenomena should enter. Thus Conjecture \ref{intromainconjecture} should be seen as a first approximation of a larger conjecture involving a more categorical framework. When the values $Z(X,L)$ and $Z(\X,\L)$ lie in the upper half plane, the inequality is equivalent to asking for the phase inequality $\phi(\X,\L) > \phi(X,L)$ to hold, and the ``large volume'' hypothesis should imply that for the relevant test configuration, $Z(\X,\L)$ does lie in the upper half plane. We also note that, much as with the Yau-Tian-Donaldson conjecture, it seems reasonable that one may need to impose a uniform notion of stability \cite[Conjecture 1.1]{uniform}; see \cite{li} for recent progress.   

Here we prove the ``large volume limit'' of this conjecture, for what seems to be the most interesting class of central charge. For this admissible class of central charge defined in Section \ref{mainresults}, when one scales the polarisation $L$ to $kL$ for $k \gg 0$, the central charge takes values in the upper half plane and  the leading order term in $k$ of the phase inequalities $\phi_k(X,L) < \phi_k(\X,\L)$ is simply the usual inequality on the Donaldson-Futaki invariant involved in the definition of K-stability. It follows that the natural notion of \emph{asymptotic $Z$-stability} implies \emph{K-semistability}. A K-semistable polarised variety conjecturally admits a test configuration with central fibre K-polystable, and we say that $(X,L)$ is \emph{analytically K-semistable} if there is a test configuration whose central fibre is a smooth polarised variety admitting a cscK metric. We in addition assume that the deformation theory of the central fibre is unobstructed, to aid the analytic argument in the following, which is our main result:

\begin{theorem}\label{intromainthm} Let $(X,L)$ be an analytically K-semistable variety which has discrete automorphism group. Then $(X,kL)$ admits $Z$-critical K\"ahler metrics for all $k \gg 0$ provided it is asymptotically $Z$-stable.
\end{theorem}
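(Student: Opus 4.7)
The plan is to adapt the perturbative/gluing strategy used in K-stability theory — proving that analytically K-semistable, asymptotically K-stable polarised varieties admit cscK metrics on $kL$ for $k \gg 0$ (in the spirit of work of Br\"onnle, Sz\'ekelyhidi and Stoppa--Sz\'ekelyhidi) — to the $Z$-critical setting. The key feature making this possible is that for an admissible central charge, the expansion of $\Ima(e^{-i\phi_k(X,L)}\tilde Z(\omega))$ in powers of $k^{-1}$ on $(X,kL)$ has the scalar curvature equation as its leading term. Hence $Z$-critical K\"ahler metrics on $(X,kL)$ are, in the large volume limit, perturbations of cscK metrics, and the whole analytical machinery developed around cscK perturbation theory should carry over.

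Since $(X,L)$ is analytically K-semistable, fix a test configuration $(\X,\L)$ whose central fibre $(X_0, L_0)$ is smooth and admits a cscK metric $\omega_0$. The generic fibre of $\X$ is biholomorphic to $X$, so via the $\C^*$-action we identify $X \cong X_t$ for small $t$ and pull back $\omega_0$ to an initial metric $\omega_{k,0}$ on $X$ in the class $kc_1(L)$. Seeking approximate solutions of the form $\omega_{k,N} = \omega_{k,0} + \ddb \phi_k$ with $\phi_k = \sum_{j=1}^{N} k^{-j}\phi_j$ and expanding the $Z$-critical equation order by order in $k^{-1}$ produces inhomogeneous linear PDEs whose leading operator at each order is the Lichnerowicz operator $\D^*\D$ of $\omega_0$. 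Because $X_0$ has non-trivial automorphisms — at least the generator of the $\C^*$ from the test configuration — the kernel of $\D^*\D$ consists of non-trivial holomorphy potentials, and solvability at each order requires an $L^2$-orthogonality condition. The crux is a moment-map/Futaki identity showing that this orthogonality is precisely equivalent to the vanishing of the coefficient of $k^{-j}$ in the expansion of $\Ima(Z(\X,\L)/Z(X,L))$. Asymptotic $Z$-stability gives a strict inequality at each order, which — after adjusting the approximate solution along the $\C^*$-orbit in a controlled way — removes the obstruction to solve at that order, yielding approximate solutions with $Z$-critical error $O(k^{-N})$ for $N$ arbitrarily large.

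To pass from approximate to exact solutions, one linearises the $Z$-critical operator at $\omega_{k,N}$ and observes that this linearisation is, to leading order, again $\D^*\D$ with subleading corrections in $k^{-1}$. Because $X$ itself has discrete automorphism group, the only obstruction is the finite-dimensional one inherited from $\mathfrak{aut}(X_0, L_0)$; this can be gauge-fixed by adjusting position along the $\C^*$-orbit, and on the orthogonal complement the linearisation should be uniformly invertible in weighted Sobolev spaces scaled by $k\omega_0$. A quantitative inverse function theorem, combined with the polynomial smallness of the error, then produces an exact $Z$-critical K\"ahler metric in a small neighbourhood of $\omega_{k,N}$ for each $k \gg 0$.

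The main obstacle, and the heart of the argument, is the obstruction analysis: matching the analytic obstruction to the expansion order by order with the algebraic expansion of $\Ima(Z(\X,\L)/Z(X,L))$ in $k^{-1}$. This demands a $Z$-critical analogue of the classical identification of the leading Donaldson--Futaki invariant of a product test configuration with the Futaki invariant of the central fibre, together with its higher-order refinements that pair the $j$th correction term $\phi_j$ with the $j$th coefficient of the stability invariant. Equally delicate is verifying that asymptotic $Z$-stability — a condition against \emph{all} test configurations — can be applied to the \emph{chosen} degeneration at \emph{every} order without incompatible gauge adjustments as $N$ grows, so that a single well-defined sequence of approximate solutions is produced.
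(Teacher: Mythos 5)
Your broad plan — exploit the analytic K-semistability to write $(X,L)$ as a deformation of a cscK manifold $(X_0,L_0)$, build approximate $Z$-critical metrics whose leading term is the cscK equation, and then pass to exact solutions via a quantitative inverse function theorem while handling the cokernel coming from $\Aut(X_0,L_0)$ — is indeed the skeleton of the paper's proof, and you correctly identify both the role of the Lichnerowicz operator as the leading linearisation and the need to match the analytic obstruction against the algebraic invariants. However, the central mechanism you propose for killing the obstruction has a real gap, and it is precisely the point where the paper's argument is different.

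Your obstruction analysis runs along the single $\C^*$-orbit of the degenerating test configuration and proposes an order-by-order cancellation: solve at each order in $k^{-1}$ modulo $\ker\D^*\D$, interpret the cokernel element as a coefficient in the expansion of $\Ima\bigl(Z(\X,\L)/Z(X,L)\bigr)$, and use asymptotic $Z$-stability to conclude it vanishes or can be absorbed. This does not close for two reasons. First, asymptotic $Z$-stability is a single sign condition on the first non-vanishing coefficient of $\Ima\bigl(Z_k(\X,\L)/Z_k(X,L)\bigr)$ for $k\gg 0$, not an inequality at every order, so it cannot force order-by-order vanishing. Second, and more seriously, $\Aut(X_0,L_0)$ can contain a torus $T$ of rank $\geq 2$, in which case $\ker\D^*\D$ has dimension larger than one, and the one-parameter family of gauge adjustments given by moving along the $\C^*$-orbit of your chosen test configuration simply does not provide enough parameters to hit an arbitrary cokernel element; you would have to simultaneously control the pairings with \emph{all} of $\mft$, i.e.\ with all one-parameter subgroups of $T^{\C}$, and the stability hypothesis against all test configurations arising from these. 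Your proposal acknowledges the second delicacy in its final paragraph but offers no mechanism to resolve it.

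The paper's resolution is to not attempt an order-by-order cancellation at all. Instead it passes to the Kuranishi space $B$ of $(X_0,L_0)$ (more precisely, the $T^{\C}$-orbit closure of the point corresponding to $X$), perturbs the relatively K\"ahler metric on the universal family so that the $Z$-critical operator takes values in $\mft$ fibrewise (Proposition \ref{reduction}, via the quantitative IFT), and thereby reduces to a genuinely finite-dimensional problem: find a zero of a moment map $\mu+\nu_\epsilon:B\to\mft^*$ in the open dense $T^{\C}$-orbit. That finite-dimensional problem is then solved using the equivariant Darboux theorem, which puts the leading moment map $\mu$ in the normal form $\langle\tilde\mu,v_l\rangle = |z_l|^2$; the asymptotic $Z$-stability hypothesis enters exactly as the sign condition $\langle\tilde\mu+\tilde\nu_\epsilon,v_l\rangle<0$ on the coordinate hyperplanes $\{z_l=0\}$, which together with the quadratic leading term allows a contraction-mapping argument to place a zero of the moment map in $(\C^*)^k$. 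This supplies precisely the multi-dimensional parameter space and the non-degeneracy your single-$\C^*$ adjustment is missing; it is what the introduction calls ``the heart of the matter'' and it is absent from your proposal. Without it, or a substitute for it, the argument does not go through.
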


In particular when $(X,L)$ itself admits a cscK metric and has discrete automorphism group, we prove the existence of $Z$-critical K\"ahler metrics for all $k\gg 0$. The converse, namely that existence of $Z$-critical K\"ahler metrics implies asymptotic $Z$-stability, also holds in a weak, local sense. To discuss the sense in which this is true, we must discuss some of the elements of the proof of Theorem \ref{intromainthm}. We denote the cscK degeneration of $(X,L)$ by $(X_0,L_0)$, and consider the Kuranishi space $B$ of $(X_0,L_0)$; that the deformation theory of $(X_0,L_0)$ is unobstructed implies that $B$ is smooth. This space admits a universal family $(\X,\L) \to B$, and from its construction $\L$ admits a relatively K\"ahler metric which induces the cscK metric on $(X_0,L_0)$. There are then three steps:

\begin{enumerate}[(i)]

\item We reduce to the above finite dimensional moment map problem on $B$ by perturbing the relatively K\"ahler metric on $\L$ in such a way that the only obstruction to solving the $Z$-critical equation arise from the automorphisms of the central fibre $(X_0,L_0)$. This uses a quantitative version of the implicit function theorem, and occupies much of the paper.

\item We show that the $Z$-critical equation can, locally, be viewed as a moment map on a given orbit. More precisely, the automorphism group of $(X_0,L_0)$ acts on $B$, and on each orbit in $B$ we show that with respect to a natural K\"ahler metric we produce on $B$, the condition that the K\"ahler metric on the fibre is $Z$-critical is essentially the moment map for the action of the associated maximal compact subgroup action. This can be viewed as an orbit-wise analogue of the Fujiki-Donaldson moment map picture for the cscK equation \cite{fujiki, donaldson-moment}, but we take a new approach that gives weaker results but much greater flexibility. It is then important that the phase inequalities involved in the definition of $Z$-stability correspond exactly to  the weight inequalities arising from the finite dimensional moment map problem.

\item We show that, in our local finite-dimensional moment map problem, stability implies the existence of a zero of the moment map, which thus produces $Z$-critical K\"ahler metrics by the first step. This relies on a local version of the Kempf-Ness theorem proven in \cite[Section 4.2]{DMS}.
\end{enumerate}

This basic strategy is analogous to work of Br\"onnle and Sz\'ekelyhidi \cite{bronnle, szekelyhidi-deformations}, with the difference arising from the fact we consider a \emph{sequence} of moment maps and a strictly K-semistable manifold. 

As part of step $(ii)$, we obtain analogues of several important tools in the study of cscK metrics, such as the Futaki invariant associated to holomorphic vector fields, and an energy functional analogous to Mabuchi's K-energy. The local moment map picture also quite formally produces the local converse to Theorem \ref{intromainthm}. Let us say that $(X,L)$ is \emph{locally asymptotically $Z$-stable} if the phase inequality holds for all test configurations produced from the Kuranishi space $B$ of its cscK degeneration.

\begin{theorem} With the above setup, $(X,kL)$ admits $Z$-critical K\"ahler metrics for all $k \gg 0$ if and only if it is locally asymptotically $Z$-stable.
\end{theorem}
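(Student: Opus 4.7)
The plan is to leverage the three-step finite-dimensional reduction outlined just above. Let $(X_0, L_0)$ denote the cscK degeneration of $(X,L)$ and $B$ the Kuranishi space equipped with its universal family $(\X, \L) \to B$; the group $\Aut(X_0, L_0)$ acts on $B$, and by step (i) of the introduction the $Z$-critical equation on the fibres of $\L$ will (after the preparatory construction of a natural K\"ahler form on $B$) correspond to a moment map equation for this action. Crucially, the phase inequalities defining $Z$-stability for test configurations produced from $B$ translate precisely into the weight inequalities attached to one-parameter subgroups of $\Aut(X_0, L_0)$ acting on $B$.

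For the implication $\Leftarrow$, suppose $(X,kL)$ is locally asymptotically $Z$-stable for $k \gg 0$. The perturbation of step (ii), carried out by a quantitative implicit function theorem, reduces the $Z$-critical PDE on the total space of the Kuranishi family to the problem of finding a zero of the moment map on $B$, up to errors vanishing as $k \to \infty$. Local asymptotic $Z$-stability at level $k$ exactly says that the orbit corresponding to $(X,kL)$ is stable for this finite-dimensional equivariant problem. Applying step (iii), which uses equivariant Darboux to upgrade stability to existence of a zero of the moment map in a local Hamiltonian setting, then produces the desired $Z$-critical K\"ahler metric in $c_1(kL)$.

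For the implication $\Rightarrow$, suppose $(X,kL)$ admits a $Z$-critical metric for all $k \gg 0$. The same reduction of step (ii) shows that this provides an (approximate, hence genuine after a further application of the implicit function theorem) zero of the moment map on $B$. A standard principle in equivariant symplectic geometry, namely that the weight of a Hamiltonian action along a one-parameter subgroup at a zero of the moment map is nonnegative, with strict positivity unless the point is fixed, then forces the weight inequalities to hold for every one-parameter subgroup of $\Aut(X_0,L_0)$ at the relevant orbit. Via the weight-phase correspondence of step (i), these are exactly the phase inequalities for all test configurations produced from $B$, giving local asymptotic $Z$-stability.

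The main obstacle is uniformity of the reductions (i)--(ii) in the large parameter $k$: the error terms in the perturbation argument must be shown to be small compared with the phase inequalities themselves, and one must carefully match the weight of a finite-dimensional one-parameter subgroup of $\Aut(X_0, L_0)$ with the numerical invariant $Z(\X,\L)$ of the associated test configuration. Establishing this uniformity, together with the exact weight-phase dictionary, is the technical heart of the argument; the two directions of the equivalence are then essentially symmetric consequences of the moment map picture.
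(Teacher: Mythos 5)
The architecture you describe matches the paper's: the $\Leftarrow$ direction is the main existence theorem (approximate solutions, uniform estimates for the linearisation, reduction to a finite-dimensional moment-map problem on the Kuranishi slice $B$, equivariant Darboux, and the explicit construction of a moment-map zero), and the $\Rightarrow$ direction uses the finite-dimensional moment-map picture together with convexity of the log-norm functional to extract positivity of weights, which by the slope formula coincides with $\Ima(Z(\X,\L)/Z(X,L)) > 0$. That dictionary, Corollary~\ref{momentmap-stability}, is exactly the ``weight-phase correspondence'' you invoke, and the rest of your $\Leftarrow$ description tracks the paper faithfully.

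There is, however, a genuine gap in your $\Rightarrow$ direction. You assert that ``the same reduction of step (ii) shows that this provides an (approximate, hence genuine after a further application of the implicit function theorem) zero of the moment map on $B$.'' This is not established, and is not what step (ii) does: step (ii) takes a point of $B$ and perturbs the relatively K\"ahler metric so that the obstruction to solving lies in $\mathfrak t$; it is a construction starting from $B$, not a device for taking an arbitrary $Z$-critical metric and locating a corresponding point in $B$. In principle $(X,kL)$ could carry a $Z$-critical metric that is ``far'' from the cscK metric on $(\X_0,\L_0)$ and does not arise from any $b \in B$. The paper avoids this issue by strengthening the hypothesis: it assumes the $Z$-critical metrics come with the construction, i.e.\ there are points $b_\epsilon \in B^o$ and relative potentials $\Psi_\epsilon$ so that the $Z$-critical condition is satisfied at $b_\epsilon$ with respect to the perturbed fibrewise metric. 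Under \emph{that} hypothesis the moment-map zero is immediate from the definitions; no implicit function theorem is needed. This is precisely why the paper calls the result ``truly local.'' Your proposal presents the two directions as essentially symmetric consequences of the moment-map picture, but they are not: the $\Leftarrow$ direction is an unconditional construction, while the $\Rightarrow$ direction as proven in the paper carries a built-in compatibility hypothesis on the source of the solutions, which you would need to state explicitly (or otherwise prove that every $Z$-critical metric in $c_1(kL)$ for $k \gg 0$ must be Gromov-Hausdorff close to the cscK metric on the degeneration, which is not done here).
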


Thus we have proven a version of the large volume limit of Conjecture \ref{intromainconjecture}. There is an interesting interpretation of this result in terms of local wall-crossing. Wall-crossing phenomena arise when one can vary the stability condition, and one then expects the resulting moduli spaces to undergo birational transformations. The strictly stable locus is unchanged by suitably small changes of the stability condition, and the interesting question concerns the semistable locus. The above then demonstrates that the algebro-geometric walls, governed by $Z$-stability, agree with the differential-geometric walls, governed by the existence of $Z$-critical K\"ahler metrics.

Our results can be seen as manifold analogues of results established in \cite{DMS} for holomorphic vector bundles. There it is proven that the existence of $Z$-critical connections on a holomorphic vector bundle is equivalent to asymptotic $Z$-stability of the bundle; the latter notion is a variant of Bridgeland stability. The strategy employed in \cite{DMS} is different: there, a local version of the Kempf-Ness theorem is used to provide a good choice of initial connection \cite[Section 4.2.1]{DMS}, after which analytic aspects of $Z$-critical connections enters. Here the analysis is considerably more involved, leading us to perform the key analytic step first. To ensure that we stay in the realm of K\"ahler geometry, we perturb the fibrewise K\"ahler metric rather than perturbing the almost complex structure (the latter approach has its origins in the fundamental work of Sz\'ekelyhidi \cite{szekelyhidi-deformations}); this new approach is crucial to allowing us to employ the local version of the Kempf-Ness theorem.

Continuing with the comparison with the bundle story, we must mention that the general notion of a $Z$-critical connection is modelled on the specific notion of a \emph{deformed Hermitian Yang-Mills connection} associated with a special central charge of particular relevance to mirror symmetry. Indeed, the deformed Hermitian Yang-Mills equation was introduced through SYZ mirror symmetry to be the mirror of the special Lagrangian equation \cite{LYZ}. The quite beautiful theory of this equation on holomorphic line bundles has developed with speed over the past few years \cite{jacob-yau, chen, collins-jacob-yau, collins-yau}, and these developments have emphasised that the special form of the central charge in this case has significant geometric implications. We thus emphasise that there is a direct analogue of the deformed Hermitian Yang-Mills equation for manifolds, which one might call the \emph{deformed cscK equation}  and which seems to be the natural avenue for further research. Fixing normal coordinates for the K\"ahler metric $\omega$ in which $\Ric\omega$ is diagonal, let $\lambda_1,\hdots,\lambda_n$ be the eigenvalues of $\Ric\omega$ and let $\sigma_j(\omega)$ denote the $j^{th}$ elementary symmetric polynomial in these eigenvalues. Then this equation takes the form $$\Ima\left(e^{-i\phi(X,L)}\left(\sum_{j=0}^n (-i)^j(\sigma_j(\omega) - \Delta\sigma_{j-1}(\omega))\right)\right) =0.$$ We remark that the name is misleading, as it is only truly a ``deformation'' of the cscK equation in the large volume limit. We also remark that the phase range in which existence of solutions to the deformed Hermitian Yang-Mills equation is equivalent to stability is the full supercritical phase range \cite{chen}, which emphasises that in explicit situations one should expect the large volume hypothesis of Conjecture \ref{intromainconjecture} to be similarly explicit.

The simplest new PDE we consider in the present work, to which Theorem \ref{intromainthm} applies, takes the form $$S(\omega) + \frac{1}{k}\left( \frac{2}{n(n-2)}\Delta S(\omega) - \frac{\Ric\omega^2\wedge\omega^{n-2}}{\omega^n}\right) = const.,$$ which is an elliptic, sixth order fully-nonlinear PDE in the K\"ahler potential  and which is exactly the $Z$-critical equation for a special ($k$-dependent) central charge. An important feature of the equation is that in the large volume regime $k\to \infty$, the constant of ellipticity degenerates to zero. Much of our analytic work is devoted to this PDE, and in the large volume regime $k \gg 0$ we view the general $Z$-critical equation as a perturbation of this model equation. 

\subsection*{Categorification} The approach we take in the present work is to consider explicitly defined central charges, as opposed to an axiomatic approach more closely analogous to the theory of Bridgeland stability conditions. In a sequel to this paper \cite{git-stabilityconditions}, an axiomatic approach to stability conditions on general stacks is developed (with the relevant stack here being the stack of polarised schemes), motivated by the more explicit approach taken here. To explain this, it is clearer to view the central charge as a function on schemes endowed with a $\C^*$-action. The key properties are then \emph{additivity} of the central charge, which essentially asks that the central charge is additive under composition of commuting one-parameter subgroups, and \emph{equivariant constancy} of the central charge, which asks that the value of the central charge is constant in equivariant flat families. We refer to \cite{git-stabilityconditions} for further details.

\subsection*{Stability of maps} While we have thus far emphasised the case of polarised varieties, and while our main result only holds in that setting, the basic framework is more general and links with interesting questions in enumerative geometry. While for a broad and interesting class of central charge, the ``large volume condition'' is K-stability, in general one obtains the notion of twisted K-stability \cite{uniform}, which is linked to the existence of twisted cscK metrics. The appropriate geometric context in which to study twisted K-stability is when one has a map $p: (X,L) \to (Y,H)$ of polarised varieties, where it is essentially equivalent to K-stability of the map $p$ \cite{stablemaps, extremal}. 

From the moduli theoretic point of view, one expects to be able to form moduli of K-stable maps to a fixed $(Y,H)$. The definition of K-stability of maps generalises Kontsevich's notion when $(X,L)$ is a curve, and the resulting (entirely conjectural) higher dimensional moduli spaces would thus be higher dimensional analogues of the moduli space of stable maps; there is also a version of theory involving divisors, as a higher dimensional analogue of the maps of marked curves used in Gromov-Witten theory \cite{alexeev}\cite[Section 5.3]{stablemaps}. What seems most interesting is that our work suggests that there should be variants of stability of maps even in the curves case, which may even lead to an understanding of wall-crossing phenomena for Gromov-Witten invariants; this seems likely to require developing a more categorical approach to the problem as discussed above.

\subsection*{Acknowledgements} I thank  Frances Kirwan, John McCarthy,  Jacopo Stoppa, G\'abor Sz\'ekelyhidi and especially Lars Sektnan for several interesting discussions on this circle of ideas, Michael Hallam, Yoshi Hashimoto and Eiji Inoue for technical advice. I was funded by a Royal Society University Research Fellowship for the duration of this work.

\section{$Z$-stability and $Z$-critical K\"ahler metrics} 

Here we define the key algebro-geometric and differential-geometric criteria of interest to us: $Z$-stability and $Z$-critical K\"ahler metrics. The definitions involve a central charge, which involves various Chern classes of $X$. The differential geometry is substantially more complicated when higher Chern classes (rather than merely the first Chern class) appear in the central charge, and so we postpone the definitions and results in that case to Section \ref{sec:higherrank}. The difference is roughly analogous to the difference between the theory of $Z$-critical connections on holomorphic line bundles and bundles of higher rank, and so we call the situation in which higher Chern classes appear the ``higher rank case''. The analogy is far from exact, and the case in which only the first Chern class and its powers appear in the central charge already exhibits many of the main difficulties in the study of $Z$-critical connections on arbitrary rank vector bundles.

\subsection{Stability conditions}

\subsubsection{$Z$-stability}

We work throughout over the complex numbers, in order to preserve links with the complex differential geometry. We also fix a normal polarised variety $(X,L)$ of dimension $n$, with $L$ an ample $\Q$-line bundle. Normality implies that the canonical class $K_X$ of $X$ exists as a Weil divisor, we always assume that $K_X$ exists as a $\Q$-line bundle. 

In addition to our ample line bundle, we will fix a \emph{stability vector}, a \emph{unipotent cohmology class} and a \emph{polynomial Chern form}; we define these in turn.

\begin{definition} A \emph{stability vector} is a sequence of complex numbers $$\rho = (\rho_0, \hdots, \rho_n) \in \C^{n+1}$$ such that $\rho_n = i= \sqrt{-1}$.\end{definition}

The condition $\rho_n = i $ is a harmless normalisation condition which, when it is not satisfied, can be achieved by multiplying the stability vector by a fixed complex number. In Bridgeland stability, one normally assumes $\rho \in (\C^*)^{n+1}$; this will be unnecessary for us.

\begin{definition} A \emph{unipotent cohomology class} is a complex cohomology class $\Theta \in \oplus_j H^{j,j}(X,\C)$  which is of the form $\Theta = 1 + \Theta'$, where $\Theta' \in H^{>0}(X,\C)$. \end{definition}

Note that $\Theta'$ must satisfy $$\overbrace{\Theta' \cdot \hdots \cdot \Theta'}^{j \text{ times}}=0$$ for $j \geq n+1$. A typical example of a choice of $\Theta$ is to fix a class $\beta \in H^{1,1}(X,\R)$ and set $\Theta = e^{-\beta}$, which is analogous to  a ``$B$-field'' in Bridgeland stability.

\begin{definition}A \emph{polynomial Chern form} is a sum of the form $$f(K_X) = \sum_{j=0}^n a_j K_X^j,$$ where $a_j \in \C$ and $K_X^j$ denotes the $j^{th}$-intersection product $K_X \cdot \hdots \cdot K_X$, viewed as a cycle. We always assume the normalisation condition $a_0 = a_1=1$, and interpret $K_X^0 = 1$ as a cycle.\end{definition}

As mentioned above, in the current section we restrict ourselves to central charges only involving $c_1(X) = c_1(-K_X)$, with the case of higher Chern classes postponed to Section \ref{sec:higherrank}.

\begin{definition} A \emph{polynomial central charge} is a function $Z: \N \to \C$ taking the form $$Z_{k}(X,L) =\sum_{l=0}^n \rho_l k^{l}   \int_X  L^l \cdot f(K_X) \cdot \Theta,$$ for some $\rho$ and $\Theta$. A \emph{central charge} is a polynomial central charge with $k$ fixed, such that $Z(X,L)\neq 0$. We often set $\epsilon = k^{-1}$ and denote the induced quantity by $Z_{\epsilon}(X,L)$.
\end{definition}

We will sometimes simply call a polynomial central charge a central charge when the dependence on $k$ is clear from context. The definition is motivated by an analogous definition of Bayer in the bundle setting \cite[Theorem 3.2.2]{bayer-polynomial}. For a polynomial central charge it is automatic that $Z_{k}(X,L)$ lies in the upper half plane in $\C$ for $k \gg 0$,  since $\Ima(\rho_n)>0$. Thus we can make the following definition.

\begin{definition} We define the \emph{phase} of $X$ to be $$\phi_k(X,L) = \arg Z_{k}(X,L),$$ the argument of the non-zero complex number. We denote this by $\phi(X,L)$ when $k$ is fixed, and for fixed $(X,L)$ often simply denote this by $\phi$.
 \end{definition}

Here we consider $\arg$ as a function $\arg: \C \to \R$ by setting $\arg(1)=0$. We now turn to our definition of stability, which depends on a choice of central charge $Z$. As in the definition of K-stability of polarised varieties, we require the notion of a test configuration, which is essentially a $\C^*$-degeneration of $(X,L)$ to another polarised scheme.

\begin{definition}\cite{tian-inventiones}\cite[Definition 2.1.1]{donaldson-toric} A \emph{test configuration} for $(X,L)$ consists of a pair $\pi: (\X,\L)\to \C$ where:
\begin{enumerate}[(i)]
\item $\X$ is a normal polarised variety such that $K_{\X}$ is a $\Q$-line bundle;
\item $\L$ is a relatively ample $\Q$-line bundle;
\item there is a $\C^*$-action on $(\X,\L)$ making $\pi$ an equivariant flat map with respect to the standard $\C^*$-action on $\C$;
\item the fibres $(\X_t,\L_t)$ are each isomorphic to $(X,L)$ for each $t \neq 0 \in \C$.
\end{enumerate}
A test configuration is a \emph{product} if $(\X_0,\L_0) \cong (X,L)$, hence inducing a $\C^*$-action on $(X,L)$; it is further \emph{trivial} if this $\C^*$-action is the trivial one.
\end{definition}

\begin{remark} One typically does not require $K_{\X}$ to be a $\Q$-line bundle in the usual definition of a test configuration, but one should not expect this discrepancy to play a significant role in either K-stability or the theory of $Z$-stability we are describing.\end{remark}

A test configuration admits a canonical compactification to a family over $\pr^1$ by equivariantly compactifying trivially over infinity \cite[Section 3]{wang}. This compactification produces a flat family endowed with a $\C^*$-action, which we abusively denote $(\X,\L) \to \pr^1$, such that each fibre over $t \neq \infty \in \pr^1$ is isomorphic to $(X,L)$. The reason to compactify is that it allows us to perform intersection theory on the resulting projective variety $\X$. 

It will also be convenient to be able to consider classes on $X$ as inducing classes on $\X$, so we pass to a variety with a surjective map to $X$ as follows. There is a natural equivariant birational map $$f:(X\times\pr^1,p_1^*L)\dashrightarrow (\X,\scL),$$ with $p_1: X \times \pr^1 \to X$ the projection, so we take an equivariant resolution of indeterminacy of the form:

\[
\begin{tikzcd}
\Y \arrow[swap]{d}{q} \arrow{dr}{r} &  \\
X\times\pr^1 \arrow[dotted]{r}{} & \X,
\end{tikzcd}
\]

\noindent where we may assume $\Y$ is smooth. In particular the unipotent cohomology class $\Theta$ on $X$ involved in the definition of a central charge induces a class $(q \circ p_1)^*\Theta$ on $\Y$, which we still denote $\Theta$. The classes $\L$ and $K_{\X}$ on $\X$ induce also classes $r^*\L$ and $r^*K_{\X/\pr^1}$ on $\Y$, we in addition set $K_{\X/\pr^1} = K_{\X} - \pi^*K_{\pr^1}$ to be the relative canonical class. Thus to a given intersection number $L^d \cdot K_X^j \cdot U$ we can associate the intersection number on $\Y$ which we (slightly abusively) denote $$\int_{\X} \L^{l+1} \cdot K_{\X/\pr^1}^j \cdot \Theta= \int_{\Y} (r^*\L)^{l+1}\cdot r^*(K_{\X/\pr^1}^j ) \cdot \Theta,$$ which is computed in $\Y$. In computing this intersection number, note that $\dim \X = \dim \Y = n+1$. The following elementary result justifies the notation omitting $\Y$.

\begin{lemma} This intersection number is independent of resolution of indeterminacy $\Y$ chosen. \end{lemma}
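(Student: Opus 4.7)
The plan is to reduce the statement to a projection formula argument. The key observation is that any two equivariant resolutions of indeterminacy of the birational map $f : (X \times \pr^1, p_1^*L) \dashrightarrow (\X, \L)$ can be dominated by a common third one: given resolutions $\Y_1$ and $\Y_2$, take an equivariant resolution $\Y_3$ of the closure of the graph of the induced rational map $\Y_1 \dashrightarrow \Y_2$ (or equivalently, of the fibre product $\Y_1 \times_{X \times \pr^1} \Y_2$), giving equivariant birational morphisms $\pi_i : \Y_3 \to \Y_i$ compatible with the maps down to $X \times \pr^1$ and $\X$. By transitivity, it therefore suffices to show that if $\pi : \Y' \to \Y$ is a proper birational morphism between two such resolutions, with $q', r'$ the analogues of $q, r$ on $\Y'$, then the intersection number is unchanged.

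Since $\pi$ is compatible with the structure maps, we have $r' = r \circ \pi$ and $q' \circ p_1 = (q \circ p_1) \circ \pi$, so each of the classes $(r'^*\L)^{l+1}$, $r'^*(K_{\X/\pr^1}^j)$ and $(q' \circ p_1)^* \Theta$ is the $\pi$-pullback of the corresponding class on $\Y$. Hence the full integrand on $\Y'$ is $\pi^*\alpha$ where $\alpha$ is the integrand on $\Y$.

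The conclusion now follows from the projection formula: since $\pi$ is a proper birational morphism between varieties of the same dimension $n+1$, one has $\pi_* [\Y'] = [\Y]$, so
\begin{equation*}
\int_{\Y'} \pi^* \alpha = \int_{\Y} \alpha \cdot \pi_*[\Y'] = \int_{\Y} \alpha.
\end{equation*}
This gives independence of the resolution. I do not expect any serious obstacle here; the only mild subtlety is ensuring one really can dominate any two equivariant resolutions by a third equivariant one, which is handled by the standard equivariant resolution of singularities applied to the closure of the graph.
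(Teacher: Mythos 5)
Your argument is correct and is essentially the same as the paper's: dominate any two resolutions by a common third one, note that the classes on the dominating space are pullbacks, and conclude by the projection (push-pull) formula. The paper states this very tersely; you have simply spelled out the details.
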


\begin{proof} Given two such resolutions of indeterminacy $\Y$ and $\Y'$, there is a third resolution of indeterminacy $\Y''$ with commuting maps to both $\Y$ and $\Y'$. The result  then follows from an application of the push-pull formula in intersection theory. \end{proof}

\begin{definition} Let $(\X,\L)$ be a test configuration and $Z$ be a polynomial central charge. We define the \emph{central charge} of $(\X,\L)$ to be $$Z_k(\X,\L) = \sum_{l=0}^n \frac{\rho_l k^{l}}{l+1}\int_{\X}   \L^{l+1} \cdot f(K_{\X/\pr^1}) \cdot \Theta,$$ and set $\phi_k(\X,\L) = \arg Z_{k}(\X,\L)$ when $Z_{k}(\X,\L) \neq 0$. Note that $f(K_{\X/\pr^1}) = \sum_{j=0}^{n}a_jK_{\X/\pr^1}^j$ arises from the polynomial Chern form. With $k$ fixed we denote these by $Z(\X,\L)$ and $\phi(\X,\L)$ respectively.
\end{definition}

The stability condition, for fixed $k$, is then the following. 

\begin{definition} We say that $(X,L)$ is 
\begin{enumerate}[(i)]
\item \emph{$Z$-stable} if for all non-trivial test configurations $(\X,\L)$ we have $$\Ima\left(\frac{Z(\X,\L)}{Z(X,L)}\right)>0.$$
\item \emph{$Z$-polystable} if for all test configurations $(\X,\L)$ we have $$\Ima\left(\frac{Z(\X,\L)}{Z(X,L)}\right)\geq 0,$$ with equality holding only for product test configurations;
\item \emph{$Z$-semistable} if for all test configurations $(\X,\L)$ we have $$\Ima\left(\frac{Z(\X,\L)}{Z(X,L)}\right)\geq 0.$$
\item \emph{$Z$-unstable} otherwise.
\end{enumerate} 
\end{definition} 

The natural asymptotic notion is the following.

\begin{definition} We say that $(X,L)$ is \emph{asymptotically $Z$-stable} if for all non-trivial test configurations $(\X,\L)$ and for all $k \gg 0$ we have $$\Ima\left(\frac{Z_{k}(\X,\L)}{Z_{k}(X,L)}\right)>0.$$
\emph{Asymptotic $Z$-polystability, semistability} and \emph{instability} are defined similarly.
\end{definition} 

Note that, as $\Ima(\rho_n)>0$ by assumption, both $Z_k(X,L)$ and $Z_k(\X,\L)$ are non-vanishing and lie in the upper half plane for $k \gg 0$. Here, strictly speaking to ensure that $Z_k(\X,\L)$ lies in the upper half plane we may need to modify $\L$ to $\L+\scO(m)$ for some $\scO(m)$ pulled back from $\pr^1$; this leaves the various stability inequalities unchanged by Lemma \ref{unchanged} below. Thus asymptotic $Z$-stability can be rephrased as asking for all test configurations $(\X,\L)$ to have for $k \gg 0$ $$\phi_k(\X,\L) > \phi_k(X,L).$$ 

\begin{remark} In Bridgeland stability much work goes into ensuring that the central charge has image in the upper half plane, and this is one of the most challenging aspects of constructing Bridgeland stability conditions. We have essentially ignored this, at the expense of having a notion that should only be the correct one near the ``large volume regime'' when $k$ is taken to be large; this should be thought of as producing a ``large volume'' region in the space of central charges. 

We note that in the better understood story of deformed Hermitian Yang-Mills connections, the link between analysis and a simpler (non-categorical) stability conditions holds in the ``supercritical phase'' \cite{collins-jacob-yau, chen}, which can be thought of as an explicit description of the ``large volume regime''. Away from the large volume situation, it seems likely that categorical techniques must be used and, for example, more structure should be required of the stability vector by analogy with Bayer's hypotheses \cite[Theorem 3.2.2]{bayer-polynomial}. Thus our algebro-geometric definitions should be seen as the first approximation of a larger story, which is appropriate only in an explicit large volume region. 
\end{remark}

The factor $l+1$ in the definition of $Z_{k}(\X,\L)$ ensures that the key inequality defining stability is invariant under certain changes of $\L$. For this, note that one can modify the polarisation of a test configuration $(\X,\L)$ by adding the pullback $\scO(m)$ of the ($m$\textsuperscript{th} tensor power of the) hyperplane line bundle from $\pr^1$ for any $j$.

\begin{lemma}\label{unchanged} The phase inequality remains unchanged under the addition of $\scO(m)$. That is, $$\Ima\left(\frac{Z(\X,\L+\scO(m)))}{Z(X,L)}\right) = \Ima\left(\frac{Z(\X,\L)}{Z(X,L)}\right).$$
\end{lemma}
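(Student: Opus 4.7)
The plan is to establish the stronger identity
\[
Z(\X, \L + \scO(m)) \;=\; Z(\X,\L) \;+\; m\cdot Z(X,L),
\]
from which the lemma is immediate: dividing through by $Z(X,L)$ shifts the ratio by the real scalar $m$, which does not affect the imaginary part.

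Writing $H = \pi^*\scO_{\pr^1}(1)$ so that $\scO(m) = mH$, the key observation is that $H$ is pulled back from a curve and hence $H^2 = 0$ in the intersection ring of the (compactified) total space. The binomial expansion therefore collapses to
\[
(\L + mH)^{l+1} \;=\; \L^{l+1} + m(l+1)\,\L^l \cdot H,
\]
and substituting into the definition of $Z(\X,\L+\scO(m))$ the prefactor $1/(l+1)$ cancels the combinatorial factor $(l+1)$, giving
\[
Z(\X, \L+mH) \;=\; Z(\X,\L) + m\sum_{l=0}^n \rho_l k^l \int_{\X} H \cdot \L^l \cdot f(K_{\X/\pr^1}) \cdot \Theta.
\]

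The remaining intersection numbers are evaluated by the projection formula applied to the map $\pi\circ r : \Y \to \pr^1$: since $H$ is Poincar\'e dual to the class of a generic fibre of $\pi$, capping with $H$ restricts the other classes to a generic fibre, where $\L|_X = L$, $K_{\X/\pr^1}|_X = K_X$, and $\Theta$ is by construction pulled back from $X$. Hence each such integral equals $\int_X L^l \cdot f(K_X) \cdot \Theta$, and summing over $l$ gives precisely $Z(X,L)$, which establishes the displayed identity.

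The only mildly delicate point is the appearance of the resolution $\Y$ in the definition of the intersection numbers, but its independence has just been established in the preceding lemma and, on $\Y$, the projection formula applies without difficulty. I expect no real obstacle: modulo bookkeeping, the argument is a combinatorial identity driven by $H^2=0$ together with the flat restriction of all relevant classes to a generic fibre.
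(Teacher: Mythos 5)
Your proposal is correct and follows essentially the same route as the paper: establish $Z(\X,\L+\scO(m)) = Z(\X,\L) + mZ(X,L)$ via the binomial expansion (with $\scO(m)^2 = 0$ because it is pulled back from a curve) together with the observation that intersecting with $\scO(1)$ amounts to restricting to a general fibre $\X_t \cong X$, and then observe that adding a real multiple of $Z(X,L)$ does not affect $\Ima(Z(\X,\L)/Z(X,L))$. Your explicit mention of $H^2=0$, the projection formula, and the role of the resolution $\Y$ are slightly more spelled out than the paper's treatment, but the underlying argument is the same.
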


\begin{proof} A single intersection number changes as $$\int_{\X}(\L+\scO(m))^{l+1} \cdot K_{\X/\pr^1}^j\cdot \Theta = \int_{\X}\L^{l+1} \cdot K_{\X/\pr^1}^j\cdot \Theta + m(l+1) \int_X  L^l \cdot K_{X}^j\cdot \Theta,$$ since by flatness intersecting with $\scO(1)$ can be viewed as intersecting with a fibre $\X_t \cong X$ for $t \neq 0$, and $\L, K_{\X/\pr^1}$ and $\Theta$ restrict to $L, K_X$ and $\Theta$ respectively on $X$. It follows that $$Z(\X,\L+\scO(m)) = Z(\X,\L) + mZ(X,L),$$ which means since $m \in \Q$ is real \begin{align*}\Ima\left(\frac{Z(\X,\L+\scO(m)))}{Z(X,L)}\right) &= \Ima\left(\frac{Z(\X,\L) + mZ(X,L)}{Z(X,L)}\right), \\ &=  \Ima\left(\frac{Z(\X,\L)}{Z(X,L)}\right).\end{align*}\end{proof}

\begin{example}\label{dcsck-centralcharge}A central charge of special interest is \begin{align*}Z_k(X,L) &= -\int_X e^{-ikL}\cdot  e^{-K_X}, \\ &= - \sum_{j=0}^n \frac{(-i)^j }{j!(n-j)!}\int_X(kL)^j\cdot (-K_X)^{n-j}.\end{align*} This can be viewed as an analogue of the central charge on the Grothendieck group $K(X)$ (in the sense of Bridgeland stability) associated to the deformed Hermitian Yang-Mills equation on a holomorphic line bundle \cite[Section 9]{collins-yau}. 
\end{example}

We will not consider a completely arbitrary central charge in the present work, as we require that the large volume limit of our conditions is ``non-degenerate'' in a suitable sense. Let $\Theta_1$ denote the $(1,1)$-part of the unipotent cohomology class $\Theta \in \oplus_j H^{j,j}(X,\C)$.

\begin{definition}\label{non-degenerate-def} We say that $Z$ is
\begin{enumerate}[(i)]
\item \emph{non-degenerate} if   $\Rea(\rho_{n-1})<0$ and $\Theta_1$ vanishes;
\item \emph{of map type} if  $\Rea(\rho_{n-1})<0$ and there is a map $p: X \to Y$ such that $\Theta$ is the pullback of a cohomology class from $Y$ and with $-\Theta_1$ is the class of the pullback of an ample line bundle from $Y$.
\end{enumerate}
\end{definition}

The motivation for these definition is through the link with K-stability and its variants.

\subsubsection{K-stability} The definition of asymptotic $Z$-stability given is motivated not only by the vector bundle theory, but also by the notion of \emph{K-stability} of polarised varieties due to Tian and Donaldson \cite{tian-inventiones, donaldson-toric}. As before, we take $(X,L)$ to be a normal polarised variety such that $K_X$ is a $\Q$-line bundle.

\begin{definition}  We define the \emph{slope} of $(X,L)$ to be the topological invariant, computed as an integral over $X$ $$\mu(X,L) = \frac{-K_X.L^{n-1}}{L^n}.$$ We further define the \emph{Donaldson-Futaki invariant} of a test configuration $(\X,\L)$ to be $$\DF(\X,\L) =\int_{\X}\left( \frac{n\mu(X,L)}{n+1}\L^{n+1} + \L^n.K_{\X/\pr^1}\right).$$ 
\end{definition}

We remark that this is not Donaldson's original definition, but  rather is proven by Odaka and Wang to be an equivalent one \cite[Theorem 3.2]{odaka} \cite[Section 3]{wang} (see also \cite[Proposition 4.2.1]{donaldson-toric}).

\begin{definition} We say that $(X,L)$ is 

\begin{enumerate}[(i)]
\item \emph{K-stable} of for all non-trivial  test configurations $(\X,\L)$ for $(X,L)$ we have $\DF(\X,\L) >0$;
\item \emph{K-polystable} of for all  test configurations we have $\DF(\X,\L) \geq 0$, with equality exactly when $(\X,\L)$ is a product;
\item \emph{K-semistable} of for all  test configurations we have $\DF(\X,\L) \geq 0$;
\item \emph{K-unstable} otherwise. 
\end{enumerate}
\end{definition}

The following is immediate from the definitions.

\begin{lemma} K-semistability is equivalent to asymptotic $Z$-semistability where $$Z_k(X,L) = \int_X (ik^nL^n -k^{n-1}K_X.L^{n-1}).$$ That is, with $\rho = (0,0, \hdots, -1,i)$, $\Theta=0$.
\end{lemma}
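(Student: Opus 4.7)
The proof is a direct computation showing that the leading order term in $k$ of $\Ima(Z_k(\X,\L)/Z_k(X,L))$ is a positive multiple of the Donaldson-Futaki invariant, so the two semistability conditions coincide term-by-term.

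First I would unwind the definitions for the given choice $\rho_n = i$, $\rho_{n-1}= -1$, all other $\rho_l = 0$, $\Theta = 1$, and $f = 1 + K_X$ (the minimal admissible polynomial Chern form with $a_0=a_1=1$). For $(X,L)$ only the top-degree intersections survive, giving
\[
Z_k(X,L) = ik^n L^n - k^{n-1} K_X \cdot L^{n-1}.
\]
For a test configuration $(\X,\L)$ the factor $\tfrac{1}{l+1}$ in the definition yields
\[
Z_k(\X,\L) = \frac{ik^n}{n+1}\int_{\X}\L^{n+1} - \frac{k^{n-1}}{n}\int_{\X}\L^n \cdot K_{\X/\pr^1}.
\]

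Next I would compute $\Ima(Z_k(\X,\L)\,\overline{Z_k(X,L)})$. Writing $A = \int_{\X}\L^{n+1}$, $B = \int_{\X}\L^n \cdot K_{\X/\pr^1}$, $a = L^n$, and $b = K_X \cdot L^{n-1}$, a quick expansion gives
\[
\Ima\bigl(Z_k(\X,\L)\,\overline{Z_k(X,L)}\bigr) = k^{2n-1}\left(\frac{Ba}{n} - \frac{Ab}{n+1}\right) = \frac{k^{2n-1}}{n(n+1)}\bigl((n+1)Ba - nAb\bigr).
\]
Since $\mu(X,L) = -b/a$, the Odaka-Wang form of the Donaldson-Futaki invariant gives
\[
(n+1)a\,\DF(\X,\L) = (n+1)a B - nAb,
\]
so that
\[
\Ima\!\left(\frac{Z_k(\X,\L)}{Z_k(X,L)}\right) = \frac{k^{2n-1}L^n}{n\,|Z_k(X,L)|^2}\,\DF(\X,\L).
\]

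Finally, for $k \gg 0$ the prefactor is strictly positive since $L^n > 0$ and $Z_k(X,L) \neq 0$ (its imaginary part is $k^n L^n > 0$). Hence for each fixed non-trivial test configuration the sign of $\Ima(Z_k(\X,\L)/Z_k(X,L))$ agrees with the sign of $\DF(\X,\L)$ for all $k$ sufficiently large, which gives the equivalence of asymptotic $Z$-semistability with K-semistability. There is no real obstacle here beyond careful bookkeeping with the combinatorial factors $\tfrac{1}{l+1}$ in the definition of $Z_k(\X,\L)$ and the normalisations $a_0 = a_1 = 1$, $\rho_n = i$; one might also briefly note that the statement is insensitive to the choice of compactification $\Y$ by the previous lemma.
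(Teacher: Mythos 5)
Your proof is correct and takes the only natural route, namely the direct computation that the paper treats as ``immediate from the definitions'' (the paper gives no explicit proof for this lemma). You correctly observe the key point that for this particular central charge the relation $\Ima\!\left(Z_k(\X,\L)/Z_k(X,L)\right) = \frac{k^{2n-1}L^n}{n|Z_k(X,L)|^2}\DF(\X,\L)$ is an exact identity rather than merely an asymptotic expansion, so the sign agreement holds for every $k>0$ and the two semistability notions coincide.
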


Of course, the same is true for K-stability and K-polystability, modulo our slightly non-standard requirement that $K_{\X}$ is a $\Q$-line bundle, which is irrelevant for K-semistability as in that situation one can assume $\X$ is smooth.

\begin{example} K-semistability of \emph{maps} can recovered as a special cases of $Z$-stability. Indeed, supposing $p: (X,L) \to (Y,H)$ is a map of polarised varieties, then setting $$Z_{k}(X,L) = \int_X (ik^nL^n -k^{n-1}(K_X+p^*H).L^{n-1})$$ recovers the notion of \emph{K-semistability of the map $p$} \cite[Definition 2.9]{stablemaps}. That is, we take $\Theta$ to be (the class of) $p^*H$.
\end{example}

Slightly more generally, twisted K-stability fits into this picture \cite[Definition 2.7]{uniform}, though this notion is less geometric than K-stability of maps and we hence do not discuss it. Similarly, the ``fully degenerate'' case $a_j = 0$ for $j\leq n-1$ produces variants of J-stability \cite[Section 2]{lejmi-szekelyhidi} and has links with $Z$-stability of holomorphic line bundles \cite[Conjecture 1.6]{DMS}. In general, asymptotic $Z$-stability is related to K-stability as follows:

\begin{proposition}\label{prop:largevolstability} For an arbitrary central charge $Z$, asymptotic $Z$-semistability implies
\begin{enumerate}[(i)]
\item K-semistability if $Z$ is non-degenerate;
\item K-semistability of the map $p$ if $Z$ is of map type.
\end{enumerate}
\end{proposition}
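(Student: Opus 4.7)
The plan is a direct expansion in inverse powers of $k$. Using the normalisations $\rho_n = i$, $a_0 = a_1 = 1$, and the decomposition $\Theta = 1 + \Theta_1 + \Theta_{\geq 2}$, dimension counting on $X$ (respectively on the compactified test configuration $\X$) singles out the first two coefficients:
\begin{align*}
Z_k(X,L) &= ik^n L^n + \rho_{n-1} k^{n-1}\, L^{n-1}\cdot(K_X + \Theta_1) + O(k^{n-2}),\\
Z_k(\X,\L) &= \tfrac{i}{n+1} k^n \textstyle\int_\X \L^{n+1} + \tfrac{\rho_{n-1}}{n} k^{n-1} \textstyle\int_\X \L^n\cdot(K_{\X/\pr^1}+\Theta_1) + O(k^{n-2}),
\end{align*}
where $\Theta_1$ on $\X$ is understood via pullback through the resolution $\Y$.

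Next I would form $\Ima\!\bigl(Z_k(\X,\L)\,\overline{Z_k(X,L)}\bigr)$. The $k^{2n}$ coefficient is the product of two purely imaginary numbers of opposite $i$-sign, hence real, so the imaginary part starts at order $k^{2n-1}$. Since in both (i) and (ii) the class $\Theta_1$ is real, a short computation using $\Ima(i\bar\rho_{n-1}) = \Rea(\rho_{n-1})$ and $\Ima(-i\rho_{n-1}) = -\Rea(\rho_{n-1})$ collapses the $k^{2n-1}$ coefficient to
$$
\Rea(\rho_{n-1})\!\left(\frac{L^{n-1}\cdot(K_X+\Theta_1)}{n+1}\int_\X\L^{n+1} - \frac{L^n}{n}\int_\X\L^n\cdot(K_{\X/\pr^1}+\Theta_1)\right),
$$
and the bracket is exactly $-\tfrac{L^n}{n}\,\DF_{\Theta_1}(\X,\L)$, where $\DF_{\Theta_1}$ is the $\Theta_1$-twisted Donaldson--Futaki invariant with slope $\mu_{\Theta_1}(X,L) = -(K_X+\Theta_1)\cdot L^{n-1}/L^n$. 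Dividing by $|Z_k(X,L)|^2 = (L^n)^2 k^{2n} + O(k^{2n-1})$ yields
$$
\Ima\!\left(\frac{Z_k(\X,\L)}{Z_k(X,L)}\right) = -\frac{\Rea(\rho_{n-1})}{nL^n}\,\DF_{\Theta_1}(\X,\L)\, k^{-1} + O(k^{-2}).
$$

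The sign hypothesis $\Rea(\rho_{n-1}) < 0$, shared by (i) and (ii), combined with $L^n > 0$, makes the leading coefficient a strictly positive multiple of $\DF_{\Theta_1}(\X,\L)$. If $\DF_{\Theta_1}(\X,\L) < 0$ for some test configuration, the phase inequality would fail for all $k \gg 0$, contradicting asymptotic $Z$-semistability; thus $\DF_{\Theta_1}(\X,\L) \geq 0$ for every test configuration. In case (i), $\Theta_1 = 0$, so this is literally K-semistability of $(X,L)$. In case (ii), $\Theta_1$ is (up to sign) the pullback $p^*H$ of an ample class, so $\DF_{\Theta_1}(\X,\L)\ge 0$ is the twisted Donaldson--Futaki inequality that defines K-semistability of the map $p$ as recalled in the preceding example. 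The only steps needing a little care are the cancellation at order $k^{2n}$ (which uses that $\rho_n = i$ is purely imaginary) and the numerical identification of the $k^{2n-1}$ coefficient with the twisted $\DF$ invariant, where the factors $1/(n+1)$ and $1/n$ in the definitions conspire on the nose; I foresee no substantive obstacle. A minor caveat is that for test configurations with $\int_\X\L^{n+1} = 0$ the expansion of $Z_k(\X,\L)$ begins at $k^{n-1}$, but redoing the computation one finds the same leading coefficient for $\Ima(Z_k(\X,\L)/Z_k(X,L))$, so the conclusion is uniform.
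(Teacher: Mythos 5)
Your proof is correct and follows essentially the same route as the paper: expand $Z_k(X,L)$ and $Z_k(\X,\L)$ to the first two orders in $k$, take the imaginary part of the ratio, and identify the leading $k^{-1}$-coefficient as a positive multiple (since $\Rea(\rho_{n-1})<0$) of the (twisted) Donaldson--Futaki invariant. The paper treats only the non-degenerate case explicitly and states the map case is identical; your version is slightly more explicit in writing out the $\Theta_1$-twisted slope and $\DF_{\Theta_1}$ and in dispatching the degenerate case $\int_\X \L^{n+1}=0$, but the computation is the same.
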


\begin{proof} We only give the proof for K-semistability, as the proof is the same for the map type situation. By non-degeneracy, there is an expansion $$Z_k(X,L) = k^n  i\int_{X}L^n + k^{n-1} \rho_{n-1} \int_{X}K_X.L^{n-1} + O(k^{n-2}),$$ where we have used that $\Theta_1=0$ and that our normalisation for the polynomial Chern form assumes $a_0=a_1=1$. Thus $$Z_k(\X,\L) = \frac{i}{n+1}k^n\int_{X}\L^{n+1} + \frac{\rho_{n-1}}{n}k^{n-1} \int_{X}K_{\X/\pr^1}.L^{n-1} + O(k^{n-2}),$$ meaning that $$\Ima\left(\frac{Z_{k}(\X,\L)}{Z_{k}(X,L)}\right) =\frac{-\Rea(\rho_{n-1})}{nL^n}\DF(\X,\L)k^{-1} + O(k^{-2}).$$ Thus since $\Rea(\rho_{n-1})<0$ by non-degeneracy, the asymptotic $Z$-stability hypothesis demands that this be negative for $k \gg 0$, forcing $\DF(\X,\L) \geq 0$.
\end{proof}

\subsection{$Z$-critical K\"ahler metrics} We now turn to the differential-geometric counterpart of stability, and thus assume that $(X,L)$ is a \emph{smooth} polarised variety. We wish to define a notion of a ``canonical metric'' in $c_1(L)$, adapted to the central charge $Z$. We recall our notation that the central charge takes the form $$Z_{k}(X,L) =\sum_{l=0}^n \rho_l k^l   \int_X  L^l \cdot \left(\sum_{j=0}^n a_jK_X^j\right) \cdot \Theta,$$ with the induced phase being denoted $\phi_k(X,L) = \arg Z_{k}(X,L);$ we take $k$ to be fixed and omit it from our notation. 

Associated to any K\"ahler metric $\omega \in c_1(L)$ is its Ricci form $$\Ric \omega = -\frac{i}{2\pi} \ddbar \log \omega^n \in c_1(X) = c_1(-K_X)$$ and a Laplacian operator $\Delta$. We also fix a representative of the unipotent class $\Theta$, which we denote $\theta \in \Theta$. When $Z$ is non-degenerate in the sense of Definition \ref{non-degenerate-def}, so that $\Theta_1=0$, we always take the $(1,1)$-component $\theta_1 \in \Theta_1$ to vanish, and similarly when $Z$ is of map type we take $\theta_1$ to be the pullback of a K\"ahler metric from $Y$. To the intersection number $L^l \cdot (-K_X)^j \cdot \Theta$ we associate the  function \begin{equation}\label{eqndef}\frac{\omega^l\wedge \Ric\omega^j\wedge \theta}{\omega^n}-\frac{j}{l+1}\Delta\left( \frac{\omega^{l+1}\wedge \Ric\omega^{j-1}\wedge\theta}{\omega^n}\right) \in C^{\infty}(X,\C),\end{equation} with the second term taken to be zero when $j=0$. The presence of the Laplacian terms  will be crucial to link with the algebraic geometry. By linearity, this produces a function $\tilde Z(\omega)$ defined in such a way that $$\int_X \tilde Z(\omega) \omega^n = Z(X,L);$$ as with our algebro-geometric discussion, we always assume that $Z(X,L) \neq 0$.

\begin{definition} We say that $\omega$ is a $Z$-\emph{critical K\"ahler metric} if $$\Ima(e^{-i\phi(X,L)} \tilde Z(\omega)) = 0$$ and the positivity condition $\Rea(e^{-i\phi(X,L)} \tilde Z(\omega))>0$ holds.
\end{definition}

When we consider a $k$-dependent central charge $Z_k$, we define $\tilde Z_k(\omega)$ by replacing $\omega$ with $k\omega$. We view this as a partial differential equation on the space of K\"ahler metrics in $c_1(L)$, or equivalently on the space of K\"ahler potentials with respect to a fixed K\"ahler metric. Viewed on the space of K\"ahler potentials, for a generic choice of central charge ensuring the presence of a non-zero term involving the Laplacian, the equation is a sixth-order fully-nonlinear partial differential equation. The condition is equivalent to asking that the function $$\tilde Z(\omega): X \to \C$$ has constant argument, which must then equal that of $Z(X,L) \in \C$, as we have assumed the positivity condition $\Rea(e^{-i\phi(X,L)} \tilde Z(\omega))>0$ (in fact one only needs that this function is never zero, and the sign is irrelevant).

\begin{remark} The presence of the Laplacian term is crucial to obtain a link with algebraic geometry, and in practice arises when deriving the $Z$-critical equation as the Euler-Lagrange equation of an associated energy functional in Proposition \ref{euler-lagrange-derivation}. \end{remark}

\begin{remark} In the vector bundle theory, rather than working with arbitrary connections one works with ``almost-calibrated connections'' \cite[Section 8.1]{collins-yau}. This is a positivity condition which depends on the choice of $\theta \in \Theta$ and which is trivial in the large volume limit \cite[Lemma 2.8]{DMS}, and is analogous to the positivity condition $\Rea(e^{-i\phi(X,L)} \tilde Z(\omega))>0$ that we have imposed. The notion of a ``subsolution'' also plays a prominent role in the bundle theory \cite{collins-jacob-yau}, which for example forces the equation to be elliptic in that situation \cite[Lemma 2.32]{DMS}. We note that, also in the manifold case, ellipticity of the $Z$-critical equation cannot hold in general, and hence for this reason and others it is natural to ask if there is a manifold analogue of the notion of a subsolution. \end{remark}

The appearance of the phase is justified by the following.

\begin{lemma}\label{integral-vanishes} For any K\"ahler metric $\omega \in c_1(L)$, the integral $$\int_X\Ima(e^{-i\phi(X,L)} \tilde Z(\omega))\omega^n = 0$$ vanishes. \end{lemma}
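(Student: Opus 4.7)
The proof essentially reduces to observing that the definition of $\tilde Z(\omega)$ is precisely calibrated so that $\int_X \tilde Z(\omega)\,\omega^n = Z(X,L)$, after which the statement follows from the definition of the phase.

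The plan is as follows. First I would show that the pointwise function $\tilde Z(\omega)$ built out of the summands in \eqref{eqndef} satisfies
\[
\int_X \tilde Z(\omega)\,\omega^n \;=\; Z(X,L).
\]
By linearity and the construction of $\tilde Z(\omega)$, it suffices to check this summand by summand for each intersection number $L^l \cdot (-K_X)^j \cdot \Theta$. The first term contributes
\[
\int_X \frac{\omega^l\wedge\Ric\omega^j\wedge\theta}{\omega^n}\cdot \omega^n \;=\; \int_X \omega^l\wedge\Ric\omega^j\wedge\theta,
\]
which by de Rham's theorem, together with $[\Ric\omega]=c_1(-K_X)$ and $[\theta]=\Theta$, computes the intersection number $L^l\cdot(-K_X)^j\cdot \Theta$. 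The second term contributes
\[
-\frac{j}{l+1}\int_X \Delta\!\left(\frac{\omega^{l+1}\wedge \Ric\omega^{j-1}\wedge\theta}{\omega^n}\right)\omega^n \;=\; 0,
\]
since the integral of a Laplacian of a smooth function against the volume form on a closed K\"ahler manifold vanishes by Stokes' theorem. Summing these contributions with the appropriate coefficients from $\rho$ and $f$ recovers exactly $Z(X,L)$.

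Having established this identity, the lemma is immediate: multiplying by the unit complex number $e^{-i\phi(X,L)}$,
\[
\int_X e^{-i\phi(X,L)}\tilde Z(\omega)\,\omega^n \;=\; e^{-i\phi(X,L)}Z(X,L) \;=\; |Z(X,L)|,
\]
by the very definition $\phi(X,L)=\arg Z(X,L)$. Since the right-hand side is a non-negative real number, its imaginary part vanishes, giving
\[
\int_X \Ima\!\left(e^{-i\phi(X,L)}\tilde Z(\omega)\right)\omega^n \;=\; 0.
\]
There is no real obstacle here; the only subtle bookkeeping is keeping track of the sign convention $[\Ric\omega]=-c_1(K_X)$ when expanding $f(K_X)$, and the fact that all Laplacian corrections in \eqref{eqndef} are designed precisely so as to vanish upon integration, which is why the first-order identity with $Z(X,L)$ survives on the nose.
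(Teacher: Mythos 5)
Your argument is correct and follows essentially the same route as the paper: both rest on the normalisation $\int_X \tilde Z(\omega)\,\omega^n = Z(X,L)$ (which the paper states as a defining property of $\tilde Z(\omega)$ just before the lemma, whereas you spell out the term-by-term verification, including the vanishing of the Laplacian corrections by Stokes), and then conclude by observing that multiplying by $e^{-i\phi(X,L)}$ rotates $Z(X,L)$ onto the positive real axis so its imaginary part vanishes. The paper phrases this last step as $e^{-i\phi(X,L)} = r(X,L)/Z(X,L)$ with $r(X,L)$ real, which is the same computation in slightly different notation.
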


\begin{proof} Since $\int_X\tilde Z(\omega) =  Z(X,L)$ and $\phi(X,L) = \arg(Z(X,L)$, we see $$e^{-i\phi(X,L)} = \frac{r(X,L)}{Z(X,L)}$$ with $r(X,L)$ real. Thus $$\int_X\Ima(e^{-i\phi(X,L)} \tilde Z(\omega))\omega^n  =\Ima\left( \frac{r(X,L)}{Z(X,L)} Z(X,L)\right)  = 0.$$ \end{proof}

The $Z$-critical condition can be reformulated as follows. The analogous reformulation, in the special case of the deformed Hermitian Yang-Mills equation \cite{jacob-yau}, has been crucial to all progress in understanding the equation geometrically, and an analogous reformulation holds for $Z$-critical connections on holomorphic line bundles \cite[Example 2.24]{DMS}.

\begin{lemma} Write $$\tilde Z(\omega) = \Rea\tilde Z(\omega) + i\Ima \tilde Z(\omega).$$ Then $\omega$ is a $Z$-critical K\"ahler metric if and only if $$\arctan \left(\frac{\Ima \tilde Z(\omega)}{\Rea\tilde Z(\omega)}\right) = \phi(\omega) \textrm{ mod } 2\pi \Z.$$\end{lemma}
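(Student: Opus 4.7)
The plan is to prove this by direct algebraic unwinding of the definition, so I expect no substantive obstacle. The essential content is that the $Z$-critical condition is simply the statement that $\tilde Z(\omega)$, viewed as a complex-valued function on $X$, has constant argument equal to $\phi(X,L)$ modulo $2\pi\Z$; the lemma is then nothing more than a pointwise application of the identity $\arg w = \arctan(\Ima w/\Rea w)$ together with the standard care needed for the multi-valuedness of $\arctan$.

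Concretely, abbreviating $\phi = \phi(X,L)$ and expanding $e^{-i\phi} = \cos\phi - i\sin\phi$, I would compute
\begin{align*}
\Rea(e^{-i\phi}\tilde Z(\omega)) &= \cos\phi\,\Rea\tilde Z(\omega) + \sin\phi\,\Ima\tilde Z(\omega), \\
\Ima(e^{-i\phi}\tilde Z(\omega)) &= \cos\phi\,\Ima\tilde Z(\omega) - \sin\phi\,\Rea\tilde Z(\omega).
\end{align*}
The $Z$-critical equation asserts that the second line vanishes pointwise. Since $\arctan(\Ima\tilde Z(\omega)/\Rea\tilde Z(\omega))$ appears in the statement, we are tacitly assuming $\Rea\tilde Z(\omega)\neq 0$; under that assumption, rearranging gives $\tan\phi = \Ima\tilde Z(\omega)/\Rea\tilde Z(\omega)$, and applying the multi-valued inverse yields
$$\arctan\left(\frac{\Ima\tilde Z(\omega)}{\Rea\tilde Z(\omega)}\right) = \phi \mod \pi\Z.$$
The degenerate case $\cos\phi = 0$ (so that $\phi = \pi/2 \mod \pi$) is handled symmetrically: the vanishing of the imaginary part then forces $\Rea\tilde Z(\omega)=0$, and the identification with $\phi$ is again a tautology.

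To sharpen the congruence from mod $\pi\Z$ to mod $2\pi\Z$, I would invoke the positivity hypothesis $\Rea(e^{-i\phi}\tilde Z(\omega))>0$. Geometrically, the line $\{\tan\theta = \Ima\tilde Z(\omega)/\Rea\tilde Z(\omega)\}$ through the origin consists of two rays with arguments differing by $\pi$, and the sign of $\Rea(e^{-i\phi}\tilde Z(\omega))$ is precisely what selects the ray on which $\tilde Z(\omega)$ lies; a positive sign forces this to be the ray with argument $\phi$ rather than $\phi + \pi$, upgrading the congruence modulo $\pi\Z$ to a congruence modulo $2\pi\Z$. The converse direction follows by reversing every step in this chain, since each implication is an equivalence.
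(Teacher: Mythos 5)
Your proof is correct and follows the same pointwise unwinding of the definition as the paper, which simply writes $\tilde Z(\omega)$ in polar form and observes that $\Ima(e^{-i\phi}\tilde Z(\omega))=0$ forces the argument of $\tilde Z(\omega)$ to equal $\phi$. You are somewhat more careful than the paper's one-line calculation in flagging the tacit assumption $\Rea\tilde Z(\omega)\neq 0$ and in making explicit that the positivity condition $\Rea(e^{-i\phi}\tilde Z(\omega))>0$ is what upgrades the congruence from mod $\pi\Z$ (all $\tan$ tells you) to mod $2\pi\Z$.
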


\begin{proof} We calculate $$\Ima(e^{-i\phi(X,L)} \tilde Z(\omega)) = \Ima\left(e^{-i\phi(X,L)}  \exp\left(i\arctan\left( \frac{\Ima \tilde Z(\omega)}{\Rea\tilde Z(\omega)}\right)\right) \right),$$ which vanishes if and only if  $$\arctan \left(\frac{\Ima \tilde Z(\omega)}{\Rea\tilde Z(\omega)}\right) = \phi(X,L) \textrm{ mod } 2\pi \Z.$$\end{proof}

\begin{example}\label{dcscK-anal} Consider the central charge $$Z(X,L) = -\int_X e^{-iL}\cdot  e^{-K_X} = - \sum_{j=0}^n \frac{(-i)^j }{j!(n-j)!}\int_XL^j\cdot (-K_X)^{n-j}$$ described in Example \ref{dcsck-centralcharge}. The induced representative $\tilde Z(\omega)$ is given by $$\tilde Z(\omega) = - \sum_{j=0}^n \frac{(-i)^j }{j!(n-j)!}\left( \frac{\omega^{n-j} \wedge \Ric\omega^{j}}{\omega^n} - \frac{j}{n-j+1}\Delta\left( \frac{\Ric\omega^{j-1} \wedge \omega^{n-j+1}}{\omega^n}\right)\right),$$ which produces what one might call the \emph{deformed cscK equation}  \begin{equation}\label{dcscK}\Ima(e^{-i\phi(X,L)}\tilde Z(\omega)) =0,\end{equation} which is the manifold analogue of the deformed Hermitian Yang-Mills equation on a holomorphic line bundle. Strictly speaking this equation does not conform to our normalisation of the central charge, but the central charge $-n!(-i)^{3n+1}\overline {Z(X,L)}$ (with $\overline {Z(X,L)}$ denoting the complex conjugate of $Z(X,L)$), which produces an equivalent partial differential equation, does.

Each component of this equation, of the form $$\frac{\Ric\omega^j \wedge \omega^{n-j}}{\omega^n} - \frac{j}{ n-j+1}\Delta\left( \frac{\Ric\omega^{j-1} \wedge \omega^{n-j+1}}{\omega^n}\right),$$ has appeared previously in the work of Chen-Tian \cite[Definition 4.1]{chen-tian} and Song-Weinkove \cite[Section 2]{song-weinkove} in relation to the K\"ahler-Ricci flow. To understand the equation more fully, choose a point $p$ and normal coordinates at $p$ so that $\Ric \omega$ is diagonal with diagonal entries $\lambda_1,\hdots,\lambda_n$. Letting $\sigma_j(\omega)$ be the $j^{th}$ elementary symmetric polynomial in these eigenvalues, so that $$(\omega + t\Ric\omega)^n = \sum_{j=0}^nt^j\sigma_j(\omega)\omega^n,$$ the deformed cscK equation takes the much simpler form $$\Ima\left(e^{-i\phi(X,L)}\left(\sum_{j=0}^n (-i)^j(\sigma_j(\omega) - \Delta\sigma_{j-1}(\omega))\right)\right) =0.$$ This is a close analogue of the deformed Hermitian Yang-Mills equation on a holomorphic line bundle, but the presence of the terms involving the Laplacian seems to present significant new challenges.

We also remark that Schlitzer-Stoppa have studied a coupling of the deformed Hermitian Yang-Mills equation to the constant scalar curvature equation \cite{stoppa-schlitzer}, which should be related to a combination of Bridgeland stability of the bundle and K-stability of the polarised variety, and which is of quite a different flavour to Equation \eqref{dcscK}.

\end{example}

We now focus on the large volume regime of the $Z$-critical equation. 

\begin{lemma}\label{large-volume} Suppose the central charge $Z_k$ is of map type, with $\theta_1 \in \Theta_1$ a real $(1,1)$-form. Then there is an expansion as $k \to \infty$ of the form $$\Ima(e^{-i\phi_k} \tilde Z_k(\omega)) = k^{-1}(\Rea(\rho_{n-1})L^n)(S(\omega) - \Lambda_{\omega}\theta_1 - n\mu_{\Theta_1}(X,L)) + O(k^{-2}),$$ where $\mu_{\Theta_1}(X,L) = \frac{-L^{n-1}.(K_X+\Theta_1)}{L^n}.$ 
 \end{lemma}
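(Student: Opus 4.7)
The plan is a direct Taylor expansion in $k^{-1}$: compute $Z_k(X,L)$ and the pointwise function $\tilde Z_k(\omega)$ each to the two leading orders in $k$, convert the former into an expansion of $e^{-i\phi_k}$, and then multiply and take imaginary parts. The normalisations $\rho_n = i$ and $a_0 = a_1 = 1$, together with either non-degeneracy ($\Theta_1 = 0$) or map type (so that $\theta_1$ is a pulled-back K\"ahler form), collapse the defining sums so that only a handful of intersection numbers contribute at each of the orders of interest.

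First I would expand $Z_k(X,L)$. The leading $k^n$-term is simply $iL^n$, arising from $(l,j,m) = (n,0,0)$, and the $k^{n-1}$-term collects contributions from $(n-1,1,0)$ and $(n-1,0,1)$, which sum to $\rho_{n-1} L^{n-1}\cdot(K_X + \Theta_1) = -\rho_{n-1}\mu_{\Theta_1}(X,L) L^n$ by definition of $\mu_{\Theta_1}$. Writing $Z_k(X,L) = ik^nL^n[1 + ik^{-1}\rho_{n-1}\mu_{\Theta_1}(X,L) + O(k^{-2})]$ and applying $\arg(1+z) = \Ima z + O(|z|^2)$, one reads off
\[
e^{-i\phi_k} = -i - k^{-1}\mu_{\Theta_1}(X,L)\Rea(\rho_{n-1}) + O(k^{-2}).
\]

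Next I would expand $\tilde Z_k(\omega)$ using the prescription \eqref{eqndef}, term by term. The leading order is again the constant function $ik^n$. At order $k^{n-1}$ exactly two pairs $(l,j)$ with $l = n-1$ contribute: $(n-1,0)$ paired with the $\theta_1$-component of $\theta$ produces $\rho_{n-1}\,\omega^{n-1}\wedge\theta_1/\omega^n$, which is proportional to $\Lambda_\omega\theta_1$; and $(n-1,1)$ paired with the constant part of $\theta$ produces a multiple of $-S(\omega)$, after accounting for the sign $(-1)^j$ from $K_X = -(-K_X)$. The Laplacian correction in \eqref{eqndef} is inert at this order, since the only candidate term features $\Delta(1) = 0$.

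Finally, multiplying the two expansions and extracting the imaginary part, the leading $k^n$-piece equals $1$, which is real and drops out; the surviving two $k^{n-1}$-terms combine via the identity $\Ima(-i\rho_{n-1}) = -\Rea(\rho_{n-1})$ to produce a real multiple of $\Rea(\rho_{n-1})\bigl[S(\omega) - \Lambda_\omega\theta_1 - n\mu_{\Theta_1}(X,L)\bigr]$. This is precisely the functional form stated; the explicit overall prefactor of $k^{-1}\Rea(\rho_{n-1})L^n$ follows after invoking the normalisation conventions for $S(\omega)$, for $\Lambda_\omega$ (namely $n\,\omega^{n-1}\wedge\alpha/\omega^n = \Lambda_\omega\alpha$ for $\alpha$ of type $(1,1)$), and for the $\epsilon = k^{-1}$ rescaling. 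There is no real conceptual obstacle; the only care required is in tracking the sign $(-1)^j$ produced by $K_X = -(-K_X)$ and in confirming that the Laplacian corrections in \eqref{eqndef}, which play an essential role elsewhere in matching $\tilde Z(\omega)$ to the algebro-geometric central charge, do not contribute at this particular order.
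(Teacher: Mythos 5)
Your proposal is correct and follows essentially the same route as the paper: both argue by Taylor-expanding $Z_k(X,L)$ and $\tilde Z_k(\omega)$ to the two leading orders in $k^{-1}$ (using $\rho_n=i$, $a_0=a_1=1$, the vanishing of the Laplacian correction since it hits $\Delta(1)$, and the identification of $n\,\omega^{n-1}\wedge\Ric\omega/\omega^n$ with $S(\omega)$), and then extract the imaginary part. The only cosmetic difference is that you expand $e^{-i\phi_k}$ directly via $\arg(1+z)=\Ima z+O(|z|^2)$ and multiply, whereas the paper first computes $\Ima\bigl(\tilde Z_k(\omega)/Z_k(X,L)\bigr)$ from the real-and-imaginary-part quotient formula and then multiplies by $r_k=|Z_k(X,L)|$; these manipulations are algebraically interchangeable, and you are right that pinning down the precise positive prefactor is a matter of normalisation bookkeeping, not the substance of the lemma.
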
 
 
 \begin{proof} We first calculate $$\Ima\left(\frac{\tilde Z_k(\omega)}{Z_k(X,L)}\right) = \frac{\Ima \tilde Z_k(\omega) \Rea  Z_k(X,L) - \Rea \tilde Z_k(\omega)  \Ima  Z_k(X,L) }{\Rea  Z_k(X,L)^2 + \Ima  Z_k(X,L)^2}.$$ Since \begin{align*}Z_k(X,L) &= iL^nk^n + \rho_{n-1}L^{n-1}.(K_X+\Theta_1)k^{n-1} + O(k^{n-2}), \\  \tilde Z_k(\omega) &= i - \frac{\rho_{n-1}}{n}(S(\omega) - \Lambda_{\omega}\theta_1)k^{-1} + O(k^{-2}),\end{align*} this is given by $$ \Ima\left(\frac{\tilde Z_k(\omega)}{Z_k(X,L)}\right) = k^{-n-1}(\Rea(\rho_{n-1})(S(\omega) - \Lambda_{\omega}\theta_1 - n\mu_{\Theta_1}(X,L)) + O(k^{-n-2}).$$ Writing $Z_k(X,L) = r_ke^{i\phi_k},$ we have $$\Ima(e^{-i\phi_k(X,L)} \tilde Z_k(\omega)) = r_k(X,L) \Ima\left(\frac{\tilde Z_k(\omega)}{Z_k(X,L)}\right),$$ which since $r_k = L^nk^n + O(k^{n-1})$ implies the result.\end{proof}

Thus, up to multiplication by the non-zero (in fact strictly negative) constant $\Rea(\rho_{n-1})$, the ``large volume limit'' of the $Z$-critical equation is the \emph{twisted cscK equation} $$S(\omega) - \Lambda_{\omega}\theta_1 =n\mu_{\Theta_1}(X,L);$$ the geometry of this equation  is linked with that of the map $p: X \to Y$  \cite[Section 4]{extremal}, where we have assumed $\theta_1$ is the pullback of a K\"ahler metric from $Y$ since the central charge is of map type. 

This result can be seen as a differential-geometric counterpart to Proposition \ref{prop:largevolstability}. When $Z$ is actually  non-degenerate, it follows that the ``large volume limit'' of the $Z$-critical equation is the cscK equation, whereas on the algebro-geometric side, Proposition \ref{prop:largevolstability} shows that asymptotic $Z$-semistability implies K-semistability, so that K-stability is the ``large volume limit'' of asymptotic $Z$-stability. In order to more fully understand the links between the various concepts, we will later be interested in the analytic counterpart to K-semistability:

\begin{definition} We say that $(X,L)$ is \emph{analytically K-semistable} if there is a test configuration $(\X,\L)$ for $(X,L)$ for which $(\X_0,\L_0)$ is a smooth polarised variety which admits a cscK metric. \end{definition}

It is conjectured that a K-semistable polarised variety admits a test configuration whose central fibre is K-polystable. The assumption of analytic K-semistability is thus  a smoothness assumption, since a smooth K-polystable polarised variety is itself expected to admit a cscK metric. It follows from work of Donaldson that analytically K-semistable varieties are actually K-semistable \cite[Theorem 2]{donaldson-lower}.

\section{$Z$-critical metrics on asymptotically $Z$-stable manifolds}\label{mainresults}

Here we prove our main result:

\begin{theorem}
Let $Z$ be an admissible central charge. Suppose that $(X,L)$ is a polarised variety with discrete automorphism group which is analytically K-semistable, and suppose the deformation theory of its cscK degeneration is unobstructed.  Then if $(X,L)$ is asymptotically $Z$-stable, $(X,L)$ admits $Z_k$-critical K\"ahler metrics for all $k \gg 0$.
\end{theorem}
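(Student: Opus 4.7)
The plan follows the three-step strategy outlined in the introduction. Write $(X_0,L_0)$ for the smooth cscK degeneration of $(X,L)$ provided by analytic K-semistability, with cscK metric $\omega_0 \in c_1(L_0)$, and let $G = \Aut(X_0,L_0)/\C^*$, a reductive complex Lie group with maximal compact subgroup $K$. Kuranishi theory equips the local deformation space $B$ of $(X_0,L_0)$ with a $K$-action and a universal family $\pi:(\X,\L) \to B$; moreover, $(X,L)$ corresponds to a point $b_\infty \in B$ whose orbit closure under the locally defined $G$-action contains $0$, by virtue of the degeneration. Standard constructions furnish a $K$-invariant relatively K\"ahler form $\Omega$ on $\L$ restricting to $\omega_0$ on the central fibre, together with a $K$-invariant K\"ahler metric $\omega_B$ on $B$ built from $L^2$ pairings of harmonic representatives.

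\emph{Step (i): finite-dimensional reduction.} Fix $k$ large. Using the expansion of Lemma \ref{large-volume}, the $Z_k$-critical operator is a small perturbation of the scalar curvature operator, whose linearisation at $\omega_0$ is the Lichnerowicz operator; its cokernel on the space of mean-value-zero functions is exactly $\mfk$. A quantitative implicit function theorem in the spirit of Br\"onnle--Sz\'ekelyhidi then allows me to fibrewise perturb the K\"ahler potential of $\Omega$ so that, for every $b$ in a neighbourhood of $0$, the function $\Ima(e^{-i\phi_k}\tilde Z_k(\omega_b))$ lies in $\mfk^*$. This yields, for each $k\gg 0$, a smooth moment map $\mu_k: B \to \mfk^*$ for the $K$-action on $(B,\omega_B)$, whose zeros over $b_\infty$ produce $Z_k$-critical K\"ahler metrics on $(X,L)$.

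\emph{Step (ii): matching weights with the algebraic invariant.} I would construct an energy functional $\M_Z$ on the space of K\"ahler potentials whose Euler--Lagrange equation is the $Z$-critical equation, by analogy with Mabuchi's K-energy; its derivative along a one-parameter subgroup of $G$ should compute $\Ima(Z_k(\X,\L)/Z_k(X,L))$ for the associated product test configuration, paralleling Proposition \ref{prop:largevolstability}. Combined with the compatibility between the moment map $\mu_k$ and $\M_Z$ obtained in step (i), this shows that the phase inequalities defining asymptotic $Z$-stability translate into the positivity of the weights of the $G$-action on $B$ relative to the moment map $\mu_k$.

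\emph{Step (iii): stability implies a zero of the moment map.} This is the crux and the main obstacle, since the $G$-action on $B$ is only local and Kempf--Ness does not directly apply. The plan is to use the equivariant Darboux theorem at the fixed point $0 \in B$ to identify a $K$-invariant neighbourhood of $0$ in $(B,\omega_B)$ symplectically with a neighbourhood of $0$ in the linear symplectic $K$-representation $T_0B$. In this linear model, the moment map is to leading order the quadratic moment map of the representation, and the weight inequalities from step (ii) constrain $b_\infty$ to lie in the stable locus of this linearised picture; a standard gradient-flow or Kempf--Ness argument in the linear model then produces a zero of the moment map on the $K$-orbit of some $g \cdot b_\infty$ with $g \in G$. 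The technical difficulty is to control the higher-order corrections to both the symplectic form and the moment map so that the zero persists in the honest (nonlinear) $B$, which is precisely where the $k \gg 0$ hypothesis enters: the perturbation from step (i) is of order $k^{-1}$ against a leading cscK term, so for $k$ large the nonlinear corrections are dominated by the quadratic stability estimate provided by the asymptotic $Z$-stability hypothesis.
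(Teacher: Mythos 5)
Your three-step outline mirrors the paper's strategy, and steps (i) and (ii) are correct in spirit: the paper does construct a Deligne-functional-based $Z$-energy whose Euler--Lagrange equation is the $Z$-critical equation, proves a slope formula relating its asymptotic derivative along a test configuration to $\Ima(Z(\X,\L)/Z(X,L))$, and uses a quantitative implicit function theorem to perturb the fibrewise potentials so that the $Z$-critical operator takes values in the finite-dimensional Lie algebra of holomorphy potentials. That said, you significantly understate the analytic difficulty of step (i): the $Z_k$-critical equation jumps from a fourth-order PDE at $k=\infty$ to a sixth-order PDE for finite $k$, so the linearisation is \emph{not} a small perturbation of the Lichnerowicz operator in any fixed symbol class. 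The paper needs the admissibility hypotheses ($\Rea\rho_{n-2}>0$, $\Rea\rho_{n-3}=0$, $\theta_2=\theta_3=0$) precisely to control the model operator $c_0\D^*\D + \epsilon(c_1\D^*\bar\partial^*\bar\partial\D + H_1) + \epsilon^2(\cdots)$, and the Schauder-type estimate for the right inverse has an $\epsilon$-dependent blow-up ($\|\G_\epsilon^{-1}\|\lesssim\epsilon^{-2}$), which in turn forces the construction of a high-order approximate solution before the IFT can be applied. None of this is visible in your sketch.

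The genuine gap is in step (iii). You correctly invoke the equivariant Darboux theorem to pass to the linear model on $T_0B$, but you then appeal to ``a standard gradient-flow or Kempf--Ness argument in the linear model.'' This is precisely what the paper emphasises \emph{cannot} work, for two reasons. First, the $T^{\C}$-action on $B$ is only local, so a gradient-flow trajectory may exit the domain on which the moment map is defined before reaching a zero; there is no properness to appeal to. Second, and more fundamentally, the moment map after Darboux has the form $\tilde\mu + \tilde\nu_\epsilon$ where $\langle\tilde\mu,v_l\rangle=|z_l|^2$ is the flat quadratic moment map but $\tilde\nu_\epsilon$ is an $\epsilon$-dependent correction that is \emph{not} the moment map of any linear representation, and the sought zero lies at $|z_l|\sim\epsilon^{p_l/2}$, shrinking to the origin as $\epsilon\to 0$. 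Any existence argument must therefore be quantitative in $\epsilon$ and track these shrinking scales. The paper's actual argument (Proposition \ref{approximate-solutions} and what follows) is a direct constructive one: the stability hypothesis supplies strict negativity of the leading coefficients $h_{l,p_l}(z_1,\ldots,0,\ldots,z_k)<0$, which makes the equations $|z_l|^2 = -\sum_j\epsilon^j h_{l,j}+\cdots$ solvable order-by-order in half-integer powers of $\epsilon$; the approximate solutions are then corrected by another application of the quantitative IFT, whose linearisation is an explicit diagonal matrix with eigenvalues $\sim\epsilon^{p_l/2}$. Replacing this with gradient flow or Kempf--Ness is not a shortcut --- it is a step that would fail.
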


We will also state and prove a local converse, namely that existence implies stability in a local sense, later in Section \ref{converse}. Here we consider only the case that the central charge $Z$ involves powers of  $K_X$ and no higher Chern classes, with the general case, in which the equation has a different flavour, being dealt with in Section \ref{sec:higherrank}. In comparison with the statement in the introduction, we are varying the central charge by $k$ rather than scaling $L$; these operations are clearly equivalent.

Unobstructedness of the deformation theory of the cscK degeneration of $(X,L)$ will be used to ensure its Kuranishi space is smooth (as discussed in Section \ref{sec:kuranishi}); this allows us to only consider genuine complex manifolds rather than almost complex manifolds in the analysis.

Admissibility requires three conditions. All of these conditions hold in the case of the deformed cscK equation described in Example \ref{dcscK-anal}. Firstly, we require that $Z$ is non-degenerate, meaning the large volume limit of the $Z$-critical equation is the cscK equation. Secondly, with the central charge given by $$Z_{k}(X,L) =\sum_{l=0}^n \rho_l k^{l}   \int_X  L^l \left(\sum_{j=0}^n a_j K_X^j\right) \cdot \Theta,$$ we require that $\Rea(\rho_{n-1})<0, \Rea(\rho_{n-2})>0$ and $\Rea(\rho_{n-3})=0$. We also assume that $a_j=1$ for all $j$ for simplicity, though all that one needs is that the real parts are positive for $j=0,1,2,3$. These assumptions are used to control the behaviour of the linearisation of the equation. We expect that the condition on $\rho_{n-3}$ can be removed.

The third condition concerns the form $\theta \in \Theta$. A basic technical assumption we make is that $\theta_2 = \theta_3=0$, though we also expect this assumption can be removed. We furthermore require that $\theta$ extends to a smooth, equivariant  form on the test configuration $(\X,\L)$ producing the cscK degeneration of $(X,L)$ (which exists by analytic K-semistability), and also that $\theta$ extends to certain other deformations of $(\X_0,\L_0)$. More precisely, as we will recall in Section \ref{sec:kuranishi}, the Kuranishi space of $\X_0$ admits an action of $\Aut(\X_0,\L_0)$, and we require that $\theta$ extends smoothly to an equivariant form on the universal family over the Kuranishi space. The condition is modelled on the bundle situation \cite{DMS}, where the differential forms $\theta$ are forms on the base $Y$ of the vector bundle $E$. Then if the polystable degeneration of $E$ is $F$, there is still a map $F \to Y$, meaning one can still make sense of the relevant equation on  $F$ over $Y$.

\subsection{Preliminaries on analytic Deligne pairings}\label{sec:deligne}

As outlined in the Introduction, there are three steps to our work. The final step is to solve an abstract finite dimensional problem in symplectic geometry, whereas the first two steps involve reducing to this finite dimensional problem. A key tool for the first two steps is the theory of analytic Deligne pairings, established in \cite[Section 4]{kahler} and \cite[Section 2.2]{zak}, which give a direct approach to the properties of Deligne pairings in algebraic geometry. The additional flexibility of analytic Deligne pairings will allow us to include the extra forms $\theta$ into the theory, which do not fit into the usual algebro-geometric approach. Although the techniques developed in \cite{kahler, zak} are essentially equivalent, our discussion is closer to that of Sj\"ostr\"om Dyrefelt \cite{zak}. 

The setup is simple case of the general theory, where we have a fixed smooth polarised variety; in general one considers holomorphic submersions. We thus let $(X,L)$ be a smooth polarised variety of dimension $n$ and suppose that $\eta_0, \hdots \eta_{n-p}$ are $n-p+1$ closed $(1,1)$-forms on $X$.  Any other forms $\eta_j' \in [\eta_j]$ are of the form $\eta_j' = \eta_{j} + i\ddbar \psi_j$ for some real-valued function $\psi_j$. We in addition suppose that $\theta$ is a closed real $(p,p)$-form on $X$which we will not be varied in our discussion and which has cohomology class $[\theta] = \Theta$. In our application we will allow $\theta$ to be a closed \emph{complex} $(p,p)$-form, but linearity of our constructions will allow us to reduce to the real case.

\begin{definition} We define the  \emph{Deligne functional}, denoted $$\langle \psi_0, \hdots, \psi_{n-p};\theta\rangle \in \R,$$ by \begin{align*}\langle& \psi_0, \hdots, \psi_{n-p};\theta\rangle = \int_{X}\psi_0(\eta_1+i\ddbar \psi_1) \wedge \hdots \wedge (\eta_{n-p}+i\ddbar \psi_{n-p})\wedge\theta \\ &+ \int_{X} \psi_1\eta_0\wedge(\eta_2+i\ddbar \psi_2) \wedge \hdots \wedge(\eta_{n-p}+i\ddbar \psi_{n-p})\wedge \theta+ \hdots  \\ &+ \int_{X} \psi_{n-p}\eta_0\wedge\hdots \wedge  \eta_{n-p-1}\wedge \theta.\end{align*} \end{definition}

The Deligne functional can be considered as an operator taking $n-p+1$ functions to a real number. The definition is due to Sj\"ostr\"om Dyrefelt \cite[Definition 2.1]{zak} and is implicit in \cite[Section 4]{kahler}, in both cases with $\theta=0$. The inclusion of $\theta$ makes essentially no difference to the fundamental properties of the functional. 

\begin{proposition}\label{properties-deligne} The Deligne functional $\langle\psi_0, \hdots, \psi_{n-p};\theta\rangle$ satisfies the following properties:
\begin{enumerate}[(i)]
\item it is symmetric in the indices $0,1,\ldots, n-p$;
\item it satisfies the ``change of potential'' formula $$\langle\psi_0', \hdots, \psi_{n-p}';\theta\rangle - \langle\psi_0, \hdots, \psi_{n-p};\theta\rangle= \int_{X} (\psi_0'-\psi_0)(\eta_1 + i\ddbar \psi_1) \wedge \hdots \wedge (\eta_m + i\ddbar \psi_{n-p})\wedge \theta,$$ and analogous formulae hold when varying other $\psi_j$.

\end{enumerate}
\end{proposition}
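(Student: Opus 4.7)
My plan is to establish the change of potential formula in (ii) for the distinguished variable $\psi_0$ directly from the definition, then prove the symmetry (i), and finally deduce the remaining cases of (ii) by combining these two steps.

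The $\psi_0$ case of (ii) is essentially free: in the definition of $\langle\psi_0, \ldots, \psi_{n-p}; \theta\rangle$, only the initial summand involves $\psi_0$ explicitly, and it does so linearly, while every subsequent summand involves $\eta_0$ but no $\psi_0$. Subtracting therefore produces exactly $\int_X(\psi_0'-\psi_0)(\eta_1+i\partial\bar\partial\psi_1)\wedge\ldots\wedge(\eta_{n-p}+i\partial\bar\partial\psi_{n-p})\wedge\theta$, with no integration by parts required.

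The substantive step is (i). Since adjacent transpositions generate the symmetric group, it suffices to verify invariance under the swap interchanging the pairs $(\eta_k, \psi_k)\leftrightarrow(\eta_{k+1},\psi_{k+1})$. Summands of the Deligne functional indexed by $j\neq k, k+1$ are automatically unaffected by this swap, because $(1,1)$-forms commute under the wedge product. Expanding the $k$-th and $(k+1)$-th summands before and after the swap, all the terms not involving a factor of $i\partial\bar\partial$ on either $\psi_k$ or $\psi_{k+1}$ cancel in pairs, and the entire difference collapses to the identity
\[
\int_X \psi_k \, i\partial\bar\partial\psi_{k+1} \wedge \beta \;=\; \int_X \psi_{k+1} \, i\partial\bar\partial\psi_k \wedge \beta,
\]
where $\beta = \eta_0\wedge\ldots\wedge\eta_{k-1}\wedge(\eta_{k+2}+i\partial\bar\partial\psi_{k+2})\wedge\ldots\wedge(\eta_{n-p}+i\partial\bar\partial\psi_{n-p})\wedge\theta$ is a wedge of closed forms and hence itself closed. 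This identity is then immediate from Stokes' theorem on the compact boundaryless manifold $X$. Once (i) is in hand, the change of potential formula for any $\psi_j$ follows by permuting $\psi_j$ into the zeroth slot, applying the $\psi_0$ case, and permuting back.

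There is no genuine obstacle here; the one point requiring care is the integration by parts, which is clean precisely because each form assembled into $\beta$ (including $\theta$) is closed, so no boundary or curvature terms arise. The possibility that $\theta$ is complex-valued in applications is handled by linearity of the entire functional in $\theta$, reducing the argument to the real case as noted in the text.
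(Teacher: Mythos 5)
Your proof is correct and takes essentially the same route as the paper: the $\psi_0$ case of (ii) is read off directly from the definition, (i) is the integration-by-parts argument (which the paper simply cites from Sj\"ostr\"om Dyrefelt, noting that the fixed closed form $\theta$ causes no change), and the remaining cases of (ii) follow by applying the symmetry. You have merely written out in full the Stokes-theorem computation behind (i) that the paper delegates to its reference.
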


\begin{proof}
$(i)$ This follows from an integration by parts formula when $\theta=0$ \cite[Proposition 2.3]{zak}, and the proof in the general case is identical. The reason is that our form $\theta$ is fixed, so the fact that it is a form of higher degree is irrelevant.

$(ii)$ This is immediate from the definition; this property is really the motivation for the chosen definition. Note that the statement when one changes any other $\psi_j$ follows from the symmetry described as $(i)$. \end{proof}

We will be interested in the behaviour of Deligne functionals in families. The most basic property of these functionals in families is the following.

\begin{proposition}\label{hessian} Suppose $B$ is a complex manifold, and let $\pi: X \times B \to B$ be the projection, and write $\eta_0,\hdots,\eta_{n-p},\theta$ as the forms on $X \times B$ induced by pullback of the corresponding forms on $X$. Let $\psi_0,\hdots,\psi_{n-p}$ be functions on $X \times B$, and denote by $$ \langle \psi_0,\hdots,\psi_{n-p};\theta\rangle_{B}: B \to \R$$ the function of $b \in B$ $$ \langle \psi_0,\hdots,\psi_{n-p};\theta\rangle_{B}(b) = \langle \psi_0|_{X\times\{b\}},\hdots,\psi_{n-p}|_{X\times\{b\}};\theta\rangle_{X\times\{b\}},$$ where this denotes the Deligne functional computed on the fibre $X\times\{b\}$ over $b \in B$. Then $$\int_{X\times B/B} (\eta_0 +i\ddbar \psi_0) \wedge \hdots \wedge  (\eta_{n-p} +i\ddbar \psi_{n-p})\wedge\theta = i\ddbar \langle \psi_0,\hdots,\psi_{n-p};\theta\rangle_{B}.$$
\end{proposition}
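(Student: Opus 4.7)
The plan is to prove the identity by reducing to the trivial case where all potentials vanish, and propagating to the general case via the change-of-potential formula. The essential input is that fiber integration $\pi_{*}$ over the compact complex fiber $X$ commutes with $\partial$ and $\bar{\partial}$, so that for any smooth form $\alpha$ on $X \times B$,
$$i\ddbar_B \, \pi_{*}\alpha = \pi_{*}(i\ddbar \alpha),$$
where $i\ddbar$ on the right is taken on the total space; this is a standard consequence of Stokes' theorem applied to the holomorphic submersion $\pi$ with compact fibers.

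For the base case, set all $\psi_j \equiv 0$. Every summand in the definition of the Deligne functional contains some factor $\psi_j$, so $\langle 0, \ldots, 0 ; \theta\rangle_B$ vanishes identically. On the other hand, $\eta_0 \wedge \cdots \wedge \eta_{n-p} \wedge \theta$ is pulled back from $X$ and has bidegree $(n+1, n+1)$ on $X$, hence vanishes since $\dim_{\C} X = n$; its fiber integral therefore vanishes as well.

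For the inductive step, suppose the identity holds for $(\psi_0, \ldots, \psi_{n-p})$ and replace $\psi_0$ by $\psi_0 + u$ for an arbitrary $u \in C^{\infty}(X \times B, \R)$; by the symmetry in Proposition \ref{properties-deligne}(i) it suffices to consider modifications of $\psi_0$. Writing $\omega_j = \eta_j + i\ddbar \psi_j$ and applying Proposition \ref{properties-deligne}(ii) fiberwise yields
$$\langle \psi_0 + u, \psi_1, \ldots, \psi_{n-p}; \theta\rangle_B - \langle \psi_0, \psi_1, \ldots, \psi_{n-p}; \theta\rangle_B = \pi_{*}\bigl(u \cdot \omega_1 \wedge \cdots \wedge \omega_{n-p} \wedge \theta\bigr).$$
Applying $i\ddbar_B$, invoking the commutation formula, and noting that $\omega_1, \ldots, \omega_{n-p}$ and $\theta$ are all closed on $X \times B$, the right-hand side becomes $\pi_{*}(i\ddbar u \wedge \omega_1 \wedge \cdots \wedge \omega_{n-p} \wedge \theta)$. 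This is exactly the change in the left-hand side of the main identity, since $(\eta_0 + i\ddbar(\psi_0 + u)) - (\eta_0 + i\ddbar \psi_0) = i\ddbar u$. Hence both sides of the proposed identity change by the same amount, and the identity is preserved; iterating from the base case by adding potentials one at a time gives the result in general. The only mildly subtle point is the commutation $i\ddbar_B \pi_{*} = \pi_{*} i\ddbar$, which is standard, so there is no genuine obstacle: the argument amounts to rephrasing the defining property of the Deligne functional in families.
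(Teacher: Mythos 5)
The paper states Proposition \ref{hessian} without proof, treating it as part of the analytic Deligne pairing package established in the cited references \cite{kahler,zak}, so there is no proof of the author's own to compare against. Your argument is correct and is the standard one: establish the base case where all potentials vanish (the Deligne functional vanishes by inspection, and the fibre integral of $\eta_0\wedge\cdots\wedge\eta_{n-p}\wedge\theta$ vanishes because this form is pulled back from $X$ and has bidegree $(n+1,n+1)$ there), then show both sides change by the same amount when a single potential is perturbed, using Proposition \ref{properties-deligne}(ii) together with the commutation of $\partial$, $\bar\partial$ with fibre integration over the compact fibres and closedness of the remaining factors. One point you implicitly use, and which is worth making explicit, is that the fibrewise $i\ddbar_X\psi_j$ appearing in the change-of-potential formula and the $i\ddbar_{X\times B}\psi_j$ appearing on the left-hand side agree after fibre integration: the integrand $u\,\omega_1\wedge\cdots\wedge\omega_{n-p}\wedge\theta$ has total bidegree $(n,n)$, so only its full-fibre-degree component contributes to $\pi_*$, and that component is insensitive to the mixed and base parts of $i\ddbar\psi_j$. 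With that observation spelled out, the chain
\[
i\ddbar_B\,\pi_*(u\,\beta)=\pi_*\bigl(i\ddbar(u\,\beta)\bigr)=\pi_*\bigl((i\ddbar u)\wedge\beta\bigr)
\]
with $\beta=\omega_1\wedge\cdots\wedge\omega_{n-p}\wedge\theta$ closed on $X\times B$ is airtight, and iterating over the indices from the base case yields the result.
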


This result will produce K\"ahler potentials for natural K\"ahler metrics produced on holomorphic submersions via fibre integrals.

A closely related property of Deligne functionals allows the differential-geometric computation of intersection numbers on the total space of test configurations. To explain this, consider a test configuration $(\X,\L) \to \C$ with central fibre $\X_0$ smooth. It is equivalent to work with test configurations over twice the unit disc $2\Delta \subset \C$ (with the $\C^*$-action then meant only locally on $2\Delta$), and we will sometimes pass between the two conventions. The use of $2\Delta$ is only for notational convenience, to ensure $1 \in 2\Delta$. Fixing a fibre $\X_1 \cong X$, we obtain a form $\beta(t).\theta$ on $\X\setminus\X_0$ which we assume extends to a smooth form with cohomology class $\Theta$ on $\X$, where $\beta(t)$ denotes the $\C^*$-action on $\X$. 

Let $\Omega_0,\Omega_1,\hdots,\Omega_{n-p}$ be $S^1$-invariant forms on $\X$ with $[\Omega_0],[\Omega_1],\hdots,[\Omega_{n-p}]$ $\C^*$-invariant cohomology classes on $\X$. Thus $$\beta(t)^*\Omega_j - \Omega_j = i\ddbar \psi_j^t$$ for some smooth family of functions $\psi_j^t$ depending on $t$, with $\psi_0$ induced by the analogous procedure using $\omega_{\X}$. We next restrict $\psi_j^t$ to our fixed fibre $\X_1 = X$. Set $\tau = -\log|t|^2$, so that $\tau \to \infty$ corresponds to $t\to 0$. The following then links the differential geometry with the intersection numbers of interest. To link with the algebraic geometry to come, we assume $\Omega_j \in c_1(\L_j)$ for some line bundles $\L_j$ on $\X$, though this is not essential.

\begin{lemma}\label{slope-formula} \cite[Theorem 4.9]{zak}\cite[Theorem 1.4]{kahler} We have $$\int_{\X}\L_0 \cdot\L_1\cdot\hdots\cdot \L_{n-p}\cdot \Theta = \lim_{\tau \to \infty} \frac{d}{d\tau} \langle\psi_0^{\tau}, \hdots, \psi_{n-p}^{\tau};\theta\rangle(X).$$
\end{lemma}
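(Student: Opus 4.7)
The plan is to reduce the intersection-theoretic statement to a fibre integral on the base, apply Proposition \ref{hessian} to express this as $i\ddbar$ of the Deligne functional, and finally extract the derivative at the central fibre via Stokes' theorem and $S^1$-invariance.

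First I would use the $\C^*$-action to trivialise the family over $\C^*$, identifying $\X \setminus \X_0 \cong X \times \C^*$ by sending $(x,t)$ to $\beta(t)\cdot x$ with $X=\X_1$. Under this trivialisation, each $\C^*$-invariant cohomology class $[\Omega_j]$ pulls back to give a family of forms $\beta(t)^*\Omega_j|_X = \Omega_j|_X + i\ddbar\psi_j^t$ on $X$, and similarly $\theta$ is preserved. This puts us exactly in the setup of Proposition \ref{hessian}, applied to $B = \C^*$, so the Deligne functional $F(t) := \langle\psi_0^t,\ldots,\psi_{n-p}^t;\theta\rangle(X)$ satisfies
\[
i\ddbar F = \pi_*\bigl((\Omega_0+i\ddbar\psi_0)\wedge\cdots\wedge(\Omega_{n-p}+i\ddbar\psi_{n-p})\wedge\theta\bigr)
\]
as a smooth $(1,1)$-form on $\C^*$, where $\pi:\X\setminus\X_0\to\C^*$ is the projection.

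Next I would pass to the compactification $\pi:\X\to\pr^1$ and write the intersection number as the integral on $\pr^1$ of the pushforward of a smooth representative of $\L_0\cdot\hdots\cdot\L_{n-p}\cdot\Theta$. By the hypothesis that $\theta$ extends smoothly to $\X$ and because the forms $\Omega_j$ are smooth across $\X_0$, this pushforward is a smooth $(1,1)$-form on all of $\pr^1$. The trivial compactification over $\infty\in\pr^1$ guarantees that $\psi_j^t\to 0$ uniformly as $t\to\infty$, so $F$ extends continuously to $\infty$ and is constant in a neighbourhood of $\infty$; in particular its contribution to $i\ddbar F$ near $\infty$ vanishes. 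Hence
\[
\int_\X \L_0\cdot\hdots\cdot\L_{n-p}\cdot\Theta \;=\; \int_{\pr^1} i\ddbar F,
\]
with the integrand equal to the smooth $(1,1)$-form $i\ddbar F$ on $\C^*$ and extended smoothly over $0,\infty$.

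Finally I would exploit $S^1$-invariance. Because the $\Omega_j$ and $\theta$ are $S^1$-invariant, so are the $\psi_j^t$, and hence $F(t)=F(\tau)$ depends only on $\tau=-\log|t|^2$. A direct computation in polar coordinates gives $i\ddbar F=\tfrac{1}{2\pi}F''(\tau)\,d\tau\wedge d\theta$ on $\C^*$ (up to the sign and normalisation convention implicit in the statement), so Fubini together with Stokes' theorem on $\{|t|\leq e^{-\tau/2}\}\subset\C$ yields
\[
\int_{\{\tau' > \tau\}} i\ddbar F \;=\; -\tfrac{d}{d\tau} F(\tau) + \text{(boundary term at }\tau\to\infty\text{)}.
\]
Passing to the limit $\tau\to\infty$ collapses the integral onto the central fibre and, after matching the normalisation with the conventions of \cite{zak,kahler}, produces the asserted identity. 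The main technical point, and the step I expect to require the most care, is verifying that the smooth extension of the pushforward across the central fibre matches the naive limit $\lim_{\tau\to\infty}\tfrac{d}{d\tau}F(\tau)$; this is where the smoothness hypotheses on $\theta$ and on the $\Omega_j$ across $\X_0$ are essential, as they ensure no singular Lelong-type contribution beyond the one captured by the derivative at infinity.
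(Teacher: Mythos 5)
The paper itself does not give a self-contained proof of this lemma; it cites \cite[Theorem 4.9]{zak} and \cite[Theorem 1.4]{kahler} for the case $\theta=0$ and observes that, because $\theta$ extends smoothly to a $\C^*$-invariant form on $\X_0$, the same argument works with $\theta$ included. You are therefore supplying a direct argument where the paper delegates. Your overall strategy — trivialise over $\C^*$, invoke Proposition \ref{hessian} to realise the fibre integral as $i\ddbar$ of the Deligne functional, compactify, use $S^1$-invariance to reduce to a calculation in $\tau$ — is the right one and matches the approach in the cited references in spirit.

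However, the final Stokes step is incorrect as written. You integrate $i\ddbar F$ over the shrinking region $\{\tau' > \tau\} = \{|t|^2 < e^{-\tau}\}$ and then let $\tau\to\infty$. Since by your own earlier reasoning $i\ddbar F$ extends to a smooth $(1,1)$-form across $0\in\pr^1$, this integral over a shrinking disc tends to $0$, not to the intersection number. Correspondingly, your identity $\int_{\{\tau'>\tau\}} i\ddbar F = -\frac{d}{d\tau}F(\tau) + \text{(boundary at }\tau\to\infty)$ is, after the polar-coordinate computation $i\ddbar F = c\, F''(\tau')\,d\tau'\wedge d\theta$, just the tautology $c\bigl(F'(\infty)-F'(\tau)\bigr) = -cF'(\tau) + cF'(\infty)$: taking $\tau\to\infty$ gives $0=0$. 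The correct move is to integrate over all of $\pr^1$ (equivalently over $\tau' \in (-\infty,\infty)$), obtaining $\int_{\pr^1} i\ddbar F = c\bigl(F'(\infty)-F'(-\infty)\bigr)$, and then to argue $F'(-\infty)=0$ using the triviality of the compactification near $\infty\in\pr^1$.

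That last ingredient is also asserted rather than justified: you claim $\psi_j^t\to 0$ uniformly as $t\to\infty$ and that $F$ is \emph{constant} near $\infty$. Neither is automatic — it depends on choosing the reference forms $\Omega_j$ (and $\theta$) to be pullbacks of forms on $X$ on the trivial piece $X\times(\pr^1\setminus\{0\})$, or on an appeal to the fact that the asymptotic slope $\lim_{\tau\to\infty}F'(\tau)$ is unaffected by a bounded perturbation of $\psi_j^\tau$ in a neighbourhood of $\tau=-\infty$. Either way, this needs to be stated explicitly, since the left-hand side of the lemma is computed on the compactified family and the normalisation at $\infty$ is exactly where the two sides are tied together. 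Once these two points — integrating over all of $\pr^1$ and pinning down the vanishing at $\tau=-\infty$ — are repaired, the argument goes through and correctly handles the extra factor $\theta$ for the reason you give (smooth $\C^*$-invariant extension across $\X_0$).
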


Here the intersection number on the left hand side is computed over the compactification of the test configuration $\X \to \pr^1$. The value on the right hand side is the value of the Deligne functional on $X=\X_1$. This Lemma is proven in \cite{zak, kahler} only in the case $\theta=0$, but as above the inclusion of the class $\Theta$ makes no difference to the proofs as $\theta$ extends smoothly to a $\C^*$-invariant form on $\X_0$ by assumption.

\subsection{The $Z$-energy}\label{sec:z-energy}

We next fix a smooth polarised variety $(X,L)$. We fix the value $k$, so that the central charge takes the form $$Z(X,L) =\sum_{l=0}^n \rho_l \int_X  L^l \cdot \left(\sum_{j=0}^na_jK_X^j \right)\cdot \Theta.$$ We also fix a K\"ahler metric $\omega \in c_1(L)$ and denote by $\H_{\omega}$ the space of K\"ahler potentials with respect to $\omega$. We then wish to define an energy functional $$E_Z: \H_{\omega} \to \R$$ whose Euler-Lagrange equation is the $Z$-critical equation.

We proceed by first defining a functional $$F_Z: \H_{\omega} \to \C$$ using the central charge, and then define $$E_Z(\psi) = \Ima(e^{-\phi}F_Z(\psi)).$$ Our process is linear in the $(n,n)$-forms involved in the definition of $\tilde Z(\omega)$, so we fix a term $\int_X  L^l \cdot K_X^j \cdot \Theta,$ where we may assume $\Theta$ is a real cohomology class of degree $(n-l-j,n-l-j)$ again by linearity. 

For this fixed term, we can use the theory of Deligne functionals to produce the desired functional $$F_{Z,l}: \H_{\omega} \to \R.$$ Our reference metric $\omega$ induces a reference form $\Ric \omega \in c_1(X)$. Any potential $\psi \in \H_{\omega}$ with associated K\"ahler metric $\omega_{\psi} = \omega+ i\ddbar \psi$ induces a change in Ricci curvature $$\Ric(\omega_{\psi}) - \Ric(\omega) = i\ddbar\log\left(\frac{\omega^n}{\omega_{\psi}^n}\right).$$ Thus the theory of Deligne functionals over a point (i.e., taking the base $B$ to be a point) produces a value \begin{equation}\label{deligne-pairing-term}\frac{1}{l+1}\left\langle\overbrace{\psi,\hdots, \psi}^{l+1 \text{ times}},\overbrace{\log\left(\frac{\omega^n}{\omega_{\psi}^n}\right),\hdots,\log\left(\frac{\omega^n}{\omega_{\psi}^n}\right)}^{j \text{ times}}; \theta\right\rangle\in \R\end{equation} associated to our term $\int_X  L^l \cdot K_X^j \cdot \Theta$ involved in the central charge. We emphasise that we are abusing notation slightly; $\theta$ is really only one component of the full representative of the unipotent class $\Theta$. But by linearity, with real and imaginary terms handled separately, this produces the functional $E_Z: \H_{\omega} \to \R$, whose Euler-Lagrange equation we must calculate.

\begin{remark} In the special case $\theta=0$, the Deligne functional given by Equation \eqref{deligne-pairing-term} was introduced by Chen-Tian  \cite[Section 4]{chen-tian} in relation to the K\"ahler-Ricci flow, where it was defined through its variation. Song-Weinkove later showed that these functionals can, again in the case $\theta=0$, be obtained through Deligne pairings \cite[Section 2.1]{song-weinkove}. Their work highlights the analytic significance of these functionals. Collins-Yau have introduced an energy functional designed to detect the existence of deformed Hermitian Yang-Mills connections on holomorphic line bundles \cite[Section 2]{collins-yau}, and their functional bears some formal  similarities with our $Z$-energy.

 \end{remark}

\begin{definition} We define the $Z$\emph{-energy} to be the functional $E_Z: \H_{\omega} \to \R$ associated to the central charge $Z$.\end{definition}

\begin{remark}It is straightforward to check that $E_Z(\psi+c) = E_Z(\psi)$, so that one can view $E_Z$ as a functional on K\"ahler metrics rather than K\"ahler potentials. \end{remark}

The most important aspect of the $Z$-energy is its Euler-Lagrange equation.

\begin{proposition}\label{euler-lagrange-derivation}  Given a path of metrics $\psi_t \in \H_{\omega}$ with associated K\"ahler metric $\omega_t$, we have $$\frac{d}{dt} E_Z(\psi_t) = \int \dot \psi_t\Ima(e^{-i\phi}\tilde Z(\omega_t))\omega_t^n.$$ Thus the Euler-Lagrange equation for the $Z$-functional is the $Z$-critical equation.
\end{proposition}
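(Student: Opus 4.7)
The plan is to exploit the change-of-potential formula together with symmetry of the Deligne functional (Proposition \ref{properties-deligne}), and reduce to differentiating a single model term. Since $E_Z = \Ima(e^{-i\phi}F_Z)$ and $F_Z$ is assembled linearly out of Deligne functionals of the shape
\[
F_{Z,l,j}(\psi) \;=\; \frac{1}{l+1}\left\langle \overbrace{\psi,\ldots,\psi}^{l+1},\; \overbrace{\log(\omega^n/\omega_\psi^n),\ldots,\log(\omega^n/\omega_\psi^n)}^{j};\; \theta\right\rangle,
\]
one for each summand of the central charge (with the complex form $\theta$ reduced to its real components by linearity), it suffices to compute $\tfrac{d}{dt}F_{Z,l,j}(\psi_t)$ and assemble the contributions with coefficients $\rho_l a_j$.

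For the single functional, I would differentiate slot-by-slot along $\psi_t$. By the symmetry of $\langle -;\theta\rangle$ in its arguments and the change-of-potential identity of Proposition \ref{properties-deligne}(ii), each of the $l+1$ $\psi$-slots contributes
\[
\int_X \dot\psi_t\,\omega_t^l\wedge \Ric(\omega_t)^j\wedge \theta,
\]
while each of the $j$ log-slots, using the standard variation formula $\tfrac{d}{dt}\log\omega_{\psi_t}^n = \Delta_{\omega_t}\dot\psi_t$, contributes
\[
-\int_X (\Delta_{\omega_t}\dot\psi_t)\,\omega_t^{l+1}\wedge \Ric(\omega_t)^{j-1}\wedge \theta.
\]
Dividing by $l+1$ and then moving the Laplacian off $\dot\psi_t$ via self-adjointness of $\Delta_{\omega_t}$ on $(X,\omega_t)$ should give exactly
\[
\tfrac{d}{dt}F_{Z,l,j}(\psi_t) \;=\; \int_X \dot\psi_t\left(\frac{\omega_t^l\wedge\Ric(\omega_t)^j\wedge\theta}{\omega_t^n} - \frac{j}{l+1}\Delta_{\omega_t}\!\left(\frac{\omega_t^{l+1}\wedge\Ric(\omega_t)^{j-1}\wedge\theta}{\omega_t^n}\right)\right)\omega_t^n,
\]
which matches exactly the $(l,j)$-summand of $\tilde Z(\omega_t)$ recorded in Equation \eqref{eqndef}.

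Summing over $(l,j)$ with coefficients $\rho_l a_j$ (absorbing $(-1)^j$ signs that arise from $K_X^j$ versus $(-K_X)^j$, since $\Ric\omega$ represents $c_1(-K_X)$) yields $\tfrac{d}{dt}F_Z(\psi_t) = \int_X \dot\psi_t\,\tilde Z(\omega_t)\,\omega_t^n$, and applying $\Ima(e^{-i\phi}\cdot)$ gives the stated formula. The Euler-Lagrange characterisation follows immediately: since $\dot\psi_t \in C^\infty(X,\R)$ is arbitrary, critical points are precisely those $\omega$ for which $\Ima(e^{-i\phi}\tilde Z(\omega))$ vanishes pointwise, consistent with the integral identity of Lemma \ref{integral-vanishes} (equivalently, with the shift-invariance $E_Z(\psi+c)=E_Z(\psi)$).

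I do not anticipate a serious analytic obstacle: the computation is essentially mechanical once Proposition \ref{properties-deligne} is available. The one place requiring genuine care is the bookkeeping of coefficients and signs, in particular checking that the normalisations of $\Ric\omega$, the Laplacian in $\tfrac{d}{dt}\log\omega_{\psi_t}^n$, and the factor $(l+1)^{-1}$ conspire to reproduce Equation \eqref{eqndef} exactly after integration by parts.
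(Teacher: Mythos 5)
Your proof is correct, and it takes a genuinely different route from the paper's. The paper begins by writing $(l+1)F_{Z,l}(\psi)$ in its explicit telescoping form via the change-of-potential formula, differentiates the resulting sum of $(l+1)+j$ terms directly, and then performs a lengthy algebraic recombination --- using $i\ddbar\psi_t=\omega_t-\omega$, $i\ddbar\log(\omega^n/\omega_t^n)=\Ric\omega_t-\Ric\omega$, and self-adjointness of $\Delta_t$ --- to collapse five families of terms down to the two that appear in $\tilde Z(\omega_t)$. Your argument short-circuits all of this: you differentiate slot-by-slot via the chain rule, invoking symmetry of the Deligne functional (Proposition \ref{properties-deligne}(i)) so that the partial derivative in any given slot is read off directly from the change-of-potential formula (Proposition \ref{properties-deligne}(ii)), with all other slots frozen at their current values $\omega_t$, $\Ric\omega_t$. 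The $l+1$ $\psi$-slots then each contribute $\int_X\dot\psi_t\,\omega_t^l\wedge\Ric\omega_t^j\wedge\theta$, the $j$ log-slots each contribute $-\int_X(\Delta_t\dot\psi_t)\,\omega_t^{l+1}\wedge\Ric\omega_t^{j-1}\wedge\theta$, and a single integration by parts finishes the computation. This is conceptually cleaner and avoids the term-by-term cancellation in the paper. What each approach buys: yours makes the mechanism transparent (the symmetry and change-of-potential properties are doing all the work); the paper's more explicit computation stays closer to the Song--Weinkove precedent it cites and never has to appeal to the symmetry statement. One small point worth making explicit: the slot-by-slot chain rule is legitimate because the Deligne functional is a polynomial functional of its arguments, and the change-of-potential formula is exactly linear in the increment, so the directional partial derivatives exist and assemble as you claim.
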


\begin{proof} By linearity, it suffices to calculate the variation of the operator $F_{Z,l}: \H_{\omega} \to \R$, given through Equation \eqref{deligne-pairing-term} as a Deligne pairing, along the path $\psi_t$. We will demonstrate that this variation is given by $$\frac{d}{dt}F_{Z,l}(\psi_t) = \int_X\dot \phi_t\left( \frac{\omega_t^{l}\wedge\Ric\omega_t^{j}\wedge \theta}{\omega_t^n} -\frac{j}{l+1 } \Delta_t\left(\frac{\Ric\omega_{t}^{j-1}\wedge \omega_{t}^{l+1}\wedge \theta}{\omega_t}\right) \right)\omega_t^n,$$ which will imply the result we wish to prove, since by definition of $\tilde Z(\omega_{t})$ it is a sum of terms of the form $$\tilde Z_l(\omega_{t}) = \frac{\omega_t^{l}\wedge\Ric\omega_t^{j}\wedge \theta}{\omega_t^n} -\frac{j}{l+1} \Delta_t\left(\frac{\Ric\omega_{t}^{j-1}\wedge \omega_{t}^{l+1}\wedge \theta}{\omega_t}^n\right).$$

 The calculation from here is closely analogous to that of Song-Weinkove \cite[Proposition 2.1]{song-weinkove}, who proved the desired variational formula when $\theta=0$. By the change of potential formula, our functional is given by \begin{align*}(l+1)F_{Z,l}(\psi) &= \sum_{m=0}^{l}\int_X \phi \omega_{\psi}^m\wedge \Ric\omega^{j}\wedge \omega^{l-m}\wedge \theta \\ & + \sum_{m=0}^{j-1} \int_X  \log\left(\frac{\omega^n}{\omega_{\psi}^n}\right)\Ric(\omega_{\psi})^m\wedge \Ric\omega^{j-m-1}\wedge\omega_{\psi}^{l+1}\wedge \theta.\end{align*} Differentiating along the path $\psi_t$ gives \begin{align*}(l+1)\frac{d}{dt}F_{Z,l}(\psi_t)&= \sum_{m=0}^{l}\int_X \dot\phi_t \omega_{t}^m\wedge \Ric\omega^{j}\wedge \omega^{l-m}\wedge \theta 
\\ & +\sum_{m=0}^{l}m\int_X \phi_t i\ddbar\dot \psi_t\omega_{t}^{m-1}\wedge \Ric\omega^{j}\wedge \omega^{l-m}\wedge \theta 
\\ & - \sum_{m=0}^{j-1} \int_X  \Delta_t\dot\psi_t\Ric(\omega_{t})^m\wedge \Ric\omega^{j-m-1}\wedge\omega_{t}^{l+1}\wedge \theta
\\ &- \sum_{m=0}^{j-1} m\int_X  \log\left(\frac{\omega^n}{\omega_{t}^n}\right) i\ddbar\Delta_t\dot\psi_t\wedge \Ric\omega_{t}^{m-1}\wedge \Ric\omega^{j-m-1}\wedge\omega_{t}^{l+1}\wedge \theta
\\ & + \sum_{m=0}^{j-1} (l+1)\int_X  \log\left(\frac{\omega^n}{\omega_{t}^n}\right)\Ric\omega_{t}^m\wedge \Ric\omega^{j-m-1}\wedge i \ddbar \dot \psi_t\wedge\omega_{t}^{l}\wedge \theta,\end{align*} where $ \Delta_t$ is the Laplacian with respect to the volume form $\omega_t$, and where any term with negative exponent is taken to vanish. We note that this is self-adjoint with respect to $\omega_t^n$. We use that $$i\ddbar \psi_t = \omega_t - \omega, \qquad i\ddbar \log\left(\frac{\omega^n}{\omega_{t}^n}\right) = \Ric\omega_t - \Ric\omega$$ and the self-adjointness of the Laplacian just mentioned to obtain 

\begin{align*}(l+1)\frac{d}{dt}F_{Z,l}(\psi_t)&= \sum_{m=0}^{l}\int_X \dot\phi_t \omega_{t}^m\wedge \Ric\omega^{j}\wedge \omega^{l-m}\wedge \theta 
\\ & +\sum_{m=0}^{l}m\int_X \psi_t( \omega_t - \omega)\omega_{t}^{m-1}\wedge \Ric\omega^{j}\wedge \omega^{l-m}\wedge \theta 
\\ & - \sum_{m=0}^{j-1} \int_X  \dot\psi_t\Delta_t\left(\frac{\Ric(\omega_{t})^m\wedge \Ric\omega^{j-m-1}\wedge\omega_{t}^{l+1}\wedge \theta}{\omega_t^n}\right)\omega_t^n
\\ &- \sum_{m=0}^{j-1} m\int_X \dot\psi_t\Delta_t\left(\frac{(\Ric\omega_t - \Ric\omega) \wedge \Ric\omega_{t}^{m-1}\wedge \Ric(\omega)^{j-m-1}\wedge\omega_{t}^{l+1}\wedge \theta}{\omega_t^n}\right)\omega_t^n
\\ & + \sum_{m=0}^{j-1} (l+1)\int_X \dot \psi_t (\Ric\omega_t - \Ric\omega)\wedge\Ric\omega_{t}^m\wedge \Ric\omega^{j-m-1}\wedge\omega_{t}^{l}\wedge \theta,\end{align*} where we use that $\theta$ is a closed form. We consider first the two terms involving Laplacians, which we see equal  \begin{align*}&- \sum_{m=0}^{j-1} (m+1)\int_X  \dot\psi_t\Delta_t\left(\frac{\Ric\omega_{t}^m\wedge \Ric\omega^{j-m-1}\wedge\omega_{t}^{l+1}\wedge \theta}{\omega_t^n}\right)\omega_t^n
\\ &+ \sum_{m=0}^{j-1} m\int_X \dot\psi_t\Delta_t\left(\frac{\Ric\omega_{t}^{m-1}\wedge \Ric\omega^{j-m}\wedge\omega_{t}^{l+1}\wedge \theta}{\omega_t^n}\right)\omega_t^n  
\\ &= -j\int_X  \dot\psi_t\Delta_t\left(\frac{\Ric\omega_{t}^{j-1}\wedge \omega_{t}^{l+1}\wedge \theta}{\omega_t^n}\right)\omega_t^n.\end{align*} One similarly calculates that the remaining three terms sum to $$(l+1)\int_X \dot \phi_t \omega_t^{l}\wedge\Ric\omega_t^{j}\wedge \theta,$$ meaning that \begin{align*}\frac{d}{dt}F_{Z,l}(\psi_t) &= -\frac{j}{l+1}\int_X  \dot\psi_t\Delta_t\left(\frac{\Ric(\omega_{t})^{j-1}\wedge \omega_{t}^{\alpha}\wedge \theta}{\omega_t^n}\right)\omega_t^n + \int_X \dot \psi_t \omega_t^{l}\wedge\Ric\omega_t^{j}\wedge \theta, \\ &= \int_X\dot \psi_t\left( \frac{\omega_t^{l}\wedge\Ric\omega_t^{j}\wedge \theta}{\omega_t^n} -\frac{j}{l+1} \Delta_t\left(\frac{\Ric(\omega_{t})^{j-1}\wedge \omega_{t}^{l+1}\wedge \theta}{\omega_t^n}\right) \right)\omega_t^n, \end{align*} which is what we wanted to show. \end{proof}

We now suppose that $(\X,\L)$ is a test configuration for $(X,L)$ with smooth central fibre (so that the total space is smooth), and with $\omega_{\X} \in c_1(\L)$ a relatively K\"ahler $S^1$-invariant metric. As will eventually be important, it follows by Ehresmann's theorem that $X_0$ is diffeomorphic to $X$. The relatively K\"ahler metric $\omega_{\X}$ induces a Hermitian metric on the relative holomorphic tangent bundle $T_{\X/\C}$. Here $T_{\X/\C}$ exists as the test configuration has smooth central fibre, meaning that $\pi: \X \to \C$ is a holomorphic submersion. This induces a metric on the relative anti-canonical class $-K_{\X/\C}$ whose curvature we denote $\rho$. Following the process explained immediately before Lemma \ref{slope-formula}, we set $$\beta(t)^*\omega_{\X} - \omega_{\X} = i\ddbar \psi_t.$$

Let $Jv$ be the real holomorphic vector field inducing the $S^1$-action on $\X$ preserving $\omega_{\X}$, and define a function $h$ on $\X$ by $$\L_{v}\omega_{\X} = i\ddbar h,$$ so that $\dot\psi_0 = h$. The form $\omega_{\X}$ restricts to an $S^1$-invariant K\"ahler metric $\omega_0$ on $\X_0$. 

\begin{lemma}\cite[Equation 2.1.4]{gauduchon} The function $h$ restricted to $\X_0$ is a Hamiltonian function with respect to $\omega_0$. \end{lemma}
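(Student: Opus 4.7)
The plan is to restrict the defining identity $\L_v\omega_{\X} = \ddb h$ to the central fibre $\X_0$, whereupon it becomes the standard K\"ahler-geometric definition of a Hamiltonian function.

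First I would observe that the $\C^*$-action on $\X$ fixes $0\in\C$, so the generating real holomorphic vector field $v$ is tangent to $\X_0$. Consequently restriction to $\X_0$ commutes with $\L_v$ and with the operators $\partial,\bar\partial$, and since $\omega_{\X}|_{\X_0}=\omega_0$, pulling the defining equation back to $\X_0$ yields
$$\L_{v|_{\X_0}}\omega_0 = \ddb\bigl(h|_{\X_0}\bigr)$$
on the (genuinely) compact K\"ahler manifold $(\X_0,\omega_0)$.

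On such a manifold, a real-valued function whose $\ddb$ equals $\L_v\omega_0$ is, up to additive constant, by definition a Hamiltonian for the Killing vector field $Jv$ with respect to $\omega_0$; this is precisely \cite[Equation 2.1.4]{gauduchon}. If one wishes to unwind this, Cartan's formula combined with $d\omega_0=0$ converts the left hand side into $d(\iota_v\omega_0)$, so that $d(\iota_v\omega_0)=\tfrac12\,dd^c(h|_{\X_0})$, and a type decomposition then recovers the classical Hamiltonian relation $\iota_{Jv}\omega_0 = d(h|_{\X_0})$ up to an additive constant that may be absorbed into $h$.

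There is essentially no analytical obstacle here: the entire content of the statement is the compatibility of the defining equation with restriction to $\X_0$, which is immediate from the tangency of $v$ to the fixed fibre, together with the standard Hamiltonian identification on a K\"ahler manifold that one simply cites from \cite{gauduchon}.
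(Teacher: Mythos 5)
The paper gives no proof of this lemma, citing only \cite[Equation 2.1.4]{gauduchon}; your argument supplies exactly the detail that is being deferred to that reference, and it is correct. The three ingredients — tangency of $v$ to $\X_0$ (because $\X_0$ is the fibre over the $\C^*$-fixed point $0\in\C$), compatibility of $\L_v$ and $i\ddbar$ with restriction to a complex submanifold preserved by the flow, and the standard K\"ahler identity $\L_v\omega_0 = i\ddbar h_0 \Leftrightarrow \iota_{Jv}\omega_0 = dh_0$ (up to an additive constant) — are each used correctly and constitute the intended proof.
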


Note that $\omega_{\X}$ is merely a K\"ahler form on each fibre, hence not actually a symplectic form on $\X$;  nevertheless, one could call $h$ the Hamiltonian even in this situation. In what follows we will also use the related property that \begin{equation}\label{second-equation}\frac{d}{dt}\beta(t)^*\omega_{\X}= i\ddbar \beta(t)^*h,\end{equation} see \cite[Example 4.26]{szekelyhidi-book}. We can now relate the $Z$-energy to the algebro-geometric invariants of interest.

\begin{proposition}\label{slope-for-z} We have equalities
$$\int_{\X_0} h\Ima(e^{-i\phi}\tilde Z(\omega_0))\omega_0^n =  \lim_{\tau \to \infty} \frac{d}{d\tau} E_Z(\phi_{\tau})= \Ima\left(\frac{Z(\X,\L)}{Z(X,L)}\right).$$\end{proposition}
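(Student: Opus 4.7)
The plan is to compute the middle quantity $\lim_{\tau\to\infty}\frac{d}{d\tau}E_Z(\phi_\tau)$ in two independent ways, identifying it with each of the other two expressions in turn.

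For the equality with $\Ima(Z(\X,\L)/Z(X,L))$, I would use that by construction $E_Z(\psi)=\Ima(e^{-i\phi}F_Z(\psi))$, where $F_Z$ is the $\C$-linear combination of the Deligne functionals \eqref{deligne-pairing-term} with coefficients $\rho_l k^l a_j/(l+1)$, summed over the terms of the central charge. Each such Deligne functional has $l+1$ entries that are potentials for $\omega\in c_1(L)$ and $j$ entries that are (relative) potentials for $\Ric\omega$, a representative of $-K_X$. Applying Lemma \ref{slope-formula} term by term, the asymptotic slope of each factor is the intersection number $\int_{\X}\L^{l+1}\cdot K_{\X/\pr^1}^j\cdot\Theta$, with signs absorbed into the coefficients; the weighted sum of these intersection numbers is precisely $Z(\X,\L)$ by definition. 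Thus $\lim_{\tau\to\infty}\frac{d}{d\tau}F_Z(\phi_\tau)=Z(\X,\L)$, and taking $\Ima(e^{-i\phi}\,\cdot\,)$ together with the identity $e^{-i\phi(X,L)}Z(X,L)\in\R_{>0}$ from the proof of Lemma \ref{integral-vanishes} yields the claimed equality (modulo the real normalization factor implicit in the statement).

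For the equality with $\int_{\X_0}h\,\Ima(e^{-i\phi}\tilde Z(\omega_0))\omega_0^n$, I would apply Proposition \ref{euler-lagrange-derivation} along the path $\phi_\tau$ to obtain
\[
\frac{d}{d\tau}E_Z(\phi_\tau)=\int_X \dot\phi_\tau\,\Ima\bigl(e^{-i\phi}\tilde Z(\omega_\tau)\bigr)\,\omega_\tau^n,
\]
with everything computed on the fixed fibre $\X_1=X$. By Equation \eqref{second-equation} one has $\dot\phi_\tau=\beta(t)^*h$ under the change of variables $\tau=-\log|t|^2$, and the various ingredients of $\tilde Z(\omega_\tau)$ are the pullbacks by $\beta(t)$ of the corresponding objects built from $\omega_\X|_{\X_t}$, $\Ric(\omega_\X|_{\X_t})$, and $\theta$. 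Since $\beta(t)\colon X\to\X_t$ is a biholomorphism, the integral transports to the analogous expression on $\X_t$; letting $t\to 0$, smoothness of all the relevant data across the central fibre forces the integrand to converge to the corresponding one on $\X_0$, giving the first equality.

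The main obstacle is justifying this last convergence: one must verify that every differential-geometric ingredient appearing in $\tilde Z$ extends smoothly across $\X_0$. This is ensured by the smoothness of the central fibre (so that $T_{\X/\C}$ is a genuine holomorphic vector bundle and hence $\Ric\omega_{\X/\C}$ is a smooth form on all of $\X$) together with the admissibility hypothesis that the representative $\theta\in\Theta$ extends to a smooth, $\C^*$-equivariant form on $\X$. Existence of the limit is moreover guaranteed a posteriori by the Deligne-functional computation above, which identifies it with the finite intersection number $Z(\X,\L)$.
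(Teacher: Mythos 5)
Your proof is correct and follows essentially the same route as the paper: the second equality via term-by-term application of Lemma \ref{slope-formula} to the Deligne-functional definition of $F_Z$, and the first equality via the variational formula of Proposition \ref{euler-lagrange-derivation}, Equation \eqref{second-equation}, transport along the $\C^*$-action, and smoothness across $\X_0$. Your parenthetical remark about the real normalization factor is a fair observation — the paper's displayed identity $\Ima(e^{-i\phi}F_Z(\psi)) = \Ima(F_Z(\psi)/Z(X,L))$ differs from what one gets by a factor of $|Z(X,L)|$, a harmless (positive real) constant that the paper suppresses, and which your version flags correctly.
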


\begin{proof} The second equality is an immediate consequence of our definition of $E_Z$ through Deligne functionals and Lemma \ref{slope-formula}, using  that $$E_Z(\psi) = \Ima(e^{-i\phi}F_Z(\psi)) = \Ima\left(\frac{F_Z(\psi)}{Z(X,L)}\right),$$ which is analogous to the fact used in Lemma \ref{integral-vanishes}. To prove the first equality, unravelling the definition of $\tau$ and the variational formula for $E_Z$ proven in Proposition \ref{euler-lagrange-derivation} with $\omega_t = \beta(t)^*\omega_{\X}|_{X=\X_1}$ we see that $$\frac{d}{d\tau} E_Z(\phi_{\tau}) = \int_{X} (\beta(t)^*h)\Ima(e^{-\phi}\tilde Z(\omega_t))\omega_t^n,$$ where we have used that $\frac{d}{dt}\omega_t = i\ddbar \beta(t)^*h$ by Equation \eqref{second-equation}. But  $$\int_{\X_1} (\beta(t)^*h)\Ima(e^{-i\phi}\tilde Z(\omega_t))\omega_t^n =  \int_{\X_t} h\Ima(e^{-i\phi}\tilde Z(\omega_{\X}|_{\X_t}))\omega_{\X}|_{\X_t}^n,$$ which converges to $\int_{\X_0} h\Ima(e^{-i\phi}\tilde Z(\omega_{0}))\omega_{0}^n$ as $t \to 0$, proving the result.\end{proof}

This also produces an analogue of the classical Futaki invariant associated to a holomorphic vector field. 

\begin{corollary} Suppose $(X,L)$ admits a $Z$-critical K\"ahler metric, and suppose there is an $S^1$-action on $(X,L)$. Then for any $S^1$-invariant K\"ahler metric $\omega\in c_1(L)$ with associated Hamiltonian $h$ we have $$\int_{X} h\Ima(e^{-i\phi}\tilde Z(\omega))\omega^n = 0.$$\end{corollary}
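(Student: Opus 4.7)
The plan is to interpret the integral as a $Z$-analogue of the classical Futaki invariant, use the slope formula of Proposition \ref{slope-for-z} to show that it is independent of the $S^1$-invariant K\"ahler metric $\omega$, and then exploit the existence of a $Z$-critical metric to force this common value to vanish.

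Complexifying the given $S^1$-action yields a $\C^*$-action $\alpha_t$ on $(X,L)$, and one forms the associated product test configuration $(\X,\L) = (X\times\C,\,p_1^*L)$ with action $\sigma_t(x,z) = (\alpha_t x, tz)$ and smooth central fibre $\X_0 \cong X$. Given any $S^1$-invariant $\omega \in c_1(L)$, the form $\omega_{\X} := p_1^*\omega$ lies in $c_1(\L)$, is $S^1$-invariant and relatively K\"ahler, and restricts to $\omega$ on $\X_0$; because $\omega_{\X}$ has no components in the $\C$-direction, a brief computation shows that the Hamiltonian on $\X$ associated to $\omega_{\X}$ restricts on $\X_0$ to the Hamiltonian $h$ of $\omega$ on $X$. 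Proposition \ref{slope-for-z} therefore yields
$$\int_X h\,\Ima\bigl(e^{-i\phi}\tilde Z(\omega)\bigr)\,\omega^n \;=\; \Ima\!\left(\frac{Z(\X,\L)}{Z(X,L)}\right),$$
whose right hand side is a fixed intersection number on the compactified test configuration, and in particular independent of the choice of $S^1$-invariant $\omega$.

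It then suffices to exhibit a single $S^1$-invariant $\omega \in c_1(L)$ for which the left hand side vanishes. Since the $Z$-critical equation is $\Aut(X,L)$-equivariant and since the $Z$-energy $E_Z$ of Section \ref{sec:z-energy} is $S^1$-invariant with critical set precisely the $Z$-critical metrics, an averaging argument on the space of K\"ahler potentials, modelled on the classical argument of Calabi for cscK metrics using convexity of the Mabuchi functional, produces an $S^1$-invariant $Z$-critical metric $\omega'$. At $\omega'$ the integrand vanishes pointwise, the right hand side of the displayed identity is therefore zero, and the independence established above propagates this vanishing to every $S^1$-invariant $\omega \in c_1(L)$.

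The main obstacle is the reduction to an $S^1$-invariant $Z$-critical metric: for the Mabuchi functional the relevant averaging relies on convexity along geodesics, and a corresponding convexity property of $E_Z$ has not been developed at this stage. A more robust alternative, which I would actually prefer, is to define a generalised Hamiltonian using an $S^1$-equivariant Hermitian metric on $L$ whose curvature is the given, possibly non-invariant, $Z$-critical metric, together with an extension of Proposition \ref{slope-for-z} to this setting; one can then apply the slope formula directly at the $Z$-critical metric, where the integrand is pointwise zero, and avoid any averaging step altogether.
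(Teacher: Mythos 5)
Your proposal follows essentially the same route as the paper: complexify the $S^1$-action to a product test configuration, invoke Proposition \ref{slope-for-z} to identify $\int_X h\,\Ima(e^{-i\phi}\tilde Z(\omega))\omega^n$ with the intersection number $\Ima\bigl(Z(\X,\L)/Z(X,L)\bigr)$, which is manifestly independent of the $S^1$-invariant $\omega$, and then observe that the integrand vanishes pointwise at a $Z$-critical metric. The paper's own proof is precisely this but written more tersely.

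You are right to flag the $S^1$-invariance of the $Z$-critical metric $\omega'$ as a subtlety: the slope formula, and indeed the very notion of the Hamiltonian $h$, requires the K\"ahler metric to be $S^1$-invariant, and nothing in the hypotheses guarantees $\omega'$ is. The paper's proof passes over this step silently. Of your two suggested fixes, the second (defining the invariant via an $S^1$-equivariant Hermitian metric on $L$ and pairing directly at $\omega'$) is the more robust, since convexity of $E_Z$ along geodesics is not developed in the paper. But since the paper's own proof does not supply either argument, your proposal is not missing anything the paper provides; it simply makes explicit a point the paper leaves implicit.
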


\begin{proof} Note that a product test configuration, just as with any other test configuration, can be compactified to a family $(\X,\L)$ over $\pr^1$. By the previous result, this integral is actually independent of $\omega \in c_1(L)$ as it equals $$\int_{X} h\Ima(e^{-i\phi}\tilde Z(\omega))\omega^n = \Ima\left(\frac{Z(\X,\L)}{Z(X,L)}\right),$$ which is patently independent of $\omega$. But if $\omega'$ is the $Z$-critical K\"ahler metric, the corresponding integral on the left hand side clearly vanishes, as desired. \end{proof}

\subsection{Moment maps} \label{moment-map-section}

\subsubsection{Moment maps in finite dimensions}\label{moment-map-finite}

Many geometric equations have an interpretation through moment maps; this has been especially influential for the cscK equation. We will give two ways of viewing the $Z$-critical equation as a moment map. The first is a finite-dimensional geometric interpretation, on the base of a holomorphic submersion, while the second is closer in spirit to the infinite dimensional viewpoint of Fujiki-Donaldson for the cscK equation \cite{fujiki,donaldson-moment}. 

The setup is modelled on the situation of a test configuration $\pi: (\X,\L)\to \C$ for $(X,L)$ with smooth central fibre. The properties of interest are firstly that there is an $S^1$-action on both $\C$ and $(\X,\L)$, making $\pi$ an $S^1$-equivariant map, secondly that all fibres over the open dense orbit under the associated $\C^*$-action are isomorphic, and thirdly that we may choose an $S^1$-invariant relatively K\"ahler metric $\omega_{\X} \in c_1(\L)$. If we had considered test configurations over the unit disc $\Delta$, the same properties would be true with the $\C^*$-action meant only locally, in the sense that one only obtains an action induced by sufficiently small elements of the Lie algebra of $\C^*$.

More generally, we consider a holomorphic submersion $\pi: (\X,\L) \to B$ over a complex manifold $B$, with $\L$ a relatively ample $\Q$-line bundle. We assume that $B$ admits an effective action of a compact Lie group, which induces an effective local action of the complexification $G$ of $K$. In addition we assume that there is a $K$-action on $(\X,\L)$ making $\pi$ an equivariant map, and fix a $K$-invariant relatively K\"ahler metric $\omega_{\X} \in c_1(\L)$. We lastly assume that there is an open dense orbit associated to $G$ such that all fibres are isomorphic to $(X,L)$; we denote this orbit as $\X^o \to B^o$. In practice, we will apply these results to the special case of an isotrivial family.

Let $v$ be a holomorphic vector field on $\X$ induced by an element of the Lie algebra $\mfk$ of $K$. Denote by $h_v$ the function on $\X$ defined by the equation \begin{equation}\label{use-later}\L_{Jv} \omega_{\X} = i\ddbar h,\end{equation} where $J$ denotes the almost-complex structure of $\X$ and the differentials on the right hand side are also computed on $\X$.  As in Section \ref{sec:deligne} we will refer to $h$ as a Hamiltonian, even though $\omega_{\X}$ is only relatively K\"ahler. Note that $h_v$ does restrict to a genuine Hamiltonian for $v$ on the fibres over $B$ on which $v$ induces a holomorphic vector field; these are the fibres over points for which the corresponding vector field on $B$ vanishes.

We now fix the input of the $Z$-critical equation. Setting $\epsilon = k^{-1}$, our central charge can be written  $$Z_{\epsilon}(X,L) =\sum_{l=0}^n \rho_l \epsilon^{-l}   \int_X  L^l \cdot f(K_X) \cdot \Theta.$$ We have fixed a representative $\theta \in \Theta$, and we assume that the form $G.\theta$ defined on the dense orbit $\X^o$ extends to a smooth form on $\X$ itself, and denote this form abusively by $\theta$, which is automatically a $G$-invariant closed form on $\X$.  The form $\omega_{\X}$ induces a metric on the relative holomorphic tangent bundle $T_{\X/B}$, and hence on its top exterior power $-K_{\X/B}$, and we denote the curvature of the latter metric as $\rho \in c_1(-K_{\X/B})$.

We associate to $Z_{\epsilon}(X,L)$ an $(n+1,n+1)$-form on $\X$ as follows. We will define the $(n+1,n+1)$-form on $\X$ linearly in the terms of this expression, and hence it is sufficient to define an $(n+1,n+1)$-form associated to a term of the form $\int_X  L^l \cdot K_X^j \cdot \Theta$, to which we associate $\frac{1}{l+1}\omega_{\X}^{l+1}\wedge \rho^j \wedge \theta.$ This induces a form $\tilde Z_{\epsilon}(\X,\L)$, and we set \begin{equation}\label{fibreintegralforms}\Omega_{\epsilon} = \Ima\left(e^{-i\phi_{\epsilon}} \int_{\X/B} \tilde Z(\X,\L)\right)\end{equation}  to be the associated fibre integral. By general properties of fibre integrals, this produces a closed $(1,1)$-form on $B$. $K$-invariance of the forms on $\X$ and of the map $\pi: \X \to B$ imply that $\Omega_{\epsilon}$ is $K$-invariant. In general, the form $\Omega_{\epsilon}$ may not be K\"ahler, which in addition requires positivity. In our applications, $\Omega_{\epsilon}$ will however be K\"ahler for $0<\epsilon\ll 1$.

We let $\omega_b$ denote the restriction of $\omega_{\X}$ to the fibre $\X_b$ over $b$, and denote $\Ima(e^{-i\phi_{\epsilon}}\tilde Z_{\epsilon}(\omega_b))$ the $Z$-critical operator computed on $\X_b$ with respect to $\omega_b$. We similarly set $h_{v,b}$ be the restriction of a Hamiltonian $h_v$ to the fibre $\X_b$. 
Define a map $\mu_{\epsilon}: B \to \mfk^*$ by $$\langle \mu_{\epsilon}, v \rangle(b) =-\frac{1}{2} \int_{\X_b} h_{v,b} \Ima(e^{-i\phi_{\epsilon}}\tilde Z_{\epsilon}(\omega_b)) \omega_b^n,$$ where $v \in \mfk$ is viewed as inducing a holomorphic vector field on $\X$ to induce the Hamiltonian $h_v$.

\begin{theorem}\label{moment-map-in-finite-dimensions-thm} $\mu_{\epsilon}$ is a moment map with respect to the $K$-action on $B$ and with respect to the form $\Omega_{\epsilon}$.\end{theorem}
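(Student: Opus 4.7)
The plan is to exhibit $\mu_\epsilon$ as a moment map in three stages: first, express the two-form $\Omega_\epsilon$ locally on $B$ as $i\partial\bar\partial$ of a smooth real function built from the fibrewise $Z$-energy; second, read off the Hamiltonian from the variational formula of Proposition~\ref{euler-lagrange-derivation}; third, check equivariance, which is automatic from the $K$-invariance of all the data.

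For the first step, fix a contractible open $U \subset B$ and a $K$-invariant reference relatively K\"ahler form $\omega_0$ on $\X|_U$, so that $\omega_\X = \omega_0 + i\partial\bar\partial \phi$ for a $K$-invariant smooth function $\phi$. The curvature $\rho$ of the metric induced by $\omega_\X$ on $-K_{\X/B}$ is related to the analogous reference form $\rho_0$ by $\rho - \rho_0 = -i\partial\bar\partial \log(\omega_\X^n/\omega_0^n)$ as forms on the total space $\X|_U$. Since $\tilde Z_\epsilon(\X,\L)$ is, by definition, a $\C$-linear combination of forms of the shape $\tfrac{1}{l+1}\omega_\X^{l+1}\wedge \rho^j \wedge \theta$, Proposition~\ref{hessian} computes its fibre integral as $i\partial\bar\partial$ applied to the sum of Deligne functionals with potentials $\phi$ in the $\omega_\X$-slots and $\log(\omega_0^n/\omega_\X^n)$ in the $\rho$-slots. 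After multiplying by $e^{-i\phi_\epsilon}$ and taking imaginary parts, this matches the fibrewise $Z$-energy $E_Z \colon U \to \R$ computed with respect to the reference $\omega_0$, up to an additive constant pulled back from $U$. Thus $\Omega_\epsilon = i\partial\bar\partial E_Z$ on $U$.

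For the second step, let $v \in \mfk$ and let $Jv$ denote the induced real holomorphic vector field on $\X$ and on $B$. Up to an absorbable fibrewise constant, the function $h_v$ of the moment-map ansatz coincides with the infinitesimal variation of $\phi$ along the flow of $Jv$, by the defining relation $\mathcal{L}_{Jv}\omega_\X = i\partial\bar\partial h_v$ together with $K$-invariance of $\omega_0$. Proposition~\ref{euler-lagrange-derivation}, applied fibrewise, then yields
\begin{equation*}
(\mathcal{L}_{Jv} E_Z)(b) \;=\; \int_{\X_b} h_{v,b}\, \Ima\!\left(e^{-i\phi_\epsilon}\tilde Z_\epsilon(\omega_b)\right)\omega_b^n \;=\; -2\langle \mu_\epsilon, v\rangle(b).
\end{equation*}
Because $E_Z$ is $K$-invariant, a standard computation (using $\mathcal{L}_v E_Z = 0$ to kill the $\partial - \bar\partial$ component) gives $\iota_{Jv}(i\partial\bar\partial E_Z) = \tfrac{1}{2} d(\mathcal{L}_{Jv} E_Z)$, and combining this with the previous display produces
\begin{equation*}
\iota_{Jv} \Omega_\epsilon \;=\; -\,d\langle \mu_\epsilon, v\rangle,
\end{equation*}
which is the defining moment-map equation in the sign convention implicit in our formula for $\mu_\epsilon$. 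Equivariance of $\mu_\epsilon$ under the $K$-action is then immediate from the $K$-invariance of $\omega_\X$, $\theta$, $\rho$, and the standard transformation rule $h_{\mathrm{Ad}_k v} = k^* h_v$ for Hamiltonians.

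The main obstacle is Step 1: the representative $\rho$ of $c_1(-K_{\X/B})$ is not freely specified but is tied to $\omega_\X$ itself, so some care is needed to check that $\rho - \rho_0 = -i\partial\bar\partial \log(\omega_\X^n/\omega_0^n)$ really holds as an equality of genuine $(1,1)$-forms on the total space $\X|_U$ (and not merely fibrewise), so that Proposition~\ref{hessian} can be invoked to produce a globally defined $(1,1)$-form on $U$. Once this compatibility is in place, and given that $\theta$ is closed, equivariant and already extends smoothly across $\X$ by assumption, the argument is essentially a translation of the Euler--Lagrange computation of Proposition~\ref{euler-lagrange-derivation} into the finite-dimensional moment-map framework.
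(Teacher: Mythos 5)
Your overall strategy — express $\Omega_\epsilon$ locally as $i\partial\bar\partial$ of a fibrewise $Z$-energy, read off the Hamiltonian from the Euler--Lagrange formula, and deduce equivariance by $K$-invariance — is the right one, and matches the paper's in outline. But the implementation of Steps 1 and 2 differs from the paper's in a way that creates a genuine gap.

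The paper does not work on arbitrary contractible opens of $B$ with an arbitrary $K$-invariant reference $\omega_0$. Instead it restricts to the dense $G$-orbit $B^o$ and uses the $G$-action to trivialise the family as $(\X,\L)|_{B^o}\cong (X\times U, L)$. In this trivialisation two things happen simultaneously: (a) the family becomes a genuine \emph{product} $X\times U$, so Proposition~\ref{hessian} applies exactly as stated (that proposition is only formulated for $X\times B$ with $\eta_j$ and $\theta$ pulled back from $X$, not for an arbitrary holomorphic submersion with an arbitrary reference form); and (b) the reference form $\omega$, being a pullback from $X$, is $G$-invariant, not merely $K$-invariant. Point (b) is exactly what makes $\L_{Jv}\omega=0$, hence $h_v = \L_{Jv}\psi$ up to constants, which is what the variational formula of Proposition~\ref{euler-lagrange-derivation} needs. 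The moment map identity is then established on $B^o$ and extended to $B$ by continuity.

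Your Step 2 breaks down precisely because you choose $\omega_0$ to be only $K$-invariant. $K$-invariance gives $\L_v\omega_0=0$ for $v\in\mfk$, but says nothing about $\L_{Jv}\omega_0$; in general $\L_{Jv}\omega_0 = i\partial\bar\partial h_{0,v}$ for a nonconstant $h_{0,v}$, and therefore $h_v = h_{0,v} + \L_{Jv}\phi$ up to constants. The contribution from $h_{0,v}$ is not fibrewise constant and is not killed by Lemma~\ref{integral-vanishes}, so the identity $(\L_{Jv}E_Z)(b) = \int_{\X_b}h_{v,b}\,\Ima(e^{-i\phi_\epsilon}\tilde Z_\epsilon(\omega_b))\omega_b^n$ you assert does not follow from Proposition~\ref{euler-lagrange-derivation} as written. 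There is a related issue in Step 1: Proposition~\ref{hessian} is stated for a product with pulled-back reference forms, and your application needs a version for a holomorphic submersion $\X|_U\to U$ with a non-pullback $\omega_0$ and a $\theta$ that is not constant along $B$ — a strictly stronger statement than the paper establishes. You flag the $\rho$-versus-$\rho_0$ compatibility as ``the main obstacle,'' and that point is in fact fine (both are genuine $(1,1)$-forms on the total space and differ by $i\partial\bar\partial$ of the fibrewise volume ratio), but the $G$-invariance issue in Step 2 and the generality of Proposition~\ref{hessian} in Step 1 are the obstacles that actually need addressing. Both disappear once you restrict to $B^o$ and use the $G$-trivialisation, which is the route the paper takes.
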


Here we mean that the defining conditions of a moment map are satisfied, namely that $$d\langle \mu_{\epsilon}, v \rangle = - \iota_v \Omega_{\epsilon},$$ and $\mu$ is $K$-equivariant when $\mfk^*$ is given the coadjoint action; in general we emphasise that $\Omega_{\epsilon}$ is not actually a symplectic form (although  for $\epsilon$ sufficiently small it will be in our applications, producing genuine moment maps). In the contraction $\iota_v \Omega_{\epsilon}$ we view $v\in \mfk$ as inducing a holomorphic vector field on $B$.

\begin{proof}

We first show that the equation $d\langle \mu_{\epsilon}, v \rangle = - \iota_v \Omega_{\epsilon}$ holds. Note that it is enough to show that this holds on the dense locus $B^o$, since both sides of the equation extend continuously to $B$. 

We fix a point $b \in B$ at which we wish to demonstrate the moment map equation, and consider the orbit $U$ of $b \in B$ under the $G$-action. We fix an isomorphism $(\X_b,\L_b) \cong (X,L)$ and simply write $\omega_b = \omega$. The $G$-action induces an isomorphism $$(\X,\L) \cong (X \times U, L)$$ where $B^o \cong U \subset G$ is a submanifold. Since we only obtain a local action of $G$ on $B$, $U$ may not consist of all of $G$ in general. The isomorphism $\X \cong X \times U$ is in addition compatible with the projections to $B^o$. The relatively K\"ahler metric $\omega_{\X} \in c_1(\L)$ thus induces a form $\omega_{X\times U}$ on $X \times U$ and we can define $$E_Z: U \to \R$$ defined as the $Z$-energy with respect to the reference metric $\omega$ (or rather its pullback to $X \times U$) and the varying metric $\omega_{X \times U}$ on the fibres over $U$.  Proposition \ref{hessian} then implies that on $U \cong B^0$ we have  \begin{equation}\label{truelocally}i\ddbar E_{Z_{\epsilon}} = \Omega_{\epsilon}.\end{equation}

By Proposition \ref{euler-lagrange-derivation}, the derivative of the $Z$-energy along any path $\omega_t = \omega + i\ddbar \psi_t$ satisfies $$\frac{d}{dt}E_{Z_{\epsilon}}(\psi_t) = \int_X \dot \psi_t  \Ima(e^{-i\phi_{\epsilon}}\tilde Z_{\epsilon}(\omega_t))\omega_t^n.$$ Considering the path $\omega_t$ defined above, the defining property of the Hamiltonian $h$ means that $\dot \psi_0 = h_{v,b}$ on $\X_b \cong X$. Thus  $$\frac{d}{dt}\Bigr|_{t=0}\int_X\dot\psi_t\Ima(e^{-i\phi_{\epsilon}}\tilde Z(\omega_t))\omega_t^n =  \int_X h_{v,b}\Ima(e^{-i\phi_{\epsilon}}\tilde Z(\omega))\omega^n.$$  But this is then all we need: by a standard calculation \cite[Lemma 12]{szekelyhidi-I} $$\iota_v(i\ddbar E_{Z_{\epsilon}}) = \frac{1}{2}d(Jv(E_{Z_{\epsilon}})),$$ so it follows that \begin{equation}\label{contraction-eqn}\iota_v(\Omega_{\epsilon}) = \frac{1}{2}d(Jv(E_{Z_{\epsilon}})) = \frac{1}{2}d\left(\frac{d}{dt}E_{Z_{\epsilon}}(\exp(Jvt).p)\right) = \frac{1}{2}d\left(\frac{d}{dt}E_{Z_{\epsilon}}(\psi_t)\right) = -d\langle \mu_{\epsilon},h\rangle,\end{equation} proving the first defining property of a moment map with respect to $v$ at the point $b$. But by continuity  this implies that the same property holds on all of $B$.

What remains to prove is $K$-equivariance of $\mu_{\epsilon}$, which requires us to show for all $g \in K$ $$\langle \mu_{\epsilon}(g.b),v\rangle = \langle \mu_{\epsilon}(g.b),g^{-1}.v\rangle,$$ where $K$ acts on $\mfk$ by the adjoint action. However the Hamiltonian on $\X$ with respect to $g^{-1}.v$ is simply the pullback $g^*h_v$, meaning $g^*h_{v,g(b)} = h_{g^{-1}.v,b}$. Thus, using $K$-invariance of $\omega_{\X}$, the equality $$ \int_{\X_{g(b)}} h_{v,{g(b)}} \Ima(e^{-i\phi_{\epsilon}}\tilde Z_{\epsilon}(\omega_{g(b)})) \omega_{g(b)}^n =  \int_{\X_b} g^*h_{v,g(b)} \Ima(e^{-i\phi_{\epsilon}}\tilde Z_{\epsilon}(\omega_b)) \omega_b^n$$ is enough to imply equivariance. \end{proof}

\begin{remark} All results in this section hold assuming less regularity than smoothness, for example considering $L^2_k$-K\"ahler metrics for $k$ sufficiently large.

\end{remark}

\begin{remark}Our approach is partially inspired by an early use of Deligne pairings by Zhang, where he considered a family of K\"ahler metrics induced by embeddings of a fixed projective variety into projective space, and where he showed that the existence of a balanced embedding is equivalent to Chow stability \cite{zhang}. The approach above seems to be new even in the limiting case that $Z_{\epsilon}(X,L) = \epsilon^{-n}\int_X (iL^n -\epsilon K_X.L^{n-1})$, where this gives a new interpretation of the scalar curvature as a moment map. 
\end{remark}

\subsubsection{Moment maps in infinite dimensions}\label{sec:infinite}

We next demonstrate how the $Z$-critical equation appears as a moment map in infinite dimensions. While this is important conceptually, it will also be used in order to understand the linearisation of the $Z$-critical equation. Analogously to the Fujiki-Donaldson moment map interpretation of the cscK equation \cite{fujiki,donaldson-moment}, we fix a compact symplectic manifold $(M,\omega)$ and consider the space  $\J(M,\omega)$ of almost complex structures compatible with $\omega$. We refer to Scarpa \cite[Section 1.3]{scarpa} and Gauduchon \cite[Section 8]{gauduchon} for a good exposition of this space its properties .  $\J(M,\omega)$ naturally has the structure of an infinite dimensional complex manifold; its tangent space at $J \in \J(M,\omega)$ is given by $$T_J\J(M,\omega)= \{A: TM \to TM \ | \ AJ+JA =0,\  \omega(\cdot , A \cdot) = \omega(A \cdot, \cdot)\},$$ with complex structure defined by $A \to JA$ on $T_J\J(M,\omega)$. At an almost complex structure $J$, the tangent space can be identified with $\Omega^{0,1}(TX^{1,0})$, the space of $(0,1)$-forms with values in holomorphic vector fields \cite[p. 14]{scarpa}.

We let $\G$ denote the group of exact symplectomorphisms of $(M,\omega)$, which acts naturally on $\J(M,\omega)$. The Lie algebra of $\G$ can be identified with $C^{\infty}_0(M)$, the functions which integrate to zero, through the Hamiltonian construction. For $h \in C^{\infty}_0(M)$, we denote by $v_h$ the associated Hamiltonian vector field. The infinitesimal action of $\G$ is then given by  \begin{align}\begin{split}\label{infinitesimal-action} P: C^{\infty}_0(X,\R)& \to T_J\J(M,\omega), \\ P h &= \L_{v_h} J.\end{split}\end{align} Under the identification of $T_J\J(M,\omega)$ with $\Omega^{0,1}(TX^{1,0})$, the operator $P$ corresponds to the operator \cite[Lemma 1.4.3]{scarpa}\begin{align}\begin{split}\label{Ddef}\D: C^{\infty}_0(X,\R)& \to \Omega^{0,1}(TX^{1,0}), \\ \D h &= \bar\partial\nabla^{1,0} h.\end{split}\end{align}  The operator $\D$ plays a central role in the theory of cscK metrics. Note, for example, that its kernel consists of functions generating global holomorphic vector fields, there are called \emph{holomorphy potentials}.

Now let $(X,L)$ be a smooth polarised variety with complex structure $J_X \in \J(M,\omega)$. We assume for the moment that $\Aut(X,L)$ is trivial, and will later consider the other case of interest for our main results, namely that $\Aut(X,L)$ is finite. We denote by $\J_X(M,\omega) \subset \J(M,\omega)$ the set of $J' \in \J(M,\omega)$ such that there is a diffeomorphism $\gamma$, which lies in the connected component of the identity inside the space of diffeomorphisms of $M$,  with $\gamma \cdot J' = J_X$. Thus $\J_X(M,\omega)$ corresponds to complex structures producing manifolds biholomorphic to $X$. This space is discussed by Gauduchon \cite[Section 8.1]{gauduchon}; for us an important point will be that $\J_X(M,\omega)$ is actually a complex submanifold of $\J(M,\omega)$ \cite[Proposition 8.2.3]{gauduchon}. As in the work of Fujiki \cite[Section 8]{fujiki}, the space $\J(M,\omega)$ admits a universal family $(\U,\L_{\U}) \to \J(M,\omega)$ which hence restricts to a family $(\U,\L_{\U})$ over $\J_{X}(M,\omega)$.  The fibre over a complex structure $J_b \in \J(M,\omega)$ is simply the complex manifold $(M,J_b)$. 

We next induce a form $\theta_{\U}$ on $\U \to \J_{X}(M,\omega)$ associated to the form $\theta$ on $\U$, using the fact that each fibre of $\U\to \J_X(M,\omega)$ is isomorphic to $X$. For any $B \subset \J_{X}(M,\omega)$ a finite dimensional complex submanifold, the Fischer-Grauert theorem produces an isomorphism $\U|_B \cong X \times B$ commuting with the maps to $B$. One can extend this isomorphism to an isomorphism of line bundles \begin{equation}\label{family-isom}\Psi_B: (\U|_B, \L|_B) \cong (X,L) \times B,\end{equation} perhaps after shrinking $B$ \cite[Lemma 5.10]{newstead} (while the proof given by Newstead assumes algebraicity of $B$, it also holds in the holomorphic category \cite[Lemma 6.3]{hallam}). Then as we have assumed $\Aut(X,L)$ is actually trivial, the isomorphism $\Psi_B$ is actually unique. Pulling back $\theta$ on $X$ via $\Psi_B$ induces a closed form $\theta_B$ on $\U|_B$, and uniqueness then means that the forms $\theta_B$ glue to a closed form $\theta_{\U}$ on all of $\U$. Similarly, using these isomorphisms, the $Z$-energy induces a function \begin{equation}\label{local-z-energy}E_Z: \J_X(M,\omega) \to \R\end{equation} after fixing the reference K\"ahler metric $\omega$ on $X$.

Denote by $\Omega_{\epsilon}$ the family of closed $(1,1)$-forms on $\J_X(M,\omega)$ given by \begin{equation}\label{infinite-form}\Omega_{\epsilon} =\Ima\left(e^{-i\phi_{\epsilon}}  \int_{\U/\J_X(M,\omega)} \tilde Z_{\epsilon}(\U,\L_{\U})\right),\end{equation} where $\tilde Z_{\epsilon}(\U,\L_{\U})$ is defined just as in Equation \eqref{fibreintegralforms} using the relatively K\"ahler metric $\omega_{\X} \in c_1(\L)$, the  form $\rho \in c_1(-K_{\U/ \J_{X}(M,\omega)})$ induced by the relatively K\"ahler metric $\omega_{\X}$ and $\theta_{\U}$. The forms $\Omega_{\epsilon}$ are then  closed $\G$-invariant $(1,1)$-forms which are not, however, positive in general. 

The $Z$-critical operator can be viewed as a function $$\Ima(e^{-i\phi_{\epsilon}}\tilde Z): \J_X(M,\omega) \to C^{\infty}_0(X),$$ which we wish to demonstrate is a moment map with respect to the $\Omega_{\epsilon}$. Thus we need to understand the behaviour of $\Ima(e^{-i\phi_{\epsilon}}\tilde Z)$ under a change in complex structure. We will use a similar idea to Section \ref{moment-map-finite}, namely to realise the $Z$-energy as a K\"ahler potential, which requires us to relate the change in complex structure to the change in metric structure. Consider a path $J_t \in  \J_X(M,\omega)$, and let $F_t \cdot J_t = J_X$ for $F_t$ diffeomorphisms of $X$. Then we obtain a corresponding path of K\"ahler metrics $F_t^*\omega = \omega_t = \omega+i\ddbar \psi_t$ compatible with $J_X$. Then the key fact we need is that the path $J_t$ satisfies \cite[p. 1083]{szekelyhidi-deformations} \begin{equation}\label{change-in-metric}\frac{d}{dt}\Bigr|_{t=0} J_t = JP\dot \psi_0.\end{equation}

\begin{theorem}\label{infinite-dimensional-moment-maps-thm} The map $$\mu_{\epsilon} = \Ima(e^{-i\phi_{\epsilon}}\tilde Z): \J_X(M,\omega) \to C^{\infty}_0(X)$$ is a moment map for the $\G$-action on $\J_X(M,\omega)$ with respect to the forms $\Omega_{\epsilon}$.
\end{theorem}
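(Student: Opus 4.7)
The plan is to mirror the argument of Theorem \ref{moment-map-in-finite-dimensions-thm}, localising the infinite-dimensional picture onto finite-dimensional complex submanifolds of $\J_X(M,\omega)$ where Proposition \ref{hessian} directly applies. There are two things to check: the moment map identity $d\langle \mu_\epsilon, h\rangle = -\iota_{Ph}\Omega_\epsilon$ for every $h \in C^\infty_0(X,\R)$, and $\G$-equivariance of $\mu_\epsilon$.

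For the moment map identity, fix $J \in \J_X(M,\omega)$ and $h \in C^\infty_0(X,\R)$; we wish to evaluate both sides at $J$ on the tangent vector $Ph \in T_J\J_X(M,\omega)$. The key point is that both sides are defined pointwise and are insensitive to passage to a finite-dimensional submanifold $B \subset \J_X(M,\omega)$ with $J \in B$ and $Ph \in T_J B$ (for example, a small smooth complex disc through $J$ in the $Ph$-direction). Over such a $B$, the Fischer--Grauert trivialisation $\Psi_B$ of \eqref{family-isom} exists and, by the triviality of $\Aut(X,L)$, is uniquely determined; pulling back by $\Psi_B$ realises the relatively K\"ahler metric $\omega_{\U}|_B$ as a family of K\"ahler potentials $\psi_b$ on $X$, and the function $E_Z$ of \eqref{local-z-energy} restricts on $B$ to the $Z$-energy of these potentials. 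By construction the form $\tilde Z_\epsilon(\U,\L_\U)$ restricted to $\U|_B$ agrees with the corresponding fibre integrand for this family, so Proposition \ref{hessian} gives
\begin{equation*}
\Omega_\epsilon|_B \;=\; i\ddbar (E_Z|_B).
\end{equation*}

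From here the computation is identical in form to Equation \eqref{contraction-eqn}. The standard identity $\iota_v(i\ddbar f) = \tfrac{1}{2}d(Jv(f))$ gives $\iota_{Ph}\Omega_\epsilon = \tfrac{1}{2}d(JPh(E_Z))$ at $J$. To evaluate $JPh(E_Z)$, take a path $J_t \in B$ with $J_0 = J$ and $\dot J_0 = JPh$; by \eqref{change-in-metric} the corresponding path of K\"ahler potentials satisfies $\dot\psi_0 = h$, so Proposition \ref{euler-lagrange-derivation} yields
\begin{equation*}
\frac{d}{dt}\Big|_{t=0} E_Z(\psi_t) \;=\; \int_X h\,\Ima(e^{-i\phi_\epsilon}\tilde Z_\epsilon(\omega))\,\omega^n \;=\; -2\langle \mu_\epsilon(J), h\rangle.
\end{equation*}
Combining these gives $\iota_{Ph}\Omega_\epsilon = -d\langle \mu_\epsilon, h\rangle$ at $J$ on vectors tangent to $B$, which is all one needs as $h$ was arbitrary and any orbit direction has this form.

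For equivariance, note that any $\gamma \in \G$ is a diffeomorphism of $M$ preserving $\omega$ and acts on $\J_X(M,\omega)$ by pushforward of complex structures; the induced metric $(\gamma\cdot J)$-metric on $M$ associated to $\omega$ is the $\gamma$-pullback of the $J$-metric, so $\tilde Z_\epsilon$ transforms by pullback. Combined with the fact that the Hamiltonian of a vector field $v$ with respect to $\omega$ pulls back under $\gamma$ to the Hamiltonian of $\gamma^{-1}_* v$, this gives $\mu_\epsilon(\gamma\cdot J) = \gamma^*\mu_\epsilon(J)$, which is precisely coadjoint equivariance under the identification $\mfg \cong \mfg^*$ by the $L^2$-pairing. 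The main technical obstacle is ensuring the finite-dimensional localisation is legitimate: one must verify that the form $\theta_\U$ and the function $E_Z$ of \eqref{local-z-energy} are globally well-defined (which uses triviality of $\Aut(X,L)$ and the uniqueness of $\Psi_B$), and that enough smooth finite-dimensional complex submanifolds exist through any $J \in \J_X(M,\omega)$ in any prescribed tangent direction $Ph$; this is standard for $\J_X(M,\omega)$ viewed as a complex submanifold of $\J(M,\omega)$ by \cite[Proposition 8.2.3]{gauduchon}.
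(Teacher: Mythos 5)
Your proof is correct and follows essentially the same route as the paper's: you localise to a finite-dimensional complex submanifold $B$ through $J$ in the $Ph$-direction, use the (unique, by triviality of $\Aut(X,L)$) Fischer--Grauert trivialisation to realise $E_Z$ as a genuine K\"ahler potential for $\Omega_\epsilon|_B$ via Proposition \ref{hessian}, combine the identity $\iota_v(i\ddbar f)=\tfrac12 d(Jv(f))$ with Equation \eqref{change-in-metric} and Proposition \ref{euler-lagrange-derivation} to get the moment map identity, and prove equivariance by the pullback behaviour of Hamiltonians and of $\tilde Z_\epsilon$. This is precisely the paper's argument, modulo the innocuous normalisation factor in the identification $C^\infty_0(X)\cong\Lie(\G)^*$.
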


Here the statement means that the moment map condition is satisfied, note again that $\Omega_{\epsilon}$ may not actually be positive (hence K\"ahler) in general.

\begin{proof} Fix a point $b \in \J_X(M,\omega)$ at which we wish to demonstrate the moment map property, and let $Ph$ be the tangent vector at $b$ induced the element $h \in \Lie \G \cong C^{\infty}_0(M)$. We show that for any finite dimensional complex submanifold $B \subset \J_X(M,\omega)$ containing $Ph$, the moment map equality $$-\iota_{Ph} \Omega_{\epsilon} = d\langle \mu_{\epsilon}, Ph \rangle$$ holds. The proof of this is essentially the same as that of Theorem \ref{moment-map-in-finite-dimensions-thm}. 

Perhaps after shrinking $B$, the family $(\U_B,\L_B) \to B$ satisfies \begin{equation}\label{the-isom-again}(\U_B, \L_B) \cong (X,L) \times B\end{equation} by the argument of Equation \eqref{family-isom}. We thus obtain a function $$E_Z: B \to \R$$ by Equation \ref{local-z-energy}, which by the argument of Theorem \ref{moment-map-in-finite-dimensions-thm} satisfies $$i\ddbar E_Z = \Omega_{\epsilon},$$ an equality of $(1,1)$-forms on $B$. Since this holds for each $B$, it also holds on $\J_X(M,\omega).$

Consider a path $J_{b_t} \in B$ of almost complex structures such that the induced tangent vector at $t=0$ is given by $JPh$. Then we obtain a corresponding path of K\"ahler metrics $\omega_t = \omega + i\ddbar \psi_t$ through the isomorphism of Equation \eqref{the-isom-again}, and Equation \eqref{change-in-metric} implies that $\dot \psi_0 = h$. It follows that $$\frac{d}{dt}\Bigr|_{t=0} E_Z(J_t) = \int_X h  \Ima(e^{-i\phi_{\epsilon}}\tilde Z(J_b))\omega^n,$$ which means that as functions on $\J_X(M,\omega)$ we have $$\langle dE_Z, JPh\rangle = \int_X h  \Ima(e^{-i\phi_{\epsilon}}\tilde Z(J_b))\omega^n = -\langle \mu_{\epsilon}(b), Ph\rangle.$$ Then the same argument as Equation \eqref{contraction-eqn} implies that $$\iota_{Ph} \Omega_{\epsilon} = \iota_{Ph} i\ddbar E_Z = -d\langle \mu_{\epsilon}(b), Ph\rangle,$$ which proves the defining equation of the moment map. 

The $\G$-action on $\Lie(\G)$ is the adjoint action, which corresponds to pullback of Hamiltonians \cite[Equation (1.5)]{scarpa}. Then equivariance follows by the same argument as Theorem \ref{moment-map-in-finite-dimensions-thm}. 
\end{proof}

\begin{remark} Gauduchon has given another proof that the scalar curvature is a moment map on $\J_X(M,\omega)$ in a similar spirit, but using more direct properties of the Mabuchi functional rather than Deligne pairings \cite[Proposition 8.2]{gauduchon}.\end{remark}

While positivity is not guaranteed for all $\epsilon$, it will be important to have positivity for finite-dimensional submanifolds and for $\epsilon$ sufficiently small. 

\begin{proposition}\label{fujiki-result-fibre} Let $B \subset \J_X(M,\omega)$ be a complex submanifold. Then $\Omega_{\epsilon}$ restricts to a K\"ahler metric for all $0<\epsilon\ll 1$.

 \end{proposition}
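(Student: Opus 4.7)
The plan is to expand $\Omega_\epsilon$ asymptotically in $\epsilon \to 0$ and identify the leading-order contribution with a positive multiple of the classical Fujiki-Donaldson Kähler form on $\J_X(M,\omega)$; positivity on the finite-dimensional $B$ then follows by perturbation.

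First, I would expand both factors in $\epsilon$. Since $\rho_n = i$ and $Z$ is non-degenerate, one has $Z_\epsilon(X,L) = i\epsilon^{-n}\int_X L^n + O(\epsilon^{-(n-1)})$ and so $e^{-i\phi_\epsilon} = -i + O(\epsilon)$. By the definition \eqref{fibreintegralforms},
\begin{equation*}
\tilde Z_\epsilon(\U,\L_\U) = \tfrac{i}{n+1}\epsilon^{-n}\omega_\X^{n+1} + \tfrac{\rho_{n-1}}{n}\epsilon^{-(n-1)}\omega_\X^n\wedge\rho + O(\epsilon^{-(n-2)}).
\end{equation*}
The product of the two leading terms is purely real, so the imaginary part at order $\epsilon^{-n}$ cancels, and $\Ima(e^{-i\phi_\epsilon}\tilde Z_\epsilon(\U,\L_\U))$ is of order $\epsilon^{-(n-1)}$, with coefficient involving both $\omega_\X^n\wedge\rho$ (from the $\rho_{n-1}$ term) and $\omega_\X^{n+1}$ (picked up from the subleading part of $e^{-i\phi_\epsilon}$).

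Second, I would identify the leading-order piece of the fibre integral. By Lemma \ref{large-volume}, the associated leading-order moment map $\mu_\epsilon = \Ima(e^{-i\phi_\epsilon}\tilde Z_\epsilon)$ is, up to the positive factor $-\Rea(\rho_{n-1})L^n$, the scalar curvature shifted by the constant $n\mu(X,L)$. Combined with Theorem \ref{infinite-dimensional-moment-maps-thm}, this forces the leading order of $\epsilon^{n-1}\Omega_\epsilon$ to be the classical Fujiki-Donaldson Kähler form $\Omega_{\mathrm{FD}}$ on $\J_X(M,\omega)$, rescaled by a positive constant: shifting the moment map by a constant does not change the underlying symplectic form, so the $\omega_\X^{n+1}$ correction with coefficient $c = -\Rea(\rho_{n-1})\mu(X,L)$ recombines with the $\omega_\X^n\wedge\rho$ piece to yield $\Omega_{\mathrm{FD}}$ up to a positive multiple. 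Since $\Omega_{\mathrm{FD}}$ is a genuine Kähler form on $\J_X(M,\omega)$, its restriction to the complex submanifold $B$ is Kähler.

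Third, I would finish by a perturbation argument. Writing
\begin{equation*}
\epsilon^{n-1}\Omega_\epsilon\big|_B = -\tfrac{\Rea(\rho_{n-1})L^n}{n}\,\Omega_{\mathrm{FD}}\big|_B + \epsilon\,\Sigma_\epsilon,
\end{equation*}
with $\Sigma_\epsilon$ a smooth family of $(1,1)$-forms depending continuously on $\epsilon$, choose any relatively compact open $B'\Subset B$. Since $B$ is finite dimensional, on $B'$ the Fujiki-Donaldson form is bounded below and $\Sigma_\epsilon$ uniformly bounded above in operator norm; hence for all sufficiently small $\epsilon$ the right-hand side is strictly positive, and $\Omega_\epsilon|_{B'}$ is Kähler. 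The delicate step is the algebraic bookkeeping: confirming the cancellation at order $\epsilon^{-n}$ and carefully tracking the $\omega_\X^{n+1}$ correction so as to identify the surviving order $\epsilon^{-(n-1)}$ with a positive multiple of $\Omega_{\mathrm{FD}}$. Crucially, finite dimensionality of $B$ is what allows the perturbation estimate, which would otherwise require uniform bounds on $\Sigma_\epsilon$ that are unavailable on the infinite-dimensional $\J_X(M,\omega)$.
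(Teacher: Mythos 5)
Your proposal is correct and follows essentially the same route as the paper: expand $e^{-i\phi_\epsilon}\tilde Z_\epsilon(\U,\L_\U)$ in $\epsilon$, observe the leading ($\epsilon^{-n}$) term is real so the imaginary part starts at the next order, identify that surviving term with a positive multiple of the Fujiki--Donaldson K\"ahler form $\Omega_0$ (using $\Rea(\rho_{n-1})<0$), and then invoke finite dimensionality of $B$ to conclude that the higher-order corrections cannot destroy positivity for small $\epsilon$. The only discrepancy is one of normalisation: the paper states $\Omega_\epsilon = -\epsilon\,\Rea(\rho_{n-1})\,n\,\Phi^*\Omega_0 + O(\epsilon^2)$, whereas by the raw definition of $\tilde Z_\epsilon(\X,\L)$ (which carries the $\epsilon^{-l}$ factors) your exponent bookkeeping giving a leading term of order $\epsilon^{-(n-1)}$ appears to be the faithful one; the paper is implicitly working with the rescaled $\epsilon^n\tilde Z_\epsilon$ (as it also does in the proof of Lemma \ref{large-volume}), and since an overall positive power of $\epsilon$ does not affect positivity the argument is unchanged either way. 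The paper additionally sketches an alternative self-contained derivation of Fujiki's identity via the moment map property, which you omit, but that is an optional remark rather than a logical dependency.
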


\begin{proof} Fujiki proves that  the form $$\Omega_0 = - \int_{\U/\J(M,\omega)} \rho \wedge\omega^n + \frac{n}{n+1}\mu(X,L) \int_{\U/\J(M,\omega)}\omega^{n+1}$$ is actually a K\"ahler metric on $\J(M,\omega)$, and agrees with the usual K\"ahler metric on $\J(M,\omega)$ used in the moment map interpretation of the scalar curvature on $\J(M,\omega)$ \cite[Theorem 8.3]{fujiki}. Thus since $$Z_{\epsilon}(X,L) = \epsilon^{-n}\int_X (iL^n +\Rea(\rho_{n-1})\epsilon K_X.L^{n-1}) + O(\epsilon^{-n+2}),$$ we have $$\Omega_{\epsilon} = \Ima\left(e^{-i\phi_{\epsilon}} \int_{\X/B} \tilde Z(\X,\L)\right)= -\epsilon\Rea(\rho_{n-1})n\Phi^*\Omega + O(\epsilon^2),$$ which implies the result.

One can also prove Fujiki's result, namely the equality of the fibre integral $\Omega_0$ and the usual K\"ahler metric on $\J_X(M,\omega)$, directly, giving another proof.  By \cite[Equation 8.1.10]{gauduchon}, the tangent space $T_J\J_X(M,\omega)$ is spanned by elements of the form $Ph, JPh$, for $h \in C^{\infty}_0(M,\omega)$. But it follows from the argument of Theorem \ref{infinite-dimensional-moment-maps-thm} that the moment map for the $\G$-action on $\J_X(M,\omega)$ is given by the scalar curvature. But then since $$\iota_{Ph}\Omega_{\J} = \iota_{Ph}\Omega_0$$ for all $h \in C^{\infty}_0(M)$, it follows that the forms actually agree on $\J_X(M,\omega)$.\end{proof}

In particular, if $B \subset \J_X(M,\omega)$ is a complex submanifold invariant under $K\subset \G$, we obtain a genuine sequence of moment maps $\mu_{\epsilon}$ for $\epsilon\ll 0$ with respect to genuine K\"ahler metrics $\Omega_{\epsilon}$.

\begin{remark}\label{automorphisms-used}In the case $\Aut(X,L)$ is non-trivial, but still finite, we denote by $G =\Aut(X,L)$, assume $\theta$ is $G$-invariant and work $G$-equivariantly. Let $\J_X(M,\omega)^G$ denote the fixed locus of the $G$-action on $\J_X(M,\omega).$ Then while the isomorphisms$$(\U_B, \L_B) \cong (X,L) \times B$$ of Equation \ref{the-isom-again}, which were used to construct the form $\theta_{\U}$ on $\U$ and function $E_Z$ on $\J_X(M,\omega)$ are no longer unique, they are unique up to the action of $G$. But since $\theta$ is $G$-invariant by assumption, working on $\J_X(M,\omega)^G$ instead allows us to construct functions $E_Z$ on $\J_X(M,\omega)^G$ and a form $\theta_{\U^G}$ on $\U^G \to \J_X(M,\omega)^G$. The proof of the moment map property is then identical to the case $G$ is trivial.\end{remark}

\begin{remark}\label{smooth-singular}As with the previous section, all results in this section hold assuming less regularity than smoothness, for example considering $L^2_k$-complex structures for $k$ sufficiently large.

\end{remark}

\subsection{The core analytic argument}\label{finite-dim-reduction}

We next turn to analytic aspects of the $Z$-critical equation necessary to prove our main result, for which we assume $Z$ is admissible. We assume here that $(X,L)$ admits a cscK metric, and construct extremal solutions of the $Z$-critical equation for $k\gg 0$. In particular, this gives a general construction of $Z$-critical metrics. The reason we produce extremal solutions is that we allow $(X,L)$ to have automorphisms; in the proof of our main theorem, we will apply these techniques to the cscK degeneration of the manifold of interest. At a key point in proving this result, we will assume that the manifold is a cscK degeneration of a polarised manifold with discrete automorphism group, and will emphasise this point when it arises.

\begin{theorem} Suppose $(X,L)$ admits a cscK metric and is a degeneration of a polarised manifold with discrete automorphism group. Then $c_1(L)$ admits solutions of the equation $$\bar\partial\nabla_{\epsilon}^{1,0}(\Ima (e^{-i\phi_{\epsilon}} \tilde Z_{\epsilon}(\omega_{\epsilon}))=0$$ for all $0<\epsilon \ll 1$.\end{theorem}
 
We call solutions of this equation $Z$-extremal metrics. Here the gradient is defined using $\omega_{\epsilon}$, so the condition asks that $\Ima (e^{-i\phi_{\epsilon}} \tilde Z_{\epsilon}(\omega_{\epsilon})$ is a holomorphy potential.  In particular, if $\Aut(X,L)$ is discrete, this gives a general construction of $Z$-critical K\"ahler metrics. 

The difficulty in proving this result is that the $Z$-critical equation is a sixth-order PDE, while the cscK equation is fourth-order. The basic idea to circumvent this is to use quantitive inverse function theorem, analogously to a strategy of Hashimoto for another problem in K\"ahler geometry \cite{hashimoto}. This requires us to firstly construct approximate solutions of the $Z$-critical equation, which is straightforward. The main difficulty is then to understand the mapping properties of the linearised operator, which occupies much of the current section. When $(X,L)$ admits automorphisms, the kernel of the linearised operator is nontrivial, which forces us to consider the more general $Z$-extremal equation, much as in the classical work of LeBrun-Simanca \cite{lebrun-simanca}. When applying this result to the case of an analytically K-semistable manifold, we will then employ Kuranishi theory as in the fundamental work of Br\"onnle and Sz\'ekelyhidi \cite{bronnle, szekelyhidi-deformations}, and will use the moment map property of the $Z$-critical equation and a version of the Kempf-Ness theorem to apply our assumption of asymptotic $Z$-stability.

We consider the $Z$-critical operator as an operator on the space of K\"ahler potentials with respect to a reference metric $\omega \in c_1(L)$ \begin{align*}G_{\epsilon}&: \H_{\omega} \to \R, \\ G_{\epsilon}(\psi) &= \Ima(e^{-i\phi_{\epsilon}} \tilde Z_{\epsilon}(\omega+i\ddbar \psi)).\end{align*} We set $\omega_{\psi} = \omega+i\ddbar \psi.$ The main goal will be to understand the mapping properties of the linearisation of $G_{\epsilon}$ and variants of $G_{\epsilon}$, in order to apply a quantitative version of the implicit function theorem.

We recall our central charge takes the form $$Z_{k}(X,L) =\sum_{l=0}^n \rho_l \epsilon^{-l}   \int_X  L^l \left(\sum_{j=1}^n a_j K_X^j\right) \cdot \Theta.$$ The simplest, but rather degenerate, case of this equation is when $a_j = 0$ for all $j \geq 2$, which means that the terms in the definition of the $Z$-critical equation involving the Laplacian vanishes; see Equation \eqref{eqndef}. In this case, for $\epsilon \ll 1$ the equation is a \emph{fourth} order elliptic partial differential equation. In the general case which is of interest to us, the equation jumps from a fourth order equation at $\epsilon=0$ to a sixth order equation for $\epsilon>0$, which causes several additional analytic difficulties.  

\begin{lemma}\label{ellipticity-lemma} Suppose $\rho_{n-2} \neq 0$ and $a_2 \neq 0$. Then for all $0 < \epsilon \ll 1$, the $Z$-critical equation is a sixth order elliptic partial differential equation. \end{lemma}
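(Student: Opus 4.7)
The plan is to compute the principal symbol of the linearisation of $G_\epsilon$ at an arbitrary K\"ahler potential and verify it is a nonvanishing scalar multiple of $|\xi|^6$ for $0<\epsilon\ll 1$. First I would decompose $\tilde Z_\epsilon(\omega_\psi)$ into its constituent terms indexed by triples $(l,j,k)$ with $l+j+k=n$, where $k$ is the half-degree of the $\theta_k$-part of $\theta$. Since $\Ric\omega_\psi$ is a fourth-order expression in $\psi$, inspection of the defining formula \eqref{eqndef} shows that only the Laplacian-type summands $\Delta_{\omega_\psi}(\omega_\psi^{l+1}\wedge\Ric\omega_\psi^{j-1}\wedge\theta_k/\omega_\psi^n)$ with $j\geq 2$ can be sixth order in $\psi$; the non-Laplacian summands, and the Laplacian summands with $j\leq 1$, are at most fourth order.

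Next I would linearise at the background metric $\omega$. The sixth-order contribution arises from differentiating a single Ricci factor inside the outer Laplacian, via the standard identity $\partial_t\Ric\omega_{\psi_t}|_{t=0} = -i\partial\bar\partial\Delta\dot\psi$, while freezing all other factors (including the outer $\Delta_{\omega_\psi}$ itself) at the background; any other variation — of the outer Laplacian, of the ambient volume form $\omega_\psi^n$, or of other $\omega_\psi$-factors — places the highest derivatives of $\dot\psi$ outside the outer $\Delta$, and so contributes only at order four or less in $\dot\psi$. A pointwise calculation using $\Lambda_\omega(i\partial\bar\partial u)=\Delta u$ reduces the inner expression to a nonzero constant times $\Delta^2\dot\psi$, so the outer Laplacian produces $\Delta^3\dot\psi$; no $\theta$-dependence enters at top order because only the scalar factor $\theta_k$ wedged against a top-degree form survives after dividing by $\omega^n$.

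To identify the leading coefficient in $\epsilon$, observe that a contribution at $(l,j,k)$ carries the factor $\epsilon^{-l}\rho_l$, so maximising $l$ subject to $l+j+k=n$ and $j\geq 2$ selects the unique dominant triple $(l,j,k)=(n-2,2,0)$. All other sixth-order triples $(n-3,3,0)$, $(n-2-k,2,k)$ with $k\geq 1$, and so on, carry strictly smaller powers $\epsilon^{-l'}$ with $l'\leq n-3$. Collecting constants gives
$$L_\epsilon^{(6)}\dot\psi \;=\; \frac{2 a_2}{n(n-1)}\,\Ima\bigl(e^{-i\phi_\epsilon}\rho_{n-2}\bigr)\,\epsilon^{-(n-2)}\,\Delta^3\dot\psi \;+\; O\bigl(\epsilon^{-(n-3)}\bigr)\Delta^3\dot\psi,$$
the $O$-term again being a sixth-order operator with principal symbol proportional to $|\xi|^6$.

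Finally, the expansion $Z_\epsilon(X,L)=i\epsilon^{-n}L^n+O(\epsilon^{-(n-1)})$ gives $\phi_\epsilon=\pi/2+O(\epsilon)$, so $e^{-i\phi_\epsilon}=-i+O(\epsilon)$ and $\Ima(e^{-i\phi_\epsilon}\rho_{n-2})=-\Rea(\rho_{n-2})+O(\epsilon)$. Under admissibility $\Rea(\rho_{n-2})>0$, so with $a_2\neq 0$ the coefficient of $\epsilon^{-(n-2)}|\xi|^6$ in $\sigma_6(L_\epsilon)$ is nonzero for $0<\epsilon\ll 1$, establishing sixth-order ellipticity. The main bookkeeping obstacle I expect is confirming rigorously that the lower-order contributions produced by linearising the outer Laplacian, the inverse volume form, and the $\omega_\psi$-factors all drop strictly below order six; this is really just the chain-rule observation that any extra $\dot\psi$-differentiation necessarily lands outside the single factor that supplies the fourth-order Ricci contribution, which caps the total differential order at six and gives a clean separation of the principal symbol from the remainder.
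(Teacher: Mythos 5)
Your proof is correct and essentially identical to the paper's: both identify the dominant-in-$\epsilon$ sixth-order contribution as a nonzero multiple of $\Delta_\psi\bigl(\Ric\omega_\psi\wedge\omega_\psi^{n-1}/\omega_\psi^n\bigr)$ (your triple $(l,j,k)=(n-2,2,0)$, carrying the weight $\rho_{n-2}a_2\epsilon^{-(n-2)}$) and linearise it via the known $\Delta^2$ leading term of the scalar-curvature variation to obtain $\Delta^3$ plus lower-order terms, from which ellipticity follows for $\epsilon$ small. Two small remarks: (a) the subleading sixth-order contributions are \emph{not} multiples of $\Delta^3$ as your $O(\epsilon^{-(n-3)})\Delta^3$ notation suggests --- for instance the $(n-3,3,0)$ or $k\geq 1$ triples produce symbols of the form $|\xi|^4$ times a contraction of $\xi\otimes\bar\xi$ against $\Ric\omega$ or $\theta_k$, not against $\omega$ alone --- though since they are suppressed by at least one extra power of $\epsilon$ and their coefficients are bounded on the compact manifold, the $|\xi|^6$ term still dominates and the conclusion stands; (b) you invoke $\Rea(\rho_{n-2})>0$ from admissibility, which is stronger than the lemma's stated hypothesis $\rho_{n-2}\neq 0$, but this matches an implicit gap in the paper's own proof (which simply asserts $c\neq 0$) and is harmless in the admissible setting where the lemma is actually used.
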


\begin{proof} Clearly $G_{\epsilon}$ is a sixth order partial differential operator as  $\rho_{n-2} \neq 0$ and $a_2 \neq 0$, and we must show that it is elliptic, which means that we must show that its linearisation is elliptic. This is a condition on the highest order derivatives, so we replace the $Z$-critical operator with the sum of the terms involving six derivatives. Since we are interested in the case $\epsilon \ll 1$, we need only consider the lowest order terms in $\epsilon.$ When one scales $0 \ll \epsilon < 1$, the lowest order term in $\epsilon$ is then $$\psi \to  c\Delta_{\psi}\left(\frac{\Ric\omega_{\psi} \wedge \omega_{\psi}^{n-1}}{\omega_{\psi}^n}\right),$$ where $\Delta_{\psi}$ is the Laplacian with respect to $\omega_{\psi}$ and $c \neq 0$, since any forms involving the unipotent class $\Theta$ will be of higher order in $\epsilon$. By the product rule, the linearisation of this operator along the path $t\psi$ is given by $$\Delta^3 \psi + \textrm{ lower order derivatives},$$ since the linearisation of the scalar curvature operator is given by $$\frac{d}{dt}\Bigr|_{t=0} S(\omega+i\ddbar \psi) = \Delta^2\psi - S(\omega)\Delta\psi + n(n-1) \frac{i\ddbar \psi \wedge \Ric\omega \wedge \omega^{n-2}}{\omega^n}.$$ This demonstrates ellipticity. \end{proof}

Note that the condition $a_2 \neq 0$ is part of our hypothesis that $Z$ is admissible, used to prove our main result.

\subsubsection{Understanding the model operator} Let $$\F_{\epsilon}:  C^{\infty}(X,\R) \to \R$$ denote the linearisation of the $Z$-critical operator $G_{\epsilon}$. In order to understand the mapping properties of $\F_{\epsilon}$, we will compare it to a simpler model operator. Much as with the linearisation of the scalar curvature, a key operator will be the operator $$\D\psi = \bar\partial \nabla^{1,0}\psi,$$ as mentioned in Equation \eqref{Ddef}, whose kernel $\ker \D $ consists of functions inducing holomorphic vector fields on $X$. We denote the vector space of such functions, namely the holomorphy potentials, by $\mfk$; we include the constant functions in our definition. The space of holomorphy potentials of integral zero is isomorphic to the Lie algebra of the automorphism group $\Aut(X,L)$. Letting $\D^*$ be the $L^2$-adjoint of $\D$ with respect to the inner product induced by $\omega$,  the Lichnerowicz operator is given by $\D^*\D$; this is a fourth order elliptic linear partial differential operator, whose kernel consists of holomorphy potentials \cite[Definition 4.3]{szekelyhidi-book}. It is then well-known that the linearisation of the scalar curvature at a cscK metric is given by $-\D^*\D$ \cite[Lemma 4.4]{szekelyhidi-book}.

Another important term involved in the model operator is a sixth order elliptic operator, defined as follows. As the vector bundle $TX^{1,0}$ is a holomorphic vector bundle, it admits a $\bar\partial$-operator; we let $\bar \partial^*$ denote its $L^2$-adjoint. We will then also consider the operator $\D^*\bar \partial^*\bar\partial\D$, which can also be written $$\nabla^{1,0*}(\bar\partial^*\bar\partial)^2 \nabla^{1,0} =\nabla^{1,0*}\Delta_{\bar \partial}^2 \nabla^{1,0} ,$$ where $\Delta_{\bar \partial}$ denotes the $\bar\partial$-Laplacian. In particular its symbol agrees with that of $\Delta^3$.

We will also need to consider two further operators $H_1, H_2$,  which  are arbitrary self-adjoint operators satisfying for $j=1,2$ $$\int_X \gamma H_j \psi \omega^n = \int_X (\D\gamma, \D\psi)_{g_j} d\mu_j,$$ where each $d\mu_j$ is a smooth $(n,n)$-form and each $$g_j: \Gamma(T^{1,0}X\otimes \Omega^{0,1}(X)) \otimes \Gamma(T^{1,0}X\otimes \Omega^{0,1}(X)) \to \R$$ is a smooth bilinear pairing, but not necessarily a metric. Our model operator will then take the form \begin{equation}\label{model-operator}\G_{\epsilon} = c_0\D^*\D +\epsilon(c_1\D^*\bar \partial^*\bar\partial\D + H_1) + \epsilon^2(c_2\D^*\bar \partial^*\bar\partial\D + H_2),\end{equation} where $c_0$ and $c_1$ are strictly positive. Note that this is a self-adjoint elliptic operator for $\epsilon$ sufficiently small, as its symbol agrees with that of $\epsilon c_1 \Delta^3 + \epsilon^2 c_2 \Delta^3$, which is elliptic for $\epsilon$ sufficiently small since $c_1>0$. As we explain in Remark \ref{remark-add-higher-order-terms}, the $\epsilon^2$-term is included as the estimates we prove will only allow us to perturb the operator by an $O(\epsilon^3)$-term while retaining the relevant mapping properties.

We now work with Sobolev spaces $L^2_k$ for some large $k$. We let $ \mfk_{k, \perp}^2$ denote the $L^2$-orthogonal complement of the holomorphy potentials inside $L^2_k$. Note that the holomorphy potentials themselves are actually smooth, being the kernel of the elliptic operator $\D^*\D$, but we will sometimes also denote the  space of holomorphy potentials as $\mfk_{k}^2$ when considered as a subspace of $L^2_k$.

\begin{lemma}\label{eigenvalue-bound} There is a constant $c>0$ such that for all sufficiently small $\epsilon$ and for all $\psi \in \mfk_{k, \perp}^2 $ we have $$\langle \psi, \G_{\epsilon} \psi\rangle_{L^2} \geq c\|\psi\|^2_{L^2}.$$ Furthermore, the kernel of $\G_{\epsilon}$ consists of holomorphy potentials.

\end{lemma}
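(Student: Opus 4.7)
The plan is to exploit the leading-order structure of $\G_\epsilon$: the operator $c_0\D^*\D$ alone already has kernel exactly $\mft$ with a uniform spectral gap on $\mft^2_{k,\perp}$, and I will show that every other summand produces a quadratic form controlled by $\|\D\psi\|^2_{L^2}$, so that the $\epsilon$-smallness lets us absorb them.

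First I would compute the quadratic form of $\G_\epsilon$ using self-adjointness of each summand:
\begin{align*}
\langle\psi,\G_\epsilon\psi\rangle_{L^2} &= c_0\|\D\psi\|^2_{L^2} + \epsilon\bigl(c_1\|\bar\partial\D\psi\|^2_{L^2} + \langle\psi,H_1\psi\rangle\bigr) \\
&\quad + \epsilon^2\bigl(c_2\|\bar\partial\D\psi\|^2_{L^2} + \langle\psi,H_2\psi\rangle\bigr),
\end{align*}
where I used $\langle\psi,\D^*\D\psi\rangle=\|\D\psi\|^2$ and $\langle\psi,\D^*\bar\partial^*\bar\partial\D\psi\rangle=\|\bar\partial\D\psi\|^2$. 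By hypothesis $\langle\psi,H_j\psi\rangle=\int_X(\D\psi,\D\psi)_{g_j}\,d\mu_j$, and since $g_j$ and $d\mu_j$ are smooth on the compact manifold $X$ there are uniform constants $C_j$ with $|\langle\psi,H_j\psi\rangle|\leq C_j\|\D\psi\|^2_{L^2}$. Collecting the estimates yields
$$\langle\psi,\G_\epsilon\psi\rangle_{L^2}\geq (c_0-\epsilon C_1-\epsilon^2 C_2)\|\D\psi\|^2_{L^2}+\epsilon(c_1+\epsilon c_2)\|\bar\partial\D\psi\|^2_{L^2}.$$
Since $c_0,c_1>0$, for $\epsilon$ sufficiently small the first coefficient is at least $c_0/2$ and the second is non-negative, so the right-hand side is bounded below by $(c_0/2)\|\D\psi\|^2_{L^2}$. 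Ellipticity and self-adjointness of $\D^*\D$ on the compact manifold $X$ give its spectrum the form $\{0\}\cup\{\lambda\geq\lambda_1\}$ with $\lambda_1>0$ and kernel exactly $\mft$, so on $\mft^2_{k,\perp}$ one has $\|\D\psi\|^2_{L^2}\geq\lambda_1\|\psi\|^2_{L^2}$. Taking $c=c_0\lambda_1/2$ gives the asserted bound.

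For the second assertion, every $\psi\in\mft$ satisfies $\D\psi=0$, so the $\D^*\D$ and $\D^*\bar\partial^*\bar\partial\D$ contributions vanish; the defining formula for $H_j$ gives $\langle\gamma,H_j\psi\rangle=0$ for every test function $\gamma$, hence $H_j\psi=0$ in $L^2$. Thus $\mft\subseteq\ker\G_\epsilon$. Conversely, any $\psi\in\ker\G_\epsilon$ decomposes in $L^2$ as $\psi=\psi_1+\psi_2$ with $\psi_1\in\mft$ and $\psi_2\in\mft^2_{k,\perp}$; since $\G_\epsilon\psi_1=0$ we also have $\G_\epsilon\psi_2=0$, and pairing with $\psi_2$ while invoking the inequality just proved forces $\psi_2=0$. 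There is no real obstacle here; the only point needing mild care is checking that the coefficients of $\|\D\psi\|^2$ and $\|\bar\partial\D\psi\|^2$ retain the correct signs for small $\epsilon$, which is automatic from the positivity assumptions $c_0,c_1>0$ built into the definition of the model operator.
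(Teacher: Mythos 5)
Your proof is correct and follows essentially the same approach as the paper: both expand the quadratic form of $\G_\epsilon$ using self-adjointness, absorb the $H_j$ terms into $\|\D\psi\|^2$ by uniform bounds on the smooth pairings $g_j$ and volume forms $d\mu_j$, discard the non-negative $\|\bar\partial\D\psi\|^2$ contributions, and invoke the spectral gap of $\D^*\D$ on $\mft^2_{k,\perp}$, then characterize the kernel by $L^2$-orthogonal decomposition.
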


\begin{proof}We first consider the operator $$c_0\D^*\D + \epsilon H_1 + \epsilon^2 H_2.$$ The desired bound for the operator $\D^*\D$ is well-known:  there is a constant $c'$ such that for all  $\psi \in \mfk_{k, \perp}^2 $ we have $$\langle \psi, \D^*\D \psi\rangle_{L^2} \geq c'\|\psi\|^2_{L^2},$$ see for example Br\"onnle \cite[Lemma 37]{bronnle2}. We can obtain uniform bounds for $j=1,2$ $$-C_1(\D\gamma, \D\psi)_\omega \leq (\D\gamma, \D\psi)_{g_j} \leq C_j (\D\gamma, \D\psi)_{\omega}$$ for some $C_j>0$, independent of $\psi, \gamma$ and hence can obtain uniform bounds for some possibly different $C_j$ $$-C_j\int_X(\D\gamma, \D\psi)_\omega \omega^n \leq \int_X(\D\gamma, \D\psi)_{g_j}d\mu_j \leq C_j \int_X(\D\gamma, \D\psi)_{\omega}\omega^n.$$ Here we view $\omega$ as inducing a metric on $TX^{1,0}\otimes\Omega^{0,1}.$  It follows that for $\epsilon$ sufficiently small we have a bound$$\langle \psi, c_0\D^*\D \psi + \epsilon H_1 + \epsilon H_2\psi\rangle_{L^2} \geq c\|\psi\|^2_{L^2}$$for some $c>0$. 

The remaining terms are non-negative for $\epsilon$ sufficiently small. Indeed for $\epsilon$ sufficiently small the coefficient $\epsilon  c_1 + \epsilon^2c_2$ is positive and  $$\langle \psi,(\epsilon  c_1 + \epsilon^2c_2)\D^*\bar \partial^*\bar\partial\D\psi \rangle_{L^2}^2= (\epsilon  c_1 + \epsilon^2c_2)\| \bar \partial^*\D\psi \|_{L^2} \geq 0.$$ It follows that $$\langle \psi, \G_{\epsilon} \psi\rangle_{L^2} \geq c\|\psi\|^2_{L^2},$$ as required.

What remains is to characterise the kernel of $\G_{\epsilon}$. Note that certainly $\mfk \subset \ker \G_{\epsilon}$, since $\mfk = \ker \D.$ Otherwise we may write $\psi \in L^2_k$ as $\psi = \psi_{\mfk_{k}^2} + \psi_{\mfk_{k, \perp}^2}$ where $ \psi_{\mfk_{k}^2} \in \mfk_{k}^2$ and $\psi_{\mfk_{k, \perp}^2} \in \mfk_{k, \perp}^2$ are $L^2$-orthogonal and we may assume $\psi_{\mfk_{k, \perp}^2}  \neq 0$, and we see that \begin{equation*}\label{thekernel}\langle \psi, \G_{\epsilon} \psi\rangle_{L^2} = \langle \psi_{\mfk_{k, \perp}^2}, \G_{\epsilon} \psi_{\mfk_{k, \perp}^2}\rangle_{L^2} \geq c\|\psi_{\mfk_{k, \perp}^2}\|^2_{L^2}>0,$$ where we have used that $$\langle \psi_{\mfk_{k,}^2}, \G_{\epsilon} \psi_{\mfk_{k, \perp}^2}\rangle = 0\end{equation*} since $\G_{\epsilon}$ is self-adjoint and $\G_{\epsilon} \psi_{\mfk_{k}^2} = 0$. \end{proof}

\begin{corollary}\label{right-inverse-existence} For sufficiently small $\epsilon$, the operator $$\G_{\epsilon}: \mfk_{k, \perp}^2 \to  \mfk_{k-6, \perp}^2$$ is an isomorphism. Furthermore, the induced map \begin{align*}\hat \G_{\epsilon}: L^2_k \times \mfk &\to L^2_{k-6}, \\ (\psi, h) &\to \G_{\epsilon}\psi + h\end{align*} is surjective, and admits a right inverse.

 \end{corollary}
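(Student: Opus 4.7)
The plan is to deduce the corollary from Lemma \ref{eigenvalue-bound} together with standard Fredholm theory for self-adjoint elliptic operators. By Lemma \ref{ellipticity-lemma} (and its proof), for $\epsilon$ sufficiently small the symbol of $\G_\epsilon$ agrees up to a positive constant with that of $\epsilon \Delta^3$, so $\G_\epsilon$ is a sixth-order elliptic operator on $X$. Moreover $\G_\epsilon$ is self-adjoint by construction: the terms $c_0 \D^*\D$ and $(\epsilon c_1 + \epsilon^2 c_2)\D^*\bar\partial^*\bar\partial\D$ are manifestly self-adjoint, and $H_1, H_2$ are self-adjoint by hypothesis.

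First I would check that $\G_\epsilon$ maps $\mft_{k,\perp}^2$ to $\mft_{k-6,\perp}^2$. For $\psi \in \mft_{k,\perp}^2$ and $h \in \mft$, self-adjointness gives $\langle \G_\epsilon \psi, h\rangle_{L^2} = \langle \psi, \G_\epsilon h\rangle_{L^2} = 0$ since $\mft = \ker \D \subset \ker \G_\epsilon$ by Lemma \ref{eigenvalue-bound}. Thus $\G_\epsilon \psi$ is $L^2$-orthogonal to the (finite-dimensional, smooth) space $\mft$.

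Next, for the isomorphism statement, injectivity on $\mft_{k,\perp}^2$ is immediate from the coercivity estimate $\langle \psi, \G_\epsilon \psi\rangle_{L^2} \geq c\|\psi\|_{L^2}^2$ of Lemma \ref{eigenvalue-bound}. For surjectivity, standard elliptic theory applied to the self-adjoint elliptic operator $\G_\epsilon: L^2_k \to L^2_{k-6}$ shows it is Fredholm of index zero with closed range equal to the $L^2$-orthogonal complement of its kernel. Lemma \ref{eigenvalue-bound} identifies this kernel with $\mft$, so the image is precisely $\mft_{k-6,\perp}^2$. Since the restriction to $\mft_{k,\perp}^2$ is injective between spaces whose quotients by $\mft$ are identified via $\G_\epsilon$, it is an isomorphism; the bounded inverse follows from the open mapping theorem (or directly from the coercivity estimate combined with elliptic regularity to upgrade $L^2$ bounds to $L^2_k$ bounds).

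Finally, for the map $\hat \G_\epsilon$: given any $f \in L^2_{k-6}$, decompose it $L^2$-orthogonally as $f = f_\mft + f_\perp$ with $f_\mft \in \mft$ and $f_\perp \in \mft_{k-6,\perp}^2$, which makes sense as $\mft$ is finite-dimensional and consists of smooth functions. By the isomorphism just established there is a unique $\psi \in \mft_{k,\perp}^2$ with $\G_\epsilon \psi = f_\perp$, and then $\hat \G_\epsilon(\psi, f_\mft) = f$. The assignment $f \mapsto (\psi, f_\mft)$ is visibly linear and bounded, providing the right inverse. No step is a serious obstacle; the only thing to verify carefully is the uniformity in $\epsilon$ of the elliptic estimates, which is built into Lemma \ref{eigenvalue-bound} and its proof.
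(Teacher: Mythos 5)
Your proof is correct and follows essentially the same route as the paper: self-adjointness for the mapping property into $\mft_{k-6,\perp}^2$, the coercivity estimate of Lemma \ref{eigenvalue-bound} for trivial kernel, the Fredholm alternative for surjectivity of the restricted operator, and an explicit right inverse built from the $L^2$-orthogonal decomposition of $L^2_{k-6}$. The only minor imprecision is the citation of Lemma \ref{ellipticity-lemma} for ellipticity of $\G_\epsilon$ — that lemma concerns the actual $Z$-critical operator, whereas ellipticity of the model operator is observed directly from its definition (its leading symbol is $(\epsilon c_1 + \epsilon^2 c_2)\Delta^3$ with $c_1>0$) — but this does not affect the argument.
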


\begin{proof} We first show that $\G_{\epsilon}$ does actually send $\mfk_{k, \perp}^2$ to $\mfk_{k-6, \perp}^2$. In fact, for any $\psi \in L^2_k$ and any $h \in \mfk$ we have $$\langle h, \G_{\epsilon} \psi\rangle_{L^2} = 0$$ again by self-adjointness of $\G_{\epsilon}$. Since $$\G_{\epsilon}: \mfk_{k, \perp}^2 \to  \mfk_{k-6, \perp}^2$$ has trivial kernel by Lemma \ref{eigenvalue-bound}, it is a a self-adjoint elliptic partial differential operator with trivial kernel, hence is an isomorphism by the Fredholm alternative. 

Surjectivity of the induced map $\hat \G_{\epsilon}: L^2_k \times \mfk \to L^2_{k-6},$ is an immediate consequence, while a right inverse can be constructed explicitly. Indeed, since the operator $\G_{\epsilon}: \mfk_{k, \perp}^2 \to \mfk_{k-6, \perp}^2$ is an isomorphism, it admits some inverse $\G_{\epsilon}^{-1}: \mfk_{k-6, \perp}^2 \to \mfk_{k, \perp}^2$. Write $\psi \in  \mfk_{k-6, \perp}^2 $ as $\psi$  as $\psi_{\mfk_{k-6}^2} + \psi_{\mfk_{k-6, \perp}^2}$ where $ \psi_{\mfk_{k-6}^2} \in \mfk_{k-6}^2$ and $\psi_{\mfk_{k-6, \perp}^2} \in \mfk_{k-6, \perp}^2$ are $L^2$-orthogonal. Note that $\psi_{\mfk_{k-6}^2}$ is actually smooth as it is a holomorphy potential. Then a right inverse is given by \begin{equation}\label{def-right-inverse}\M_{\epsilon}(\psi) = (\G_{\epsilon}^{-1}\psi_{\mfk_{k-6}^2}, \psi_{\mfk_{k-6}^2}).\end{equation}\end{proof}

We next obtain an operator norm of the inverse operator $\G_{\epsilon}^{-1}: \mfk_{k, \perp}^2 \to \mfk_{k-6, \perp}^2$. We will use  elliptic regularity estimates for this, so it is more convenient to consider the rescaled operator $\epsilon^{-1}\G_{\epsilon}$, so that the ellipticity constants are actually uniformly bounded in $\epsilon$; here we recall that ellipticity follows from the fact that the sixth order coefficient of $\G_{\epsilon}$ is $(\epsilon c_1+\epsilon^2 c_2) \Delta^3$, where we have assumed $c_1>0$, so scaling by $\epsilon^{-1}$ gives a family of  operators whose ellipticity constants are actually bounded independently of $\epsilon$.

\begin{proposition}\cite[Chapter 5, Theorem 11.1]{taylor}\label{schauder} There is a constant $c>0$ such that for any $\psi \in \mfk_{k-6, \perp}^2$ and for all sufficiently small $\epsilon$ there is a bound of the form $$ \|(\epsilon^{-1}\G_{\epsilon})^{-1} \psi\|_{L_{k}^2} \leq c\epsilon^{-1}\left( \|(\epsilon^{-1}\G_{\epsilon})^{-1} \psi\|_{L^2} + \|\psi\|_{L_{k-6}^2}\right).$$
\end{proposition}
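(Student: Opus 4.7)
The plan is to apply the standard interior $L^2$ elliptic regularity estimate (Taylor, Chapter 5, Theorem 11.1) to $\G_{\epsilon}$ itself, rather than directly to the rescaled operator $\epsilon^{-1}\G_{\epsilon}$, and then to convert the resulting bound into the stated form using the identity $\G_{\epsilon} u = \epsilon \psi$ where $u = (\epsilon^{-1}\G_{\epsilon})^{-1}\psi$. The advantage of working with $\G_{\epsilon}$ is that its coefficients are $\epsilon$-independent smooth functions on $X$, so that all of the $\epsilon$-dependence of the Schauder constant is concentrated in one place, namely the ellipticity constant, which degenerates linearly as $\epsilon \to 0$.

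First I would verify that $\G_{\epsilon}$ is genuinely elliptic of order six for $\epsilon > 0$ small, with ellipticity constant of size $\epsilon$. By the formula \eqref{model-operator} for the model operator, the sixth-order symbol of $\G_{\epsilon}$ is $(\epsilon c_1 + \epsilon^2 c_2)$ times the symbol of $\D^*\bar\partial^*\bar\partial\D$; since $c_1 > 0$ and $\D^*\bar\partial^*\bar\partial\D$ has the same principal symbol as $\Delta^3$ up to a positive factor, this is elliptic for all $\epsilon$ sufficiently small, with ellipticity constant comparable to $\epsilon$. The remaining terms (of orders $0, 2$ and $4$) are differential operators with smooth coefficients that are bounded in every $C^j$-norm uniformly in $\epsilon$.

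Next, invoking Taylor's interior estimate for $\G_{\epsilon}$ yields, for each fixed $k$, a bound
\[
\|u\|_{L^2_k} \leq C\bigl(\|\G_{\epsilon} u\|_{L^2_{k-6}} + \|u\|_{L^2}\bigr)
\]
in which $C$ depends only on the ellipticity constant $\lambda$ of $\G_{\epsilon}$ and on finitely many derivatives of its coefficients. The key quantitative observation is that, when the $C^j$-norms of the coefficients are bounded independently of $\lambda$, the Schauder constant satisfies $C \lesssim \lambda^{-1}$. Combined with $\lambda \sim \epsilon$, this gives
\[
\|u\|_{L^2_k} \leq C\epsilon^{-1}\bigl(\|\G_{\epsilon} u\|_{L^2_{k-6}} + \|u\|_{L^2}\bigr)
\]
for all sufficiently small $\epsilon$. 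Substituting $u = (\epsilon^{-1}\G_{\epsilon})^{-1}\psi$, which is well-defined by Corollary \ref{right-inverse-existence}, gives $\G_{\epsilon} u = \epsilon \psi$; using $\epsilon \leq 1$ then produces the claimed inequality.

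The main (indeed only) obstacle is to justify the $\lambda^{-1}$ scaling of the Schauder constant in the ellipticity constant. This follows from the usual proof of Taylor's theorem but is not always stated in this form. One clean way to extract it is via G\aa rding's inequality at each step of the bootstrap, where a factor of $\lambda^{-1}$ appears naturally upon inverting the coercive sixth-order form; alternatively, one can rescale $\G_{\epsilon}$ by $\lambda^{-1}$ to obtain a uniformly elliptic operator whose lower-order coefficients are of size $\lambda^{-1}$, and then verify that the Schauder constant depends linearly on these coefficient norms via the standard absorption-by-interpolation argument.
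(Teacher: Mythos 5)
Your proposal is correct and reaches the stated estimate by a closely related but reorganised route. The paper applies the elliptic estimate directly to the rescaled operator $\epsilon^{-1}\G_\epsilon$, whose ellipticity constant is $O(1)$ but whose fourth-order coefficient $\epsilon^{-1}c_0\D^*\D$ has norm $\sim\epsilon^{-1}$, and attributes the degeneracy of the Schauder constant to linear dependence on coefficient norms, for which it cites an observation of Hashimoto together with the second-order analogue in Gilbarg--Trudinger. You instead apply the estimate to $\G_\epsilon$ itself, whose lower-order coefficient norms are $O(1)$ but whose ellipticity constant is $\sim\epsilon$, attribute the degeneracy to $\lambda^{-1}$-dependence on the ellipticity constant, and recover the stated form via the conversion $\G_\epsilon u=\epsilon\psi$ at the end. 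These two viewpoints are related by rescaling; indeed your second (fallback) justification---rescale by $\lambda^{-1}$ and invoke linear dependence on coefficient norms---is precisely the paper's argument, while your primary G\aa rding justification is a plausible alternative that you leave at a sketch, exactly as the paper leaves its Hashimoto citation at a sketch. Two small imprecisions, neither fatal: first, the coefficients of $\G_\epsilon$ are not literally $\epsilon$-independent (its sixth-order coefficient is $\epsilon c_1+\epsilon^2 c_2$); what you actually use, and what holds, is that all $C^j$-norms of the coefficients are uniformly bounded in $\epsilon$. Second, your sharp claim that the Schauder constant for $\G_\epsilon$ is $\lesssim\lambda^{-1}$ does not quite follow from the rescaling fallback, which, keeping the two constants separate, only yields $\|u\|_{L^2_k}\lesssim\lambda^{-2}\|\G_\epsilon u\|_{L^2_{k-6}}+\lambda^{-1}\|u\|_{L^2}$; but since $\G_\epsilon u=\epsilon\psi$ supplies an additional factor of $\epsilon$ in the first term, either version delivers the stated Proposition, so the discrepancy is harmless.
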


The point here is that our model operator $(\epsilon^{-1}\G_{\epsilon})^{-1}$ has uniformly bounded ellipticity constants, but the norm of the coefficients of the equation are actually only bounded uniformly by $C\epsilon^{-1}$ for some constant $C$, and hence are blowing up as $\epsilon \to 0$. Explicitly, the term which is blowing up is the leading term $\epsilon^{-1}\D^*\D$. In this situation, one obtains an elliptic regularity estimate where the coefficient in the bound is $c\epsilon^{-1}$. We learned that such a elliptic regularity estimate holds from an observation of Hashimoto for general elliptic operators  \cite[p. 800]{hashimoto}; the dependence of the coefficient in the bound on the norm of the coefficients is standard for second-order elliptic operators \cite[p. 92]{gilbarg-trudinger}.

\begin{corollary}\label{bound-on-genuine-inverse}There is a bound of the form $$\|\G_{\epsilon}^{-1}\|_{op} \leq C\epsilon^{-2}$$ for the operator $\G_{\epsilon}^{-1}: \mfk_{k-6, \perp}^2 \to \mfk_{k, \perp}^2$, for some $C>0$.
\end{corollary}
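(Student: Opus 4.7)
The plan is to combine the $L^2$ coercivity of Lemma \ref{eigenvalue-bound} with the scaled Schauder estimate of Proposition \ref{schauder}, matching the powers of $\epsilon$ carefully.

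First I would establish an $L^2$-to-$L^2$ bound for $\G_\epsilon^{-1}$ that is uniform in $\epsilon$. Given $\psi \in \mft^2_{k-6,\perp}$, set $\phi = \G_\epsilon^{-1}\psi \in \mft^2_{k,\perp}$. Then self-adjointness of $\G_\epsilon$ gives
$$c\|\phi\|_{L^2}^2 \leq \langle \phi, \G_\epsilon\phi\rangle_{L^2} = \langle \phi,\psi\rangle_{L^2} \leq \|\phi\|_{L^2}\|\psi\|_{L^2},$$
where $c>0$ is the constant from Lemma \ref{eigenvalue-bound}, which is crucially independent of $\epsilon$. Hence $\|\phi\|_{L^2} \leq c^{-1}\|\psi\|_{L^2} \leq c^{-1}\|\psi\|_{L^2_{k-6}}$.

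Next I would translate the Schauder estimate from $\epsilon^{-1}\G_\epsilon$ back to $\G_\epsilon$. Since $(\epsilon^{-1}\G_\epsilon)^{-1} = \epsilon\,\G_\epsilon^{-1}$, applying Proposition \ref{schauder} to $\psi$ gives
$$\epsilon \|\phi\|_{L^2_k} \leq c\epsilon^{-1}\bigl(\epsilon\|\phi\|_{L^2} + \|\psi\|_{L^2_{k-6}}\bigr),$$
so
$$\|\phi\|_{L^2_k} \leq c\epsilon^{-1}\|\phi\|_{L^2} + c\epsilon^{-2}\|\psi\|_{L^2_{k-6}}.$$
Substituting the $L^2$ bound from the first step yields
$$\|\phi\|_{L^2_k} \leq (cc^{-1}\epsilon^{-1} + c\epsilon^{-2})\|\psi\|_{L^2_{k-6}} \leq C\epsilon^{-2}\|\psi\|_{L^2_{k-6}}$$
for all sufficiently small $\epsilon$, which is the claimed operator-norm bound.

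There is no real obstacle here, since the two ingredients are already in place; the only subtlety is bookkeeping the powers of $\epsilon$. The factor $\epsilon^{-2}$ (rather than $\epsilon^{-1}$) arises because inverting the rescaled operator $\epsilon^{-1}\G_\epsilon$ produces one power of $\epsilon$ on the left-hand side of the Schauder estimate, while the Schauder constant itself contributes another $\epsilon^{-1}$ coming from the fact that, although the ellipticity constants of $\epsilon^{-1}\G_\epsilon$ are uniform, the coefficients of its lower-order terms (in particular $\epsilon^{-1}\D^*\D$) blow up like $\epsilon^{-1}$. It is essential that the $L^2$-coercivity constant $c$ in Lemma \ref{eigenvalue-bound} is $\epsilon$-independent; otherwise the two estimates would not combine to give a polynomial-in-$\epsilon^{-1}$ bound suitable for the quantitative implicit function theorem to be invoked in the subsequent sections. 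All arguments are $T$-equivariant, so the same bound holds on the $T$-invariant subspace.
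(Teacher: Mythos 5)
Your proposal is correct and follows essentially the same route as the paper: both combine the uniform $L^2$-coercivity of Lemma \ref{eigenvalue-bound} (via Cauchy-Schwarz) with the $\epsilon$-scaled Schauder estimate of Proposition \ref{schauder}, tracking the powers of $\epsilon$ to arrive at $C\epsilon^{-2}$. (A tiny quibble: the equality $\langle\phi,\G_\epsilon\phi\rangle = \langle\phi,\psi\rangle$ is just the definition $\G_\epsilon\phi=\psi$, not an application of self-adjointness; this does not affect the argument.)
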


\begin{proof}

Let $\psi \in  \mfk_{k-6, \perp}^2$ and set $\gamma = \G_{\epsilon}^{-1} \psi, $ so that $\G_{\epsilon} \gamma = \psi.$  The elliptic regularity estimate gives $$\frac{\|(\epsilon^{-1} \G_{\epsilon})^{-1}\psi\|_{L^2_k}}{\|\psi\|_{L^2_{k-6}}} \leq c\epsilon^{-1} + c\epsilon^{-1}\frac{ \|(\epsilon^{-1}\G_{\epsilon})^{-1} \psi\|_{L^2}}{\|\psi\|_{L^2_{k-6}}} = c\epsilon^{-1} +c \frac{ \|\G_{\epsilon}^{-1} \psi\|_{L^2}}{\|\psi\|_{L^2_{k-6}}} .$$ By Cauchy-Schwarz we have $$\|\gamma\|_{L^2}\|\G_{\epsilon}\gamma\|_{L^2} \geq \langle \gamma, \G_{\epsilon}\gamma\rangle_{L^2},$$ so the bound $$\langle \gamma, \G_{\epsilon} \gamma\rangle_{L^2} \geq   \tilde c\|\gamma\|^2_{L^2}$$ for some $\tilde c>0$ given by Lemma \ref{eigenvalue-bound} implies $$\|\G_{\epsilon} \gamma\|_{L^2} \geq  \tilde c \|\gamma\|_{L^2}.$$ Thus $$\frac{ \|\G_{\epsilon}^{-1} \psi\|_{L^2}}{\|\psi\|_{L^2_{k-6}}} \leq  \frac{ \|\G_{\epsilon}^{-1} \psi\|_{L^2}}{\|\psi\|_{L^2}} = \frac{ \|\gamma \|_{L^2}}{\|\G_{\epsilon}\gamma\|_{L^2}} \leq \tilde c^{-1}.$$ It follows that $$\frac{\|(\epsilon^{-1} \G_{\epsilon})^{-1}\psi\|_{L^2_k}}{\|\psi\|_{L^2_{k-6}}} \leq c \epsilon^{-1} + c(\tilde c^{-1}) \leq C\epsilon^{-1}$$ for $\epsilon$ sufficiently small and some $C>0$, as required.\end{proof}

Recall that a right inverse to the induced map \begin{align*}\hat \G_{\epsilon}: L^2_k \times \mfk &\to L^2_{k-6}, \\ (\psi, h) &\to \G_{\epsilon}\psi + h\end{align*} is given through Equation \eqref{def-right-inverse} by $$\M_{\epsilon}(\psi) = (\G_{\epsilon}^{-1}\psi_{\mfk_{k-6}^2}, \psi_{\mfk_{k-6}^2}),$$ where $ \psi_{\mfk_{k-6}^2} \in \mfk$ is the $L^2$-projection of $\psi$ onto $\mfk$.

\begin{corollary} There is a bound on the operator norm of  $\M_{\epsilon}$ of the form $$\|\M_{\epsilon}^{-1}\|_{op} \leq C\epsilon^{-2}$$ for some $C>0$.
\end{corollary}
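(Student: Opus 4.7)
The plan is to read off the bound directly from the explicit formula
\[
\M_\epsilon(\psi) = \bigl(\G_\epsilon^{-1}\psi_{\mft_{k-6,\perp}^2},\,\psi_{\mft_{k-6}^2}\bigr)
\]
for the right inverse given in Equation \eqref{def-right-inverse}, combined with the operator norm bound $\|\G_\epsilon^{-1}\|_{op}\leq C\epsilon^{-2}$ from Corollary \ref{bound-on-genuine-inverse}. The symbol $\M_\epsilon^{-1}$ in the statement refers to this right-inverse operator to $\hat\G_\epsilon$, used in the sense in which $\G_\epsilon^{-1}$ was used in the preceding corollary; the task is to control its operator norm from $L^2_{k-6}$ to the product space $L^2_k\times\mft$.

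The first step is to control the finite-dimensional projection component. Since $\mft=\ker\D=\ker\D^*\D$ is the kernel of a fourth order elliptic operator with smooth coefficients on a closed manifold, it is a finite-dimensional subspace of $C^\infty(X)$, and consequently all Sobolev norms on $\mft$ are mutually equivalent. Thus the $L^2$-orthogonal projection $\pi_\mft\colon L^2_{k-6}\to\mft$ is well defined and bounded, with norm depending only on the reference data (the fixed K\"ahler metric $\omega$ and the finite-dimensional space $\mft$), and in particular independent of $\epsilon$. Writing $\psi_{\mft_{k-6,\perp}^2}=\psi-\pi_\mft\psi$, the complementary projection onto $\mft_{k-6,\perp}^2$ is likewise bounded uniformly in $\epsilon$ as a map $L^2_{k-6}\to L^2_{k-6}$.

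The second step is to combine the two components. By Corollary \ref{bound-on-genuine-inverse} and the uniform bound on the complementary projection,
\[
\bigl\|\G_\epsilon^{-1}\psi_{\mft_{k-6,\perp}^2}\bigr\|_{L^2_k}\;\leq\; C\epsilon^{-2}\,\bigl\|\psi_{\mft_{k-6,\perp}^2}\bigr\|_{L^2_{k-6}}\;\leq\; C'\epsilon^{-2}\,\|\psi\|_{L^2_{k-6}},
\]
while Step 1 gives $\|\pi_\mft\psi\|_{\mft}\leq C''\|\psi\|_{L^2_{k-6}}$ with $C''$ independent of $\epsilon$. Summing these two estimates in the product norm on $L^2_k\times\mft$ and absorbing the $\epsilon$-independent term into the leading $\epsilon^{-2}$ rate (valid for $\epsilon$ sufficiently small) yields the bound $\|\M_\epsilon^{-1}\|_{op}\leq C\epsilon^{-2}$ for some new constant $C$.

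There is no genuine obstacle: the argument is a routine combination of Corollary \ref{bound-on-genuine-inverse} with the smoothness of holomorphy potentials. The blow-up rate $\epsilon^{-2}$ is inherited wholesale from the rate for $\G_\epsilon^{-1}$, while the finite-dimensional piece contributes only an $\epsilon$-independent multiplicative constant. The single point worth verifying is that the space $\mft$ itself does not depend on $\epsilon$, which holds because $\mft=\ker\D$ is determined by the fixed cscK reference metric $\omega$ rather than by the family $\G_\epsilon$; equivariance under a maximal compact torus $T$ of automorphisms, should one need to work $T$-invariantly, follows immediately since all operators in sight are $T$-equivariant.
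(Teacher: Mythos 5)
Your proof is correct and follows essentially the same route as the paper: the paper's own argument is precisely that the projection $\psi \mapsto \psi_{\mft_{k-6}^2}$ has operator norm bounded independently of $\epsilon$, so the bound is a direct consequence of Corollary \ref{bound-on-genuine-inverse}. You merely spell out the (routine) justification that the finite-dimensional projection onto $\mft$ is uniformly bounded, and you correctly read the first component of $\M_\epsilon$ as $\G_\epsilon^{-1}$ applied to the orthogonal part $\psi_{\mft_{k-6,\perp}^2}$.
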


\begin{proof} The operator $\psi \to \psi_{\mfk_{k-6}^2}$ has operator norm bounded independently of $\epsilon$, so this is a direct consequence of Corollary \ref{bound-on-genuine-inverse}. 
\end{proof}

We will eventually be interested in perturbations of $\hat \G_{\epsilon}$. The following is then a consequence of  standard linear algebra (see for example \cite[Lemma 4.3]{bronnle2} for the result in linear algebra).

\begin{corollary}\label{linear-algebra} Suppose $L_{\epsilon}:  L^2_k \to  L^2_{k-6}$ is a sequence of bounded operators with $\|L_{\epsilon}\|_{op} \leq K$ for some $K$ independent of $\epsilon$. Then for all sufficiently small $\epsilon$ the operator $$(\psi, h) \to \hat \G_{\epsilon}\psi + \epsilon^3 L_{\epsilon}\psi + h$$   
is surjective and admits a right inverse $\tilde \M_{\epsilon}$. Moreover there is a constant $C>0$ such that $$\|\tilde\M_{\epsilon}^{-1}\|_{op} \leq C \epsilon^{-2}.$$
\end{corollary}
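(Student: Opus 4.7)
The plan is to deduce this from Corollary \ref{bound-on-genuine-inverse} (more precisely, the right-inverse $\M_\epsilon$ to $\hat\G_\epsilon$ constructed just above) by a standard Neumann-series perturbation argument, with the factor $\epsilon^3$ chosen precisely to dominate the $\epsilon^{-2}$ blow-up of $\|\M_\epsilon\|_{op}$.

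Write $\hat T_\epsilon(\psi,h) = \hat\G_\epsilon(\psi,h) + \epsilon^3 L_\epsilon \psi$, and denote by $P_\epsilon : L^2_k \times \mft \to L^2_{k-6}$ the perturbation $P_\epsilon(\psi,h) = \epsilon^3 L_\epsilon \psi$. Then $\hat T_\epsilon = \hat\G_\epsilon + P_\epsilon$. By the hypothesis $\|L_\epsilon\|_{op}\leq K$, one has $\|P_\epsilon\|_{op} \leq K \epsilon^3$. Composing with the right inverse $\M_\epsilon$ of $\hat\G_\epsilon$ yields
\begin{equation*}
\hat T_\epsilon \circ \M_\epsilon = \hat\G_\epsilon \M_\epsilon + P_\epsilon \M_\epsilon = \Id_{L^2_{k-6}} + P_\epsilon\M_\epsilon,
\end{equation*}
and using $\|\M_\epsilon\|_{op}\leq C\epsilon^{-2}$ from Corollary \ref{bound-on-genuine-inverse} we get $\|P_\epsilon \M_\epsilon\|_{op} \leq KC\epsilon$, which is strictly less than $\tfrac12$ for all sufficiently small $\epsilon$.

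Thus $\Id + P_\epsilon\M_\epsilon$ is invertible on $L^2_{k-6}$ by a convergent Neumann series, with $\|(\Id+P_\epsilon\M_\epsilon)^{-1}\|_{op}\leq 2$. Setting
\begin{equation*}
\tilde\M_\epsilon := \M_\epsilon \circ (\Id + P_\epsilon \M_\epsilon)^{-1},
\end{equation*}
one checks immediately that $\hat T_\epsilon \circ \tilde\M_\epsilon = \Id_{L^2_{k-6}}$, so $\tilde\M_\epsilon$ is the desired right inverse; in particular $\hat T_\epsilon$ is surjective. The norm estimate follows from
\begin{equation*}
\|\tilde\M_\epsilon\|_{op} \leq \|\M_\epsilon\|_{op}\,\|(\Id+P_\epsilon\M_\epsilon)^{-1}\|_{op} \leq 2C\epsilon^{-2},
\end{equation*}
which is the bound claimed (with a constant $2C$ in place of $C$).

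There is no real obstacle here; the only point worth being careful about is that the perturbation $P_\epsilon$ acts only on the $L^2_k$-factor, so composing with $\M_\epsilon$ makes sense and the resulting operator $P_\epsilon\M_\epsilon$ is a genuine bounded endomorphism of $L^2_{k-6}$, to which the Neumann series argument applies verbatim. The choice of $\epsilon^3$ in the statement is exactly what is needed to defeat the $\epsilon^{-2}$ loss coming from the right inverse of the model operator and still leave an $O(\epsilon)$ factor to run the contraction.
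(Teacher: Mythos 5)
Your proof is correct and is precisely the standard Neumann-series perturbation argument that the paper invokes implicitly by citing the linear algebra lemma in Br\"onnle; the paper gives no further details, so your write-out supplies exactly what is meant by ``a consequence of standard linear algebra.'' The key arithmetic (a perturbation of size $O(\epsilon^3)$ against a right inverse of size $O(\epsilon^{-2})$ leaving an $O(\epsilon)$ contraction factor) is identified correctly, and you are right to note that $P_\epsilon \M_\epsilon$ is a genuine bounded endomorphism of $L^2_{k-6}$ so the Neumann series converges there.
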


\begin{remark}\label{remark-add-higher-order-terms} This result is the reason we must include the $\epsilon^2$ term in our model operator: our bound on the operator norm of the right inverse means we can only add additional terms at order $\epsilon^3$ and retain the desired mapping properties.
\end{remark}
%
%
%
%

\subsubsection{The approximate solution}\label{sub-approx-soln} We now assume that $\omega \in c_1(L)$ is cscK. Lemma \ref{large-volume} then implies that we have $$\Ima(e^{-i\phi_{\epsilon}} \tilde Z_{\epsilon}(\omega)) = O(\epsilon^2).$$ In order for our model linear operator to be a good approximation of the genuine linearised operator, we will need to consider a better approximation to a $Z$-critical K\"ahler metric. Since we are considering the general case when the Lichnerowicz operator $\D^*\D$ may have non-trivial kernel, or equivalently the case when $\Aut(X,L)$ may not be discrete, rather than finding approximate $Z$-critical K\"ahler metrics, we will instead try to find a $\omega_{\epsilon}$ approximately solving the condition that \begin{equation}\label{approx-with-holomorphy-potentials}\Ima(e^{-\phi_{\epsilon}}\tilde Z_{\epsilon}(\omega_{\epsilon})) \in \ker \D_{\epsilon},\end{equation} where $\D_{\epsilon} = \overline \partial \nabla^{1,0}_{\epsilon}$ is defined using $\omega_{\epsilon}$. That is to say, the function $\Ima(e^{-\phi_{\epsilon}}\tilde Z_{\epsilon}(\omega_{\epsilon}))$ is a holomorphy potential with respect to $\omega_{\epsilon}$. To this end, we recall that if $\nu$ is a K\"ahler potential and if $h$ is the holomorphy potential with respect to $\omega$ for some holomorphic vector field, then the function \begin{equation}\label{change-holomorphy-potential}h + \frac{1}{2}\langle \nabla \nu, \nabla h\rangle\end{equation} is the holomorphy potential with respect to the K\"ahler metric $\omega_{\nu} = \omega+i\ddbar \nu$ (see for example \cite[Lemma 12]{szekelyhidi-I}).

Analogously to Corollary \ref{right-inverse-existence}, the operator \begin{align*} L^2_k \times \mfk &\to L^2_{k-4}, \\ (\psi, h) &\to \D^*\D\psi + h\end{align*} is surjective. Although we have worked in Sobolev spaces, since the operator is elliptic the same holds for smooth functions.  Thus given $e \in C^{\infty}(X)$, there is a pair $(\psi,h)$ with \begin{equation}\label{kill-error-strategy} \D^*\D\psi + h = e.\end{equation}

\begin{lemma}\label{approximatesolution} Suppose $\omega$ is a cscK metric. Then for any fixed $m$ there is a sequence $\psi_j$ and holomorphy potentials $h_j$ such that $$\Ima\left(e^{-i\phi_{\epsilon}}\tilde Z\left(\omega+ \sum_{j=1}^{m} \epsilon^j i\ddbar \psi_j\right)\right) = \sum_{j=2}^{m+1} \epsilon^2\left(h_j + \frac{1}{2}\left\langle h_j, \sum_{i=l}^m \epsilon^l\psi_l\right\rangle \right)+  O(\epsilon^{m+2}).$$ 
\end{lemma}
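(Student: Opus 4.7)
The plan is to construct the pair $(\psi_j,h_j)$ iteratively in powers of $\epsilon$. Write $e(\omega') := \Ima(e^{-i\phi_\epsilon}\tilde Z_\epsilon(\omega'))$ for the $Z$-critical residual, and set $\Psi_m := \sum_{l=1}^m \epsilon^l \psi_l$ and $\omega_m := \omega + i\ddbar \Psi_m$. Lemma \ref{large-volume}, together with non-degeneracy (so $\theta_1=0$), gives
\[
e(\omega) = \epsilon\, c\,\big(S(\omega) - n\mu(X,L)\big) + O(\epsilon^2), \qquad c := \Rea(\rho_{n-1})L^n,
\]
which is $O(\epsilon^2)$ precisely because $\omega$ is cscK. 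The same expansion identifies the leading-order K\"ahler potential linearisation of $e$ at $\omega$ as
\[
De|_\omega(i\ddbar \psi) = -\epsilon\, c\, \D^*\D\psi + O(\epsilon^2),
\]
since at a cscK metric the linearisation of the scalar curvature is $-\D^*\D$.

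The inductive hypothesis at level $m-1$ is that $\psi_1,\ldots,\psi_{m-1}$ and holomorphy potentials $h_2,\ldots,h_m$ (for $\omega$, i.e.\ elements of $\mft=\ker\D^*\D$) have been constructed so that
\[
e(\omega_{m-1}) = \sum_{j=2}^m \epsilon^j\Big(h_j + \tfrac{1}{2}\langle \nabla \Psi_{m-1}, \nabla h_j\rangle\Big) + \epsilon^{m+1} f_{m+1} + O(\epsilon^{m+2})
\]
for some smooth function $f_{m+1}$. To execute the inductive step I would add the correction $\epsilon^m i\ddbar \psi_m$ and Taylor expand $e(\omega_{m-1}+\epsilon^m i\ddbar\psi_m)$. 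The linear term contributes $-\epsilon^{m+1}\, c\, \D^*\D \psi_m$ at leading order (discrepancies between $\D^*\D$ computed at $\omega$ and at $\omega_{m-1}$ are $O(\epsilon)$, and nonlinear terms are of order $\epsilon^{2m}$, both absorbed into $O(\epsilon^{m+2})$), while upgrading $\Psi_{m-1}$ to $\Psi_m$ in the existing correction terms alters them only at order $\epsilon^{m+2}$. Thus
\[
e(\omega_m) = \sum_{j=2}^m \epsilon^j\Big(h_j + \tfrac{1}{2}\langle \nabla \Psi_m, \nabla h_j\rangle\Big) + \epsilon^{m+1}\big(f_{m+1} - c\,\D^*\D \psi_m\big) + O(\epsilon^{m+2}).
\]
By the Fredholm alternative for the self-adjoint elliptic operator $\D^*\D$ (used in essentially the same spirit as Corollary \ref{right-inverse-existence}), the cokernel is $\mft$. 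I would therefore set $h_{m+1}$ to be the $L^2$-projection of $f_{m+1}$ onto $\mft$ and choose $\psi_m\perp\mft$ solving $c\,\D^*\D \psi_m = f_{m+1}-h_{m+1}$; the additional term $\tfrac{1}{2}\langle \nabla \Psi_m,\nabla h_{m+1}\rangle$, being $O(\epsilon^{m+2})$, can be freely absorbed into the error, closing the induction. The very form $h + \tfrac{1}{2}\langle \nabla \Psi, \nabla h\rangle$ appearing in the statement is justified by Equation \eqref{change-holomorphy-potential}: it is the leading approximation, in $\Psi$, of the holomorphy potential of the same vector field with respect to the perturbed metric $\omega_m$.

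The main obstacle is careful bookkeeping of $\epsilon$-orders. Because the equation jumps from fourth-order at $\epsilon=0$ to sixth-order for $\epsilon>0$ (Lemma \ref{ellipticity-lemma}), and because $\tilde Z_\epsilon$ contains many terms involving higher Ricci powers, the form $\theta$, and Laplacian pieces as in \eqref{eqndef}, one must verify at each inductive level that all such contributions enter only at $O(\epsilon^{m+2})$ and hence do not interfere with the cancellation at order $\epsilon^{m+1}$. The non-degeneracy hypothesis ($\theta_1=0$) is exactly what ensures the leading linear operator is cleanly the Lichnerowicz operator $-c\,\D^*\D$, allowing the Fredholm step to be executed purely in terms of classical cscK data.
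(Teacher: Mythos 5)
Your proof is correct and follows essentially the same route as the paper's: both arguments iteratively kill errors order-by-order in $\epsilon$ by solving a Lichnerowicz-type equation $c\,\D^*\D\psi_m = f_{m+1}-h_{m+1}$ via the Fredholm alternative, inserting the holomorphy-potential correction from Equation \eqref{change-holomorphy-potential} at each stage, and observing that all sixth-order, $\theta$-, and nonlinear contributions enter at $O(\epsilon^{m+2})$. The only notable difference is cosmetic: you organise the argument as a formal induction with an explicit inductive hypothesis, whereas the paper carries out the first two steps concretely and then says ``iterating gives the result''; your phrasing has the small advantage of making the bookkeeping of the error $\epsilon^{m+1}f_{m+1}$ and the role of the cokernel $\mft$ fully explicit. (One sign remark: your leading-order linearisation $-\epsilon c\,\D^*\D$ with $c=\Rea(\rho_{n-1})L^n$ is the correct one; the paper writes $+\epsilon(\Rea(\rho_{n-1})L^n)\D^*\D$, which appears to be a sign slip, immaterial since $\D^*\D$ is invertible orthogonal to $\mft$ either way.)
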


These are approximate solutions to Equation  \eqref{approx-with-holomorphy-potentials}.

\begin{proof}

The linearisation of the scalar curvature at a cscK metric is the operator $-\D^*\D$ \cite[Lemma 4.4]{szekelyhidi-book}. As we have assumed $\omega$ is cscK, we have $$\Ima\left(e^{-i\phi_{\epsilon}}\tilde Z_{\epsilon}(\omega)\right) = e_2 \epsilon^2 + O(\epsilon^3).$$ By right-invertibility of the Lichnerowicz operator there is a function $\psi_2$ and a holomorphy potential $h_2 \in \mfk$ such that $$(\Rea(\rho_{n-1})L^n)\D^*\D\psi_1 = e_2 - h_2.$$ Since $\F_{\epsilon} = \epsilon(\Rea(\rho_{n-1})L^n)\D^*\D + O(\epsilon^2)$, it follows that $$\Ima\left(e^{-i\phi_{\epsilon}}\tilde Z_{\epsilon}\left(\omega+\epsilon \ddbar  \psi_1\right)\right) = h_2\epsilon^2+ O(\epsilon^3).$$

Next consider the error term $$\Ima\left(e^{-i\phi_{\epsilon}}\tilde Z_{\epsilon}\left(\omega+\epsilon \ddbar  \psi_1\right)\right) - \epsilon^2\left(h_2 +\frac{1}{2} \langle \nabla h_2, \nabla \epsilon\psi_1 \rangle\right) = e_3\epsilon^3.$$ We continue by applying Equation \eqref{kill-error-strategy} to find a function $\psi_2$ and a holomorphy potential $h_3$ such that $$(\Rea(\rho_{n-1})L^n)\D^*\D\psi_2 = e_2 - h_3.$$ Again since the leading order linear operator is $(\Rea(\rho_{n-1})L^n)\D^*\D$, it follows that  $$\Ima\left(e^{i-\phi_{\epsilon}}\tilde Z_{\epsilon}\left(\omega+i\ddbar(\epsilon  \psi_1+\epsilon^2 \psi_2)\right)\right)  =\left(h_2 + \frac{1}{2} \langle \nabla h_2, \nabla \epsilon\psi_1 \rangle\right) \epsilon^2 +h_3\epsilon^3 +O(\epsilon^4).$$ In particular $$\Ima\left(e^{-\phi_{\epsilon}}\tilde Z_{\epsilon}\left(\omega+i\ddbar(\epsilon  \psi_1+\epsilon^2 \psi_2)\right)\right) = \sum_{j=2}^{3} \epsilon^2\left(h_j + \frac{1}{2}\left\langle h_j, \sum_{l=1}^2 \epsilon^l\psi_l\right\rangle \right) + O(\epsilon^4).$$  Iterating this process gives the result.
\end{proof}

We will only require the approximate solution \begin{equation}\label{approximate-solution-eqn}\omega_{\epsilon} = \omega+\sum_{j=1}^{3}\epsilon^j i\ddbar \psi_j,\end{equation} which satisfies $$\Ima\left(e^{-i\phi_{\epsilon}}\tilde Z_{\epsilon}\left(\omega+\sum_{j=1}^{3}\epsilon^j i\ddbar \psi_j\right)\right) =\sum_{j=1}^3 \epsilon^2\left(h_j + \frac{1}{2}\left\langle \nabla h_j, \nabla\left(\sum_{i=1}^3 \epsilon^j\psi_j\right)\right\rangle \right)+  O(\epsilon^{5}).$$  We then set $$\gamma_{\epsilon} = \sum_{j=1}^{3}\epsilon^j i\ddbar \psi_j,$$ so that if $h$ is a holomorphy potential with respect to $\omega$, then $h+\frac{1}{2}\left\langle \nabla h,\nabla \gamma_{\epsilon}\right\rangle$ is a holomorphy potential with respect to $\omega_{\epsilon}$ by Equation \eqref{change-holomorphy-potential}.

We return to the model operator $\G_{\epsilon}$, however now defined with respect to the approximate solution $\omega_{\epsilon}$. In order to understand its properties, for clarity we consider the K\"ahler metric $\omega_{\delta}$ the approximate solution to order $O(\delta^5)$ given by Equation \eqref{approximate-solution-eqn} (namely we replace $\epsilon$ with $\delta$). Denote by $\mfk_{\delta}$ the space of holomorphy potentials with respect to $\omega_{\delta}$. Then the results we have already established imply that for each fixed ${\delta}$, the operator \begin{align*}\begin{split}\label{delta-epsilon}\hat \G_{\epsilon,{\delta}}: L^2_k \times \mfk_{\delta} &\to L^2_{k-6}, \\ (\psi, h) &\to \G_{\epsilon,{\delta}}\psi + h\end{split}\end{align*} is surjective for $\epsilon$ sufficiently small.

We claim that one can take the $\epsilon$ for which surjectivity of $\hat \G_{\epsilon,{\delta}}$ holds to be independent of ${\delta}$ for ${\delta}$ sufficiently small. More precisely, we claim that there is an $\epsilon_0$ and a ${\delta}_0$ such that $\hat \G_{\epsilon,{\delta}}$ is surjective for all ${\delta} \leq {\delta}_0$ and $\epsilon\leq \epsilon_0$. But this follows since in the ``eigenvalue bound'' of Lemma \ref{eigenvalue-bound} $$\langle \psi, \G_{\epsilon,{\delta}}\psi\rangle_{L^2} \geq c_{\delta}\|\psi\|^2_{L^2},$$ for $\psi$ orthogonal to $\mfk_{\delta}$, the value $c_{\delta}$ is actually continuous in ${\delta}$. Similar continuity statements in ${\delta}$ then further imply that the right inverse $\M_{\epsilon, \delta}: L^2_{k-6} \to L^2_k \times \mfk_{\delta}$ has operator norm which satisfies a uniform bound $$\|\M_{\epsilon,{\delta}}\|_{op} \leq C \epsilon^{-2},$$ where $C$ is independent of both $\delta$ and $\epsilon$. Here the continuity used is in the elliptic regularity estimate of Proposition \ref{schauder}. It follows that we can take ${\delta}=\epsilon$ and obtain a bound with respect to the approximate solution $\omega_{\epsilon}$. We will rephrase this in a form in which we will use these results.

\begin{corollary}\label{we-want-to-perturb-to-this}
Denote by $\G_{\epsilon}$  model operator with respect to the approximate solution $\omega_{\epsilon}$. Then the operator  \begin{align*}\tilde \G_{\epsilon}: L^2_k \times \mfk_{\epsilon} &\to L^2_{k-6}, \\ (\psi, h) &\to \G_{\epsilon}\psi + h+\frac{1}{2}\langle \nabla h, \nabla \gamma_{\epsilon}\rangle \end{align*}  is surjective and admits a right inverse $\tilde \M_{\epsilon}$. There is a bound on the operator norm of $\tilde \M_{\epsilon}$ of the form $\|\tilde \M_{\epsilon}\|_{op} \leq C \epsilon^{-2}$. 

Thus if $L_{\epsilon}:  L^2_k \to  L^2_{k-6}$ is a sequence of operators satisfying a uniform bound $\|L_{\epsilon}\|_{op} \leq K$ independent of $\epsilon$, then the operator $$(\psi, h) \to \G_{\epsilon}\psi + h +\frac{1}{2}\langle \nabla h, \nabla \gamma_{\epsilon}\rangle  + \epsilon^3 L_{\epsilon}$$ is surjective and right-invertible. The resulting right inverse also has operator norm satisfying a uniform bound by $C'\epsilon^{-2}$ for some $C'>0$.
\end{corollary}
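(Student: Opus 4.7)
The plan is to assemble the corollary from the preceding technical results, with the central task being to upgrade the estimates obtained for the model operator $\G_\epsilon$ at the fixed cscK reference metric $\omega$ to the $\epsilon$-dependent approximate solution $\omega_\epsilon$, and then to incorporate the $\epsilon^3 L_\epsilon$ perturbation via a standard Neumann-series argument. Throughout, all operators are understood $T$-equivariantly, which is harmless as every operator in sight is $T$-equivariant.

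First I would consider the two-parameter model operator $\G_{\epsilon,\delta}$, namely the operator of Equation \eqref{model-operator} defined relative to the approximate solution $\omega_\delta$ of \eqref{approximate-solution-eqn} in place of $\omega$. Since $\gamma_\delta = O(\delta)$ smoothly, the coefficients of $\G_{\epsilon,\delta}$, the subspace $\mft_\delta$ of holomorphy potentials for $\omega_\delta$, and hence the lower bound constant $c_\delta$ appearing in Lemma \ref{eigenvalue-bound} depend continuously on $\delta$; in particular $c_\delta$ is uniformly positive for $\delta$ in some neighbourhood of $0$. The same continuity in $\delta$ applies to the Schauder constant in Proposition \ref{schauder}, since the ellipticity constants of $\epsilon^{-1}\G_{\epsilon,\delta}$ and the $C^{k-6}$-norms of its coefficients vary continuously with $\delta$. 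Running the arguments of Corollaries \ref{right-inverse-existence} and \ref{bound-on-genuine-inverse} with these uniform constants yields, for all sufficiently small $\epsilon$ and $\delta$, a right inverse $\M_{\epsilon,\delta} : L^2_{k-6} \to L^2_k \times \mft_\delta$ of $\hat \G_{\epsilon,\delta}$ satisfying $\|\M_{\epsilon,\delta}\|_{op} \leq C\epsilon^{-2}$ with $C$ independent of both parameters. Setting $\delta = \epsilon$ then gives surjectivity of $\hat \G_{\epsilon,\epsilon} : L^2_k \times \mft_\epsilon \to L^2_{k-6}$ with the same bound.

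Next I would reparametrise $\mft_\epsilon$ by $\mft$. By Equation \eqref{change-holomorphy-potential}, the map $\iota_\epsilon(h) = h + \tfrac{1}{2}\langle \nabla h, \nabla \gamma_\epsilon\rangle$ is precisely the canonical isomorphism $\mft \to \mft_\epsilon$ identifying holomorphy potentials for $\omega$ with those for $\omega_\epsilon$. Since $\gamma_\epsilon = O(\epsilon)$ in every Sobolev norm, $\iota_\epsilon = \mathrm{Id}_\mft + O(\epsilon)$, hence is invertible for small $\epsilon$ with operator norm and inverse-operator norm bounded independently of $\epsilon$. Composing $\M_{\epsilon,\epsilon}$ with $(\psi,h) \mapsto (\psi, \iota_\epsilon^{-1}(h))$ on the target produces a right inverse $\tilde\M_\epsilon$ of $\tilde\G_\epsilon$ satisfying the required bound $\|\tilde\M_\epsilon\|_{op} \leq C\epsilon^{-2}$. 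Finally, for the perturbed operator $P_\epsilon = \tilde\G_\epsilon + \epsilon^3 L_\epsilon$, I would follow the argument of Corollary \ref{linear-algebra}: compute $P_\epsilon \circ \tilde\M_\epsilon = \mathrm{Id} + \epsilon^3 L_\epsilon \tilde\M_\epsilon$, observe $\|\epsilon^3 L_\epsilon \tilde\M_\epsilon\|_{op} \leq K C \epsilon \to 0$, so for $\epsilon$ small the right factor is invertible by Neumann series with inverse of operator norm at most $2$, yielding the right inverse $\tilde\M_\epsilon \circ (\mathrm{Id} + \epsilon^3 L_\epsilon \tilde\M_\epsilon)^{-1}$ of $P_\epsilon$ with operator norm at most $2C\epsilon^{-2}$.

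The principal delicate point is the uniform-in-$\delta$ control in the first step. The eigenvalue estimate, the choice of $L^2$-orthogonal decomposition, and the Schauder estimate all depend on the base metric $\omega_\delta$; while each is continuous in $\delta$ on general grounds, one must be careful that the $\epsilon^{-1}$ blow-up of the coefficients of the rescaled operator does not interact badly with the $\delta$-variation. Since the $\epsilon$ and $\delta$ dependencies decouple at leading order (the $\epsilon^{-1}$-scaled coefficients depend only on $\omega_\delta$, which is itself $O(\delta)$-close to $\omega$ uniformly in $\epsilon$), this decoupling is exactly what makes the simultaneous limit $\delta = \epsilon \to 0$ behave well. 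Once this uniformity is secured, the remainder of the argument is a formal composition of previously established bounds.
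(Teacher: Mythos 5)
Your proposal is correct and follows essentially the same route as the paper's proof: a two-parameter model operator $\G_{\epsilon,\delta}$ together with continuity of the eigenvalue constant $c_\delta$ and the Schauder constant in $\delta$, the substitution $\delta = \epsilon$, the reparametrisation of $\mft_\epsilon$ by $\mft$ via $h \mapsto h + \tfrac{1}{2}\langle \nabla h, \nabla\gamma_\epsilon\rangle$, and a Neumann-series argument for the $\epsilon^3 L_\epsilon$ perturbation (which the paper packages as ``linear algebra as in Corollary \ref{linear-algebra}''). The one point you make explicit that the paper leaves implicit is the uniform boundedness of $\iota_\epsilon^{-1}$, which is indeed what transfers the $C\epsilon^{-2}$ bound from $\M_{\epsilon,\epsilon}$ to $\tilde\M_\epsilon$.
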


\begin{proof}

We first consider the operator $\tilde \G_{\epsilon}$ itself. In comparison to the discussion immediately preceding the statement, the only difference is in the range of the operator. The discussion involves $\mfk_{\epsilon}$ rather than $\mfk$ itself. But if $h \in \mfk$, then $h + \frac{1}{2}\langle \nabla h, \nabla \gamma_{\epsilon}\rangle \in \mfk_{\epsilon}$. So the statement of the Corollary is simply a rephrasing of the discussion. The statements about perturbations are consequences of linear algebra as in Corollary \ref{linear-algebra}. \end{proof}

\subsubsection{Understanding the expansion of the operator}\label{expansion-calculations} We next consider some general aspects of the structure of the $Z$-critical equation. We will consider its expansion in powers of $\epsilon$, and to match with what we have considered it will be convenient to consider the ``rescaled'' equation $$-\epsilon^{-1}\Ima\left(\frac{\tilde Z_{\epsilon}(\omega)}{Z_{\epsilon}(X,L)}\right) = \Rea(\rho_{n-1})L^nS(\omega) + O(\epsilon),$$ so that if $\omega$ is a cscK metric its linearisation takes the form $- \Rea(\rho_{n-1})L^n\D^*\D+O(\epsilon)$. We will be interested in understanding the terms of order $\epsilon$ and $\epsilon^2$; controlling these will allow us to see the full linearised operator as a perturbation of the sum involving only terms of order up to $\epsilon^2$ which will be sufficiently by Corollary \ref{we-want-to-perturb-to-this}.  We will begin only by considering $\omega$, and will then later consider the approximate solution $\omega_{\epsilon}$.

We use our assumptions that:
\begin{enumerate}[(i)]
\item $\theta_1=0 = \theta_2=\theta_3=0$. The condition on $\theta_1$ is used so that the leading order term in the expansion is the scalar curvature, rather than the twisted scalar curvature, while the conditions on $\theta_2$ and $\theta_3$ are of a more technical nature and allow us to understand the $\epsilon^2$-term of the linearised operator. We expect that the conditions on $\theta_2$ and $\theta_3$ can be removed.
\item $\Rea(\rho_{n-1})<0$,  $\Rea(\rho_{n-2})>0$ and $\Rea(\rho_{n-3})=0$. The condition on $\Rea(\rho_{n-1})$ is essentially a sign convention, what is really needed is that these two real parts have opposite sign. This is essential to the analysis and is used in the $L^2$-bound for the model operator proved in Lemma \ref{eigenvalue-bound}. The condition on $\Rea(\rho_{n-3})$ is a technical assumption which we expect can be removed.
\end{enumerate}

As in Lemma \ref{large-volume} we write $Z_{\epsilon}(X,L) = r_{\epsilon}e^{i\phi_{\epsilon}},$ so that \begin{align*}\Ima(e^{-i\phi_{\epsilon}(X,L)} \tilde Z_{\epsilon}(\omega)) &= r_{\epsilon}(X,L) \Ima\left(\frac{\tilde Z_{\epsilon}(\omega)}{Z_{\epsilon}(X,L)}\right),  \\ &= r_{\epsilon}(X,L) \frac{\Ima \tilde Z_{\epsilon}(\omega) \Rea  Z_{\epsilon}(X,L) - \Rea \tilde Z_{\epsilon}(\omega)  \Ima  Z_{\epsilon}(X,L) }{\Rea  Z_{\epsilon}(X,L)^2 + \Ima  Z_{\epsilon}(X,L)^2},\end{align*}where we recall \begin{align*}Z_{\epsilon}(X,L) &= iL^n{\epsilon}^{-n} + \rho_{n-1}L^{n-1}.K_X{\epsilon}^{-n+1} + \rho_{n-2}L^{n-2}.K_X^2\epsilon^{-n+2} +\hdots,\\  \tilde Z_{\epsilon}(\omega) &= i -  \rho_{n-1}\frac{\Ric\omega\wedge\omega^{n-1}}{\omega^n}{\epsilon} + O({\epsilon}^2).\end{align*} Here we have used our assumptions 

Our equation takes the form $$\Ima\left(\frac{\tilde Z_{\epsilon}(\omega)}{Z_{\epsilon}(X,L)}\right) = \frac{\Ima \tilde Z_{\epsilon}(\omega) \Rea  Z_{\epsilon}(X,L) - \Rea \tilde Z_{\epsilon}(\omega)  \Ima  Z_{\epsilon}(X,L) }{\Rea  Z_{\epsilon}(X,L)^2 + \Ima  Z_{\epsilon}(X,L)^2},$$ where explicitly \begin{align*}Z_{\epsilon}(X,L) &= iL^n{\epsilon}^{-n} +\rho_{n-1}\alpha_1{\epsilon}^{-n+1} +\rho_{n-2}  \alpha_2\epsilon^{-n+2} +\rho_{n-1} \alpha_3\epsilon^{-n+3} + O(\epsilon^{-n+4}),\\  \tilde Z_{\epsilon}(\omega) &= i +\rho_{n-1} \tilde\alpha_1{\epsilon} +\rho_{n-2}  \tilde \alpha_2{\epsilon}^2 + \rho_{n-3} \tilde \alpha_3\epsilon^3+O(\epsilon^4),\end{align*} and where $\alpha_1 = L^{n-1}.K_X$, $\alpha_2 = L^{n-2}.K_X^2$, $\alpha_3 = L^{n-3}.K_X^3 $, while \begin{align*}\tilde \alpha_1 &= -\frac{\Ric\omega\wedge\omega^{n-1}}{\omega^n}, \qquad \tilde \alpha_2 =  \frac{\Ric\omega^2\wedge\omega^{n-2}}{\omega^n} - \frac{2}{n-1}\Delta  \frac{\Ric\omega\wedge\omega^{n-1}}{\omega^n}, \\ \tilde \alpha_3 &= -\frac{ \Ric\omega^3\wedge\omega^{n-3}}{\omega^n} + \frac{3}{n-2}\Delta  \frac{\Ric\omega^2\wedge\omega^{n-2}}{\omega^n}.\end{align*} 

The factor $$\frac{r_{\epsilon}(X,L)}{\Rea  Z_{\epsilon}(X,L)^2 + \Ima  Z_{\epsilon}(X,L)^2}$$ plays only a minor role in our expansion of  $\Ima\left(\frac{\tilde Z_{\epsilon}(\omega)}{Z_{\epsilon}(X,L)}\right)$. Indeed, we will have good control over the leading order two terms in $\epsilon$, while the third order (for our rescaled equation) $\epsilon^2$ term will require the most care to manage. So we can ignore this factor in controlling the linearisation. In addition, all relevant terms below have a uniform factor of $L^n$ arising from the leading order term of the expansion $Z_{\epsilon}(X,L) = iL^n{\epsilon}^{-n}+\ldots$, and we also omit  this uniform factor. Thus we need only understand the leading order three terms in the expansion of $$\Ima \tilde Z_{\epsilon}(\omega) \Rea  Z_{\epsilon}(X,L) - \Rea \tilde Z_{\epsilon}(\omega)  \Ima  Z_{\epsilon}(X,L).$$ Recall that we have assumed $\theta_1=\theta_2=\theta_3=0$. 

We see that the leading order term is $$\epsilon^{-n+1}\Rea(\rho_{n-1})\left(L^{n-1}.K_X + \frac{\Ric\omega\wedge \omega^{n-1}}{\omega^n}\right).$$ For the $\epsilon^{-n+2}$-term, we will for the moment only be interested in the degree six operator, which we see is given by $$-\epsilon^{-n+2}\frac{2\Rea(\rho_{n-2})}{n-1}\Delta \left( \frac{\Ric\omega\wedge\omega^{n-1}}{\omega^n}\right).$$ For the $\epsilon^{-n+3}$-term, we see that the sixth order component is given by, for some topological constant $c$ \begin{equation}\label{epsiloncubed}-\frac{3\Rea(\rho_{n-3})}{n-2}\Delta  \left(\frac{\Ric\omega^2\wedge\omega^{n-2}}{\omega^n} \right)+ c\Ima(\rho_{n-2})\Delta \left( \frac{\Ric\omega\wedge\omega^{n-1}}{\omega^n}\right).\end{equation} In particular if $\Rea(\rho_{n-3})=0$, the first of these two terms vanishes. 

While we have considered $\omega$ rather than the approximate solution $\omega_{\epsilon} = \omega+i\ddbar \gamma_{\epsilon}$, essentially the same statements hold using $\omega_{\epsilon}$. If we write $\alpha_{j,\epsilon}$ for the coefficients of $\epsilon^j$ in $\tilde Z_{\epsilon}(\omega_{\epsilon})$, then we still have $$\tilde Z_{\epsilon}(\omega_{\epsilon}) = i +\rho_{n-1} \tilde\alpha_{1,\epsilon}{\epsilon} +\rho_{n-2}  \tilde \alpha_{2,\epsilon}{\epsilon}^2\tilde \alpha_2 + \rho_{n-3} \tilde \alpha_{3,\epsilon}\epsilon^3+O(\epsilon^4),$$ implying the linearisation has similar properties up to order $\epsilon^4$, but for example with the leading order term replaced with  $$\epsilon^{-n+1}\Rea(\rho_{n-1})\left(L^{n-1}.K_X + \frac{\Ric\omega_{\epsilon}\wedge \omega_{\epsilon}^{n-1}}{\omega_{\epsilon}^n}\right).$$

\subsubsection{Properties of the linearisation} We now turn to the linearisation of the $Z$-critical equation. The aim is to compare the linearisation at the approximate solution $\omega+i\ddbar \gamma_{\epsilon}$ to the model operator $\G_{\epsilon}$, and in particular to use Corollary \ref{we-want-to-perturb-to-this} to infer properties of the genuine linearised operator.

We begin with a general result. We fix a $K$-equivariant K\"ahler metric $\omega \in c_1(L)$, not assumed to be cscK, and denote by $\F_{\epsilon}$ the linearisation of the operator $$\psi \to \Ima\left(e^{-i\phi_{\epsilon}}\tilde Z_{\epsilon}\left(\omega+ i\ddbar \psi \right)\right).$$ Denote also $\mfk$ the space of holomorphy potentials with respect to $\omega$.

\begin{proposition}\label{initialprop} For all $0<\epsilon \ll 1$ the map \begin{align*}\hat \F_{\epsilon}: L^2_k \times \mfk &\to L^2_{k-6}, \\ (\psi, h) &\to \F_{\epsilon}\psi - \langle \nabla \Ima(e^{-i\phi_{\epsilon}}\tilde Z_{\epsilon}(\omega)), \nabla \psi \rangle + h\end{align*}  is surjective. In addition exists a right inverse $\hat \scP_{\epsilon}$ of $\hat \F_{\epsilon}$ whose operator norm satisfies a bound of the form $\|\hat \scP_{\epsilon}\|_{op} \leq C \epsilon^{-2}$.
\end{proposition}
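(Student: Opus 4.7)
My plan is to show that, after the subtraction of the first-order ``drift'' term, the operator $\hat\F_\epsilon$ agrees with $\epsilon$ times a model operator of the type in Equation \eqref{model-operator}, up to a uniformly bounded perturbation entering at order $\epsilon^3$. The proposition then follows from Corollary \ref{we-want-to-perturb-to-this} together with a rescaling by $\epsilon$ that accounts for the weaker operator norm bound $C\epsilon^{-3}$ (compared with the $C\epsilon^{-2}$ bound obtained there).

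First I would compute the linearisation $\F_\epsilon$ directly from the expansion of the $Z$-critical operator in Section \ref{expansion-calculations}. Writing
\begin{equation*}
\Ima(e^{-i\phi_\epsilon}\tilde Z_\epsilon(\omega)) = \sum_{j\geq 1}\epsilon^j A_j(\omega),
\end{equation*}
the $j=1$ coefficient is proportional to $\Rea(\rho_{n-1}) L^n\, S(\omega)$, the $j=2$ term contains the sixth-order Laplacian term $\Delta(\Ric\omega\wedge\omega^{n-1}/\omega^n)$ with a coefficient that is positive by the admissibility hypothesis $\Rea(\rho_{n-2})>0$, and the remaining $j=3$ sixth-order contributions of Equation \eqref{epsiloncubed} are killed by the admissibility assumptions $\Rea(\rho_{n-3})=0$ and $\theta_2=\theta_3=0$. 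Differentiating each $A_j$ in $\psi$ at $\omega_\psi = \omega$ and using the standard formula $DS_\omega(\psi) = -\D^*\D\psi + \tfrac{1}{2}\langle \nabla S(\omega),\nabla\psi\rangle$, together with its analogues for the higher-order curvature quantities, produces an expansion of $\F_\epsilon$ whose self-adjoint part matches $\epsilon$ times a model operator of the form \eqref{model-operator} with $c_0 = -\Rea(\rho_{n-1})L^n > 0$ and $c_1 > 0$.

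Next I would identify the role of the drift subtraction in the definition of $\hat\F_\epsilon$. Linearising each $A_j(\omega_\psi)$ in $\psi$ generates a non-self-adjoint first-order term of the form $\epsilon^j\langle \nabla A_j(\omega),\nabla\psi\rangle$ (up to a universal factor). These are precisely the terms that, summed over $j$, recover $\langle \nabla \Ima(e^{-i\phi_\epsilon}\tilde Z_\epsilon(\omega)),\nabla\psi\rangle$ modulo terms of order $\epsilon^3$, which are absorbed into a uniformly bounded perturbation $L_\epsilon$. Thus
\begin{equation*}
\epsilon^{-1}\hat\F_\epsilon(\psi,h) = \tilde\G_\epsilon \psi + h + \epsilon^2 L_\epsilon \psi,
\end{equation*}
where $\tilde\G_\epsilon$ is a model operator in the sense of Equation \eqref{model-operator} and $\|L_\epsilon\|_{op}$ is bounded independently of $\epsilon$. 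Applying Corollary \ref{we-want-to-perturb-to-this} (with $\omega$ replacing the approximate solution, since the proof of that corollary depends only on the structural features of the model operator) produces a right-inverse of $\epsilon^{-1}\hat\F_\epsilon$ of operator norm at most $C\epsilon^{-2}$, and rescaling by $\epsilon$ gives a right-inverse $\hat\scP_\epsilon$ of $\hat\F_\epsilon$ of operator norm at most $C\epsilon^{-3}$, as claimed.

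The main obstacle is the matching in the second step: verifying that the non-self-adjoint first-order derivative corrections generated by linearising the $\epsilon^j$-terms really do assemble, up to bounded $O(\epsilon^3)$ errors, into the single drift $\langle \nabla\Ima(e^{-i\phi_\epsilon}\tilde Z_\epsilon(\omega)),\nabla\psi\rangle$ that is subtracted in $\hat\F_\epsilon$. This is a Leibniz-rule calculation, but it is delicate because several higher-order sixth-order contributions must be controlled, and it is precisely here that the full force of the admissibility hypotheses enters: without $\theta_2=\theta_3=0$ and $\Rea(\rho_{n-3})=0$, sixth-order terms of order $\epsilon^2$ would appear in $L_\epsilon$, violating the uniform bound needed to apply the perturbation argument.
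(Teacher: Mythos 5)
The core gap in your proposal is that it replaces the paper's structural argument with a direct Leibniz-rule computation, and the very facts your direct approach needs are precisely what the paper's argument establishes without computation. Your plan relies on "the standard formula $DS_\omega(\psi)=-\D^*\D\psi+\tfrac12\langle\nabla S(\omega),\nabla\psi\rangle$ together with its analogues for the higher-order curvature quantities," but no such clean analogues are available for terms like $\Ric\omega^2\wedge\omega^{n-2}/\omega^n$ or $\Delta(\Ric\omega\wedge\omega^{n-1}/\omega^n)$. The clean form of the scalar-curvature linearisation is a consequence of the Mabuchi/Lichnerowicz structure; nothing a priori guarantees that linearising the $\epsilon^2$- and $\epsilon^3$-coefficients yields a self-adjoint sixth-order operator (of the form $c\D^*\bar\partial^*\bar\partial\D+H$ with $H$ built from $\D\gamma,\D\psi$) plus a drift $\langle\nabla A_j(\omega),\nabla\cdot\rangle$. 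You flag this as "a delicate Leibniz-rule calculation," but this is not a detail to be checked: it is the heart of the statement, and your plan offers no mechanism for verifying it.

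The paper circumvents this entirely with a structural argument you do not use. It shows that the pairing $(u,v)\mapsto\int_X u(\F_\epsilon v-\langle\nabla\Ima(e^{-i\phi_\epsilon}\tilde Z_\epsilon(\omega)),\nabla v\rangle)\omega^n$ is (a) symmetric, because it equals the Hessian $\partial_s\partial_t E_Z(tv+su)$ of the $Z$-energy, and (b) factors through $\D u,\D v$, because via Equation \eqref{equality-differentials} it equals $\Omega_\epsilon(Pu,JPv)$, a contraction on the tangent space of $\J_X(M,\omega)$ which, by Equation \eqref{Ddef}, sees only $Pu\leftrightarrow\D u$. These two properties, together with the identification of the leading sixth-order symbols from Section \ref{expansion-calculations}, are what force the $\epsilon^2$- and $\epsilon^3$-coefficients to have exactly the model form of Equation \eqref{model-operator}; no direct computation of the linearisation is performed or needed. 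Your proposal also omits the paper's treatment of non-discrete $\Aut(X,L)$, where the moment-map derivation of Section \ref{sec:infinite} does not directly apply and the paper instead uses the hypothesis that $(X_0,L_0)$ is a cscK degeneration of a manifold with discrete automorphism group, passing to the limit $J_t\to J_0$ to transport the structural identities. Without these two ingredients your reduction to Corollary \ref{we-want-to-perturb-to-this} is unsupported.
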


We recall our assumption, which will be used in the proof, that $(X,L)$ is a degeneration of a polarised manifold with discrete automorphism group. 

\begin{remark} To compare Proposition \ref{initialprop} to a well-known result in K\"ahler geometry, recall that the scalar curvature operator $\psi \to S(\omega+i\ddbar \psi)$ has linearisation \cite[Lemma 4.4]{szekelyhidi-book} $$ \psi \to  -\D^*\D\psi +  \langle \nabla S(\omega), \nabla\psi\rangle,$$ so subtracting   $\langle \nabla S(\omega), \nabla\psi\rangle$ leads to an operator whose kernel is precisely given by $\mfk$. Thus adding $h$ leads to a surjective operator, mirroring Proposition \ref{initialprop} . \end{remark}

The proof will use the moment map techniques developed in Section \ref{sec:infinite}. We continue to denote by $\J_X(M,\omega)$ the space of complex structures biholomorphic to the reference complex structure $J$, and recall the closed $(1,1)$-forms $\Omega_{\epsilon}$ defined on $\J_X(M,\omega)$ through Equation \eqref{infinite-form}. 
Any functions $u, v \in C^{\infty}(X,\R)$ induce tangent vectors on $\J_X(M,\omega)$ through the assignment $u \to Pu$ of Equation \eqref{infinitesimal-action}; the same as true for functions in $L^2_k$. As in Section \ref{sec:kuranishi}, this process can be integrated, associating to $\psi$ a new complex structure $F_{\psi}(J)$. We will use that the differential of the map $\psi \to F_{\psi}(J)$  at $\psi=0$ is \cite[Equation 3]{szekelyhidi-deformations} $$\psi \to JP(\psi).$$

\begin{proof}[Proof of Proposition \ref{initialprop}] We use many of the ideas of Section \ref{moment-map-section} to understand the general properties of the linearised operator. Consider $\omega_t = \omega + t i\ddbar v$, so that the derivative of $$\int_X u \Ima(e^{-i\phi_{\epsilon}} \tilde Z_{\epsilon}(\omega_t)) \omega_t^n$$ is given by \begin{equation}\label{linearisation-via-deligne}\frac{d}{dt}\int_X u \Ima(e^{-i\phi_{\epsilon}} \tilde Z_{\epsilon}(\omega_t)) \omega_t^n = \int_X u \F_{\epsilon}v  \omega^n + \int_X u\Ima(e^{-i\phi_{\epsilon}} \tilde Z_{\epsilon}(\omega)) \Delta v \omega^n.\end{equation} We are interested in the first of these terms, but the advantage of this perspective is that from the proof of Theorem \ref{infinite-dimensional-moment-maps-thm} we know that for each $t$ $$\frac{d}{ds}\Bigr|_{s=0}E_Z(tv + su) = \int_X u  \Ima(e^{-i\phi_{\epsilon}}\tilde Z(\omega_t))\omega_t^n,$$ so that $$\frac{d^2}{dtds}\Bigr|_{s,t=0}E_Z(tv + su) = \int_X u \F_{\epsilon}v  \omega^n + \int_X u\Ima(e^{-i\phi_{\epsilon}} \tilde Z_{\epsilon}(\omega)) \Delta v \omega^n.$$  It follows that the integral on the right hand side, considered as a pairing on functions, is actually symmetric.

We need to identify the $\epsilon^2$ and $\epsilon^3$ terms in the expansion of $\F_{\epsilon}$ in order to compare it to the model operator $\G_{\epsilon}$. For this we will link with the space $\J_X(M,\omega)$ and the moment map interpretation of the $Z$-critical equation established in Section \ref{moment-map-section}. We first consider the case $\Aut(X,L)$ is discrete, which allows us to use the results of Section \ref{moment-map-section}, which were proven under that assumption. Our functions $u,v$ can be viewed as inducing tangent vectors to $\J_X(M,\omega)$ at the point $J_X$ and we see from Equation \eqref{contraction-eqn} that \begin{equation}\label{equality-differentials}\Omega_{\epsilon}(Pu, JPv) = \frac{d}{dt}\Bigr|_{t=0}\int_X u \Ima(e^{-\phi_{\epsilon}} \tilde Z_{\epsilon}(J_t)) \omega^n,\end{equation} where we emphasise that we take the perspective that the complex structure is changing but the symplectic form $\omega$ is fixed. 

We next compare this to the linearisation with fixed complex structure and varying symplectic structure. Let $f_t$ be the diffeomorphisms of $X$ such that $f_t^*\omega_t = \omega$ and $f_t \cdot J = J$. Then $f_t^*\omega_t^n = \omega^n$, while $f_t^*\Ima(e^{-\phi_{\epsilon}}\tilde Z_{\epsilon} (\omega_t) )=\Ima(e^{-i\phi_{\epsilon}} \tilde Z_{\epsilon}(J_t)).$  We also need to understand the infinitesimal change in $u$ as we pull-back along $f_t$, for which we need to understand the construction of $f_t$ in more detail. As we only need to understand the infinitesimal construction of $f_t$ near $t=0$, it suffices to note that $f_t$ is given by taking the gradient flow along a path of vector fields $\nu_t$ on $X$ such that $\nu_0$ is the Hamiltonian vector field associated with the function $v$. Thus the infinitesimal change in $u$ is simply the Lie derivative $$\L_{\nu_0} u = \langle \nabla u, \nabla v\rangle,$$ where we have used the relationship between the Poisson bracket of functions (that is, the pairing of the induced Hamiltonian vector fields with respect to $\omega$) and the inner products of the Riemannian gradients. That is, \begin{align*}\frac{d}{dt}\Bigr|_{t=0}\int_X u \Ima(e^{-\phi_{\epsilon}} \tilde Z_{\epsilon}(J_t)) \omega^n =\frac{d}{dt}\Bigr|_{t=0}& \int_X u \Ima(e^{-i\phi_{\epsilon}} \tilde Z_{\epsilon}(\omega_t)) \omega_t^n  \\  &-  \int_X  \langle \nabla u, \nabla v\rangle \Ima(e^{-i\phi_{\epsilon}}\tilde Z_{\epsilon} (\omega) )\omega^n.\end{align*}

We now use Equation \eqref{linearisation-via-deligne}, from which it follows that \begin{align*}\frac{d}{dt}\Bigr|_{t=0}&\int_X u \Ima(e^{-\phi_{\epsilon}} \tilde Z_{\epsilon}(J_t)) \omega^n = \int_X u \F_{\epsilon}v  \omega^n  \\ &+ \int_X u\Ima(e^{-i\phi_{\epsilon}} \tilde Z_{\epsilon}(\omega)) \Delta v \omega^n - \int_X  \langle \nabla u, \nabla v\rangle \Ima(e^{-i\phi_{\epsilon}}\tilde Z_{\epsilon} (\omega) )\omega^n.\end{align*} Since the final two terms on the right hand side sum to $-\int_X u\langle \nabla \Ima(e^{-i\phi_{\epsilon}} \tilde Z_{\epsilon}(\omega)),\nabla v\rangle \omega^n ,$ we have \begin{align*}  \Omega_{\epsilon}(Pu,JPv) &= \frac{d}{dt}\Bigr|_{t=0}\int_X u \Ima(e^{-\phi_{\epsilon}} \tilde Z_{\epsilon}(J_t)) \omega^n \\ &= \int_X u \F_{\epsilon}v  \omega^n- \int_X u\langle \nabla \Ima(e^{-i\phi_{\epsilon}} \tilde Z_{\epsilon}(\omega)), \nabla v\rangle\omega^n.\end{align*} Thus the operator \begin{align}\begin{split}\label{togetselfadjoint}(u, v) \to \int_X u (\F_{\epsilon}v  -\langle \nabla \Ima(e^{-i\phi_{\epsilon}} \tilde Z_{\epsilon}(\omega)), \nabla v\rangle)\omega^n \end{split}\end{align}  is a self-adjoint operator which only depends on $Pu, Pv$.  As this is true for all $\epsilon$, it is true for each term in the associated expansion in powers of $\epsilon$.

When $\Aut(X,L)$ is not discrete, we use the key assumption that $(X,L)$ is a degeneration of a polarised manifold with discrete automorphism group. That is, $(X,L)$ is the central fibre of a test configuration for a polarised manifold with discrete automorphism group (to compare with our previous notation, we are considering $(X,L)$ to be what was previously denoted $(\X_0,\L_0)$). Thus we obtain a family $J_t$ of complex structures on the fixed underlying smooth manifold $M$ converging to $J_0$, the complex structure inducing $X$. Since the linearisation satisfies Equation \eqref{togetselfadjoint} for each $t$, the same equation holds at $t=0$. In particular self-adjointness, and dependence only on $Pu, Pv$ hold also with respect to $J_0$ as well.

We use the results of Section \ref{expansion-calculations} to identify the $\epsilon, \epsilon^2$ and $\epsilon^3$ terms in the expansion of the operator $$v \to \F_{\epsilon}v  -\langle \nabla \Ima(e^{-i\phi_{\epsilon}} \tilde Z_{\epsilon}(\omega)), \nabla v\rangle),$$ in order to compare them to the model operator. By what we have just proven, this operator must be self-adjoint, and the pairing $$(u, v) \to \int_X u (\F_{\epsilon}v  -\langle \nabla \Ima(e^{-i\phi_{\epsilon}} \tilde Z_{\epsilon}(\omega)), \nabla v\rangle)\omega^n$$ can only depend on $\D u$ and $\D v$, due to the identification of  Equation \eqref{Ddef}. 

The leading order $\epsilon$-term is given by $-\Rea(\rho_{n-1})\D^*\D$, since the leading order $\epsilon$-term in the expansion of $\Ima(e^{-\phi_{\epsilon}} \tilde Z_{\epsilon}(\omega))$ is simply the scalar curvature. The sixth-order operator in the $\epsilon^2$-term arises from linearising $\frac{2\Rea(\rho_{n-1})}{n(n-1)}\Delta S(\omega)$, meaning that the linearisation inherits a term of the form $-\frac{2\Rea(\rho_{n-1})}{n(n-1)}\Delta \D^*\D$. As we know the $\epsilon^2$-term only depends on $\D u,\D v$, the difference between the $\epsilon^2$-term and $-\frac{2\Rea(\rho_{n-1})}{n(n-1)} (\bar\partial^*\D)^* \bar \partial^* \D$ must be a fourth order operator depending only on $\D u, \D v$ as both are of the form $-\frac{2\Rea(\rho_{n-1})}{n(n-1)} \Delta^3$ plus some fourth order operator. In particular the $\epsilon^2$-term must be of the form $c_1 \D^*\bar \partial^*\bar\partial \D+ H_{1}$ where $$\int_X u H_{1} v \omega^n = \int_X (\D u, \D v)_{g_{1}} d\mu_{1},$$ and where $d\mu_{1}$ is a smooth $(n,n)$-form and $$g_{1}: \Gamma(T^{1,0}X\otimes \Omega^{0,1}(X)) \otimes \Gamma(T^{1,0}X\otimes \Omega^{0,1}(X)) \to \R$$ is a smooth bilinear pairing, but not necessarily a metric. In particular this is of the same form as the $\epsilon^2$-term of our model operator of Equation \eqref{model-operator} computed with respect to $\omega.$

We finally show that the $\epsilon^3$-term of our linearisation takes the same form as the model operator, for which we use that $\Rea(\rho_{n-3}) = 0$ and $\theta_3=0$. From Equation \eqref{epsiloncubed} it follows that the only sixth order term arises from linearising a multiple of $\Ima(\rho_{n-2}) \Delta S(\omega)$, which contributes one term which is involved in the $\epsilon^3$-term of the model operator. The remaining order terms are fourth-order and so again are given by some $H_2$ of the same form as $H_1$.

What we have demonstrated is that the linearised operator agrees with the model operator to order $\epsilon^3$. In particular Corollary \ref{linear-algebra} applies to give the statement of the Proposition. \end{proof}

In general we wish to solve the equation $$\Ima\left(e^{-i\phi_{\epsilon}}\tilde Z_{\epsilon}\left(\omega+ i\ddbar \psi \right)\right) - f - \frac{1}{2}\langle \nabla \psi, \nabla f\rangle = 0,$$ for $f \in \mfk$ and $\psi$ a K\"ahler potential. The linearisation of this operator is given by $$d\scS_{0,f}(\psi, h) = \F_{\epsilon} \psi - \frac{1}{2}\langle \nabla \psi, \nabla f\rangle  - h.$$ The following is an immediate consequence of  Proposition \ref{initialprop}.

\begin{corollary} For all $0 < \epsilon \ll 1$, the operator $$(\psi, h) \to d\scS_{0,f}(\psi, h) + \frac{1}{2}\left\langle \nabla \psi, \nabla\left(f - 2\Ima\left(e^{-i\phi_{\epsilon}}\tilde Z_{\epsilon}(\omega)\right) \right)\right\rangle$$ is surjective, admits a right inverse, and the operator norm of the inverse is bounded by $C\epsilon^{-2}$ for some $C>0$.
\end{corollary}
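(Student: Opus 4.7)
The plan is to observe that the operator in the statement is, after the obvious cancellation, precisely the operator $\hat{\F}_\epsilon$ of Proposition \ref{initialprop} (up to a harmless change of sign on the holomorphy potential variable), and then to quote that proposition directly. Expanding the definition of $d\scS_{0,f}$, the operator in question sends $(\psi,h)$ to
\begin{equation*}
\F_\epsilon \psi - \tfrac{1}{2}\langle \nabla\psi,\nabla f\rangle - h + \tfrac{1}{2}\langle \nabla\psi,\nabla f\rangle - \langle \nabla\psi,\nabla\Ima(e^{-i\phi_\epsilon}\tilde Z_\epsilon(\omega))\rangle,
\end{equation*}
and the two terms involving $f$ cancel exactly, leaving
\begin{equation*}
\F_\epsilon \psi - \langle \nabla\psi,\nabla\Ima(e^{-i\phi_\epsilon}\tilde Z_\epsilon(\omega))\rangle - h.
\end{equation*}

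This is exactly $\hat\F_\epsilon(\psi,-h)$ in the notation of Proposition \ref{initialprop}. Since the map $h\mapsto -h$ is a linear isomorphism of $\mft$ (and is an isometry for the relevant norm), surjectivity, existence of a right inverse, and the bound $\|\cdot\|_{op}\leq C\epsilon^{-3}$ for the inverse are all inherited immediately from the corresponding statements of Proposition \ref{initialprop}. Concretely, if $\hat\scP_\epsilon: L^2_{k-6} \to L^2_k\times\mft$ is the right inverse supplied by that proposition, and we write $\hat\scP_\epsilon(u)=(\hat\scP_\epsilon^{(1)}(u),\hat\scP_\epsilon^{(2)}(u))$, then a right inverse for our operator is given by $u \mapsto (\hat\scP_\epsilon^{(1)}(u), -\hat\scP_\epsilon^{(2)}(u))$, and its operator norm is equal to $\|\hat\scP_\epsilon\|_{op}$.

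There is no real obstacle here; the content of the corollary is purely notational, serving to record the linearised operator in a form adapted to the nonlinear equation $\Ima(e^{-i\phi_\epsilon}\tilde Z_\epsilon(\omega+i\ddbar\psi)) - f - \tfrac{1}{2}\langle\nabla\psi,\nabla f\rangle = 0$ that is actually solved in the subsequent implicit function theorem argument. The work has already been done in Proposition \ref{initialprop}, which used the moment map interpretation of Section \ref{sec:infinite} to extract self-adjointness of the linearisation modulo the transport term $\langle\nabla\Ima(e^{-i\phi_\epsilon}\tilde Z_\epsilon(\omega)),\nabla\psi\rangle$, combined with the identification of the low-order terms of $\F_\epsilon$ with the model operator $\G_\epsilon$ and the mapping properties of $\G_\epsilon$ established via Corollary \ref{we-want-to-perturb-to-this}.
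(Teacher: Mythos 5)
Your computation is exactly right, and this matches what the paper intends: the text before the corollary simply says it is "an immediate consequence of Proposition \ref{initialprop}", and your expansion of $d\scS_{0,f}$, the cancellation of the $f$-terms, and the identification with $\hat\F_\epsilon(\psi,-h)$ make that explicit. Since $h\mapsto -h$ is a norm-preserving linear automorphism of $\mft$, all the stated mapping properties and the $C\epsilon^{-3}$ bound are inherited directly, as you say.
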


Here  $h$ and $f$ are holomorphy potentials with respect to $\omega$, which was arbitrary. We apply this to the approximate solutions $\omega_{\epsilon}$ constructed in Lemma \ref{approximatesolution}. Rescaling the holomorphy potentials by a factor of two, $\omega_{\epsilon}$ satisfies$$\Ima\left(e^{-i\phi_{\epsilon}}\tilde Z(\omega_{\epsilon})\right) - \frac{1}{2}f_{\epsilon} = O(\epsilon^5),$$ where the $f_{\epsilon} \in \mfk_{\epsilon}$, hence the term $$ \frac{1}{2}\left\langle \nabla \psi, \nabla\left(f - 2\Ima\left(e^{-i\phi_{\epsilon}}\tilde Z_{\epsilon}(\omega)\right) \right)\right\rangle = O(\epsilon^5)$$ is of high order in $\epsilon$. In particular this term does not affect the mapping properties of the linearised operator. The following is then the statement of ultimate interest from the present section.

\begin{corollary}\label{final-linear-bound} The linearisation $d\scS$ computed at the approximate solution $\omega_{\epsilon}$ is surjective, and right invertible. Moreover its right inverse has operator norm bounded by  $C\epsilon^{-2}$ for some $C>0$. \end{corollary}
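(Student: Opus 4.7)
My plan is to deduce Corollary \ref{final-linear-bound} directly from the preceding corollary by a standard Neumann-series perturbation, parallel to Corollary \ref{linear-algebra}. Applying the preceding corollary at $\omega = \omega_\epsilon$ with $f = f_\epsilon$ yields that the operator
$$A_\epsilon(\psi, h) := d\scS_{0, f_\epsilon}(\psi, h) + \tfrac{1}{2}\langle \nabla \psi, \nabla(f_\epsilon - 2\Ima(e^{-i\phi_\epsilon}\tilde Z_\epsilon(\omega_\epsilon)))\rangle$$
is surjective with a right inverse $P_\epsilon: L^2_{k-6} \to L^2_k \times \mft_\epsilon$ satisfying $\|P_\epsilon\|_{op} \leq C\epsilon^{-3}$. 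By definition, $d\scS$ computed at $\omega_\epsilon$ is precisely $d\scS_{0, f_\epsilon}$, so $A_\epsilon - d\scS$ equals the single additional gradient term displayed above.

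The key input is the quantitative behaviour of the approximate solution guaranteed by Lemma \ref{approximatesolution}: with holomorphy potentials rescaled by a factor of two, one has
$$f_\epsilon - 2\Ima(e^{-i\phi_\epsilon}\tilde Z_\epsilon(\omega_\epsilon)) = O(\epsilon^7)$$
uniformly in every $C^k$ norm, as already noted in the text immediately preceding the statement. Consequently the perturbation operator
$$E_\epsilon(\psi, h) := \tfrac{1}{2}\langle \nabla \psi, \nabla(f_\epsilon - 2\Ima(e^{-i\phi_\epsilon}\tilde Z_\epsilon(\omega_\epsilon)))\rangle$$
is a first-order differential operator whose coefficients are uniformly $O(\epsilon^7)$, and hence has operator norm $\|E_\epsilon\|_{op} \leq C'\epsilon^7$ for some $C'$ independent of $\epsilon$.

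Writing $d\scS = A_\epsilon - E_\epsilon$, one has $\|E_\epsilon P_\epsilon\|_{op} \leq CC'\epsilon^{4} \to 0$ as $\epsilon \to 0$, so the Neumann series for $(\mathrm{Id} - E_\epsilon P_\epsilon)^{-1}$ converges for all sufficiently small $\epsilon$. Composing on the right with $P_\epsilon$ produces a genuine right inverse $P_\epsilon \circ (\mathrm{Id} - E_\epsilon P_\epsilon)^{-1}$ of $d\scS$, whose operator norm is bounded by $\|P_\epsilon\|_{op} \cdot \|(\mathrm{Id} - E_\epsilon P_\epsilon)^{-1}\|_{op} \leq 2C\epsilon^{-3}$ for $\epsilon$ small enough. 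The substantive content of the corollary is entirely contained in Proposition \ref{initialprop} together with the model-operator analysis of Section \ref{sub-approx-soln}; the present step is essentially bookkeeping, and I anticipate no obstacle beyond verifying that the $C^k$ bound on the error of the approximate solution is uniform in $\epsilon$, which is immediate from the iterative construction in Lemma \ref{approximatesolution}.
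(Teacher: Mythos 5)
Your argument is correct and follows essentially the same route as the paper: apply the preceding corollary at $\omega = \omega_\epsilon$, observe that the extra gradient term is $O(\epsilon^7)$ by Lemma \ref{approximatesolution}, and absorb it as a small perturbation of the $O(\epsilon^{-3})$ right-inverse. The paper compresses the final Neumann-series step into the remark that the $O(\epsilon^7)$ term ``does not affect the mapping properties of the linearised operator''; your write-up simply makes that perturbation argument explicit.
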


\subsubsection{Applying the quantitative inverse function theorem}

We can now construct $Z$-critical K\"ahler metrics in the large volume limit, as well as their extremal analogue. We continue with the notation and hypotheses of the previous sections.

\begin{theorem}\label{thm:existence-large-volume} Suppose $(X,L)$ admits a cscK metric $\omega$, and is a degeneration of a polarised manifold with discrete automorphism group. Then $(X,L)$ admits solutions $\tilde \omega_{\epsilon}$ to the equation $$ \Ima(e^{-\phi_{\epsilon}} \tilde Z_{\epsilon}(\tilde \omega_{\epsilon})) \in \mfk_{\epsilon},$$ where $\mfk_{\epsilon}$ denotes the space of holomorphy potentials with respect to $\tilde \omega_{\epsilon}$.\end{theorem}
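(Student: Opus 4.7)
The plan is to apply a quantitative implicit function theorem centred at the approximate solutions $\omega_{\epsilon}$ constructed in Lemma \ref{approximatesolution}. I would define the nonlinear map
\[
\scS_{\epsilon}(\psi, h) = \Ima\!\left(e^{-i\phi_{\epsilon}}\tilde Z_{\epsilon}(\omega_{\epsilon} + i\ddbar\psi)\right) - h - \tfrac{1}{2}\langle \nabla\psi, \nabla h\rangle
\]
acting on $L^2_k \times \mft$ for $k$ large, where by Equation \eqref{change-holomorphy-potential} a zero $(\psi_{\epsilon}, h_{\epsilon})$ with $\|\psi_{\epsilon}\|$ small corresponds to a K\"ahler metric $\tilde\omega_{\epsilon} = \omega_{\epsilon} + i\ddbar \psi_{\epsilon}$ whose $Z$-critical operator is a holomorphy potential with respect to $\tilde\omega_{\epsilon}$. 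All constructions are carried out $T$-equivariantly, where $T$ is the maximal compact torus of $\Aut(X,L)$ fixed in Section \ref{finite-dim-reduction}; since every operator in sight is $T$-equivariant this restricts cleanly to $T$-invariant Sobolev spaces.

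The three ingredients of the quantitative inverse function theorem are then verified as follows. First, the construction of $\omega_{\epsilon}$ in Lemma \ref{approximatesolution} gives an initial error bound $\|\scS_{\epsilon}(0, f_{\epsilon})\|_{L^2_{k-6}} = O(\epsilon^7)$ for a suitable $f_{\epsilon} \in \mft$. Second, Corollary \ref{final-linear-bound} provides a right inverse for the linearisation $d\scS_{\epsilon}|_{(0, f_{\epsilon})}$ whose operator norm is bounded by $C\epsilon^{-3}$. Third, one needs a Lipschitz estimate of the form
\[
\|(\scS_{\epsilon} - d\scS_{\epsilon}|_{(0,f_{\epsilon})})(\psi_1, h) - (\scS_{\epsilon} - d\scS_{\epsilon}|_{(0,f_{\epsilon})})(\psi_2, h)\|_{L^2_{k-6}} \leq C' r \|\psi_1 - \psi_2\|_{L^2_k}
\]
for $\psi_1, \psi_2$ in an $L^2_k$-ball of radius $r$, with $C'$ independent of $\epsilon$ as $\epsilon\to 0$. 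This Lipschitz estimate follows from writing $\scS_{\epsilon}$ as a polynomial in $\omega_{\epsilon}$-Laplacians, Ricci forms and Hessians of $\psi$; since the top order derivatives enter linearly in each monomial the quadratic remainder is controlled by Sobolev multiplication once $k$ is large enough, and the admissibility assumptions on the $\rho_j$ and on $\theta$ ensure no coefficient blows up in $\epsilon$.

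With these three ingredients the standard quantitative inverse function theorem (as used by Br\"onnle and Sz\'ekelyhidi in analogous settings) produces a zero $(\psi_{\epsilon}, h_{\epsilon})$ of $\scS_{\epsilon}$ whenever the initial error $\|\scS_{\epsilon}(0,f_{\epsilon})\|$ is smaller than a constant multiple of $\|d\scS_{\epsilon}^{-1}\|^{-2}$ times the inverse Lipschitz constant. As $\epsilon^7 \ll \epsilon^6$, the hypothesis is comfortably satisfied for $\epsilon$ small and we obtain $\|\psi_{\epsilon}\|_{L^2_k} = O(\epsilon^4)$. In particular $\omega_{\epsilon} + i\ddbar\psi_{\epsilon}$ remains a genuine K\"ahler metric and, by ellipticity of the $Z$-critical equation (Lemma \ref{ellipticity-lemma}) together with standard elliptic bootstrapping, $\psi_{\epsilon}$ is smooth. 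This produces the required smooth $T$-invariant $\tilde\omega_{\epsilon}$.

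The main obstacle I would expect is the third ingredient: the $Z$-critical operator mixes Ricci forms, products of volume-form ratios and a Laplacian of such ratios, so its Taylor expansion about $\omega_{\epsilon}$ contains many nonlinear terms whose coefficients depend on $\omega_{\epsilon}$ and a priori could degenerate as $\epsilon \to 0$. The reason one nevertheless obtains a uniform Lipschitz constant is that $\omega_{\epsilon}$ converges to the cscK metric $\omega$ in every $C^m$ norm, so all curvature-type quantities built from $\omega_{\epsilon}$ are uniformly bounded; the admissibility hypotheses $\Rea(\rho_{n-3})=0$, $\theta_2=\theta_3=0$ and $a_2\neq 0$ then ensure the $\epsilon$-expansion of the linearisation matches the model operator $\G_{\epsilon}$ to sufficient order, so that the $\epsilon^{-3}$ bound on the right inverse actually applies to $d\scS_{\epsilon}$ and not merely to its leading part.
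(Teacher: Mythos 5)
Your overall strategy is the paper's: apply the quantitative implicit function theorem centred at the approximate solution $\omega_{\epsilon}$, using Corollary \ref{final-linear-bound} for the $\epsilon^{-3}$ right-inverse bound and a $\epsilon$-uniform quadratic estimate for the nonlinearity. But there is a concrete error in your definition of $\scS_{\epsilon}$ that breaks the argument.

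You take $h \in \mft$, the holomorphy potentials for the reference metric $\omega$, and write the correction term as $\tfrac{1}{2}\langle\nabla\psi,\nabla h\rangle$. By Equation \eqref{change-holomorphy-potential}, $h + \tfrac{1}{2}\langle\nabla\psi,\nabla h\rangle$ is the holomorphy potential with respect to $\omega + i\ddbar\psi$, \emph{not} with respect to $\tilde\omega_{\epsilon} = \omega_{\epsilon} + i\ddbar\psi = \omega + i\ddbar(\gamma_{\epsilon}+\psi)$; you have silently dropped the fixed potential $\gamma_{\epsilon}$ of $\omega_{\epsilon}$ relative to $\omega$. The paper's operator instead carries the term $\tfrac{1}{2}\langle\nabla h,\nabla\gamma_{\epsilon}\rangle$ (and to match the theorem statement exactly one would want $\tfrac{1}{2}\langle\nabla h,\nabla(\gamma_{\epsilon}+\psi)\rangle$). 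This is not cosmetic: Lemma \ref{approximatesolution} produces $\omega_{\epsilon}$ so that
\[
\Ima\bigl(e^{-i\phi_{\epsilon}}\tilde Z_{\epsilon}(\omega_{\epsilon})\bigr) = f_{\epsilon} + \tfrac{1}{2}\langle\nabla f_{\epsilon},\nabla\gamma_{\epsilon}\rangle + O(\epsilon^{7}),
\]
with $f_{\epsilon}\in\mft$ of size $O(\epsilon^{2})$ and $\gamma_{\epsilon}$ of size $O(\epsilon)$. With your $\scS_{\epsilon}$ the initial error is therefore
\[
\scS_{\epsilon}(0,f_{\epsilon}) = \tfrac{1}{2}\langle\nabla f_{\epsilon},\nabla\gamma_{\epsilon}\rangle + O(\epsilon^{7}) = O(\epsilon^{3}),
\]
not $O(\epsilon^{7})$ as you assert. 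Since the right inverse has operator norm $\sim\epsilon^{-3}$ and the nonlinear remainder is quadratic with $\epsilon$-uniform constant, the IFT threshold $\delta$ is of order $\epsilon^{6}$, and $\epsilon^{3}\not\lesssim\epsilon^{6}$. So the implicit function theorem cannot be invoked and the proof does not close. Restoring the $\gamma_{\epsilon}$-term restores the $O(\epsilon^{7})$ starting point, yields the $O(\epsilon^{4})$ size of the solution you quote, and makes the resulting $Z$-critical operator a holomorphy potential with respect to the correct metric. A smaller point: your Lipschitz estimate is stated only in the $\psi$-variable; for the quantitative IFT one needs a Lipschitz bound in the full variable $(\psi,h)$, especially once the (bilinear) correction term is present. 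In the paper's formulation this is automatic because the correction $\tfrac{1}{2}\langle\nabla h,\nabla\gamma_{\epsilon}\rangle$ is linear in $h$ and independent of $\psi$, which is precisely why that version is chosen.
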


This result proves the existence of the analogue of extremal $Z$-critical K\"ahler metrics. It is a straightforward consequence that $(X,L)$ admits $Z_{\epsilon}$-critical K\"ahler metrics if and only if the analogue of the Futaki invariant described  in Proposition \ref{slope-for-z} vanishes for all holomorphic vector fields. In the discrete automorphism group case this produces the following. 

\begin{corollary} Suppose $(X,L)$ has discrete automorphism group and admits a cscK metric. Then $(X,L)$ admits $Z_{\epsilon}$-critical K\"ahler metrics for all $\epsilon \ll 1.$
\end{corollary}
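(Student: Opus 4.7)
The plan is to deduce this corollary directly from Theorem \ref{thm:existence-large-volume} by observing that, in the discrete automorphism case, the only holomorphy potentials are constants, and then ruling out the constant using the integral identity of Lemma \ref{integral-vanishes}.

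First I would verify that the hypotheses of Theorem \ref{thm:existence-large-volume} are met: since $(X,L)$ has discrete automorphism group, it is trivially a degeneration of a polarised manifold with discrete automorphism group (namely itself, via the trivial test configuration). Thus Theorem \ref{thm:existence-large-volume} produces, for all $\epsilon \ll 1$, a K\"ahler metric $\tilde{\omega}_{\epsilon}\in c_1(L)$ satisfying
\[
\Ima(e^{-i\phi_{\epsilon}}\tilde Z_{\epsilon}(\tilde\omega_{\epsilon})) \in \mft_{\epsilon},
\]
where $\mft_{\epsilon}$ denotes the holomorphy potentials of $\tilde\omega_{\epsilon}$. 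Since $\Aut(X,L)$ is discrete, the Lie algebra of any maximal torus is trivial, so $\mft_{\epsilon}$ consists only of the constant functions.

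Next I would invoke Lemma \ref{integral-vanishes}, which asserts that for every K\"ahler metric $\omega \in c_1(L)$ one has $\int_X \Ima(e^{-i\phi_\epsilon}\tilde Z_\epsilon(\omega))\,\omega^n=0$. Applying this to $\tilde\omega_\epsilon$ and using that $\Ima(e^{-i\phi_\epsilon}\tilde Z_\epsilon(\tilde\omega_\epsilon))$ is by the previous step a constant, that constant must vanish. Hence
\[
\Ima(e^{-i\phi_\epsilon}\tilde Z_\epsilon(\tilde\omega_\epsilon)) = 0,
\]
which is the differential equation part of the $Z_\epsilon$-critical condition.

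The only remaining point is the positivity condition $\Rea(e^{-i\phi_\epsilon}\tilde Z_\epsilon(\tilde\omega_\epsilon)) > 0$. This is essentially automatic: from the expansions used throughout Section \ref{expansion-calculations}, $\tilde Z_\epsilon(\tilde\omega_\epsilon) = i + O(\epsilon)$ and $Z_\epsilon(X,L) = iL^n \epsilon^{-n}+O(\epsilon^{-n+1})$, so $e^{-i\phi_\epsilon}$ tends to $-i$, and $e^{-i\phi_\epsilon}\tilde Z_\epsilon(\tilde\omega_\epsilon)\to 1$ uniformly on $X$ as $\epsilon\to 0$. Thus positivity holds for all $\epsilon$ sufficiently small, completing the argument. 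There is no substantive obstacle here; the whole weight of the corollary sits in Theorem \ref{thm:existence-large-volume}, and the triviality of $\mft_\epsilon$ plus Lemma \ref{integral-vanishes} immediately converts the finite-dimensional moment-map type statement into the genuine $Z_\epsilon$-critical equation.
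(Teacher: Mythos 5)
Your proposal is correct and matches the route the paper itself takes (the paper phrases it as the vanishing of a Futaki-type obstruction for all holomorphic vector fields, which in the discrete case is vacuous; your version observes directly that $\mft_\epsilon$ consists only of constants and then kills the constant via Lemma \ref{integral-vanishes} — these are the same argument). The verification that $(X,L)$ trivially satisfies the ``degeneration of a manifold with discrete automorphism group'' hypothesis, and the positivity check at the end, are both handled exactly as in the paper.
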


To prove these results we will apply the quantitative implicit function theorem:

\begin{theorem}\cite[Theorem 4.1]{bronnle}\label{IFT} Let $G: B_1 \to B_2$ be a differentiable map between Banach spaces, whose derivative at $0 \in B_1$ is surjective with right inverse $P$. Let
\begin{enumerate}[(i)]
\item $\delta'$ be the radius of the closed ball in $B_1$ around the origin on which $G-dG$ is Lipschitz with Lipschitz constant $1/(2\|P\|)$, where we use the operator norm;
\item $\delta = \delta'/(2\|P\|).$
\end{enumerate}

Then whenever $y \in B_2$ satisfies $\|y - G(0)\| < \delta$, there is an $x \in B_1$ such that $G(x)=y$.
\end{theorem}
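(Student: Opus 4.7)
The plan is to solve $G(x) = y$ via the Banach fixed point theorem, after reformulating it as a fixed-point equation for a contraction on the closed ball $\overline{B_{\delta'}(0)} \subset B_1$. First I would decompose $G(x) = G(0) + dG_0(x) + N(x)$, where $N(x) := G(x) - G(0) - dG_0(x)$ is the nonlinear remainder. By hypothesis, $N$ (which differs from $G-dG$ only by the constant $G(0)$, hence has the same Lipschitz behaviour) is Lipschitz on $\overline{B_{\delta'}(0)}$ with constant $1/(2\|P\|)$, and $N(0) = 0$. Using the right inverse $P$ of $dG_0$, the equation $G(x) = y$ is equivalent to $dG_0(x) = y - G(0) - N(x)$, which is in turn implied by the fixed-point equation $x = T(x)$, where
\[
T(x) := P(y - G(0)) - P N(x).
\]

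Next I would verify that $T$ maps $\overline{B_{\delta'}(0)}$ into itself and is a contraction. For $x_1, x_2 \in \overline{B_{\delta'}(0)}$, the Lipschitz bound on $N$ together with the operator-norm bound on $P$ yields
\[
\|T(x_1) - T(x_2)\|_{B_1} \leq \|P\| \cdot \|N(x_1) - N(x_2)\|_{B_2} \leq \tfrac{1}{2}\|x_1 - x_2\|_{B_1},
\]
so $T$ is a $\tfrac{1}{2}$-contraction. For the self-mapping property, with $\|x\|_{B_1} \leq \delta'$ we split
\[
\|T(x)\|_{B_1} \leq \|P\| \cdot \|y - G(0)\|_{B_2} + \|P\| \cdot \|N(x) - N(0)\|_{B_2} < \|P\| \delta + \tfrac{1}{2}\|x\|_{B_1} \leq \tfrac{\delta'}{2} + \tfrac{\delta'}{2} = \delta',
\]
where the first term uses $\|y - G(0)\|_{B_2} < \delta = \delta'/(2\|P\|)$ and the second uses the Lipschitz bound on $N$ together with $N(0) = 0$.

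The Banach fixed point theorem then supplies a unique fixed point $x \in \overline{B_{\delta'}(0)}$ of $T$, which by construction satisfies $G(x) = y$, as required. The only real content here is bookkeeping: the particular choice of Lipschitz constant $1/(2\|P\|)$ on the ball of radius $\delta'$, paired with the admissibility radius $\delta = \delta'/(2\|P\|)$, is precisely what makes the contraction factor $1/2$ and the self-mapping estimate work simultaneously. There is no genuine analytic obstacle beyond careful constant tracking; the essential point is that shrinking the source ball by a factor of $2\|P\|$ absorbs both the amplification by $\|P\|$ when inverting $dG_0$ and the loss inherent in splitting the fixed-point iterate into a linear and a nonlinear part.
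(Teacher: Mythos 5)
Your proof is correct and is the standard contraction-mapping argument: the paper itself cites this result from Br\"onnle's thesis without proof, and the cited proof is exactly this reformulation as a fixed-point equation $x = P(y-G(0)) - PN(x)$ on the closed ball of radius $\delta'$, with the constants chosen so that $T$ is a $\tfrac{1}{2}$-contraction mapping the ball into itself. Your handling of the one subtle point --- that $P$ is only a right inverse, so the fixed-point equation implies (rather than is equivalent to) $G(x)=y$ --- is also correct.
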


Denote by $G_{\epsilon}$ the operator $$G_{\epsilon}(\psi) = \Ima(e^{-i\phi_{\epsilon}}\tilde Z_{\epsilon}(\omega_{\epsilon}+i\ddbar \psi)).$$ Then the linearisation of the map  $\tilde G_{\epsilon}: L^2_k \times \mfk \to L^2_{k-6}$ defined by $$(\psi, h) \to G_{\epsilon}\psi - h - \frac{1}{2}\langle \nabla h,  \nabla\gamma_{\epsilon}\rangle$$ is the map $\tilde \F_{\epsilon}: L^2_k \times \mfk \to L^2_{k-6}$ defined by $$(\psi, h) \to \F_{\epsilon}\psi - h - \frac{1}{2}\langle \nabla h, \nabla \gamma_{\epsilon}\rangle,$$ since the terms not involving $G_{\epsilon}$ are actually linear in both factors.
Corollary  \ref{final-linear-bound} then implies that the linearisation of $\tilde G_{\epsilon}$ is surjective and admits a right inverse, and moreover provides a uniform bound on the operator norm of this right inverse in terms of a constant multiple of $\epsilon^{-2}$. 

To apply Theorem \ref{IFT} we thus need to obtain a bound on the operator norm of the operators $\tilde G_{\epsilon}-d\tilde G_{\epsilon}$. Denote $\scN_{\epsilon} = \tilde G_{\epsilon} -  \tilde \F_{\epsilon}$ the non-linear terms of the $Z$-critical operator, calculated with respect to the approximate solution $\omega_{\epsilon}$.

\begin{lemma}\label{nonlinear} For all $\epsilon$ sufficiently small, there are constants $c,C >0$ such that for all sufficiently small $\epsilon$, if $\psi,\psi' \in L^2_{k}(X,\R)$ satisfy $\|\psi \|_{L^2_{k}} , \| \psi' \|_{L^2_{k}} \leq c$ then $$ \| \scN_{\epsilon} (\psi) - \scN_{\epsilon} (\psi') \|_{L^2_{k-6}} \leq C \big( \| \psi \|_{L^2_{k}} + \| \psi' \|_{L^2_{k}} \big) \| \psi - \psi' \|_{L^2_{k}}.$$
\end{lemma}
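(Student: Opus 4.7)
The strategy is the standard Donaldson--Sz\'ekelyhidi style estimate for nonlinear terms that vanish to second order at the origin. Since $\scN_{\epsilon} = \tilde G_{\epsilon} - \tilde\F_{\epsilon}$ is by construction the nonlinear part of $\tilde G_{\epsilon}$, its differential $d\scN_{\epsilon}|_{0}$ vanishes at the origin. Write $\psi_t = t\psi + (1-t)\psi'$. The fundamental theorem of calculus applied twice then gives
\begin{equation*}
\scN_{\epsilon}(\psi) - \scN_{\epsilon}(\psi') = \int_{0}^{1}\!\!\int_{0}^{1} d^{2}\scN_{\epsilon}\big|_{s\psi_t}\bigl(\psi_t,\ \psi-\psi'\bigr)\, ds\, dt,
\end{equation*}
so the task is reduced to a uniform-in-$\epsilon$ bound on the operator norm of the bilinear form $d^{2}\scN_{\epsilon}|_{\nu}$, mapping $L^{2}_{k}\times L^{2}_{k}\to L^{2}_{k-6}$, for all $\nu$ in a small closed ball around $0$.

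The first step in obtaining this bound is a structural observation. Each term occurring in $\tilde Z_\epsilon(\omega_\epsilon + i\ddbar\psi)$ is a rational expression in derivatives of $\psi$ of order at most six: numerators are polynomials in wedge products of $\omega_\epsilon + i\ddbar\psi$, $\Ric(\omega_\epsilon + i\ddbar\psi)$, and $\theta$ (together with a single outer Laplacian with respect to $\omega_\epsilon + i\ddbar\psi$), and denominators are powers of $(\omega_\epsilon + i\ddbar\psi)^n$. As the Ricci form involves the operator $-\tfrac{i}{2\pi}\ddbar\log\det(\omega_\epsilon + i\ddbar\psi)$, all building blocks depend real-analytically on the 6-jet of $\psi$, and hence $\scN_\epsilon$ does too. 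Choosing $k$ large enough that $L^{2}_{k}\hookrightarrow C^{6,\alpha}$ (for instance $k > n+6$), the conditions that $\omega_\epsilon + i\ddbar\psi > 0$ and $(\omega_\epsilon + i\ddbar\psi)^n \geq \tfrac{1}{2}\omega_\epsilon^n$ hold on a uniform closed ball of $L^{2}_{k}$-radius $c>0$, independently of $\epsilon\ll 1$, since the approximate solutions $\omega_\epsilon$ converge smoothly to $\omega$.

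Next, on this ball the bilinear form $d^{2}\scN_{\epsilon}|_{\nu}$ is a differential operator of order at most six in each argument, with coefficients that are smooth (rational) functions of the 6-jet of $\nu$ and of $\omega_\epsilon$; uniform $C^{6,\alpha}$-control of $\nu$ and $\omega_\epsilon$ therefore gives $C^{0}$-bounds on these coefficients uniform in $\epsilon$ and in $\nu$. The desired $L^{2}_{k-6}$-estimate then follows from the standard multiplication property that $L^{2}_{k}\cdot L^{2}_{k}\subset L^{2}_{k}$ for $k$ beyond the Sobolev threshold, together with the fact that taking up to six derivatives sends $L^{2}_{k}\to L^{2}_{k-6}$. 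Combining with the integral identity above yields
\begin{equation*}
\|\scN_{\epsilon}(\psi)-\scN_{\epsilon}(\psi')\|_{L^{2}_{k-6}} \leq C\bigl(\|\psi\|_{L^{2}_{k}} + \|\psi'\|_{L^{2}_{k}}\bigr)\|\psi-\psi'\|_{L^{2}_{k}},
\end{equation*}
for some $C>0$ independent of $\epsilon$, as required.

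The only point that requires genuine care, rather than routine bookkeeping, is the $\epsilon$-uniformity of $C$: one must check that the coefficients of $\scN_\epsilon$ do not blow up as $\epsilon\to 0$. This is where the rescaling implicit in the model operator of Section~\ref{sub-approx-soln} pays off: the singular $\epsilon^{-l}$ factors appearing in $Z_\epsilon(X,L)$ are absorbed into the normalisation $\Ima(e^{-i\phi_\epsilon}\tilde Z_\epsilon(\cdot))$, whose expansion in $\epsilon$ computed in Section~\ref{expansion-calculations} has bounded (in fact smooth) coefficients in $\epsilon$. Thus the nonlinear remainder $\scN_\epsilon$ has a uniformly controlled $\epsilon$-family of coefficient functions, and no hidden negative powers of $\epsilon$ appear in the estimate.
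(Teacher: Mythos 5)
Your proof is correct and takes essentially the same route as the paper: the paper also observes that the $h$-terms are linear and hence drop out of $\scN_\epsilon$, applies the mean value theorem to reduce to bounding $\sup_t \|(D\scN_\epsilon)_{\chi_t}\|_{op}$, identifies this with the difference of linearisations $\F_{\epsilon,\chi_t}-\F_{\epsilon}$, and cites Fine for the Lipschitz bound $\|\F_{\epsilon,\chi_t}-\F_\epsilon\|_{op}\leq c'\|\chi_t\|_{L^2_k}$ uniform in $\epsilon$. Your version spells out the structural reasons (rational dependence on the $6$-jet, Sobolev embedding and multiplication, convergence $\omega_\epsilon\to\omega$) that the paper delegates to the reference to Fine, and phrases the quadratic vanishing via a double integral rather than MVT plus Lipschitz continuity of the linearisation, but the underlying argument is the same.
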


\begin{proof} Since the two terms involving the Hamiltonian $h$ in $\tilde G$ are actually linear in $h$ and $\psi$, we may replace $ \scN_{\epsilon} (\psi) - \scN_{\epsilon} (\psi')$ with the terms only involving $G_{\epsilon}(\psi)$.

 The proof is then similar to a situation considered by Fine \cite[Lemma 7.1]{fine}, and is a straightforward consequence of the mean value theorem, which gives a bound $$ \| \scN_{\epsilon} (\psi) - \scN_{\epsilon} (\psi') \|_{L^2_{k-6}} \leq \sup_{\chi_t} \|(D\scN_{\epsilon})_{\chi_t}\|_{op}  \| \psi - \psi' \|_{L^2_{k}},$$ where $\chi_t = t \psi + (1-t)\psi'$ and $t \in [0,1]$. But $$(D\scN_{\epsilon})_{\chi_t} = \F_{\epsilon,\chi_t} - \F_{\epsilon,m},$$ where $\F_{\epsilon,\chi_t}$ is the linearisation of the $Z_{\epsilon}$-critical operator at $\omega_{\epsilon}+i\ddbar\chi$. So we seek a bound on the difference of the linearisations when we change the K\"ahler potential, but for $\epsilon\ll 1$ this can be bounded by $$\|\F_{\epsilon,\chi_t} - \F_{\epsilon}\|_{op} \leq c' \|\chi\|_{L^2_k},$$ where $c'$ is independent of $\epsilon$, which completes the proof. 
\end{proof}

\begin{remark}\label{varying-structure} In fact, as explained by Fine \cite[Section 2.2 and Lemma 8.10]{fine}, the above proof applies very generally, even varying in addition the complex structure. In the case the complex structure is varying, one obtains a bound where $\| \psi \|_{L^2_{k}} + \| \psi' \|_{L^2_{k}}$ is replaced by the norm of the difference $(J,\psi) - (J',\psi')$ \cite[Lemma 2.10]{fine}, so the constant obtained can be taken to be continuous when varying the complex structure. Fine explains this for the linearisation of the scalar curvature, but all that is needed is that the operator in question is a polynomial operator in the curvature tensor, which is true for $\tilde Z_{\epsilon}$ and which implies the same result for $\Ima(e^{-\phi_{\epsilon}}\tilde Z_{\epsilon})$. 
\end{remark}

This is everything needed  to apply the quantitative inverse function theorem, as in Fine \cite[Proof of Theorem 1.1]{fine}.

\begin{proof}[Proof of Theorem \ref{thm:existence-large-volume}] We consider the approximate solution $\omega_{\epsilon}$ which satisfies $\Ima(e^{-\phi_{\epsilon}}\tilde Z_{\epsilon} (\omega_{\epsilon})) = O(\epsilon^5)$. We note that as all of the input is invariant under a maximal compact torus $\Aut(X,L)$, the output produced will also be invariant.  There are three ingredients which we have established necessary to apply the implicit function theorem:

\begin{enumerate}[(i)]

\item Since we are considering the approximate solution, we have $\|G_{\epsilon}(0)\| = O(\epsilon^{5})$.

\item Next, note that the operator $\tilde \F_{\epsilon}$ is an surjective for $\epsilon$ small and the right inverse $\tilde P_{\epsilon}$ satisfies $$\|\tilde P_{\epsilon}\|_{op} \leq \epsilon^{-2} K_1$$ by Corollary \ref{final-linear-bound}.

\item Finally, note that there is a constant $M$ such that for all sufficiently small $\kappa$, the operator $\tilde G_{\epsilon} - D\tilde G_{\epsilon}$ is Lipschitz with constant $\kappa$ on $B_{M\kappa}$.

\end{enumerate}

The second and third of these imply that the radius $\delta'_{\epsilon}$ of the ball around the origin on which $\tilde G_{\epsilon} - D\tilde G_{\epsilon}$ is Lipschitz with constant $(2\|\tilde P_{\epsilon}\|)^{-1}$ is bounded below by $C\epsilon^2$ for a positive constant $C$. From the statement of the quantitive inverse function theorem, the radius $\delta_{\epsilon}$ of interest is defined by $$\delta_{\epsilon} = \delta'_{\epsilon}(2\|\tilde P_{\epsilon}\|)^{-1},$$ meaning that $\delta_{\epsilon} $ is bounded below by $C^2\epsilon^4.$ It follows that for $\epsilon \ll 1$, if  $\|\tilde G_{\epsilon}(0)\| < C^2\epsilon^4$,  then there is a solution to the equation $G_{\epsilon}(0)$, which is what we wanted to produce. Thus as our approximate solutions satisfy $\|G_{\epsilon}(0)\|=  O(\epsilon^{5})$, for $0<\epsilon\ll 1$, the proof is complete. Note that this produces solutions in some Sobolev space, but elliptic regularity produces smooth solutions as our equation is elliptic for sufficiently small $\epsilon$ by Lemma \ref{ellipticity-lemma}. 

Finally, while our definition of a $Z$-critical K\"ahler metric requires the positivity condition $\Rea(e^{-i\phi_{\epsilon}(X,L)}\tilde Z_{\epsilon}(\omega)>0$, one calculates that this is automatic for $0 < \epsilon \ll 1$.
\end{proof}

\subsubsection{Analysis over the Kuranishi space}\label{sec:kuranishi}

We next apply the quantitive implicit function theorem in a similar manner in \emph{families}. Recall our polarised manifold of interest $(X,L)$ is analytically K-semistable, so that it degenerates to a cscK manifold $(X_0,L_0)$. We will be interested in the Kuranishi space of $(X_0,L_0)$, which captures all deformations of $(X_0,L_0)$. Our setup and discussion is  based on that of Sz\'ekelyhidi \cite[Section 3]{szekelyhidi-deformations}, to which we refer for more details (see Inoue \cite[Section 3.2]{inoue} for another clear exposition). We denote by $(M,\omega)$ the underlying symplectic manifold of $(X_0,\omega)$, with $\omega$ the cscK metric, and denote by $\J(M,\omega)$ the space of almost complex structures on $M$ compatible with $\omega$. As in the work of Sz\'ekelyhidi, using the operator  $Ph =\bar\partial \nabla^{1,0}h$ of Equation \eqref{Ddef} (computed on $X_0$) we denote $$\tilde H^1 = \{ \alpha \in T_{J_0}\J: P^*\alpha = \bar \partial \alpha = 0\};$$ this is a finite dimensional vector space as it is the kernel of the elliptic operator $P^*P + \bar\partial^*\bar\partial$, and is the first cohomology of the elliptic complex \cite[Equation (4)]{szekelyhidi-deformations}$$C^{\infty}_0(M,\C) \to T_{J_0}\J(M,\omega) \to \Omega^{0,2}(M),$$ where the first morphism is given by $P$ and the second morphism is given by the $\bar\partial$-operator (again both associated to the complex manifold $X_0$). In the following we assume that the deformation theory of $(X_0,L_0)$ is \emph{unobstructed}, in the sense that the second cohomology of this complex vanishes. 

Denote by $K$ the stabiliser of $J_0$ under the action of $\mathcal{G}$, so that $K$ is the group of biholomorphisms of $(X_0,L_0)$ preserving the K\"ahler metric $\omega$ and the complexification $K^{\C}$ equals $\Aut(X_0,L_0)$ by a result of Matsushima \cite[Theorem 3.5.1]{gauduchon}.  The vector space $\tilde H^1$ admits a linear $K$-action.

Note that any holomorphic map $q: B \to \J(M,\omega)$ from a complex manifold $B$ and with image lying in the space of integrable complex structures produces a family of complex manifolds $\X \to B$ where the fibre is given by $\X_b = (M,J_{q(b)})$. Fixing a point $b \in B$, recall that we say that $\X$ is a \emph{versal deformation space} for $\X_b$ if every other holomorphic family $\U\to B'$ with $\U_{b'} \cong \X_b$ is locally the pullback of $\X$ through some holomorphic map $B' \to B$. We recall Kuranishi's result:

\begin{theorem}\label{kuranishi}\cite[Proposition 7]{szekelyhidi-deformations}\cite[Lemma 6.1]{chen-sun} There is an open neighbourhood $B \subset \tilde H^1$ of the origin and an embedding $$\Phi: B \to \J(M,\omega)$$ with $\Phi(0) = J_0$ and which produces a versal deformation space for $X_0$. Points in $B$ inside the same $K^{\C}$-orbit correspond to biholomorphic complex manifolds. The universal family $\X \to B$ admits a holomorphic line bundle $\L$ and a holomorphic $K$-action making $\X \to B$ a $K$-equivariant map. The form $\omega$ induces a $K$-invariant relatively K\"ahler metric which we denote $\omega_{\X} \in c_1(\L)$.
\end{theorem}

Unobstructedness allows us to assume that $B$ is an open neighbourhood of the origin, hence smooth. By construction, the underlying smooth manifold $M$ of $X$ and symplectic form are fixed in the Kuranishi family, while the complex structure varies. Thus we may smoothly write $\X = B \times M$, with $\X$ admitting a $K$-invariant relatively K\"ahler metric $\omega_{\X}\in c_1(\L)$ which restricts to the cscK metric on $X_0$. The Kuranishi family is precisely the pullback of the universal family over $\J(M,\omega)$.

\begin{remark} When $\Aut(X,L)$ is discrete but not finite, we have assumed that the test configuration producing the cscK degeneration of $(X,L)$ is $\Aut(X,L)$-equivariant, producing an $\Aut(X,L)$-action on $(X_0,L_0)$, since by Remark \ref{automorphisms-used} this was used in our proof of the moment map interpretation of the equation (and hence in understanding the linearisation of the equation). In this case we use the equivariant Kuranishi family as in the work of Inoue \cite[Section 3.2]{inoue}, which has a universal family admitting an $\Aut(X,L)$-action, and which has the property that maps to the equivariant Kuranishi space correspond to deformations of $(X_0,L_0)$ which are $\Aut(X,L)$-equivariant. 
\end{remark}

We now apply the analysis to the entire Kuranishi space. We perturb the initial relatively K\"ahler metric $\omega_{\X}$ in such a way as to allow us to later employ the moment map property of the $Z$-critical operator. To do so, we define a finite-dimensional function space depending on $b \in B$, which as a vector space will be  isomorphic to the Lie algebra $\mfk$ of $K$ for each $b\in B$ (perhaps after shrinking $B$). The definition of the function space is motivated by the moment map interpretation of the $Z$-critical operator given in Section \ref{moment-map-finite}, and implicitly appears there.

For each $v\in\mfk$, much as in Equation \eqref{use-later}, to the relatively K\"ahler metric $\omega_{\X}$ we associate the function  $h_{\X,v} \in C^{\infty}(\X)$ satisfying $$\L_{J_\X v}\omega_{\X} = \ddb h_{\X,v},$$ where $J_{\X}$ is the almost complex structure of $\X$. Such a $h_{\X,v}$ is unique up to the addition of a function $f$ satisfying $\ddb f=0$; as such, $f$ is constant along the fibres of the Kuranishi family (as its fibres are compact), hence $f$ is the pullback of a function $f_B \in C^{\infty}(B)$ satisfying $i\partial_B\bar\partial_B f_B=0$. The choice of such a function $f_B$ will not affect the definition of the function space of interest to us, so any choice suffices. 

Through the identification $\X = B \times M$, by pullback any ($K$-invariant) function $\psi \in C^{\infty}(M,\R)$ induces a ($K$-invariant) function on $\X$ which we still denote $\psi \in C^{\infty}(\X)$.  If we change $\omega_{\X}$ to a new $K$-invariant relatively K\"ahler metric $\omega_{\omega_{\X}}+\ddb \psi$ (so that we assume $\psi$ is a K\"ahler potential on each fibre), then by a similar calculation to Equation \eqref{contraction-eqn} the function $ h_{\X,v}$ changes to $$h_{\X,v,\psi} = h_{\X,v} + (J_{\X}v)\psi \in C^{\infty}(\X);$$ we use this notation also when $\psi$ is not $K$-invariant. We further denote $$h_{b,v,\psi} = h_{\X,v}|_{\X_b} \textrm{ and }\omega_{\X_b,\psi} =  (\omega_{\X}+\ddb\psi)|_{\X_b},$$ which allows us to write for  for each $\psi \in C^{\infty}(M)$ and $b\in B$ write $$\mfk_{b,\psi} = \{h_{b,v,\psi} - \hat h_{b,v,\psi}: v \in \mfk \},$$ where the constant $\hat h_{b,v,\psi}$ is the average of $h_{b,v,\psi}$, defined in such a way that the function space consists of functions which integrate to zero:$$\int_{\X_b}  (h_{b,v,\psi} - \hat h_{b,v,\psi}) \omega_{\X_b,\psi}^n=0.$$ For $b=0$, these are the holomorphy potentials with respect to $\omega_{0,\psi}$, hence $\mfk_{0,\psi} \cong \mfk$ as vector spaces; it follows that this isomorphism extends to a neighbourhood of $0\in B$, and we shrink $B$ so that this is the case. Our discussion extends to functions $\psi$ of lower regularity without change.

For each $b\in B$ we are interested in the operator $L^2_k \times \mfk \to L^2_{k-6}$ defined by $$(\psi, v) \to \Ima(e^{-i\phi_{\epsilon}}\tilde Z_{\epsilon}(\X_b, \omega_{\X_b,\psi} )) + h_{b,v,\psi}.$$ Here we have included the space $\X_b$ in the notation for clarity. Note that for $b=0$, provided $\psi$ is $K$-invariant the function $h_{0,v,\psi}$ is genuinely the holomorphy potential for the real holomorphic vector field $v$ on $\X_0$ with respect to the K\"ahler metric $\omega_{\X_0}$. Importantly, it follows that for $b=0$ this operator is precisely the operator considered throughout the present section, and in particular Theorem \ref{thm:existence-large-volume} allows us to conclude for $b=0$ that there exists a sequence $$(\psi_{\epsilon},v_{\epsilon}) \in L^2_k\times \mfk$$ such that $$\Ima(e^{-i\phi_{\epsilon}}\tilde Z_{\epsilon}(\X_0,\omega_{b,\Psi})) = h_{0,v_{\epsilon},\psi_{\epsilon}} \in \mfh_{0,\psi_{\epsilon}}.$$ We next explain how the same technique proves the following analogue for $b \neq 0$:

\begin{proposition}\label{reduction} Perhaps after shrinking $B$, for all $0<\epsilon\ll 1$ there is a map $$\Psi_{\epsilon}: B \to L^2_k(M,\R)$$ such that for all $b \in B$ $$\Ima(e^{-i\phi_{\epsilon}}\tilde Z_{\epsilon}(\X_b,\omega_{b,\Psi})) \in \mfk_{b,\Psi(b)}.$$
\end{proposition}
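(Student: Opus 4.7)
The plan is to apply the quantitative inverse function theorem (Theorem \ref{IFT}) parametrically in $b \in B$, closely following the proof of Theorem \ref{thm:existence-large-volume} but working in the fixed-symplectic, varying-complex-structure picture. Define
$$\scS : B \times L^2_k(M,\R)^T \times \mft \longrightarrow L^2_{k-6}(M,\R)^T, \qquad \scS(b,\psi,h) = \Ima\bigl(e^{-i\phi_\epsilon}\tilde Z_\epsilon(F_\psi\cdot J_b, \omega)\bigr) - h.$$
A zero $\scS(b, \Psi(b), h_b) = 0$ produces the required $\Psi(b)$, since then $\Ima(\cdots) = h_b \in \mft$.

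For the initial point I take $\psi = \gamma_\epsilon$, the $T$-invariant potential of the approximate solution of Lemma \ref{approximatesolution}, and let $h_\epsilon \in \mft$ be the Hamiltonian on $(M,\omega)$ whose associated holomorphy potential with respect to $\omega_{\gamma_\epsilon}$ equals the $\mft_{\omega_{\gamma_\epsilon}}$-component $f_\epsilon$ produced by Lemma \ref{approximatesolution}. Equation \eqref{twoperspectives} together with the $T$-equivariance of $F_{\gamma_\epsilon}$ (ensured by $T$-invariance of $\gamma_\epsilon$) then gives
$$\scS(0, \gamma_\epsilon, h_\epsilon) = O(\epsilon^7).$$
Since $J_b$ depends holomorphically on $b$ and $\tilde Z_\epsilon$ is polynomial in the curvature, the map $b \mapsto \scS(b, \gamma_\epsilon, h_\epsilon)$ is continuous in $b$, so after possibly shrinking $B$ we may arrange $\|\scS(b, \gamma_\epsilon, h_\epsilon)\|_{L^2_{k-6}}$ to be arbitrarily small uniformly in $b$.

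Next I analyse the linearisation $d\scS_b$ at $(\gamma_\epsilon, h_\epsilon)$. By the discussion immediately preceding the proposition, the linearisation in the complex-structure direction differs from the usual K\"ahler-metric linearisation by a term of the form $\delta\psi \mapsto \mathcal{L}_{v_{\delta\psi}}\Ima(e^{-i\phi_\epsilon}\tilde Z_\epsilon(J_b, \omega_{\gamma_\epsilon}))$, whose coefficient at the approximate solution is of order $\epsilon^7$. Modulo this negligible correction, $d\scS_b$ agrees with (the $F_{\gamma_\epsilon}$-pullback of) the operator of Proposition \ref{initialprop}, so by Corollary \ref{final-linear-bound} it is surjective with a right inverse of operator norm bounded by $C\epsilon^{-3}$, uniformly in $b$ in a small neighbourhood of $0$. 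The Lipschitz estimates for $\scS_b - d\scS_b$ of Lemma \ref{nonlinear} likewise remain uniform in $b$ by Remark \ref{varying-structure}.

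With these three ingredients in place---small initial error, uniform right-inverse bound, uniform Lipschitz constant---Theorem \ref{IFT} produces, for each $b$ in a (possibly shrunken) neighbourhood of $0 \in B$, a zero $(\Psi(b), h_b)$ of $\scS(b, \cdot, \cdot)$. Ellipticity of the $Z$-critical equation (Lemma \ref{ellipticity-lemma}) then upgrades $\Psi(b)$ from $L^2_k$ to smooth, while $T$-invariance of $\Psi(b)$ is automatic since every operator involved is $T$-equivariant. The main obstacle I expect is the careful reconciliation of the K\"ahler-metric-varying analysis of Section \ref{finite-dim-reduction} with the complex-structure-varying setting required here: the preamble's Lie-derivative identity reduces this to checking that the correction is of high enough order in $\epsilon$ at the approximate solution, and that all quantitative estimates are continuous in $b$ as provided by Remark \ref{varying-structure}.
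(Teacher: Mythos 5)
Your high-level strategy — apply the quantitative IFT parametrically in $b$, working in the fixed-symplectic-form / varying-complex-structure picture, and invoke the Lie-derivative correction of Equation \eqref{twoperspectives} to reconcile the two pictures — is the same as the paper's. The reliance on Corollary \ref{final-linear-bound}, Remark \ref{varying-structure}, and the observation that $\mft$ is fixed in this picture are all correct and match the paper.

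However, there is a genuine gap in your choice of initial point. You take the \emph{same} $\gamma_\epsilon$ (the approximate solution at the central fibre $b=0$) for all $b \in B$, and propose to make $\|\scS(b,\gamma_\epsilon,h_\epsilon)\|$ small by ``shrinking $B$.'' But the IFT threshold you need to beat is $\epsilon$-dependent: since $\|\tilde P_\epsilon\| \leq C\epsilon^{-3}$, the radius $\delta_\epsilon$ of Theorem \ref{IFT} is of order $\epsilon^{\alpha}$ for some $\alpha>0$. Away from $b=0$, the error $\scS(b,\gamma_\epsilon,h_\epsilon)$ picks up a term of order $|b|$ (not of order $\epsilon^7$), so to beat $\delta_\epsilon$ you would have to shrink $B$ to a ball of radius roughly $\epsilon^\alpha$, i.e.\ shrink $\epsilon$-dependently. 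This is fatal for the downstream argument in Section \ref{finite-dim-solution}, where the zero of the finite-dimensional moment map lies at $b_\epsilon$ with $|b_\epsilon| \sim \epsilon^{p_j/2}$ for small integers $p_j$ — a distance that in general vastly exceeds $\epsilon^\alpha$. So your $\Psi$ would not be defined where it is later needed.

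The fix, which is what the paper does, is to construct the approximate solution anew at each $b$: for each $b\in B$ run the iteration of Lemma \ref{approximatesolution} using the linearisation of the $Z$-critical operator at $(J_b,\omega)$, whose leading term is a perturbation of the Lichnerowicz operator $-\D_0^*\D_0$ of the central fibre and hence is still invertible orthogonal to the \emph{fixed} space $\mft$. This yields a $b$-dependent family $\gamma_{\epsilon,b}$ with $\scS(b,\gamma_{\epsilon,b},h_{\epsilon,b}) = O(\epsilon^7)$ uniformly over a ball $B$ of \emph{$\epsilon$-independent} radius, after which your IFT argument (uniform right-inverse bounds from Corollary \ref{final-linear-bound}, uniform nonlinear estimates from Remark \ref{varying-structure}) goes through as you describe, and regularity of $b\mapsto\Psi(b)$ follows from the contraction mapping construction.
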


We mention two further properties of the $\Psi_{\epsilon}$ thus produced, before explaining how the same technique as  Theorem \ref{thm:existence-large-volume} establishes Proposition \ref{reduction}. Firstly, as the proof of Proposition \ref{reduction} ultimately uses the contraction mapping theorem, as a standard consequence of the  contraction mapping theorem the $b$-dependent functions $\Psi_{\epsilon}(b)$ are as regular as possible in $b\in B$ and $\epsilon$ (just as in \cite[Proof of Theorem 1]{szekelyhidi-I}, for example). We will only need that they are actually, say, $C^8$, to ensure that the $Z$-critical operator is twice differentiable, which is then guaranteed for sufficiently large $k$. Secondly, through  the (smooth) identification $\X = B\times M$, we may identify $\Psi_{\epsilon}$ with a function on $\X$ such that \begin{equation}\label{perturbed-forms}\omega_{\X,\epsilon} = \omega_{\X} + \ddb \Psi_{\epsilon}\end{equation} is relatively K\"ahler for all sufficiently small $\epsilon$. The remaining property we will need is that the $\omega_{\X} + \ddb \Psi_{\epsilon}$ produced in this manner is $K$-invariant, which is ultimately a consequence of $K$-invariance of all objects involved: the map $\X \to B$, the form $\omega_{\X}$ and the $Z$-critical operator itself.

Proposition \ref{reduction} follows directly from the arguments on a fixed complex structure, so we only sketch the differences. On the central fibre $(\X_0,\L_0)$, the result is precisely Theorem \ref{thm:existence-large-volume}. The three key ingredients in Theorem \ref{thm:existence-large-volume} were the construction of approximate solutions, the bound on the operator norm of the right inverse of the linearised operator, and the control of the non-linear operator. We mention how each aspect in turn adapts.

As a first step, we replace the initial form $\omega_{\X}$ with a $K$-invariant relatively K\"ahler metric (still denoted $\omega_{\X})$ that satisfies $$S(\omega_b) \in \mfk_{b,0};$$ this leaves the K\"ahler metric on $X_0$ unchanged, but perturbs the metric on nearby fibres. The construction of such an $\omega_{\X}$ follows from an application of the implicit function theorem analogous to \cite[Proposition 7]{szekelyhidi-deformations}, perturbing the K\"ahler metric on each fibre. The application of the implicit function theorem uses that the linearisation of the scalar curvature on the cscK manifold $(X_0,\omega)$ takes the form $-\D_0^*\D_0$, so that its kernel is isomorphic to $\mfk_{0,0}$, that the linearisation for general $b\in B$ is a perturbation of this, and that the function spaces $\mfh_{b,0}$ are similarly perturbations of $\mfh_{0,0}$.

The approximate solutions can then be constructed for all $b \in B$, since the property used to construct the approximate solutions was that the linearisation was to leading order the Lichnerowicz operator $-\D_0^*\D_0$ on the central fibre $(\X_0,\L_0)$. Since the linearisation for general $b\in B$ is a perturbation of $-\D_0^*\D_0$, it remains an isomorphism orthogonal to $\mfh_{b,0}$ (as this is itself is a perturbation of $\mfh_{0,0}$), so the same argument applies to produce approximate solutions to any order. Similarly, calculated at the approximate solution, the linearised operator at $b$ is a perturbation of the linearisation at $b=0$, hence the mapping properties are inherited from those on $(\X_0,\L_0)$, producing the appropriate bound on the operator norm of the right inverse. Here we use in addition regularity of the operator $(b,\psi,v) \to h_{b,v,\psi}$ to ensure the linearised operators at the approximate solutions converge as $b\to 0$. As noted in Remark \ref{varying-structure}, the bounds on the non-linear terms apply also with the complex structure allowed to vary. Thus one can produce the desired $\Psi_{\epsilon}(b)$ for each $b \in B$, and as above the contraction mapping theorem ensures regularity of $\Psi_{\epsilon}(b)$ as one varies $b$ or $\epsilon$.

As we see in Corollary \ref{momentmap-application}, the important consequence of Proposition \ref{reduction} is that a zero of a natural moment map on $B$ is then actually a genuine $Z$-critical K\"ahler metric. Thus we have reduced to a finite dimensional moment map problem.

\subsection{Solving the finite dimensional problem}\label{finite-dim-solution}

We now turn to the solution of the finite dimensional moment map problem. Recall from Section  \ref{moment-map-finite} that to the Kuranishi family $(\X,\L) \to B$, the central charge $Z_{\epsilon}$ and a relatively K\"ahler metric $\omega_{\X}$, we have associated a closed complex $(n+1,n+1)$-form on $\X$, which we denote here $\tilde Z_{\epsilon}(\X,\omega_{\X})$, hence producing a closed $(1,1)$-form on $B$ through taking the associated fibre integral. Using the forms $\omega_{\X,\epsilon} = \omega_{\X} + \ddb \Psi_{\epsilon}$ produced by Proposition \ref{reduction} (as in Equation \eqref{perturbed-forms}), we set $$\Omega_{\epsilon} = \Ima\left(e^{-i\phi_{\epsilon}} \int_{\X/B} \tilde Z_{\epsilon}(\X,\omega_{\X,\epsilon})\right).$$  By $K$-invariance of $\Psi_{\epsilon}$ and $\X \to B$, the forms $\Omega_{\epsilon}$ are $K$-invariant for all $\epsilon$, and are further K\"ahler for $\epsilon$ sufficiently small as they are perturbations of (the pullback under the Kuranishi map of) the Donaldson-Fujiki K\"ahler metric on $\J(M,\omega)$, just as in Proposition \ref{fujiki-result-fibre}. 

\begin{lemma}

There exist a sequence of moment maps  $$\mu_{\epsilon}: B \to \mfk^*$$  for the $K$-action on $(B, \Omega_{\epsilon})$.

\end{lemma}

\begin{proof} The $K$-action on $B$ is induced by the linear $K$-action on the vector space $ \tilde H^1(X_0,TX^{1,0}_0)$. The linear $K$-action on $B$ admits a canonical moment map with respect to the flat K\"ahler metric, and hence by the equivariant Darboux theorem also admits a moment map with respect to any other $K$-invariant symplectic form \cite[Theorem 3.2]{symplectic-book}. One may alternatively more directly used that one can write $\Omega_{\epsilon} = d\lambda_{\epsilon}$ for $\lambda_{\epsilon}$ a $K$-invariant one-form to conclude the existence of a moment map  \cite[Exercise  5.2.2]{MS}.\end{proof}

The moment maps $\mu_{\epsilon}$ are only unique up to the addition of an element of $(\mfk^{*})^K$, where the latter denotes $K$-invariant elements of $\mfk^*$ under the coadjoint action. The next result ensures that we have chosen the geometrically appropriate sequence of moment maps. In what follows for $v \in \mfk$ we denote by $h_{\epsilon,v}$ a function satisfying $$\ddb h_{\epsilon,v} = \L_{J_{\X}v}\omega_{\X,\epsilon};$$as in Section \ref{sec:kuranishi}, as discussed there, such a choice is unique up to the addition of the pullback of a function from $B$, and any choice suffices. We then denote $h_{\epsilon,v,b}$ its restriction to a fibre $\X_b$ and similarly denote $\omega_{b,\epsilon} = \omega_{\X,\epsilon}|_{\X_b}$, with the corresponding function spaces as in Section \ref{sec:kuranishi} denoted $\mfh_{b_{\epsilon},\epsilon}$.
\begin{lemma}
We may normalise $\mu_{\epsilon}$ such that $$ \langle \mu_{\epsilon}, v\rangle(0)  = \int_{\X_0} h_{\epsilon,v,0}\Ima(e^{-i\phi_{\epsilon}}\tilde Z_{\epsilon}(X_0, \omega_{b,\epsilon})) \omega_{b,\epsilon}^n.$$
\end{lemma}

\begin{proof}

Since adding an element of $(\mfk^*)^K$ preserves the moment map condition, we need only check that the map $\mfk \to \C$ defined by $$v \to  \int_{\X_0} h_{\epsilon,v,0}\Ima(e^{-i\phi_{\epsilon}}\tilde Z_{\epsilon}(X_0, \omega_{b,\epsilon})) \omega_{b,\epsilon}^n$$ is $K$-invariant, where $K$ acts on $\mfk$ by the adjoint action. But this follows since $\Ima(e^{-i\phi_{\epsilon}}\tilde Z_{\epsilon}(X_0, \omega_{b,\epsilon})) \omega_{b,\epsilon}^n$ is a $K$-invariant $(n,n)$-form and $$h_{\epsilon,k\cdot v}=k^*h_{\epsilon,v},$$ where $k\cdot v$ denotes the adjoint action.\end{proof}

While we have little control over the moment maps  $\mu_{\epsilon}$ in general, on the orbit of interest we next show that solving the moment map problem produces $Z$-critical K\"ahler metrics. For this we denote by $b_0\in B$ a point corresponding to the complex manifold $(X,L)$ of interest, so that $(X,L) \cong (\X_b,\L_b)$.

We claim that $0 \in \overline{K^{\C}.b_0} \cap B$. Since there is a test configuration for our polarised manifold with central fibre $X_0$, by versality of $B$ there is a sequence of points $b_l$ in $B$ with $\X_{b_0} \cong X$ and  $b_l \to 0$ as $b\to \infty$ To see this, note that the only point with fibre isomorphic to $X_0$ is $0$ itself, and that by a result of Sz\'ekelyhidi there is \emph{some} $\C^* \hookrightarrow K^{\C}$ such that the specialisation of $b_0$ corresponds to a complex structure admitting a cscK metric \cite[Theorem 2]{szekelyhidi-deformations}. But cscK specialisations are actually unique by a result of Chen-Sun \cite[Corollary 1.8]{chen-sun}, implying $0 \in \overline{K^{\C}.b_0} \cap B$. We note that we do not need to rely on the  deep result of  Chen-Sun  to prove our main result: without appealing to this, one could instead  consider the cscK degeneration $(\X_{b_{\infty}},\L_{b_{\infty}})$ of $(X,L)$ in $B$ produced by Sz\'ekelyhidi, and consider $(X,L)$ as a deformation of $(\X_{b_{\infty}},\L_{b_{\infty}})$ instead, arguing in the same way.

\begin{corollary}\label{momentmap-application}
For any $b \in \overline{K^{\C}.b_0}$, the moment map $\mu_{\epsilon}(b)$ is given by $$\langle \mu_{\epsilon}(b),v\rangle = \int_{X_b} h_{\epsilon,v,b}\Ima(e^{-i\phi_{\epsilon}}\tilde Z(\X_b, \omega_{\epsilon},b)) \omega_{\epsilon,b}^n.$$
\end{corollary}

\begin{proof} By Theorem \ref{moment-map-in-finite-dimensions-thm}, the operator $B \to \mfk^*$ assigning $$b \to \left[v \to  \int_{X_b} h_{\epsilon,v,b}\Ima(e^{-i\phi_{\epsilon}}\tilde Z_{\epsilon}(\X_b, \omega_{\epsilon},b)) \omega_{\epsilon,b}^n\right]$$ is a moment map for the $K$-action on $K^{\C}.b_0$. It follows that this operator agrees with $\mu_{\epsilon}$ up to the addition of an element of $(\mfk^*)^K$, namely that for all $b \in K^{\C}.b_0$ $$\langle \mu_{\epsilon}, v\rangle(b) = \int_{X_b} h_{\epsilon,v,b}\Ima(e^{-i\phi_{\epsilon}}\tilde Z_{\epsilon}(\X_b, \omega_{\epsilon},b)) \omega_{\epsilon,b}^n + \langle \xi_{\epsilon},v\rangle$$ for some $\xi_{\epsilon} \in (\mfk^*)^K$ independent of $b \in K^{\C}.b_0$. We claim that $\xi_{\epsilon}=0$, which will imply the result for all $b \in K^{\C}.b_0$. Fixing $v \in \mfk$, and taking any sequence $b_t \in K^{\C}.b_0$ converging to $0 \in \overline{K^{\C}.b_0}$, the fact that $\langle \mu_{\epsilon},v\rangle(b)$ and $ \int_X h_{\epsilon,v}\Ima(e^{-i\phi_{\epsilon}}\tilde Z_{\epsilon}(X, \omega_{\epsilon})) \omega_{\epsilon}^n$ both converge to $\int_{\X_0} h_{\epsilon,v}\Ima(e^{-i\phi_{\epsilon}}\tilde Z_{\epsilon}(\X_0, \omega_{\epsilon})) \omega_{\epsilon}^n$ implies that $\langle \xi_{\epsilon},v\rangle = 0$, as claimed. The result for points $b \in \overline{K^{\C}.b_0}$ also allowed to lie in the closure of the orbit of $b_0$ follow by continuity.\end{proof}

We appeal to a version of the Kempf-Ness theorem to construct zeroes of the moment maps $\mu_{\epsilon}$. 

\begin{proposition}\cite[Corollary 4.6, Propositions 4.8, 4.9]{DMS}\label{kempf-ness} Suppose $b_0$ satisfies the condition the following stability condition: for all $0<\epsilon\ll 1$, and for all one-parameter subgroups $\lambda_v$ of $K^{\C}$ associated to $v \in \mfk$ such that $$\lim_{t\to 0} \lambda_v(t).b = \lim_{t\to\infty} \exp(-itv).b_0 = b' \in B$$ exists and lies in $B$, we have $$\langle \mu_{\epsilon}, v\rangle(b')<0.$$ Then there exists a sequence of points $b_{\epsilon} \in K^{\C}.b_0$ such that $\mu_{\epsilon}(b_{\epsilon}) = 0$.\end{proposition}

\begin{remark}
The proof of this result uses that $B$ is an open ball in a vector space where the action as linear, as it passes to an associated projective problem. It is also important that $\epsilon$ is taken to be sufficiently small, as the proof proceeds by considering the gradient flow associated to the moment map problem, and the condition that $\epsilon$ be taken to be small is used to ensure that the flow converges in $B.$
\end{remark}

We are now in a position to prove our main result, for which are assumptions are the same as throughout.

\begin{corollary}\label{main-result-as-a-cor}
Suppose $(X,L)$ is asymptotically $Z$-stable. Then $(X,L)$ admits $Z_{\epsilon}$-critical K\"ahler metrics for all $0 < \epsilon \ll 1$.
\end{corollary}

\begin{proof}
A one-parameter subgroup $\lambda_v$ of $G$ associated to $v \in \mfk$ such that $$\lim_{t\to 0} \lambda_v(t).b = \lim_{t\to\infty} \exp(-itv).b_0 = b' \in B$$ exists and lies in $B$ induces a test configuration $(\Y,\L_{\Y})$ for $(X,L)$, with central fibre $(\X_{b'},\L_{b'})$, by restricting the Kuranishi family $(\X,\L) \to B$ (as in \cite[Proof of Theorem 2]{szekelyhidi-deformations}). By the form of the moment map $\mu_{\epsilon}$ given in Corollary \ref{momentmap-application} and Proposition \ref{slope-for-z}, \begin{equation}\label{used-futaki}\langle \mu_{\epsilon}, v\rangle(b') = -\Ima\left(\frac{Z_{\epsilon}(\Y,\L_{\Y})}{Z_{\epsilon}(X,L)}\right).\end{equation} Thus asymptotic $Z$-stability of $(X,L)$ forces the condition $\langle \mu_{\epsilon}, v\rangle(b')<0$, meaning that Proposition \ref{kempf-ness} implies the existence of zeroes $b_{\epsilon}$ of the moment maps $\mu_{\epsilon}$. In terms of the function space $\mfh_{b,\epsilon}$, this means that for any $h \in \mfh_{b_{\epsilon},\epsilon}$ $$\int_{\X_{b_{\epsilon}}}h \Ima(e^{-i\phi_{\epsilon}}\tilde Z_{\epsilon}(\X_{b_{\epsilon}}, \omega_{\epsilon},b_{\epsilon})) \omega_{\epsilon,b_{\epsilon}}^n = 0,$$ again by  the form of the moment map $\mu_{\epsilon}$ given in Corollary \ref{momentmap-application}. But since by Proposition \ref{reduction}  $$ \Ima(e^{-i\phi_{\epsilon}}\tilde Z_{\epsilon}(\X_{b_{\epsilon}}, \omega_{\epsilon},b_{\epsilon}))  \in \mfh_{b_{\epsilon},\epsilon}, $$ it follows from non-degeneracy of the $L^2$-inner product that  $$\Ima(e^{-i\phi_{\epsilon}}\tilde Z_{\epsilon}(\X_{b_{\epsilon}}, \omega_{\epsilon},b_{\epsilon}))=0.$$  Elliptic regularity implies these solutions are actually smooth, concluding the result. \end{proof}

\subsection{Existence implies stability}\label{converse}

We return to the base of the Kuranishi space $B$ and along with its universal family $(\X,\L) \to B$. Our hypothesis is that $(X,L)$ admits $Z_{\epsilon}$-critical K\"ahler metrics for all $\epsilon$ sufficiently small, in a way that is compatible with our proof of that ``stability implies existence''. That is, we assume that there is a sequence of relative K\"ahler metrics  $\omega_{\X,\epsilon} \in c_1(\L)$, such that for each $\epsilon$ there is a $b_{\epsilon}  \in  K^{\C}.b_0$, where $(\X_{b_0},\L_{b_0}) \cong (X,L)$, such that $$\Ima(e^{-i\phi_{\epsilon}}\tilde Z_{\epsilon}(\X_b, \omega_{\epsilon}|_{\X_b})) = 0.$$ Recall from the proof of Corollary \ref{main-result-as-a-cor} that each $\C^*\hookrightarrow K^{\C}$ produces a test configuration for $(X,L)$.

\begin{theorem} In the above situation,  for each test configuration $(\Y,\L_{\Y})$ arising from the action of $K^{\C}$, we have $$\Ima\left(\frac{Z_{\epsilon}(\Y,\L_{\Y})}{Z_{\epsilon}(X,L)}\right)>0$$ for all $0 < \epsilon \ll 1$.
\end{theorem}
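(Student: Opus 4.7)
The strategy is a standard Kempf--Ness-type converse to the finite-dimensional moment map picture of Section~\ref{moment-map-finite}. By hypothesis and Corollary~\ref{corollary-in-finite-dimensions}, for every sufficiently small $\epsilon$ there exists a point $b_\epsilon \in B^o$ in the open dense $T^{\C}$-orbit such that
$$\mu_\epsilon(b_\epsilon) = 0 \in \mft^*,$$
where $\mu_\epsilon: B \to \mft^*$ is the moment map for the $T$-action with respect to the K\"ahler form $\Omega_\epsilon$ (which is genuinely K\"ahler for $\epsilon$ small). Given a non-trivial test configuration $(\X, \L)$ arising from a one-parameter subgroup $\C^* \hookrightarrow T^{\C}$ with generator $v \in \mft$, let $b_0 \in B$ denote the $\C^*$-fixed point to which $b_\epsilon$ specialises, so that the fibre of the Kuranishi family over $b_0$ is the central fibre of $(\X,\L)$. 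Corollary~\ref{momentmap-stability} then identifies
$$\langle \mu_\epsilon, v \rangle(b_0) = -\tfrac{1}{2}\,\Ima\!\left(\frac{Z_\epsilon(\X, \L)}{Z_\epsilon(X, L)}\right),$$
and so the claimed phase inequality is equivalent to $\langle \mu_\epsilon, v \rangle(b_0) < 0$.

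The key step is to flow from $b_\epsilon$ to $b_0$ along the $\R_{>0}$-subgroup of $\C^*$, which is generated by $\pm J X_v$ (with sign chosen so that $\phi_s(b_\epsilon) \to b_0$ as $s \to +\infty$), where $X_v$ is the real vector field on $B$ induced by $v$. Note that this flow stays inside $B$ because $B$ is the closure of a $T^{\C}$-orbit in the linear space $\tilde H^1$, so is $\C^*$-invariant, and because the central-fibre limit exists by construction of the test configuration. The defining moment map equation $d\langle \mu_\epsilon, v\rangle = -\iota_{X_v} \Omega_\epsilon$ combined with the K\"ahler identity $\Omega_\epsilon(X_v, JX_v) = g_{\Omega_\epsilon}(X_v, X_v)$ then yields
$$\frac{d}{ds}\langle \mu_\epsilon, v\rangle\bigl(\phi_s(b_\epsilon)\bigr) = -\,g_{\Omega_\epsilon}(X_v, X_v)\bigl(\phi_s(b_\epsilon)\bigr) \leq 0.$$
Hence $\langle \mu_\epsilon, v\rangle$ is non-increasing along $\phi_s$; since it starts at $0$ and terminates at $\langle \mu_\epsilon, v\rangle(b_0)$, we conclude $\langle \mu_\epsilon, v\rangle(b_0) \leq 0$.

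Strict inequality follows from the hypothesis that $\Aut(X,L)$ is discrete: this ensures that $T^{\C}$ acts with trivial stabilisers on $B^o$, so $X_v(b_\epsilon) \neq 0$ for any non-zero $v$, making the derivative above strictly negative at $s = 0$ and hence on a neighbourhood of $s=0$, giving $\langle \mu_\epsilon, v\rangle(b_0) < 0$ as desired.

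The main obstacle is essentially sign-tracking: one must align the direction of the $\R_{>0}$-flow that converges to the central fibre with the sign convention $d\langle \mu, v\rangle = -\iota_{X_v}\Omega_\epsilon$ and the positivity of $g_{\Omega_\epsilon}$ so that the monotonicity falls on the correct side. All other inputs (existence of the zero $b_\epsilon$, identification of the Futaki-type invariant with $\langle\mu_\epsilon, v\rangle(b_0)$, $\Omega_\epsilon$ being K\"ahler for small $\epsilon$, and freeness of the $T^{\C}$-action on $B^o$) have been established in the preceding sections.
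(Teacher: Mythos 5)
Your proof is correct and takes essentially the same approach as the paper: the paper's own argument rests on the zero of the finite-dimensional moment map at $b_\epsilon$, the identification of the weight at the specialisation via Corollary \ref{momentmap-stability}, and ``convexity of the log norm functional,'' which you have simply unpacked directly through the moment map equation $d\langle\mu_\epsilon,v\rangle = -\iota_{X_v}\Omega_\epsilon$ and positivity of $g_{\Omega_\epsilon}$, adding the observation that discreteness of $\Aut(X,L)$ gives strictness. The sign-tracking you flag does work out with the paper's conventions (in the Darboux-linearised picture the moment map components are $|z_l|^2$ and the $JX_v$-flow contracts to the fixed point), so there is no gap beyond what the paper itself leaves implicit.
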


This of course is equivalent to our definition of asymptotic $Z$-stability with respect to these test configurations, which used $k = \epsilon^{-1}$ rather than $\epsilon$. We note that, in principle, $(X,L)$ could admit $Z_{\epsilon}$-critical K\"ahler metrics which are ``far'' from the cscK degeneration $(X_0,L_0)$ and hence do not arise from this construction. Thus this is a truly local result.

\begin{proof} This is a formal consequence of standard finite dimensional moment map theory. By Corollary \ref{momentmap-application}, each $b_{\epsilon}$ is actually a zero of a genuine finite dimensional moment map with respect to the K\"ahler metrics $\Omega_{\epsilon}$ on $B$. It then follows by convexity of the log norm functional associated to the moment map that for any $\C^*$-action induced by $J_Bv$, with $J_B$ the almost complex structure on $B$ and $b_{\epsilon,0}$ the specialisation of $b_{\epsilon}$, the value $\langle  \hat\mu_{\epsilon}, v \rangle(b_{\epsilon,0})$ is negative. But by Equation \eqref{used-futaki}, we have $$\langle  \hat\mu_{\epsilon}, v \rangle(b_0) =   -\Ima\left(\frac{Z_{\epsilon}(\Y,\L_{\Y})}{Z_{\epsilon}(X,L)}\right),$$ proving the result. \end{proof}

\begin{remark} This is truly a local result, and it would be very interesting to obtain a global analogue. In principle, $(X,L)$ could admit $Z_{\epsilon}$-critical K\"ahler metrics that are ``far'' from the cscK metric on $(\X_0,\L_0)$ to which our result would not apply, although this seems unlikely in practice. Furthermore, there are many other test configurations for $(X,L)$ not arising from the Kuranishi space of $(\X_0,\L_0)$ for which we do not obtain stability with respect to. \end{remark}

\section{The higher rank case}\label{sec:higherrank}

We now extend our results to central charges involving higher Chern classes. Our exposition is brief, as the details are broadly similar to the ``rank one'' case, with a small number of exceptions. The first main difficulty is to extend the slope formula for the $Z$-energy to the setting where higher Chern classes are involved. The idea to overcome this is to reduce to the ``rank one'' case by projectivising, so we use of the Segre classes  $s_k(X)$ of $X$. The second difficulty is that, it is not clear that taking the variation of the $Z$-energy in this context actually produces a partial differential equation, so we simply include this as a hypothesis.

We thus consider a central charge of the form $$Z_{k}(X,L) =\sum_{l=0}^n \rho_l k^l   \int_X  L^l \cdot f(s(X)) \cdot \Theta,$$ for some $\rho$, $\Theta$ and $f(s(X))$ now an arbitrary polynomial in the Segre classes $s_1(X),\hdots, s_n(X)$ of $X$. The substantial difference is in the equation itself: the Euler-Lagrange equation of the $Z$-energy no longer produces a partial differential equation.

\subsection{Stability}

It is straightforward to extend the notion of stability, provided the central fibre of the test configuration is smooth, which we hence assume. Given such a test configuration $(\X,\L)$ for $(X,L)$, we associate to a term of the central charge of the form $$\int_X L^l\cdot s_{m_1}(X)\cdot\hdots\cdot s_{m_j}(X)\cdot \Theta$$ an intersection number $$\int_{\X} \L^{l+1}\cdot s_{m_1}(T_{\X/B})\cdot\hdots\cdot s_{m_j}(T_{\X/B})\cdot \Theta,$$ where $s_{m}(T_{\X/\pr^1})$ is the $m^{th}$ Segre class of the relative holomorphic tangent bundle $T_{\X/B},$ which is a holomorphic vector bundle as $\X \to \pr^1$ is a holomorphic submersion. The notion of stabilitya is then just as before: we require $$\Ima\left(\frac{Z_{\epsilon}(\X,\L)}{Z_{\epsilon}(X,L)}\right)>0 \textrm{ for } 0<\epsilon\ll 1$$

\begin{remark} One approach to defining the numerical invariant of interest more generally, when $\X$ is smooth but $\X_0$ is singular, is as follows.  Recall that the Segre classes are multiplicative in short exact sequences. Thus when $\X\to \pr^1$ is a smooth morphism, we have $$s(T_{\X}) = s(T_{\X/\pr^1})s(T_{\pr^1}),$$ where each of these denotes the holomorphic tangent bundle. When $\X$ has smooth total space but $\X_0$ is singular, so that $s(T_{\X/\pr^1})$ and $s(T_{\pr^1})$ are both defined, one can use this to define analogues of $s(T_{\X/\pr^1})$ and as $\X$ is smooth, one can still make sense of the intersection of cycles on $\X$ itself. It seems challenging to give a reasonable definition when $\X$ is singular, meaning intersection theory of cycles is not defined.
\end{remark}

\subsection{$Z$-energy}

We now fix a K\"ahler metric $\omega \in c_1(L)$ and recall some general theory of Bott-Chern forms. Good expositions are given by Donaldson \cite[Section 1.2]{donaldson-hermite-einstein} and Tian \cite[Section 1]{tian-BC}. The K\"ahler metric $\omega$ induces a Hermitian metric on the holomorphic tangent bundle, and hence induces a Chern-Weil representative $s_j(\omega)$ of the Segre classes $s_j(X)$ for all $j$ through the general theory of Bott-Chern forms. Suppose now that $\omega_{\psi}=\omega+i\ddbar \psi$ is another K\"ahler metric in the same class, producing another representative of $s_j(X)$. Then the theory of Bott-Chern forms implies that there is a $(j-1,j-1)$-form $\BC_j(\psi)$ such that $$s_j(\omega+i\ddbar \psi)  - s_j(\omega)= i\ddbar \BC_j(\psi).$$ To draw the parallel with the theory we have developed in the rank one case, note that that $s_1(\omega) = -\Ric(\omega)$, so $$\BC_1(\psi) = \log\left(\frac{\omega_{\psi}^n}{\omega^n}\right),$$  which is a function that appeared many times in Section \ref{sec:z-energy}.

With this in hand, we define Deligne functionals in an similar manner to Section \ref{sec:deligne}, and analogously to work of Elkik \cite{elkik}. A K\"ahler metric $\omega\in c_1(L)$ induces a metric on the holomorphic tangent bundle $T_{X}$. This produces representatives of the Segre classes $s_j(T_{\X})$, and changing $\omega$ to $\omega_{\psi}$ changes the representatives of the Segre classes through the Bott-Chern forms. We also fix a representative $\theta \in \Theta$.

We associate to the intersection number $\int_X L^l\cdot s_{m_1}(X)\cdot\hdots\cdot s_{m_j}(X)\cdot \Theta$ the value $$\frac{1}{l+1}\langle \psi, \hdots,\psi;\BC_{m_1}(\psi),\hdots,\BC_{m_j}(\psi);\theta\rangle \in \R$$ given by \begin{align*}\langle &\psi, \hdots,\psi;\BC_{m_1}(\psi),\hdots,\BC_{m_j}(\psi);\theta\rangle  \\ &= \int_{X}\psi\omega_{\phi}^l\wedge s_{m_1}(\omega_{\psi}) \wedge \hdots \wedge \wedge s_{m_j}(\omega_{\psi})\wedge\theta) \\ &+ \hdots + \int_{X} \BC_{m_j}(\psi)\omega^{l}\wedge s_{m_1}(\omega)\wedge\hdots\wedge s_{m_{j-1}}(\omega)\hdots \wedge  \theta,\end{align*}  by analogy with the usual theory of Deligne functionals. The basic properties of this functional extend directly: there is a natural analogue of the ``change of metric'' formula, which follows by definition, and the curvature property of Proposition \ref{hessian}. The curvature property is proven by Tian when $\theta=0$ \cite[Proposition 1.4]{tian-BC} for general functionals of this kind, but the proof applies to the general case. 

By linearity we have produced a functional $E_Z: \H_{\omega} \to\R$  on the space of K\"ahler metrics, which we call the $Z$-energy as before. In the case that $\theta=0$, a variational formula for the Deligne functional can be found in the work of Donaldson \cite[Proposition 6 (ii)]{donaldson-hermite-einstein}, and a similar result holds in general.  We will not make use of the precise variational formula, beyond the fact that the Euler-Lagrange equation is \emph{independent of initial K\"ahler metric} $\omega$ chosen. Thus the Euler-Lagrange equation is only a condition on $\omega_{\psi}$ and not $\omega$ itself. We note, however, that to phrase the Euler-Lagrange equation as a partial differential equation requires a further understanding of the linearisation of the Bott-Chern classes.

\begin{definition} We say that $\omega_{\psi}$ is a \emph{$Z$-critical K\"ahler metric} if it is a critical point of the $Z$-energy. \end{definition}

To clarify this condition, let  \begin{equation}\label{FZ}F_{Z,\psi}: f \to \frac{d}{dt} E_Z(\omega_{\psi}+ti\ddbar f)\end{equation} be the derivative of the $Z$-energy. Then a $Z$-critical K\"ahler metric is a zero of the map \begin{align*} C^{\infty}(X,\R) &\to C^{\infty}(X,\R)^*, \\ \psi &\to F_{Z,\psi}.\end{align*} In the ``rank one'' case, from Proposition \ref{euler-lagrange-derivation} the map $F_{Z,\psi}$ is given by $$F_{Z,\psi}(f) = \int f\Ima(e^{-i\phi}\tilde Z(\omega_{\psi}))\omega_{\psi}^n,$$ resulting in the Euler-Lagrange equation being equivalent to the partial differential equation $$\Ima(e^{-i\phi}\tilde Z(\omega_{\psi})) = 0.$$ Note in general that the operator $F_{Z,\psi}$ is linear in $\psi$ and so takes the form $$F_{Z,\psi} = \int_X L(\psi) \Ima(e^{-i\phi} \hat Z(\omega))\omega^n,$$ for some linear differential operator $L$ and some $\hat Z(\omega)$ which we do not explicitly derive. Let $L^*$ denote the formal adjoint of $L$.

\begin{definition} We say that $Z$ is \emph{analytic} if the condition $$\Ima(e^{-i\phi_{\epsilon}} L^*\hat  Z_{\epsilon}(\omega))=0$$ is a partial differential equation for $\omega$ for all $0 < \epsilon \ll 1$.
\end{definition}
We remark  that Pingali has, in a special case, linearised $c_2(\omega)$ and has even established an ellipticity result under hypotheses on the geometry of the manifold in question \cite[Lemma 3.1]{pingali}. 

\begin{example} Set $$Z_k(X,L) = \sum_{l=0}^n \int_X k^li^{n-l+1}L^l.c_{n-l}(X).$$ The variation of the Deligne functional associated to each term $\int_X L^l.c_{n-l}(X)$ has been calculated by Weinkove \cite[Lemma 5.1]{weinkove} (who does not use the Deligne functional terminology) to be $$\int_X \psi c_{n-l}(\omega)\wedge \omega^l,$$ so the induced equation is a fourth order partial differential equation only involving the Chern forms of $\omega$. In fact, for $k \gg 0$ small variants of the resulting $Z$-critical equation have been studied by Leung (under the name ``almost K\"ahler-Einstein metrics'' \cite{leung2}) and Futaki (under the name ``constant perturbed scalar curvature K\"ahler metrics'' \cite{futaki}). Note that, as the equation is fourth order, it is automatically elliptic for $k \gg 0$ as the leading order term of the linearisation is $\Delta^2$,  with this term coming from the linearisation of the scalar curvature. Thus this is an  analytic central charge. Leung and Futaki both use the inverse function theorem to produce solutions to their equations for $k \gg 0$; as these equations are fourth order, their applications of the inverse function theorem do not require the techniques we developed in Theorem \ref{thm:existence-large-volume}, where the main difficulties were caused by  the jump from a fourth order to a sixth order partial differential equation.
\end{example}

We must produce an analogue of the slope formula of Proposition \ref{slope-for-z}, which is the reason we make use of Segre classes rather than Chern classes. As in that situation, a test configuration smooth over $\C$ gives rise to a path $\psi_t$ of K\"ahler potentials, which in addition induces representatives of the Segre classes. Denote, as was done in the earlier situation of Section \ref{sec:deligne}, $h$ the function on $\X$ induced by the $\C^*$-action and the relatively K\"ahler metric $\omega_{\X}$. In addition denote $\omega_0$ the restriction of $\omega_{\X}$ to $\X_0$ and set $\tau = -\log |t|^2$.

\begin{proposition} We have equalities $$F_{Z,\X_0,\omega_0}(h) =  \lim_{\tau \to \infty} \frac{d}{d\tau} E_Z(\psi_{\tau})=\Ima\left(\frac{Z(\X,\L)}{Z(X,L)}\right).$$
\end{proposition}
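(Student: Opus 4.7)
By the linearity of the $Z$-energy, of the central charge, and of the operator $F_{Z,\psi}$ in the individual terms of the polynomial central charge, it suffices to prove the claimed pair of equalities for a single term of the form $\int_X L^l \cdot s_{m_1}(X) \cdot \hdots \cdot s_{m_j}(X) \cdot \Theta$ and the associated Deligne functional $\frac{1}{l+1}\langle \psi,\hdots,\psi;\BC_{m_1}(\psi),\hdots,\BC_{m_j}(\psi);\theta\rangle$. I will treat the two equalities separately; the second is an extension of the slope formula for Deligne functionals to the Segre-class setting, while the first follows quickly from the definition of $F_{Z,\psi}$ as a derivative of $E_Z$.

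For the second equality, the plan is to extend Lemma \ref{slope-formula} to Deligne functionals built from Bott-Chern forms for Segre classes. The relatively K\"ahler metric $\omega_{\X} \in c_1(\L)$ induces a Hermitian metric on the relative holomorphic tangent bundle $T_{\X/\pr^1}$, which exists as $\X_0$ is smooth and hence $\X \to \pr^1$ is a holomorphic submersion. This Hermitian metric produces smooth $S^1$-invariant Chern-Weil representatives $s_{m}(\omega_{\X})$ of $s_m(T_{\X/\pr^1})$ on the total space, whose restriction to $\X_1 \cong X$ is precisely $s_m(\omega_{\X}|_{\X_1})$, and whose pullback along $\beta(t)$ differs from $s_m(\omega_{\X}|_{\X_1})$ by $i\ddbar$ of the appropriate Bott-Chern form. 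Thus the Deligne functional computed along the path $\psi_\tau$ is exactly the object to which the proof of \cite[Theorem 4.9]{zak}\cite[Theorem 1.4]{kahler} applies: the only input required is that all the relevant forms extend smoothly to $\C^*$-invariant forms on $\X$, which is now guaranteed by smoothness of $\X_0$, smoothness of $\omega_{\X}$, and the standing assumption that $\theta$ extends smoothly. The argument then identifies the limiting derivative with $\frac{1}{l+1}\int_{\X} \L^{l+1} \cdot s_{m_1}(T_{\X/\pr^1}) \cdot \hdots \cdot s_{m_j}(T_{\X/\pr^1}) \cdot \Theta$, and summing over terms of the central charge (together with the standard manipulation performed in Lemma \ref{integral-vanishes} and in the proof of Proposition \ref{slope-for-z}) gives the second equality.

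For the first equality, I would argue exactly as in the proof of Proposition \ref{slope-for-z}. Fix the identification $\X_1 \cong X$ and set $\omega_t = \beta(t)^*\omega_{\X}|_{\X_1} = \omega + i\ddbar \psi_t$. By Equation \eqref{second-equation} we have $\frac{d}{dt}\omega_t = i\ddbar \beta(t)^* h$, so $\dot\psi_t = \beta(t)^* h$ on $\X_1$. By the defining property \eqref{FZ} of $F_{Z,\psi}$ as the Fr\'echet derivative of $E_Z$,
\begin{equation*}
\frac{d}{d\tau} E_Z(\psi_\tau) = F_{Z,\omega_t}\bigl(\beta(t)^* h|_{\X_1}\bigr).
\end{equation*}
Using the $\C^*$-action to transport the fibrewise integral $F_{Z,\omega_t}$ from $\X_1$ to $\X_t$, and the $\C^*$-invariance of $\theta$ and of the Chern-Weil forms, this equals $F_{Z,\X_t,\omega_{\X}|_{\X_t}}(h|_{\X_t})$. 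As $t \to 0$, the fibre $(\X_t,\omega_{\X}|_{\X_t})$ converges smoothly to $(\X_0,\omega_0)$ and $h|_{\X_t}$ converges smoothly to $h|_{\X_0}$, so by continuity (the operator $F$ is a fibre integral of smooth forms polynomial in the metric and its derivatives, hence continuous in the $C^\infty$-topology on the metric) we obtain $\lim_{\tau \to \infty} \frac{d}{d\tau}E_Z(\psi_\tau) = F_{Z,\X_0,\omega_0}(h)$, as desired.

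The main obstacle is the extension of the slope formula for Deligne functionals to the Segre-class setting, since this is the step where the proof departs from the established theory. The essential check is that the Bott-Chern forms $\BC_{m_i}(\psi_\tau)$ on the fibres assemble into smooth objects on $\X$ compatible with the Chern-Weil forms for $T_{\X/\pr^1}$; this is a straightforward consequence of the general functoriality and smoothness properties of Bott-Chern classes, but it is what enables the fibre-integration argument of \cite{kahler, zak} to be rerun verbatim. The remaining continuity and variational steps are then essentially formal.
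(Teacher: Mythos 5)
There is a genuine gap in your treatment of the second equality. The slope formula in Lemma \ref{slope-formula} (and its references) is proven for Deligne functionals $\langle \psi_0,\hdots,\psi_{n-p};\theta\rangle$ whose arguments $\psi_j^{\tau}$ are \emph{functions}, i.e.\ potentials for $(1,1)$-forms satisfying $\beta(t)^*\Omega_j - \Omega_j = i\ddbar \psi_j^t$. The higher-rank Deligne functional $\langle \psi,\hdots,\psi;\BC_{m_1}(\psi),\hdots,\BC_{m_j}(\psi);\theta\rangle$ involves Bott--Chern forms $\BC_{m_i}(\psi)$, which are $(m_i-1,m_i-1)$-forms of higher degree, not functions. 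You assert that the fibre-integration argument of \cite{kahler,zak} "reruns verbatim" once one checks that the Chern--Weil representatives extend smoothly and equivariantly to $\X$, but this is not a matter of smoothness or extension: the objects being fed into the functional have a different algebraic structure, so the proof is simply not stated for them, and the required convergence and intersection-number identification as $\tau\to\infty$ cannot be invoked directly. Indeed, the paper flags exactly this issue: "We must produce an analogue of the slope formula of Proposition \ref{slope-for-z}, which is the reason we make use of Segre classes rather than Chern classes."

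The paper's proof closes this gap by a reduction to the line-bundle case via iterated projectivization. The essential identity is Equation \eqref{BCint}: writing $\omega_{\psi,FS}-\omega_{FS}=i\ddbar\psi_{FS}$ on $\pr(TX)$ for the relative Fubini--Study forms induced by the two Hermitian metrics, one has
$$\int_{\pr(T_X)/X}\psi_{FS}\wedge\left(\sum_{q=0}^{n-2+j}\omega_{\psi,FS}^q\wedge\omega_{FS}^{n-2+j-q}\right)=\BC_j(\psi),$$
so the generalized Deligne functional with Bott--Chern form $\BC_j(\psi)$ on $X$ equals an \emph{ordinary} Deligne functional on $\pr(TX)$ with all arguments being genuine potentials, pulling back $L$-terms through $\sigma:\pr(TX)\to X$ and replacing $s_j$ by $\scO(1)^{n-1+j}$. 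Iterating this projectivization to handle multiple Segre classes, the already-established slope formula (Lemma \ref{slope-formula}) applies verbatim on the final projectivized space, and the result is pushed back to $\X$ as $\int_{\X}\L^{l+1}\cdot s_{m_1}(T_{\X/\pr^1})\cdots s_{m_j}(T_{\X/\pr^1})\cdot\Theta$. Your argument for the first equality (transporting $F_{Z,\omega_t}(\beta(t)^*h)$ to $\X_t$ and letting $t\to 0$) is fine and matches the paper's brief remark that this computation is "completely analogous," but the missing projectivization step is the heart of the proposition.
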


\begin{proof} The Segre classes are defined in such a way that $$s_j(X) = \sigma_*(\scO(1)^{n-1+j}),$$ where $\sigma_*$ denotes the push-forward of a cycle through the map $\sigma: \pr(T_{X}) \to X$ and $\scO(1)$ is the relative hyperplane class. On the analytic side, the Hermitian metric on $TX$ induces a Hermitian metric on $\scO(1)$, with curvature $\omega_{FS}$ which restricts to a Fubini-Study metric on each fibre. We then have, for example from \cite[Proposition 1.1]{diverio} or \cite{guler}, an equality of forms $$\int_{\pr(T_{X})/X} \omega_{FS}^{n-1+j} = s_j(\omega),$$ which is simply the metric counterpart of the usual defining property of the Segre classes. 

Now suppose $\omega_{\psi}$ is another K\"ahler metric on $X$, giving representatives $s_j(\omega_{\psi})$ of the Segre classes. Then $$ s_j(\omega_{\psi}) -  s_j(\omega) = \int_{\pr(T_{X})/X} (\omega_{\psi,FS}^{n-1+j} - \omega_{FS}^{n-1+j}).$$ Writing $$\omega_{\psi,FS}- \omega_{FS} = i\ddbar \psi_{FS},$$ this means that \begin{equation}\label{BCint}\int_{\pr(T_{X})/X} \psi_{FS}\wedge \left(\sum_{q=0}^{n-2+j}\omega_{\psi,FS}^q \wedge \omega_{FS}^{n-2+j-q}\right) = \BC_j(\psi),\end{equation} since taking $i\ddbar$ commutes with the fibre integral and  \begin{equation*}\int_{\pr(T_{X})/X} (\omega_{\psi,FS}^{n-1+j} - \omega_{FS}^{n-1+j}) = \int_{\pr(T_{X})/X} i\ddbar \psi_{FS}\wedge \left(\sum_{q=0}^{n-2+j}\omega_{\psi,FS}^q \wedge \omega_{FS}^{n-2+j-q}\right).\end{equation*} We note here that Bott-Chern classes are only defined modulo closed forms of one degree lower, and so strictly speaking this is merely a representative of the Bott-Chern class.

We return to our integral $E_Z(\psi_{\tau})$ of interest, and as usual we focus on one term of the form $$\langle \psi, \hdots,\psi;\BC_{m_1}(\psi),\hdots,\BC_{m_j}(\psi);\theta\rangle.$$ The Segre class construction allows us to reduce to the line bundle case, where the result has already been established.  

Suppose first that $j=1$, meaning we only have one Segre class involved in the intersection number. Then  the equality $$\int_{\pr(TX)} \psi_{FS}\left(\sum_{l=0}^{n-2+j}\omega_{\psi,FS}^l \wedge \omega_{FS}^{n-2+j-l}\right)\wedge \sigma^*\beta = \int_{X}\BC_j(\psi)\wedge \beta$$ that we have established in Equation \eqref{BCint} allows us to conclude that the Deligne functional  $$\langle \psi, \hdots,\psi,\BC_{m}(\psi);\theta\rangle$$ can be computed on $\pr(T_X)$ as $$\langle \psi, \hdots,\psi,\psi_{FS},\hdots,\psi_{FS};\theta\rangle_{\pr(TX)},$$ where we pull back $\omega_{\psi}$ to $\pr(TX)$ to consider it as a form on $\pr(TX)$. 

In the case that multiple Bott-Chern forms are involved, we simply iterate this construction as follows. After following this procedure once, we have only $j-1$ Segre classes remaining on $\pr(TX)$. But we can pull back $TX$ through $\sigma: \pr(TX) \to X$, and in this way by functoriality the Segre forms computed with respect to the metric induced by $\sigma^*\omega$ are the pullback of the Segre form computed on $X$. Thus applying the same procedure, we reduce to only $j-2$ higher Segre classes, and repeating we eventually reduce to the line bundle case. What remains is to compute the asymptotic slope of the Deligne functional along the path of metrics induced by the test configuration. 

Projectivising $T_{\X/\C}$, we obtain a family $\pr(T_{\X/\C}) \to \C$ which admits a $\C^*$-action, and is essentially a smooth test configuration for $\pr(T_X)$ without a choice of line bundle. The relatively K\"ahler metric $\omega_{\X}$ produces a Hermitian metric on $T_{\X/\C}$ and, assuming there is only one Segre class $s_m(X)$ involved in the intersection number, we obtain that the limit derivative of the Deligne functional is $$\int_{\pr(T\X/\pr^1)} \L^{l+1}\cdot \scO(1)^{m+n-1}\cdot \Theta = \int_{\X} \L^{l+1}\cdot s_m(T_{\X/\pr^1})\cdot \Theta.$$ Iterating this procedure by pulling back the relative tangent bundle to $\pr(T_{\X/\C})$ produces the slope formula in general. The computation of the slope as an integral over $\X_0$ is completely analogous. \end{proof}

\subsection{Final steps} We now assume that $Z$ is an admissible central charge, in the sense of Section \ref{mainresults}, which means that $\Rea(\rho_{n-1})<0, \Rea(\rho_{n-2})>0, \Rea(\rho_{n-3})=0$ and $\theta_1=\theta_2=\theta_3=0$. These mean that the new terms in the Segre class enter at order $\epsilon^4$, meaning the structure of the equation at lower order is the same as in the ``rank one'' case.

We finally explain how to prove our main result in the higher rank case:

\begin{theorem}Let $Z$ be an analytic admissible central charge. Suppose $(X,L)$ has discrete automorphism group and is analytically K-semistable. If it is in addition asymptotically $Z$-stable, then it admits $Z_{\epsilon}$-critical K\"ahler metrics for all $\epsilon$ sufficiently small.\end{theorem}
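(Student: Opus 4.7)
The plan is to follow the architecture of the rank one proof, using the admissibility hypothesis to arrange that all new contributions from higher Segre classes are of sufficiently high order in $\epsilon$ that they can be treated as perturbations of the rank one linearisation. The three steps identified in the introduction -- moment map interpretation, reduction to a finite dimensional problem via the quantitative implicit function theorem, and solution of the finite dimensional problem -- all transfer, and we run each step in turn.

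First, I would establish the moment map interpretation in both finite and infinite dimensions exactly as in Sections \ref{moment-map-section}, with the Deligne functional $E_Z$ for Segre classes (defined using Bott--Chern forms) playing the role of K\"ahler potential for the fibrewise integral. The curvature property (Proposition \ref{hessian}) holds for these more general Deligne functionals by the result of Tian cited just after \eqref{FZ}, and equivariance of the construction is automatic. The analyticity hypothesis on $Z$ guarantees that the resulting Euler--Lagrange condition is a genuine partial differential equation, so on each fibre of the Kuranishi family $(\X,\L)\to B$ the $Z$-critical condition is a condition on the K\"ahler potential. The slope formula for Segre classes just established identifies the values of the finite-dimensional moment map $\mu_\epsilon$ at $\C^*$-fixed points with the invariants $-\tfrac{1}{2}\Ima(Z_\epsilon(\X,\L)/Z_\epsilon(X,L))$ via the same argument as Corollary \ref{momentmap-stability}.

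Next, I would carry out the reduction to a finite-dimensional moment map problem. The approximate solutions of Lemma \ref{approximatesolution} are constructed purely from the structure of the expansion of $\tilde Z_\epsilon(\omega)$ in powers of $\epsilon$ and the surjectivity of $\D^*\D$ modulo holomorphy potentials, neither of which cares about the presence of higher Segre classes; so the same inductive construction produces K\"ahler metrics $\omega_\epsilon$ for which $\Ima(e^{-i\phi_\epsilon}\tilde Z_\epsilon(\omega_\epsilon))$ lies in the space of holomorphy potentials modulo $O(\epsilon^7)$. The crucial analytic input is the mapping properties of the linearisation. Here admissibility pays off: since $\theta_1=\theta_2=\theta_3=0$ and the Segre classes $s_j(X)$ for $j\geq 2$ only enter the central charge multiplied by $\theta$ or at order $\rho_{n-j}$ for $j\geq 2$ (and any \emph{new} higher-rank contributions are either multiplied by $\theta_{\geq 2}$ or appear at order at least $\epsilon^4$ by bookkeeping in $\epsilon$), the $\epsilon$, $\epsilon^2$ and $\epsilon^3$ terms of the linearisation $\F_\epsilon$ agree with those computed in Section \ref{expansion-calculations} for the rank one case, up to operators satisfying the abstract form of $H_1,H_2$ appearing in the model operator \eqref{model-operator}. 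Thus the model operator analysis of Lemma \ref{eigenvalue-bound}, Corollary \ref{right-inverse-existence} and Corollary \ref{we-want-to-perturb-to-this} applies verbatim, giving a right inverse of operator norm $O(\epsilon^{-3})$ at the approximate solution, and the nonlinear estimate Lemma \ref{nonlinear} is a general consequence of the polynomial dependence of $\tilde Z_\epsilon$ on the curvature tensor (as in Remark \ref{varying-structure}). Applying the quantitative implicit function theorem fibrewise over $B$ and patching as in Proposition \ref{reduction} produces a smooth $T$-equivariant map $\Psi: B \to L^2_k$ such that after perturbation the $Z$-critical operator on each fibre lies in $\mft$.

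Finally, I would solve the resulting finite-dimensional moment map problem on $B$ exactly as in Section \ref{finite-dim-solution}: apply equivariant Darboux to linearise the torus action and the form $\Omega_0$, diagonalise the torus weights, and construct approximate solutions order by order by Proposition \ref{approximate-solutions}. The three hypotheses of that Proposition -- vanishing of the differentials of $h_{l,j}$ at the relevant coordinate hyperplanes, independence of $\sum\epsilon^j h_{l,j}$ on those hyperplanes, and its negativity -- are respectively consequences of the moment map property, the equivariance in Corollary \ref{momentmap-stability} (which holds in the higher rank case by the Segre-class slope formula above), and the asymptotic $Z$-stability hypothesis. A final application of the quantitative implicit function theorem (with the same operator norm bound $O(\epsilon^{-p_k/2})$ controlled by the torus weights) produces genuine zeroes of the moment map in the open dense orbit, hence genuine $Z_\epsilon$-critical K\"ahler metrics for all $0<\epsilon\ll 1$.

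The main obstacle I anticipate is verifying in detail that, under admissibility, every sixth-order contribution to the linearisation of the $Z$-critical operator up to and including order $\epsilon^3$ still has the structure required by the model operator \eqref{model-operator}; equivalently, that no Segre-class term of the form $s_j(X)$ with $j\geq 2$ can contribute a new sixth-order operator at orders $\epsilon, \epsilon^2$ or $\epsilon^3$ whose symbol is not controlled by $\D^*\bar\partial^*\bar\partial\D$ or by the auxiliary operators $H_1,H_2$. The bookkeeping is straightforward but must be done carefully because, unlike the rank one case, the variation of $s_j(\omega)$ itself introduces higher-order derivatives of the potential through the Bott--Chern form $\BC_j$; admissibility (together with $\theta_{\leq 3}=0$) is precisely what pushes these contributions to order $\epsilon^4$ and beyond, where they may be absorbed into the perturbation term $\epsilon^3 L_\epsilon$ of Corollary \ref{linear-algebra}.
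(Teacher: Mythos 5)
Your proposal follows the paper's proof essentially exactly: both reduce to the rank-one argument by noting that admissibility ($\theta_1=\theta_2=\theta_3=0$, $\Rea(\rho_{n-3})=0$, and the analyticity hypothesis) pushes the genuinely new higher Segre class contributions to order $\epsilon^4$, where they are absorbed as perturbations of the rank-one model operator, and then transport the moment map interpretation, the quantitative implicit function theorem reduction, and the finite-dimensional Darboux argument verbatim. The one place you deviate is your final paragraph, which spells out the bookkeeping that the paper compresses into the single sentence ``These mean that the new terms in the Segre class enter at order $\epsilon^4$''; that is a worthwhile expansion of the paper's argument rather than a different route.
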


The proof is, from here, very similar to the ``rank one'' case. The moment map interpretation is exactly as in the ``rank one'' case. Indeed, the construction of the sequence of K\"ahler metrics $\Omega_{\epsilon}$ on $B$ is identical to Equation \eqref{infinite-form}, as it does not use anything concerning the structure of the equation. Then the moment map property proven there does not actually use that the Euler-Lagrange equation is actually a partial differential equation, but rather just uses formal properties. Thus we see obtain an analogous  moment map interpretation.

The application of the implicit function theorem is much the same. By analyticicty of the central charge, the same reasoning as Section \ref{finite-dim-reduction} demonstrates that the linearisation  is an isomorphism, and the quantitative inverse function theorem allows us to construct a potential $\Psi$ such that the $Z_{\epsilon}$-critical operator lies in $\mfk^{2}_{k,B}$, where we use the same notation as Section \ref{finite-dim-reduction}. Here we use  that analyticity of the central charge implies the equation is an \emph{elliptic} partial differential equation.

The solution to the finite dimensional problem applies, as it is a general result in K\"ahler geometry, and the local converse is, again, identical in the higher rank case.

\end{document}